\renewcommand*\env@matrix[1][*\c@MaxMatrixCols c]{%
  \hskip -\arraycolsep
  \let\@ifnextchar\new@ifnextchar
  \array{#1}}
\tikzset{
	ch/.style={circle,draw,on chain,inner sep=2pt},
	chj/.style={ch,join},
	every path/.style={shorten >=4pt,shorten <=4pt}
	}
\newcommand{\dnode}[2][chj]{%
	\node[#1,label={below:#2}] (#1) {};}
\newcommand{\dnodenj}[1]{%
	\dnode[ch]{#1}}
\newcommand{\dydots}{%
	\node[chj,draw=none,inner sep=1pt] {\dots};}
\newcommand{\alg}{\textup{alg}}
\newcommand{\bA}[1]{    \begin{align}  #1  \end{align}}
\newcommand{\bAn}[1]{    \begin{align*}  #1  \end{align*}}
\newcommand{\ba}[1]{    \begin{array}   #1   \end{array}}
\newcommand{\bbA}{\mathbb{A}}
\newcommand{\bfA}{\mathbf{A}}
\newcommand{\bc}[1]{     \begin{cases}   #1   \end{cases}}
\newcommand{\bD}{\mbf D}
\newcommand{\be}{\mbf e}
\newcommand{\bsp}[1]{    \begin{split}  #1  \end{split}}
\newcommand{\bbf}{\mbf f}
\newcommand{\bH}{\mbf H}
\newcommand{\bK}{\mbf K}
\newcommand{\bL}{\mbf L}
\newcommand{\bm}[1]{    \left[\begin{smallmatrix}    #1    \end{smallmatrix}\right]}
\newcommand{\Bp}[1]{\Big{(} #1\Big{)}}
\newcommand{\bS}{\mbf S }
\newcommand{\bU}{\mbf U}
\newcommand{\bv}{\mbf v}
\newcommand{\CL}{C^{\bL}}
\newcommand{\co}{\textup{col}}
\newcommand{\cP}{\mathcal{P}}
\newcommand{\D}{\mathscr{D}} 
\newcommand{\dBj}{\dot{\mathfrak B}}
\newcommand{\diag}{\textup{diag}}
\newcommand{\dK}{\dot{\bK}}
\newcommand{\dKKi}{\dot{\KK}^\imath_{n}}
\newcommand{\dKKj}{\dot{\KK}^\jmath_{n}}
\newcommand{\dKKjp}{\dot{\KK}^{>}_n}
\newcommand{\ds}{\displaystyle}
\newcommand{\dU}{\dot{\bU}}
\newcommand{\dUUi}{\dot{\UU}^\imath}
\newcommand{\dUUj}{\dot{\UU}^\jmath}
\newcommand{\fa}{\mathfrak{a}}
\newcommand{\fc}{\mathfrak{c}}
\newcommand{\fd}{\mathfrak{d}}
\newcommand{\fgl}{\mathfrak{gl}}
\newcommand{\fS}{\mathfrak{S}}
\newcommand{\HH}{\mathbb{H}}
\newcommand{\Hom}{\textup{Hom}}
\newcommand{\id}{\mathbbm{1}}
\newcommand{\II}{\mathbb{I}}
\newcommand{\IIj}{\mathbb{I}^\jmath}
\newcommand{\inv}{^{-1}}
\newcommand{\iXi}{\Xi^\imath}
\newcommand{\iXit}{\~{\iXi}}
\newcommand{\JJ}{\mathbb{J}}
\newcommand{\KK}{\mathbb{K}}
\newcommand{\ld}{\lambda}
\newcommand{\Ld}{\Lambda}
\newcommand{\lrb}[2]{{#1 \brack #2}}
\newcommand{\LR}[1]{\left\llbracket #1 \right\rrbracket}
\newcommand{\mA}{\mathcal{A}}
\newcommand{\mB}{\mathcal{B}}
\newcommand{\mC}{\mathcal{C}}
\newcommand{\mbf}{\mathbf}
\newcommand{\mrm}{\mathrm}
\newcommand{\NN}{\mathbb{N}}
\newcommand{\otw}{\textup{otherwise}}
\newcommand{\p}[1]{\leftidx{_{p}}{#1}}
\newcommand{\pp}[1]{\leftidx{_{\breve{p}}}{#1}}
\newcommand{\QQ}{\mathbb{Q}}
\newcommand{\ro}{\textup{row}}
\newcommand{\rw}{\rightarrow}
\newcommand{\sgn}{\mathrm{sgn}}
\newcommand{\SSj}{\mathbb{S}^{\jmath}_{n,d}}
\newcommand{\SSi}{\mathbb{S}^{\imath}_{n,d}}
\newcommand{\Stab}{\textup{Stab}}
\newcommand{\tfor}{\textup{for }}
\newcommand{\tif}{\textup{if }}
\newcommand{\tinv}{\mathrm{inv}}
\newcommand{\tneg}{\mathrm{neg}}
\renewcommand{\tt}{\theta}
\newcommand{\Tt}{\Theta}
\newcommand{\UU}{\mathbb{U}}
\newcommand{\UUi}{\mathbb{U}^\imath}
\newcommand{\UUj}{\mathbb{U}^\jmath}
\newcommand{\Xit}{\~{\Xi}}
\newcommand{\ZZ}{\mathbb{Z}}
\renewcommand{\^}[1]{\widehat{#1}}
\renewcommand{\~}[1]{\widetilde{#1}}
\renewcommand{\=}[1]{\overline{#1}}
\numberwithin{equation}{subsection}
\theoremstyle{definition}
\newtheorem{Def}{Definition}[subsection] 
\newtheorem{rem}[Def]{Remark}
\newtheorem{rmk}[Def]{Remark}
\theoremstyle{plain}
\newtheorem{prop}[Def]{Proposition}
\newtheorem{thm}[Def]{Theorem}
\newtheorem{lem}[Def]{Lemma}
\newtheorem{lemma}[Def]{Lemma}
\newtheorem{cor}[Def]{Corollary}
\newtheorem*{thmA}{Theorem A}
\newtheorem*{thmB}{Theorem B}
\newtheorem*{thmC}{Theorem C}
\newtheorem*{thmD}{Theorem D}
\title[Schur algebras and QSPs with unequal parameters]{Schur algebras and quantum symmetric pairs with unequal parameters}
\author[C.~Lai and L.~Luo]{Chun-Ju Lai and Li Luo}
\address{Department of Mathematics, University of Georgia, Athens, GA 30605}
\email{cjlai@uga.edu (Lai)}
\address{ 
    Department of mathematics, Shanghai Key Laboratory of Pure Mathematics and Mathematical Practice, East China Normal University, Shanghai 200241, China}
\email{lluo@math.ecnu.edu.cn (Luo)}
\begin{document}

\begin{abstract}
We study the (quantum) Schur algebras of type B/C corresponding to the Hecke algebras with unequal parameters.
We prove that the Schur algebras afford a stabilization construction in the sense of Beilinson-Lusztig-MacPherson that constructs a multiparameter upgrade of the quantum symmetric pair coideal subalgebras of type AIII/AIV with no black nodes.
We further obtain the canonical basis of the Schur/coideal subalgebras, at the specialization associated to any weight function.
These bases are the counterparts of Lusztig's bar-invariant basis for Hecke algebras with unequal parameters.
In the appendix we provide an algebraic version of a type D Beilinson-Lusztig-MacPherson construction which is first introduced by Fan-Li from a geometric viewpoint.
\end{abstract}

\maketitle
\setcounter{tocdepth}{1}
\tableofcontents
\section{Introduction}
\subsection{Background}
The quantum groups introduced by Drinfeld and Jimbo have played a central role in representation theory and many other branches of mathematics.
Equally important are Lusztig's modified (or idempotented) quantum groups (cf. \cite{Lu93})
that admit the canonical bases, which are analogs of the Kazhdan-Lusztig bases for the Hecke algebras.
In \cite{BLM90}, a geometric construction of the modified quantum group $\dU(\fgl_n)$ is given by Beilinson-Lusztig-MacPherson.
Their construction is now referred as the BLM or stabilization construction after a stabilization property of the family of the (quantum) Schur algebras of type A.
In this paper, by a (equal-parameter)\footnote{Our goal} {\it stabilization construction of type $X$} we mean a construction of an algebra $\dK^X_n$ over $\ZZ[v,v\inv]$ such that
\enu
\item There is a family of quantum Schur algebras $\bS^X_{n,d}$, which are the centralizing algebras to the action of the Hecke algebra $\bH^X_d$ of type $X_d$, for all $n,d$;
\item The family $\{\bS^X_{n,d}~|~ d\in \NN\}$ admits a stabilization property, namely, the algebra $\dK^X_n = \mathop{\textup{Stab}}\limits_{\infty\leftarrow d} \bS^X_{n,d}$ is well-defined. As a consequence, there is a basis of $\dK^X_n$ that is compatible with the Kazhdan-Lusztig bases for $\bH^X_d$, and the canonical bases of $\bS^X_{n,d}$ for all $d$.
\endenu
The stabilization constructions have been developed for classical type and for certain affine type (see Table~\ref{tab:BLM} for the references) --
there are geometric approaches using partial flags and counting over finite fields developed;
while there also are algebraic approaches in the framework of the Hecke algebras using combinatorics on Coxeter groups.

\begin{table}[h]
\caption{Known BLM/stabilization constructions}\label{tab:BLM}
\begin{tabular}{c|cccccccc}
type& finite A& finite B/C & finite D& affine A& affine C
\\ \hline
geometric&\cite{BLM90}&\cite{BKLW18}&\cite{FL15}&\cite{Lu99}&\cite{FL3Wa}
\\
algebraic&\cite{DDPW08}& ? & ? &\cite{DF15}&\cite{FL3Wb}
\end{tabular}
\end{table}
We remark that the algebraic approach for finite type B/C is more or less a special case for affine type C;
while the algebraic approach for type D will be given in the appendix of this present paper.

The stabilization construction in general produces not the Drinfeld-Jimbo's quantum groups but Letzter-Kolb's quantum symmetric pairs (cf. \cite{Le02, Ko14}). For example, the stabilization constructions of type A and B/C lead to the quantum symmetric pairs of type AIII/IV with no black nodes.
\subsection{A new direction}
A recent work by Bao-Wang-Watanabe brings to the author's attention that a multiparameter Schur duality (cf. \cite{BWW18}) plays a governing role among the Schur dualities of classical type.
They also introduce a multiparameter upgrade of quantum symmetric pairs of type AIII/AIV with no black nodes.

While it is unclear how to proceed a geometric approach with unequal parameters since dimension counting does not make sense in an obvious way,
an algebraic/combinatorial approach seems viable.
The goal of this article is to provide a stabilization construction with respect to the Schur duality with unequal parameters in {\it loc. cit.}
We show that the multiparameter stabilization algebras constructed are the coideal subalgebras appearing in the quantum symmetric pairs of type AIII/AIV with no black nodes.
As an application, we construct, for the first time, the canonical bases for the type B/C Schur algebras with unequal parameters associated to any weight function, using Lusztig's bar-invariant basis \cite{Lu03} with unequal parameters.

The following diagram explains briefly the connection between the stabilization construction of type B/C for equal and unequal parameters (here $\mbf{c} = \gcd(\bL(s_0), \bL(s_1))$, and there are two distinct cases where $\bullet$ can be replaced by $\imath$ or $\jmath$):

\begin{table}[h]
\caption{Relation between Schur duality of type B/C at various specializations}
\[
\ba{{llcccccc}
\ds
\mathbb{S}^\bullet_{n,d} \curvearrowright \mathbb{V}^{\otimes d}\curvearrowleft \mathbb{H}_d&\textup{over }\ZZ[u^{\pm1}, v^{\pm1}]
\\[5pt]
\ds
\Downarrow\textup{ specialization at } u = \bv^{\bL(s_0)}, v = \bv^{\bL(s_1)}
\\[5pt]
\ds
\mathbb{S}^{\bullet,\bL}_{n,d} \curvearrowright \mathbb{V}_\bL^{\otimes d}\curvearrowleft \mathbb{H}^\bL_d&\textup{over }\ZZ[\bv^{\pm\mbf{c}}]
\\[5pt]
\Downarrow \textup{ specialization at } u=v=\bv ~(\textup{i.e., } \bL = \ell)
\\[5pt]
\mathbf{S}^{\bullet}_{n,d} \curvearrowright \mathbf{V}^{\otimes d}\curvearrowleft \mathbf{H}_d&\textup{over }\ZZ[\bv^{\pm1}]
}
\]
\end{table}
\label{rmk:typeD}

At the specialization $u=1$, the Hecke algebra contains the type D Hecke algebra over $\ZZ(v^{\pm1})$ as a proper subalgebra. Hence the multiparameter Schur duality yields a weak Schur duality of type D which is used in \cite{Bao17} to formulate the Kazhdan-Lusztig theory for classical and super type D.
The very duality also appears in \cite{ES18} as a piece of a larger skew Howe duality of the quantum symmetric pair coideal subalgebra with itself.

\subsection{Unequal parameters}
While the organization of this paper follows closely to the (equal-parameter) affine type C construction  \cite{FL3Wb}, the technical lemmas therein do not generalize naively.
Below we mention some notable difficulties working with unequal parameters.

The first difficulty comes to dealing with the combinatorics of (type B/C) quantum numbers with two parameters.
The key observation here is that the (equal-parameter) quantum numbers/factorials used in the BLM-type constructions arise from the (equal-parameter) Poincare polynomials corresponding to the Weyl groups.
Hence, we compute the multiparameter upgrade for the type B/C Poincare polynomials (cf. Lemma~\ref{lem:[A]!}), and then extract from it a type B/C quantum factorial \eqref{def:c-factorial} with two parameters.

The second difficulty arises in constructing a standard basis of $\SSj$.
For the equal-parameter case such a basis element $[A]$ is obtained by multiplying a $\bv$-power to the evident basis $e_A$; while for unequal parameters, it is not obvious how to define a multiplier $u^\bullet v^\bullet$ that specializes to the original $\bv$-power.
We solve this problem by reducing it to getting an explicit formula (cf. Lemma~\ref{lem:bar}) for the leading coefficient under the bar map.
For the equal-parameter case the formula is obtained using certain identities on the dual Kazhdan-Lusztig basis due to Curtis.
However, there are no multiparameter Kazhdan-Lusztig basis known to us (yet).
Hence, we take a detour via Lusztig's bar-invariant basis $c_w$ with unequal parameters and have successfully define a standard basis that affords the entire stabilization process.


Finally, we remark that there is an unexpected behavior for our multiparameter monomial bases --
the basis elements are not bar-invariant, unlike the (equal-parameter) monomial basis elements.
As a result, we can only show the existence of canonical bases for Schur algebras at certain specialization (see Section~\ref{sec:SCB}).

\subsection{Organization and main results}
Throughout the article the algebras are over the ground ring
$$\bbA = \ZZ[u^{\pm1},v^{\pm1}]$$ ($u,v$ are independent indeterminants) and its specializations.

We first start with the case $\bullet = \jmath$.
In Section~\ref{sec:Weyl} we recall combinatorial properties of Weyl groups of type B/C in terms of permutation matrices.
We characterize a matrix set $\Xi_{n,d}$ (see \eqref{def:Xi}) associated to certain double coset representatives.
We also introduce the multiparameter quantum numbers of type B/C corresponding to the Poincare polynomials.
In Section~\ref{sec:schur} we introduce the Schur algebra $\SSj$ (see \eqref{def:Sj}) with an evident basis $\{e_A\mid A\in\Xi_{n,d}\}$.
In Section~\ref{sec:bases} we introduce a standard basis $\{[A] \mid A\in\Xi_{n,d}\}$ (see \eqref{def:[A]}), and we show that, using Lusztig's basis $c_w$ for the Hecke algebras with unequal parameters, it satisfies a unitriangular condition under the bar involution.
The first main result is the following multiparameter upgrade of the multiplication formulas in \cite{BKLW18}:
\begin{thmA}[Theorem~\ref{thm:multformula2}]
Let $A, B \in \Xi_{n,d}$ and $B - b(E_{h,h+1} +E_{-h,-h-1})$ is diagonal.
Let $\gamma_{B,A}^C \in \bbA$ be such that
$
[B] [A]
=\sum_C
\gamma_{B,A}^C
[C] \in \SSj$.
The explicit formula and the vanishing criterion for $\gamma_{B,A}^C$ are computed.
\end{thmA}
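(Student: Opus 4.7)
The plan is to mimic the strategy of \cite{BKLW18}, adapted to the two-parameter setting. Since the standard basis element $[A]$ of \eqref{def:[A]} differs from the evident basis element $e_A$ only by an explicit monomial $\tt_A \in \bbA$, it suffices to compute the product $e_B \cdot e_A$ in the $e$-basis and then rescale: writing $e_B e_A = \sum_C g_{B,A}^C e_C$ gives $\gamma_{B,A}^C = \tt_A\inv \tt_B\inv \tt_C \cdot g_{B,A}^C$. Thus the two tasks are (i) identifying which $C$'s appear and (ii) computing the Hecke-theoretic weight attached to each.

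For (i), I would realize $\SSj$ as the endomorphism algebra of $\bigoplus_\ld 1_\ld \HH_d$, so that $e_A$ is a double-coset-sum basis vector and products translate into counts of factorizations of double-coset representatives. Because $B - b(E_{h,h+1} + E_{-h,-h-1})$ is diagonal, $e_B$ is a very short double coset and its action on $e_A$ is a \emph{chip-moving} operation: the $b$ boxes in position $(h,h+1)$ (resp.\ $(-h,-h-1)$) of $B$ can only be split among the columns of the $(h+1)$-st (resp.\ $(-h-1)$-st) row of $A$, with the two halves linked by the type B/C fold symmetry. This forces
\[ C = A + \sum_{j} t_{j}\bp{E_{h,j}-E_{h+1,j}} + \sum_j t'_j \bp{E_{-h,j}-E_{-h-1,j}} \]
for nonnegative integer tuples $(t_j), (t'_j)$ that pair correctly under the fold and whose entries sum to $b$ on each side, and this combinatorial description is precisely the vanishing criterion for $\gamma_{B,A}^C$.

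For (ii), the weight $g_{B,A}^C$ attached to each admissible $C$ is a product, over the affected columns, of multiparameter quantum binomials extracted from the type B/C Poincar\'e polynomials of the relevant parabolic subgroups, as computed in Lemma~\ref{lem:[A]!}. Rescaling by the monomial $\tt_C/(\tt_B \tt_A)$ collapses the extraneous prefactors into an explicit $u^{\alpha(B,A,C)} v^{\beta(B,A,C)}$, yielding the closed formula for $\gamma_{B,A}^C$. As a sanity check, the specialization $u = v = \bv$ must recover the equal-parameter multiplication formula of \cite{BKLW18}.

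The main technical obstacle is the bookkeeping of the $u$- and $v$-exponents. Unlike the equal-parameter case, the type B/C quantum factorials of \eqref{def:c-factorial} mix the two parameters in a way that depends on whether the row/column indices lie in the \emph{interior} of the matrix or at the \emph{boundary} (indices near the central index $0$, which is where $u$-contributions appear). Moreover, since the basis elements $[A]$ are not bar-invariant in the unequal-parameter case, no symmetrization shortcut is available and the multiplier $\tt_A$ must be expanded row by row. I would handle this by separating interior contributions, which can be read off from \cite{BKLW18} under the formal substitution $\bv \mapsto v$, from boundary contributions, where the folded combinatorics force extra $u$-factors whose exponents must be computed directly. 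Once this split is made cleanly, everything else is forced by the chip-moving enumeration and the rescaling formula above.
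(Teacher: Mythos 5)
Your proposal follows essentially the same route as the paper: Lemma~\ref{lem:multAw} expresses $e_B e_A$ as a sum over $w \in \D_\delta \cap W_\mu$ weighted by $[A^w]^!_\fc/[A]^!_\fc$ and powers of $u^2, v^2$ recording the truncated length statistics, Proposition~\ref{prop:multformula1} evaluates the resulting fiber sums exactly as in your ``chip-moving'' enumeration by tuples $t$, and Theorem~\ref{thm:multformula2} rescales to the standard basis via $[A] = u^{-\^\ell_\fc(A)}v^{-\^\ell_\fa(A)}e_A$, checking consistency at $u=v$ against \cite{BKLW18}. The one refinement to note is that the weight attached to each admissible $C$ is not literally a Poincar\'e polynomial of a parabolic as in Lemma~\ref{lem:[A]!}, but the sum $\sum_w u^{2\ell_\fc(w)}v^{2\ell_\fa(w)}$ over the fiber $\{w \in \D_\delta\cap W_\mu : A^w = \widehat{A}_{t,h}\}$, which factors as $u^{2\ell_\fc(w_t)}v^{2\ell_\fa(w_t)}$ times a product of multiparameter quantum binomials only after an application of the quantum binomial theorem.
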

The multiplication formula plays an essential step towards constructing a monomial basis in the sense that a stabilization property \eqref{eq:monop} holds. 
\begin{thmB}[Proposition~\ref{prop:mono}, Theorem~\ref{thm:SCB}]
There exists a monomial basis $\{m_A\}$ for the Schur algebra $\SSj$ over $\bbA$. Consequently, at a specialization associated to a weight function $\bL$,
there exists a canonical basis $\{\{A\}^\bL\}$ for $\mathbb{S}^{\jmath, \bL}_{n,d}$.
\end{thmB}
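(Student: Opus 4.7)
The plan is to prove the two halves in sequence, constructing the monomial basis over $\bbA$ first and then specializing to obtain the canonical basis. For each $A \in \Xi_{n,d}$, I would decompose the off-diagonal entries into a totally ordered list and produce, via the standard zigzag/column-sweep procedure familiar from the equal-parameter type B/C construction, a sequence $B^{(1)}, \ldots, B^{(r)} \in \Xi_{n,d}$ each satisfying the hypothesis of Theorem~A (i.e., $B^{(i)}$ minus a diagonal matrix equals $b_i(E_{h,h+1} + E_{-h,-h-1})$ for some $b_i, h$), together with a residual diagonal matrix $D_A$ absorbing the remaining mass. Set
\[
m_A := [B^{(1)}][B^{(2)}] \cdots [B^{(r)}][D_A] \in \SSj.
\]
Iterating Theorem~A, each left multiplication by $[B^{(i)}]$ contributes the expected leading term together with strictly lower-order corrections under a partial order $\preceq$ on $\Xi_{n,d}$ modelled on the Bruhat order (the BLM order familiar from the equal-parameter case). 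An induction on $r$ then yields
\[
m_A = [A] + \sum_{A' \prec A} g_{A,A'}(u,v)\, [A'], \qquad g_{A,A'} \in \bbA,
\]
which is the stabilization identity \eqref{eq:monop}. Unitriangularity of this transition matrix identifies $\{m_A\}_{A \in \Xi_{n,d}}$ as an $\bbA$-basis of $\SSj$, establishing Proposition~\ref{prop:mono}.

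For the canonical basis at the specialization $(u, v) = (\bv^{\bL(s_0)}, \bv^{\bL(s_1)})$, I would invoke Lusztig's lemma. Its hypothesis is that the bar involution on $\mathbb{S}^{\jmath, \bL}_{n, d}$ acts on the standard basis $\{[A]\}$ unitriangularly. To verify this, I would use Lemma~\ref{lem:bar} together with Lusztig's bar-invariant basis $c_w$ of $\mathbb{H}^\bL_d$: the explicit leading-coefficient formula under the bar map, combined with the triangularity of $m_A$ in the standard basis from the previous step, produces the desired unitriangularity. Lusztig's lemma then supplies a unique family $\{\{A\}^\bL \mid A \in \Xi_{n,d}\}$ satisfying $\overline{\{A\}^\bL} = \{A\}^\bL$ and $\{A\}^\bL \in [A] + \sum_{A' \prec A} \bv^{-1}\ZZ[\bv^{-1}]\, [A']$, giving the canonical basis.

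The principal obstacle is that, in contrast with the equal-parameter case \cite{BKLW18}, the monomial elements $m_A$ are not bar-invariant over the full $\bbA$, so one cannot apply Lusztig's lemma directly at the generic level. The workaround is to carry out the bar-invariance analysis only after specialization, where the numerical identities $u = \bv^{\bL(s_0)}, v = \bv^{\bL(s_1)}$ force cancellations that fail in the two-parameter setting. The technical input driving this cancellation is Lemma~\ref{lem:bar}, which routes through $c_w$ since a multiparameter Kazhdan-Lusztig basis is unavailable; this detour both determines the form of the canonical basis and restricts its construction to the specialization.
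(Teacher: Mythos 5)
Your overall architecture (Chevalley decomposition $\Rightarrow$ monomial elements $m_A$ $\Rightarrow$ unitriangularity under bar $\Rightarrow$ Lusztig's lemma at the specialization) matches the paper's, but there is a genuine gap at the step you pass over most quickly: the claim that iterating the multiplication formula yields $m_A=[A]+\sum_{A'\prec A}g_{A,A'}[A']$ with leading coefficient exactly $1$. In the two-parameter setting this is precisely the delicate point. What the iteration gives you a priori is $m_A=u^{\alpha(A)}v^{\beta(A)}[A]+\text{lower terms}$ for some exponents $\alpha(A),\beta(A)$, and nothing in your sketch rules out a nontrivial monomial $u^{\alpha}v^{\beta}$ in front of $[A]$; the equal-parameter arguments you are modelling this on only control the single product $v^{\alpha+\beta}$. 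The paper does not carry out the direct bookkeeping either: it observes that the specialization $u=v$ (via \cite[Theorem 3.10]{BKLW18}) forces $v^{\alpha(A)+\beta(A)}=1$ while the specialization $u=1$ (via \cite[Theorem 4.6.3]{FL15}) forces $v^{\beta(A)}=1$, and these two constraints together pin down $u^{\alpha(A)}v^{\beta(A)}=1$. You would need either this two-specialization trick or a genuinely careful tracking of the $u$- and $v$-exponents through Theorem~\ref{thm:multformula2}; as written, the induction "yields" the identity only by assertion.

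Two smaller points. First, your normalization for the canonical basis should be $\{A\}^\bL\in[A]^\bL+\sum_{B<_\alg A}\bv^{-\mbf{c}}\ZZ[\bv^{-\mbf{c}}][B]^\bL$ with $\mbf{c}=\gcd(\bL(s_0),\bL(s_1))$, not $\bv^{-1}\ZZ[\bv^{-1}]$: the specialized structure constants live in $\ZZ[\bv^{\pm\mbf{c}}]$, and Lusztig's lemma must be run with respect to that ring. Second, your diagnosis of where the specialization is needed is slightly off: the unitriangularity of the bar involution on the standard basis (Proposition~\ref{prop:Abar}, resting on Lemma~\ref{lem:bar} and the basis $c_w$) already holds over all of $\bbA$; the specialization enters because Lusztig's lemma requires a one-variable ring with the splitting $\ZZ[\bv^{\pm\mbf{c}}]=\bv^{-\mbf{c}}\ZZ[\bv^{-\mbf{c}}]\oplus\ZZ[\bv^{\mbf{c}}]$, not because the triangularity itself fails generically.
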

In Section~\ref{sec:stabj} we show that the stabilization procedure along the line of Beilinson-Lusztig-MacPherson applies to the family of Schur algebras $\{\SSj\mid d \ge 1\}$ with a fixed $n$, which leads to the construction of stabilization algebra $\dKKj$ (cf. Corollary~\ref{cor:stab}) together with its canonical basis.
\begin{thmC}[Theorem~\ref{thm:KCB}]
There exists a monomial basis $\{m_A\}$ for the stabilization algebra $\dKKj$. As a corollary, there exists a canonical basis $\{\{A\}^\bL\}$ for $\dKKj$ at a specialization associated to a weight function $\bL$.
\end{thmC}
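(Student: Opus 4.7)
The plan is to lift the monomial basis of the Schur algebras to the stabilization algebra using the very stabilization map from Corollary~\ref{cor:stab}, and then to derive the canonical basis at a weight-function specialization by a standard Lusztig-type triangularity argument applied level-wise. By Theorem B, for each $d$ we have a monomial basis $\{m_A \mid A \in \Xi_{n,d}\}$ of $\SSj$, obtained by multiplying out Chevalley-type standard-basis elements $[B]$ with $B - b(E_{h,h+1}+E_{-h,-h-1})$ diagonal via Theorem A. The key input is the stabilization property in \eqref{eq:monop} — namely that the multiplication formula of Theorem A is compatible with the $d\to\infty$ limit in the sense of Beilinson-Lusztig-MacPherson — so that for each $A$ in the stabilized index set $\dBj$, the symbols $m_A$ assemble into a well-defined element $m_A \in \dKKj$.

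First I would show that $\{m_A\}_{A \in \dBj}$ is a basis of $\dKKj$. The strategy is to invoke the standard basis $\{[A]\}_{A \in \dBj}$ of $\dKKj$ (which exists by construction of the stabilization algebra) and to deduce that the transition matrix between $m_A$ and $[A]$ is unitriangular with respect to the Bruhat-type partial order $\preceq$ on $\dBj$ that governs the Schur-algebra formulas of Theorem A. Concretely, for each fixed $d$ the transition between $m_A$ and $[A]$ in $\SSj$ is already unitriangular (this is the content of Theorem B / Proposition~\ref{prop:mono}); since both families stabilize and the partial order is stable, the limiting transition in $\dKKj$ is still unitriangular, and thus $\{m_A\}$ is indeed a basis.

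Next, for the canonical basis at a specialization $u = \bv^{\bL(s_0)}, v = \bv^{\bL(s_1)}$, I would first pass to $\dKKj^{\bL}$, which inherits a bar involution from $\mathbb{S}^{\jmath,\bL}_{n,d}$ by Theorem~\ref{thm:SCB}. As noted in the introduction, the $\bbA$-monomial basis elements are \emph{not} bar-invariant, which is precisely why the canonical basis is only asserted after specialization; however, under $\bL$ the monomial elements $m_A^\bL$ satisfy a unitriangular relation with the standard basis under the bar map, inherited from the Schur-algebra level (Theorem~\ref{thm:SCB}). Applying Lusztig's standard lemma to the family $\{m_A^\bL\}$ then yields a unique bar-invariant basis $\{\{A\}^\bL\}$ of $\dKKj^\bL$ characterized by $\{A\}^\bL \in m_A^\bL + \sum_{B \prec A} \bv^{-\mbf{c}} \ZZ[\bv^{-\mbf{c}}] \, m_B^\bL$, which is by construction compatible with the canonical basis $\{\{A\}^\bL\}$ of $\mathbb{S}^{\jmath,\bL}_{n,d}$ via the stabilization projection.

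The main obstacle I expect is checking that the limit $m_A \in \dKKj$ truly realizes the stabilization of the Schur-algebra monomial bases \emph{uniformly in $d$}: one must verify that the generators of $\SSj$ and their products behave coherently under the stabilization map, so that writing $m_A$ as a polynomial in generators produces the same polynomial expression in generators of $\dKKj$. This relies on tracking the leading $u^{\bullet}v^{\bullet}$ multipliers defining $[A]$ (the detour through $c_w$ and the leading-coefficient Lemma~\ref{lem:bar}) through each application of the multiplication formula Theorem A, to confirm the stabilization constants match. Everything else — unitriangularity, passage to the limit, and the Lusztig-lemma extraction of $\{A\}^\bL$ — is a formal consequence once this coherence is in place.
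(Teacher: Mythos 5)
Your overall route is the paper's route: lift the monomial basis through the stabilization using \eqref{eq:monop} (this is exactly how Proposition~\ref{prop:mAK'} is obtained from Proposition~\ref{prop:mono}), observe that unitriangularity with respect to $\le_\alg$ survives the limit because the coefficients $\zeta_i(u,v,v^{-p})$ agree for all large even $p$ and hence determine $\zeta_i(u,v,1)$, and then run Lusztig's lemma at the specialization. Two corrections, one of which is substantive. First, the bar involution on $\dKKj$ is not simply ``inherited from $\mathbb{S}^{\jmath,\bL}_{n,d}$ by Theorem~\ref{thm:SCB}'': it requires its own stabilization statement, Proposition~\ref{prop:stab2} (note the enlarged coefficient ring $\mathcal{R}_2$ needed there), recorded in Corollary~\ref{cor:stab}; without it there is no bar map on $\dKKj$ to which Lusztig's lemma could be applied.

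Second, and more seriously, your characterization of the canonical basis is degenerate as written. You require $\{A\}^\bL \in m_A^\bL + \sum_{B<_\alg A} \bv^{-\mbf{c}}\ZZ[\bv^{-\mbf{c}}]\, m_B^\bL$ together with bar-invariance. But the $m_B^\bL$ are themselves bar-invariant at the specialization, so bar-invariance of $\{A\}^\bL$ forces each coefficient $q_{BA}$ to satisfy $\overline{q_{BA}}=q_{BA}$, and $\bv^{-\mbf{c}}\ZZ[\bv^{-\mbf{c}}]\cap \bv^{\mbf{c}}\ZZ[\bv^{\mbf{c}}]=\{0\}$; your conditions therefore just return $\{A\}^\bL=m_A^\bL$, which is not in $[A]^\bL+\sum_{B<_\alg A}\bv^{-\mbf{c}}\ZZ[\bv^{-\mbf{c}}][B]^\bL$ in general. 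The correct statement (and the one in \eqref{eq:SCB}, which Theorem~\ref{thm:KCB} cites as the defining property) measures the lower-order terms against the \emph{standard} basis $\{[B]^\bL\}$: the bar-invariant unitriangular family $\{m_A^\bL\}$ is the input to the inductive existence argument, while the $\bv^{-\mbf{c}}\ZZ[\bv^{-\mbf{c}}]$-integrality condition that pins down $\{A\}^\bL$ uniquely must be imposed on the coefficients of the $[B]^\bL$. With that repair, the rest of your argument is a faithful account of the paper's proof.
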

Section~\ref{sec:i} is dedicated to the counterparts of Theorems~B and C for the case $\bullet = \imath$ (see Theorems~\ref{thm:SiCB} and \ref{thm:KiCB}).
In Section~\ref{sec:QSP} we show that the stabilization algebras coincide with the $\fgl$-variants $\UUj, \UUi$ of the multiparameter quantum symmetric pair coideal subalgebras studied by Bao-Wang-Watanabe in \cite{BWW18} (referred as $\mathbf{U}^\jmath, \mathbf{U}^\imath$ therein). The argument is made bypassing the idempotented (or modified) quantum algebras.
\begin{thmD}[Theorems~\ref{thm:Kj=Uj} and \ref{thm:Ki=Ui}]
There are algebra isomorphisms $\dKKj \simeq \dUUj, \dKKi \simeq \dUUi$.
\end{thmD}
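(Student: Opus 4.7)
The plan is to construct an explicit algebra homomorphism $\Phi^\jmath : \dUUj \to \dKKj$ (and analogously $\Phi^\imath : \dUUi \to \dKKi$) by specifying where the Chevalley-type generators of the multiparameter coideal subalgebras from \cite{BWW18} go, and then to establish that $\Phi^\bullet$ is an isomorphism by combining the multiplication formulas of Theorem~A, the monomial basis of Theorem~C, and a specialization argument. Concretely, I would send each idempotent $1_\lambda$ to the standard basis element $[D_\lambda]$ for the diagonal matrix $D_\lambda$; each Chevalley generator $E_i 1_\lambda$ and $F_i 1_\lambda$ to the standard basis element $[A]$ where $A$ differs from $D_\lambda$ by a single off-diagonal $(h,h+1)$-block; and the distinguished boundary generator to the standard basis element for the appropriate boundary configuration. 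The $\imath$-case follows by the analogous construction outlined in Section~\ref{sec:i}.

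Well-definedness of $\Phi^\bullet$ reduces to checking that the images satisfy the defining relations of $\dUUj$ (resp.\ $\dUUi$). Each such relation is an identity among products of two or three generators, and Theorem~A expands every product of standard basis elements as an explicit $\bbA$-linear combination of $[C]$'s, so the task becomes a finite list of polynomial identities in the two-parameter quantum integers. These are tractable via the multiparameter Poincar\'e polynomial computation of Lemma~\ref{lem:[A]!} and should be a careful, largely mechanical upgrade of the equal-parameter verifications in \cite{BKLW18}. Surjectivity of $\Phi^\bullet$ is then immediate from Theorem~C: the monomial basis $\{m_A\}$ of $\dKKj$ consists of explicit products of Chevalley-type standard basis elements, each of which lies in the image of $\Phi^\jmath$.

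The main obstacle is injectivity, since a satisfactory multiparameter PBW theory for $\dUUj$ is not evidently available; this is precisely where the argument ``bypasses the idempotented quantum algebras'' referenced in the abstract becomes essential. I would pull back the monomials realizing $\{m_A\}$ through $\Phi^\jmath$ to obtain a candidate spanning set $\{\tilde{m}_A\}$ of $\dUUj$, so that the $\bbA$-rank of $\dUUj$ is at most that of $\dKKj$. At the specialization $u = v = \bv$, $\Phi^\jmath$ descends to the equal-parameter BLM-type isomorphism of \cite{BKLW18}, hence the specialized image of $\{\tilde{m}_A\}$ is a basis of the specialized algebra. Since $\bbA$ is a UFD and $\dUUj$ is torsion-free over $\bbA$, linear independence lifts from the specialization to $\bbA$, which promotes $\{\tilde{m}_A\}$ to an $\bbA$-basis of $\dUUj$ that $\Phi^\jmath$ maps bijectively to $\{m_A\}$. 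The $\imath$-case is entirely parallel.
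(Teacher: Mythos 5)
Your construction of the map and the verification of well-definedness are exactly the paper's route: the paper defines $\aleph$ on $1_\ld$, $e_i1_\ld$, $f_i1_\ld$ using the presentation of $\dUUj$ by idempotented Serre-type relations and checks those relations in ${}_\QQ\dKKj$ by direct computation with the multiplication formulas of Theorem~\ref{thm:multformula2}; surjectivity via the monomial basis is likewise as in the paper. One inaccuracy in your setup for the $\imath$-case: the extra generator is \emph{not} sent to a single standard basis element, but to ${\bf t}1_\ld=[\ld-E_{1,1}^\theta+E_{-1,1}^\theta]+v^{-\ld_1}\frac{u-u^{-1}}{v-v^{-1}}[\ld]$; the diagonal correction term (which degenerates to the familiar one at $u=v$) is essential for the relations involving $t$ to close up, so "the standard basis element for the appropriate boundary configuration" would not give a well-defined homomorphism.

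The genuine gap is in your injectivity argument. Injectivity of $\Phi^\jmath$ is equivalent to the pulled-back monomials $\{\tilde m_A\}$ \emph{spanning} $\dUUj$; their linear independence is automatic, since any relation among the $\tilde m_A$ pushes forward under $\Phi^\jmath$ to a relation among the $m_A$, which form a basis of $\dKKj$. Your specialization-at-$u=v$ step therefore proves only the part that comes for free, while the assertion that actually carries the weight ("the $\bbA$-rank of $\dUUj$ is at most that of $\dKKj$") is never justified: spanning does not descend from generic to special nor lift from a specialization (a proper $\bbA$-submodule can surject onto the full fibre at a closed point, as $2\ZZ\subset\ZZ$ already shows modulo $3$). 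There is also a circularity: $\dUUj$ is a $\QQ(u,v)$-algebra, so "its specialization at $u=v$" is undefined until one fixes an integral form, and in the paper the integral form ${}_\bbA\dUUj$ is \emph{defined} as $\aleph^{-1}(\dKKj)$, i.e., only after the isomorphism is established. What is needed, and what the argument the paper invokes (\cite[Theorem 4.7]{BKLW18}, together with the presentation of $\dUUj$ and the embedding $\UUj\hookrightarrow\UU$ from \cite{BWW18}) supplies, is an a priori upper bound on each $\dim_{\QQ(u,v)}{}_\ld(\dUUj)_{\mu}$, equivalently a straightening of every word in the generators into the span of the $\tilde m_A$. Without that step your argument does not close.
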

In the appendix we provide an algebraic version of a type D Beilinson-Lusztig-MacPherson construction which is first introduced by Fan-Li from a geometric viewpoint.

\vspace{2mm}
\noindent {\bf Acknowledgement.}
The authors thank Huanchen Bao and Weiqiang Wang for helpful discussions. We also thank Catherina Stroppel for bringing \cite{ES18} to our attention.
The second author is partially supported by the Science and Technology Commission of Shanghai Municipality (grant No. 18dz2271000) and the NSF of China (grant No. 11671108, 11871214).
We thank the referees for detailed comments on a previous version of the manuscript.

\section{Combinatorics on Weyl groups}\label{sec:Weyl}
\subsection{Weyl groups as permutation groups}
Let $\NN = \{0,1,2,\ldots\}$. Fix $N,n,D,d\in\NN$ such that
\begin{equation}\label{NdDd}
N = 2n+1, D=2d+1.
\end{equation}
Let $\textup{Perm}(X)$ be the group of permutations on a set $X$.
Let $(W,S)$ be the Coxeter system of type B/C by
\bA{
\label{def:W}
&W = \{g \in \textup{Perm}([-d,d]) ~|~ g(-i) = -g(i)\},
\quad
S = \{s_0, \ldots, s_{d-1}\},}
where
\begin{equation}
s_0 = (-1, 1),
\quad
s_i = (i, i+1)(-i, -i-1)
\quad
(1\leq i<d).
\end{equation}
In particular, $g(0)=0$ for any $g\in W$.
The corresponding Coxeter diagram is as below:
\[
\begin{tikzpicture}[start chain]
\dnode{$0$}
\dnodenj{$1$}
\dydots
\dnode{$d-1$}
\path (chain-1) -- node[anchor=mid] {\(=\joinrel=\joinrel=\)} (chain-2);
\end{tikzpicture}
\]
Since that any $g \in W$ is uniquely determined by $(g(1), \ldots, g(d))$,
we use the two-line/one-line notations (referred as the window notation in \cite{BB05})
\eq
g \equiv \left|\ba{{ccc}1,&\ldots&,d\\g(1),&\ldots&,g(d)}\right|_\fc \equiv |g(1), \cdots, g(d)|_\fc.
\endeq
Let $\ell:W\to\NN$ be the length function on $W$.
We introduce a truncated length function $\ell_\fc:W \to \NN$ such that $\ell_\fc(g)$ equals to the total number of $s_0$'s in a reduced expression of $g$.
The function $\ell_\fc$ is well-defined since it is the weight function (cf. \cite{Lu03}) determined by $\ell_\fc(s_0) = 1, \ell_\fc(s_i) = 0$ for $i\ge 1$.
We set $\ell_\fa = \ell - \ell_\fc$.
\lem\label{lem:l(g)}
For $g \in W$, we have
\bA{
\label{eq:lc(g)} &\ell_\fc(g)
= \frac{1}{2}{}^\sharp\left\{ (i,j) \in [1,d]\times \{0\} \middle| \substack{i< j\\ g(i) > g(j)} \textup{ or }\substack{i > j\\ g(i) < g(j)} \right\},
\\
\label{eq:la(g)} &\ell_\fa(g)
= \frac{1}{2}{}^\sharp\left\{ (i,j) \in [1,d]\times ([-d,d]-\{0\}) \middle| \substack{i< j\\ g(i) > g(j)} \textup{ or }\substack{i > j\\ g(i) < g(j)} \right\}.
\\
\label{eq:l(g)} &\ell(g)
= \frac{1}{2}{}^\sharp\left\{ (i,j) \in [1,d]\times [-d,d] \middle| \substack{i< j\\ g(i) > g(j)} \textup{ or }\substack{i > j\\ g(i) < g(j)} \right\}.
}
\endlem
\proof
It follows by an easy induction that $\ell_\fc(g) = {}^\sharp\{ i\in [1, d] ~|~ g(i) < 0\}$, which yields to
\eqref{eq:lc(g)} by a direct calculation.
The formula \eqref{eq:l(g)} for $\ell(g)$ is equivalent to the formula \cite[(8.2)]{BB05}.
Then there comes the formula \eqref{eq:la(g)} by $\ell_\fa(g)=\ell(g)-\ell_\fc(g)$.
\endproof
\rmk
The expressions in Lemma~\ref{lem:l(g)}  are not the most straight-forward.
There are simpler ones, for example, $\ell_\fa = \tinv +\tneg$ and $\ell_\fc = \tneg$ following the convention in \cite{BB05}.
We will see in Lemma~\ref{lem:l(A)} the advantage of choosing such symmetrized expressions.
See also \cite[Appendix A]{FL3Wb} for similar symmetrized length formulas for finite and affine classical types.
\endrmk
Denote the set of weak compositions of $d$ of $n+1$ parts by
\eq\label{def:Ld}
\Ld_{n,d} = \{
\ld =(\ld_n, \ldots,\ld_{1}, 2\ld_{0}+1,\ld_{1}, \ldots, \ld_n) \in\NN^{2n+1}
~|~
\textstyle\sum_{i=0}^{n} \ld_i = d
\}.
\endeq
For any $\ld \in \Ld_{n,d}$ and integer $i\in[-n,n]$, we define integer intervals $R^\ld_i$ by
\eq\label{def:R}
R^\ld_i  = \bc{

[\ld_0+\sum\limits_{1\le j <i}\ld_j +1, \ld_0+\sum\limits_{1\le j \le i}\ld_j]&\tif 0<i \leq n;
\\
[-\ld_0, \ld_0]&\tif i= 0;
\\
-R^\ld_{-i} &\tif -n\leq i < 0.
}
\endeq

For any subset $X \subset [-d,d]$, let $\textup{Stab}(X)$ be the stabilizer of $X$ in $W$.
A parabolic subgroup of $W$ must be of the form
\eq\label{eq:W_ld}
W_\ld = \bigcap_{i=0}^n \textup{Stab}(R^\ld_i),\quad \mbox{for some $\ld \in \Ld_{n,d}$.}
\endeq
Precisely, $W_\ld$ is the parabolic subgroup of $W$ generated by
$S- \{s_{\ld_0}, s_{\ld_0+\ld_1}, \ldots, s_{d-\ld_n}\}$.
Denote the set of shortest right coset representatives for $W_\ld\setminus W$ by
\bA{\label{def:Dld}
\D_\ld &= \{ w\in W ~|~ \ell(wg) = \ell(w) + \ell(g) \textup{~for all~} w\in W_\ld\}
\\
&=\{w \in W ~|~ w\inv \textup{~is order-preserving on all~}R_i^\ld\}.
}

Denote the set of  minimal length double coset representatives
for $W_\ld \backslash W /W_\mu$ by
\eq  \label{def:Dldmu}
\D_{\ld\mu} = \D_\ld \cap \D_\mu^{-1}.
\endeq

In the following we collect some standard results for Coxeter groups from \cite[Proposition 4.16, Lemma ~4.17 and Theorem~ 4.18]{DDPW08}.
\begin{lem}\label{lem:doublecoset}
Let $\ld,\mu \in \Ld_{n,d}$ and $g \in \D_{\ld\mu}$.
\enua
\item There exists $\delta \in \Ld_{n',d}$ for some $n'$ such that
$
W_{\delta} = g^{-1} W_\ld g \cap W_\mu.
$
\item The map $W_\ld \times (\D_\delta \cap W_\mu) \rw W_\ld g W_\mu$ sending $(x,y)$ to $xgy$ is a bijection;
moreover, we have $\ell(xgy) = \ell(x) + \ell(g) + \ell(y)$.
\item The map $W_\delta \times (\D_\delta\cap W_\mu) \rw W_\mu$ sending $(x,y)$ to $xy$ is a bijection;
moreover, we have $\ell(x) + \ell(y) = \ell(xy)$.
\endenua
\end{lem}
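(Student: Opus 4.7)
The plan is to establish the three parts in sequence, working in the permutation realization of $W$ from \eqref{def:W} with parabolic subgroups described via \eqref{eq:W_ld} as simultaneous stabilizers of symmetric interval partitions of $[-d, d]$.

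For part (a), the key observation is that $g \in \D_\mu\inv$ makes $g$ order-preserving on each $R^\mu_j$ (by the characterization in \eqref{def:Dld}), so for all $i, j$ the set $R^\mu_j \cap g\inv(R^\ld_i) = \{x \in R^\mu_j \mid g(x) \in R^\ld_i\}$ is an interval in $R^\mu_j$. Since $g$ commutes with negation, this common refinement of $\{R^\mu_j\}$ and $\{g\inv R^\ld_i\}$ is symmetric under negation, and the zero-block $R^\mu_0 \cap g\inv(R^\ld_0)$ is a symmetric interval about $0$ of odd length. Listing the nonempty positive-side blocks in a fixed order yields a composition $\delta \in \Ld_{n', d}$ for some $n'$, and the parabolic $W_\delta$ --- as the simultaneous stabilizer of this refinement --- is readily identified with $g\inv W_\ld g \cap W_\mu$ by observing that $g\inv R^\ld_i = \bigsqcup_j (R^\mu_j \cap g\inv R^\ld_i)$ and similarly for $R^\mu_j$.

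For part (c), the plan is to appeal to the standard Coxeter-group fact (cf.\ \cite{DDPW08}) that whenever $W_\delta \subset W_\mu$, the set $\D_\delta \cap W_\mu$ is a complete set of shortest right coset representatives for $W_\delta \backslash W_\mu$, with length-additive factorization $w = xy$. For part (b), write an arbitrary element of $W_\ld g W_\mu$ as $x_0 g z$ with $x_0 \in W_\ld$, $z \in W_\mu$, and apply (c) to factor $z = x_1 y$ with $x_1 \in W_\delta$, $y \in \D_\delta \cap W_\mu$. Since $g W_\delta g\inv \subset W_\ld$ by (a), there exists $x_2 \in W_\ld$ with $g x_1 = x_2 g$, giving $x_0 g z = (x_0 x_2) g y$ with $x := x_0 x_2 \in W_\ld$. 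Length additivity $\ell(xgy) = \ell(x) + \ell(g) + \ell(y)$ then follows by combining $\ell(xg) = \ell(x) + \ell(g)$ (from $g \in \D_\ld$) and $\ell(gy) = \ell(g) + \ell(y)$ (from $g \in \D_\mu\inv$ and $y \in W_\mu$) with the standard no-cancellation argument enabled by the minimality of $g$ in its double coset. Uniqueness of $(x, y)$ is inherited from the uniqueness in (c).

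The main obstacle I expect is verifying the symmetry claim in part (a): in type A the intersection of a conjugate parabolic with another parabolic is automatically parabolic, but in type B/C one must confirm that the resulting composition lies in $\Ld_{n', d}$, i.e., satisfies the symmetric conventions of \eqref{def:Ld} including the odd-zero-block requirement. This is where $g(0) = 0$ together with the negation equivariance of $g$ play an essential role: they guarantee both the symmetry of the refinement and the required odd parity of the zero-block size, so that the composition $\delta$ genuinely belongs to $\Ld_{n',d}$. Once (a) is secured, parts (b) and (c) reduce to standard Coxeter-theoretic manipulations for which \cite{DDPW08} provides direct references (Proposition~4.16, Lemma~4.17, and Theorem~4.18, as cited in the excerpt).
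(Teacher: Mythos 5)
Your proposal is correct. Note that the paper itself offers no proof of this lemma: it simply quotes it as a collection of standard results from \cite[Proposition 4.16, Lemma 4.17, Theorem 4.18]{DDPW08}. Your treatment of parts (b) and (c) ultimately defers to the same references, which is fine, but you add genuine value in part (a): the only point that is not literally off-the-shelf in type B/C is that $g^{-1}W_\ld g\cap W_\mu$ is again a \emph{standard} parabolic of the symmetric form prescribed by $\Ld_{n',d}$, and your argument via the common refinement of the interval partitions $\{R^\mu_j\}$ and $\{g^{-1}R^\ld_i\}$ --- using that $g$ is order-preserving on each $R^\mu_j$ because $g\in\D_\mu^{-1}$, that $g$ commutes with negation, and that $g(0)=0$ forces the zero block to be a symmetric interval of odd length --- is exactly the right verification and is consistent with the explicit $\delta(A)$ the paper later writes down in \eqref{delta1}. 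The one step you gloss over is upgrading the two pairwise additivities $\ell(xg)=\ell(x)+\ell(g)$ and $\ell(gy)=\ell(g)+\ell(y)$ to the threefold $\ell(xgy)=\ell(x)+\ell(g)+\ell(y)$; the clean route is the standard fact that $gy\in\D_\ld$ for $y\in\D_\delta\cap W_\mu$, which is precisely what the cited results of \cite{DDPW08} supply, so this is not a gap.
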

An essential step in deriving the multiplication formula is to understand the set $\D_\delta \cap W_\mu$, which we will see in Section \ref{sec:DW}.
\subsection{Set-valued matrices}
Let
\eq\label{def:Tt}
\Tt_{N,D} := \left\{ (a_{ij})_{-n \le i,j \le n} \in \textup{Mat}_{N\times N}(\NN)~\middle|~ \sum_{ij} a_{ij} = D\right\},
\quad
\Tt_N = \bigcup_{D\in2\NN+1} \Tt_{N,D}.
\endeq
Note that the columns/rows of such a matrix are indexed by $[-n, n]$ instead of $[1, N]$.
Let
\eq\label{def:Xi}
\Xi_{n,d} := \left\{ (a_{ij})\in \Tt_{N,D}~\middle|~ \ba{{l}a_{00} \in 2\ZZ+1,\\ a_{ij} = a_{-i,-j}\quad\textup{for all}\quad i,j}\right\},
\quad
\Xi_n = \bigcup_{d\in\NN} \Xi_{n,d}.
\endeq

For $A=(a_{ij}) \in \Xi_{n,d}$ we define a matrix $A^\cP = (A^\cP_{ij})$ to be the unique set-valued matrix satisfying:
\enu
\item[(P0)] The sets $(A^\cP_{ij})_{ij}$ partition $[-d,d]$;
\item[(P1)] $|A^\cP_{ij}| = a_{ij}$ for all $i,j$;
\item[(P2)] Every element in $A^\cP_{ij}$ is smaller than any element in $A^\cP_{xy}$ if $(i,j) < (x,y)$ in the lexicographical order (i.e., $(i,j) < (x,y)$ if and only if $i<x$ or $(i=x, j < y)$).
\endenu
In words, the set-valued matrix $A^\cP$ is obtained by filling integers from $-d$ to $d$ into the entries $A^\cP_{ij}$ row-by-row, top-to-bottom.
For $T \in \Tt_N$, we define its row sum vector $\ro(T)= (\ro(T)_k)_{k=-n}^n$
and column sum vector $\co(T)= (\co(T)_k)_{k=-n}^n$
by
\eq\label{def:ro}
\ro(T)_k = \sum\limits_{-n\le j \le n} t_{kj}
\quad\textup{and}\quad
\co(T)_k = \sum\limits_{-n\le i \le n} t_{ik}.
\endeq
\begin{lem}\label{lem:kappa}
The following map is bijective:
\eq
\kappa: \bigsqcup_{\ld,\mu\in\Ld_{n,d}} \{\ld\} \times \D_{\ld\mu} \times \{\mu\} \to \Xi_{n,d},
\quad
\kappa(\ld,g,\mu) = (|R_i^\ld \cap g R_j^\mu|)_{ij}.
\endeq
Moreover, the inverse is given by $\kappa\inv(A) = (\ro(A), g_A, \co(A))$, where $g_A$ is the permutation sending $k$ to the $k$-th number in the column-reading of $A^\cP$ (see Example \ref{ex:kappa} below).
\end{lem}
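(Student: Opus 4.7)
The plan is to establish well-definedness of $\kappa$ and construct its inverse explicitly. The key combinatorial fact driving everything is the antisymmetry $A^\cP_{-i,-j} = -A^\cP_{ij}$ enjoyed by $A^\cP$ whenever $A \in \Xi_{n,d}$, which in turn comes from the symmetry $a_{ij} = a_{-i,-j}$ together with the row-by-row filling rule (P0)--(P2).

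First I would check that $\kappa(\ld, g, \mu) \in \Xi_{n,d}$ for any admissible triple $(\ld, g, \mu)$. The symmetry $a_{ij} = a_{-i,-j}$ is immediate from $R^\ld_{-i} = -R^\ld_i$ and $gR^\mu_{-j} = -gR^\mu_j$ (both consequences of $g(-k) = -g(k)$), together with the fact that negation is a bijection on $[-d,d]$. The diagonal entry $a_{00}$ counts the elements of $R^\ld_0 \cap gR^\mu_0$, which is a negation-stable subset of $[-d,d]$ containing $0$ (since $g(0)=0$), hence of odd cardinality. Finally, the row sums of $\kappa(\ld, g, \mu)$ equal $\ld$ and the column sums equal $\mu$, because $\bigsqcup_j gR^\mu_j = [-d,d] = \bigsqcup_i R^\ld_i$.

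For the inverse, given $A \in \Xi_{n,d}$ I set $\ld := \ro(A)$ and $\mu := \co(A)$; these lie in $\Ld_{n,d}$ by the symmetry of $A$ and the oddness of $a_{00}$. From (P0)--(P2), row $i$ of $A^\cP$ coincides with $R^\ld_i$, and the column-reading of $A^\cP$ visits column $j$ while passing through the position-interval corresponding to $R^\mu_j$. To prove $g_A \in W$, I would verify the antisymmetry $A^\cP_{-i,-j} = -A^\cP_{ij}$ by direct bookkeeping with $\sum_{j'} a_{ij'} = \ro(A)_i$ and the symmetry $a_{ij} = a_{-i,-j}$; this antisymmetry then forces the column-reading sequence to be antisymmetric about its middle position, giving $g_A(-k) = -g_A(k)$ and $g_A(0) = 0$. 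Next, $g_A \in \D_{\ld\mu}$ follows from order-preservation properties of the column reading: within each row, the reading visits boxes $A^\cP_{i,-n}, A^\cP_{i,-n+1}, \ldots$ in increasing column order and each box in increasing numerical order, so $g_A\inv$ is order-preserving on $R^\ld_i$; within each column, the reading visits rows $-n, -n+1, \ldots$ top-to-bottom, so $g_A$ is order-preserving on $R^\mu_j$. By construction $R^\ld_i \cap g_A R^\mu_j = A^\cP_{ij}$, whence $\kappa(\ld, g_A, \mu) = A$.

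For injectivity, suppose $\kappa(\ld, g, \mu) = A$. Row and column sum computations force $\ld = \ro(A)$ and $\mu = \co(A)$. Since $g \in \D_\mu\inv$, $g$ is order-preserving on each $R^\mu_j$; together with the prescribed intersection sizes $|R^\ld_i \cap gR^\mu_j| = a_{ij}$, this forces $g$ to send $R^\mu_j$ in order into the successive blocks indexed by $i = -n, \ldots, n$. Since $g\inv$ is order-preserving on $R^\ld_i$, the specific elements of $R^\ld_i$ landing inside $gR^\mu_j$ are pinned down to be exactly $A^\cP_{ij}$, so $g = g_A$. The main obstacle is the antisymmetry check $A^\cP_{-i,-j} = -A^\cP_{ij}$ and its translation into $g_A \in W$; once this arithmetic is dispatched, the rest of the argument is essentially a mechanical unpacking of the order-preservation conditions defining $\D_{\ld\mu}$.
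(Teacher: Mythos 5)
Your proposal is correct. For surjectivity/well-definedness you and the paper do essentially the same thing --- the paper simply asserts that $\kappa(\ro(A), g_A, \co(A)) = A$ ``by a direct calculation,'' and your verification of the antisymmetry $A^\cP_{-i,-j} = -A^\cP_{ij}$ and of the order-preservation properties of the column-reading is exactly the content of that calculation spelled out. Where you genuinely diverge is injectivity. The paper's argument is coset-theoretic: if $\kappa(\ld,g,\mu) = \kappa(\ld',g',\mu')$ then the row/column sums force $\ld=\ld'$, $\mu=\mu'$, and since the intersection matrix $(|R^\ld_i \cap gR^\mu_j|)$ determines the double coset $W_\ld g W_\mu$ (because $W_\ld$, $W_\mu$ stabilize the blocks $R^\ld_i$, $R^\mu_j$ by \eqref{eq:W_ld}), one gets $g = w_{(\ld)} g' w_{(\mu)}$, and minimality of both representatives in $W_\ld \backslash W / W_\mu$ gives $g=g'$. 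You instead pin $g$ down element-by-element: order-preservation of $g$ on each $R^\mu_j$ distributes $R^\mu_j$ in order across the blocks $R^\ld_{-n},\dots,R^\ld_n$ with multiplicities $a_{ij}$, and order-preservation of $g\inv$ on each $R^\ld_i$ identifies $R^\ld_i \cap gR^\mu_j$ with $A^\cP_{ij}$, forcing $g = g_A$. Both are valid; the paper's route is shorter and reuses standard double-coset machinery (Lemma~\ref{lem:doublecoset}), while yours is self-contained, avoids invoking minimality of representatives, and as a by-product re-derives the explicit description of $g_A$ rather than treating it only in the surjectivity step.
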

\proof
The surjectivity follows from $\kappa(\ro(A), g_A, \co(A))=A\ (\forall A\in\Xi_{n,d})$ by a direct calculation.

For injectivity, we assume $\kappa(\ld,g,\mu) = A = \kappa(\ld',g',\mu')$.
Then $\ld =\ld' = \ro(A)$ and $\mu = \mu' = \co(A)$ and hence $g,g'\in\D_{\ld\mu}$. It follows from $|R^{\ld}_i \cap g R^{\mu}_j|=|R^{\ld}_i \cap g' R^{\mu}_j|\ (\forall i,j\in[-n,n])$ that $g = w_{(\ld)} g' w_{(\mu)}$ for some $w_{(\ld)} \in W_{\ld}, w_{(\mu)} \in W_{\mu}$.
Therefore $g = g'$ since they are both minimal double coset representatives in $W_\ld\backslash W / W_\mu$.
\endproof
Thanks to Lemma~\ref{lem:kappa}, we define length functions $\ell, \ell_\fc, \ell_\fa$ on $\Xi_{n,d}$ by
\eq\label{def:l(A)}
\ell(A) = \ell(g),
\quad
\ell_\fc(A) = \ell_\fc(g),
\quad
\ell_\fa(A) = \ell_\fa(g)
\quad
(\textup{for }A = \kappa(\ld,g,\mu)).
\endeq
We define index subsets of type A/C by the following:
\eq\label{def:Ia}
I_\fa = (\{0\}\times[1,n]) \sqcup ([1,n] \times [-n,n]),
\quad
I_\fc = I_\fa \sqcup \{(0,0)\}.
\endeq
For $(i,j) \in I_\fc$, we set
\eq\label{def:anat}
a^\natural_{ij} = \bc{\frac{1}{2}(a_{ij}-1) &\tif (i,j) = (0,0);\\ a_{ij}&\otw.}
\endeq
There is an alternative length formula in terms of products of matrix entries as below.

\begin{lem}\label{lem:l(A)}
Recall $a_{ij}^{\natural}$ from \eqref{def:anat}.
The (truncated) length functions of $A$ are given by
\bA{\label{eq:l(A)}
&\ell(A) = \dfrac{1}{2}
\bigg(
\sum\limits_{(i,j) \in I_\fc }
\Bp{
\sum\limits_{\substack{x < i\\ y > j}}
+
\sum\limits_{\substack{x > i\\ y < j}}
}
a^\natural_{ij} a_{xy}
\bigg),
\quad
\ell_\fc(A) =
\dfrac{1}{2}
\Bp{
\sum\limits_{\substack{0 < x\\ 0 > y}}
+
\sum\limits_{\substack{0 > x\\ 0 < y}}
}
a_{xy},
\\
&
\ell_\fa(A)
=
\dfrac{1}{2}
\bigg(
\sum\limits_{(i,j) \in I_\fc }
\Bp{
\sum\limits_{\substack{x < i\\ y > j}}
+
\sum\limits_{\substack{x > i\\ y < j}}
}
a^{\natural\natural}_{ij} a_{xy}
\bigg),
}
where $a_{00}^{\natural\natural} =a_{00}^{\natural}-1= \frac{1}{2}(a_{00}-3)$ and $a_{ij}^{\natural\natural} =a_{ij}$ if $(i,j) \in I_\fa$.
\end{lem}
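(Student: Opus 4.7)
The plan is to exploit the explicit cell-partition description of $g_A$ from Lemma~\ref{lem:kappa}: the set-valued matrix $A^\cP$ partitions $[-d,d]$ into cells $A^\cP_{ij}$ of sizes $a_{ij}$, with row $i$ equal to the consecutive interval $R^\ld_i$ (where $\ld=\ro(A)$) and the restriction $g_A|_{R^\mu_j}$ (for $\mu=\co(A)$) given by reading column $j$ top-to-bottom. The key consequence is that $g_A$ restricted to each block $R^\mu_j$ is order-preserving onto column $j$, sending smaller positions to cells with smaller row index; in particular no inversions of $g_A$ occur within a single block.

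For $\ell_\fc(A)$, I will compute directly from $\ell_\fc(g_A)=\tneg(g_A)=\#\{k\in[1,d]:g_A(k)<0\}$. The block $R^\mu_0\cap[1,d]=[1,\mu_0]$ maps entirely into the non-negative portion of column $0$, by order-preservation combined with the fact that the central cell occupies $A^\cP_{00}=[-a^\natural_{00},a^\natural_{00}]$; hence this block contributes nothing. For each $j>0$ the whole block $R^\mu_j\subseteq[1,d]$ contributes $\sum_{i<0}a_{i,j}$ negative images. Summing over $j>0$ and symmetrizing via $a_{ij}=a_{-i,-j}$ produces the stated formula.

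For $\ell(A)$, I will count ordered inversions of $g_A$ cell by cell. An inversion $(p,q)$ with $p<q$ and $g_A(p)>g_A(q)$ must arise from $p\in R^\mu_{j_1}$, $q\in R^\mu_{j_2}$ with $j_1<j_2$ and target cells satisfying $i_1>i_2$; hence the total ordered-inversion count is $|I|:=\sum_{x<i,\,y>j}a_{ij}a_{xy}$. Using the inversion-preserving involution $(p,q)\mapsto(-q,-p)$, the count $N$ appearing in \eqref{eq:l(g)} simplifies to $N=|I|-\#\{(p,q)\in I:q=0\}=|I|-\ell_\fc(A)$, yielding $\ell(A)=\tfrac{1}{2}(|I|-\ell_\fc(A))$. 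To match the symmetrized form in \eqref{eq:l(A)}, I observe that the involution $\phi\colon(i,j)\mapsto(-i,-j)$ satisfies $I_\fc\cup\phi(I_\fc)=[-n,n]^2$ with overlap exactly $\{(0,0)\}$; expanding the target expression via $a_{xy}=a_{-x,-y}$ together with $a^\natural_{00}=(a_{00}-1)/2$ causes it to collapse to $\tfrac{1}{2}(|I|-\ell_\fc(A))$, as required.

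Finally, $\ell_\fa(A)=\ell(A)-\ell_\fc(A)$ follows immediately, and the substitution $a^\natural_{00}\to a^{\natural\natural}_{00}=a^\natural_{00}-1$ in \eqref{eq:l(A)} precisely removes the $(0,0)$-diagonal's contribution $\tfrac{1}{2}(\sum_{x<0,\,y>0}+\sum_{x>0,\,y<0})a_{xy}=\ell_\fc(A)$. The main technical obstacle is the careful bookkeeping for the ``symmetry axis'' cells $A^\cP_{0,0}$, $A^\cP_{0,j}$, $A^\cP_{i,0}$ straddling the boundary between positive and negative coordinates; the $a^\natural/a^{\natural\natural}$ modifications are engineered precisely to absorb these edge contributions under $\phi$.
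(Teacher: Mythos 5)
Your proof is correct and follows exactly the route the paper intends: the paper's own "proof" is the one-line assertion that the matrix formulas are paraphrases of Lemma~\ref{lem:l(g)} under the bijection $\kappa$, and your argument simply supplies the omitted bookkeeping (blockwise order-preservation of $g_A$, the involution $(p,q)\mapsto(-q,-p)$, and the $a^\natural$/$a^{\natural\natural}$ corrections at the central cell), all of which check out.
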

\proof
These three formulas are paraphrases of those in Lemma~\ref{lem:l(g)}.
\endproof

Let $A=\kappa(\ld,g,\mu)\in\Xi_{n,d}$. We define a signed weak composition as below:
\begin{equation}\label{delta1}
\delta(A)=(a_{nn},\ldots,\ldots,\ldots,a_{00}^{\natural},a_{10},\ldots,a_{n0},a_{-n,1},a_{-n+1,1},\ldots,a_{n1},\ldots,\ldots,a_{-n,n},a_{-n+1,n},\ldots,a_{nn}).
\end{equation}

A direct computation shows that $\delta(A)$ is indeed a weak composition $\delta$ in Lemma \ref{lem:doublecoset}(a).

\exa\label{ex:kappa}
Let $A =  \bm{1&3&1\\1&1&1\\1&3&1}$. We have
\[
\ro(A) = (5,3,5),
\quad
\co(A) = (3,7,3),
\quad
A^\cP  = \bm{\{-6\}&\{-5,-4,-3\}&\{-2\}\\ \{-1\}&\{0\}&\{1\}\\ \{2\}&\{3,4,5\}&\{6\}}.
\]
Column-reading of $A^\cP$ gives us a sequence $-6,-1,2,-5,-4,-3,0,3,4,5,-2,1,6$, and hence $g_A$ is the permutation
\[
g_A = |3,4,5,-2,1,6|_\fc = s_1s_0s_2s_1s_3s_2s_4s_3.
\]
Indeed, we have
\bAn{
\ell(A) &= \frac{1}{2}\Big(
a^\natural_{00} (1+1) + a_{01}(0+4) + a_{1,-1}(6+0) + a_{10}(2+0) + a_{11}(0+0)
\Big)
\\
&= \frac{1}{2}(0+4+6+6+0)= 8,
\\
\ell_\fc(A)&= \frac{1}{2}( a_{1,-1} + a_{-1,1}) = 1,
\\
\ell_\fa(A)&= \frac{1}{2}\Big(
a^{\natural\natural}_{00} (2) + a_{01}(4) + a_{1,-1}(6) + a_{10}(2) + a_{11}(0)
\Big) = 7.
}
Furthermore, $\delta(A)=(1,1,1,3,0,3,1,1,1)$.
\endexa
\subsection{Quantum combinatorics}
We denote the quantum $v$-number by
\eq
[a] =  \ds\frac{v^{2a}-1}{v^2-1}
\quad
(a\in\ZZ).
\endeq
We denote the type-A quantum $v$-factorials by, for
$t \in \NN$ $, A = (a_{ij}) \in \Tt_N$,
\eqa\label{def:factorial}
\quad
[t]! =  \prod_{k=1}^t [k],
\quad
[A]! = \prod_{-n\le i, j \le n} [a_{ij}]!
.
\endeqa
The type-B/C analogues are defined by, for
$t\in\NN, A = (a_{ij}), B = (b_{ij})\in \Xi_n$,
\eq\label{def:c-factorial}
[2t]_\fc = [t](u^2v^{2(t-1)}+1),
\quad
[t]^!_\fc =  \prod_{k=1}^t [2k]_\fc,
\quad
[A]^!_\fc =  [a^\natural_{00}]^!_\fc  \prod\limits_{(i,j)\in I_\fa} [a_{ij}]!.
\endeq
In particular, the specialization of $[2t]_\fc$ at  $u=v$ is $[t](1+v^{2t}) = [2t]$.
Furthermore, we set, for any $a\in\mathbb{Z}$ and $b\in\mathbb{N}$,
$$\left[\begin{array}{cc}a\\b\end{array}\right]=\prod_{i=1}^b\frac{v^{2(a-i+1)}-1}{v^{2i}-1}.$$

\lem\label{lem:[A]!}
Let $A = \kappa(\mu,g,\nu)$, and let $\delta = \delta(A)$.
Then $\sum\limits_{w \in W_{\delta}} u^{2\ell_\fc(w)}v^{2\ell_\fa(w)} = [A]_\fc^!$.
\endlem
\proof
Let $W^\fc_d$ be the Weyl group of type C$_d$.

Recall $\delta$ in \eqref{delta1}. We have
$W_\delta \simeq W^\fc_{a^\natural_{00}}\times \prod_{(i,j)\in I_\fa } \fS_{a_{ij}}$.
For each $w\in\fS_{a_{ij}}$ we have $\ell_\fc(w) = 0, \ell_\fa(w) = \ell(w)$, and hence
\eq
\sum_{w\in \fS_{a_{ij}}} u^{2\ell_\fc(w)}v^{2\ell_\fa(w)}
= \sum_{w \in \fS_{a_{ij}}} v^{2\ell(w)} = [a_{ij}]^!.
\endeq
Thus
\[
\sum_{w \in W_{\delta}} u^{2\ell_\fc(w)}v^{2\ell_\fa(w)}
=\Bp{\sum_{w\in W^\fc_{a^\natural_{00}}} u^{2\ell_\fc(w)}v^{2\ell_\fa(w)}}\prod\limits_{(i,j)\in I_\fa} [a_{ij}]!.
\]

It suffices to show that
\eq\label{eq:[d]^!_c}
\sum_{w\in W^\fc_d} u^{2\ell_\fc(w)}v^{2\ell_\fa(w)} = [d]^!_\fc.
\endeq
Let $\ld = (0, \ldots, 0, 1, 2d-1,1,0,\ldots,0) \in \Ld_{n,d}$.
We have $W_\ld \simeq W^\fc_{d-1}$, and hence
\eq\label{}
\sum_{w\in W^\fc_d} u^{2\ell_\fc(w)}v^{2\ell_\fa(w)} =
\Big(
	\sum_{w\in W^\fc_{d-1}} u^{2\ell_\fc(w)}v^{2\ell_\fa(w)}
\Big)
\Big(
\sum_{w\in \D_\ld} u^{2\ell_\fc(w)}v^{2\ell_\fa(w)}
\Big).
\endeq
By \eqref{def:Dld}, $g \in \D_\ld$ if and only if $g\inv$ is order-preserving on $[-d+1, d-1]$. Hence,
\eq
\D_\ld = \left\{
|i_1,\cdots,i_{d-1},\pm j|_\fc\inv
~\middle|~
\ba{{c} [1,d] = \{j\} \sqcup \{i_1, \ldots i_{d-1}\},
\\ i_1 < \ldots < i_{d-1}}
\right\}.
\endeq
Consequently, we have
\eq
\sum_{w\in \D_\ld} u^{2\ell_\fc(w)}v^{2\ell_\fa(w)}
= [d](1+u^2v^{2(d-1)}) = [2d]_\fc.
\endeq
Therefore, \eqref{eq:[d]^!_c} follows from a downward iteration. The Lemma is proved.
\endproof

\section{Schur algebras}\label{sec:schur}
\subsection{Schur algebras}
The Hecke algebra $\HH = \HH(W)$  
over $\bbA$ is an algebra with a basis $\{T_g ~|~ g\in W\}$ satisfying
\bA{
\label{def:Hecke}
&T_w T_{w'} = T_{ww'}
\quad\tif
\ell(ww') = \ell(w) + \ell(w'),
\\
\label{def:T0}&(T_{s_0}+1) (T_{s_0} - u^2) = 0,
\\
\label{def:Ts}&(T_s+1) (T_s - v^2) = 0
\quad\tfor
s \in S-\{s_0\}.
}
For any subset $X \subset W$ and for $\ld\in\Ld_{n,d}$ \eqref{def:Ld}, set
\eq  \label{eq:x}
T_X = \sum_{w\in X} T_w,
\quad
T_{\ld\mu}^g = T_{(W_\ld)g(W_\mu)},
\quad
x_\ld = T_{\ld\ld}^\id =T_{W_\ld},
\endeq
where $\id$ is the identity element of $W$.
\lem\label{lem:wx}
If $w \in W_\ld$, then $T_w x_\ld = u^{2\ell_\fc(w)}v^{2\ell_\fa(w)} x_\ld = x_\ld T_w$.
\endlem
\proof
This reduces to the case when $w = s \in S$. It then follows from the Hecke relation \eqref{def:Hecke}.
\endproof
For $\ld,\mu\in\Ld_{n,d}$ and $g\in \D_{\ld\mu}$, we consider a right $\HH$-linear map
$
\phi_{\ld\mu}^g \in \Hom_\HH(x_\mu \HH, \HH)$,
sending $x_\mu$ to $T^g_{\ld\mu}.$
Thanks to Lemma~ \ref{lem:doublecoset}(b),
we have $T^g_{\ld\mu} = x_\ld T_g T_{\D_\delta \cap W_\mu}$ for some $\delta\in \Ld_{n',d}$,
and hence we have constructed a right $\HH$-linear map
\eq  \label{phi}
\phi_{\ld\mu}^g \in \Hom_\HH(x_\mu\HH, x_\ld\HH),
\qquad
T_{\mu\mu}^\id \mapsto T^g_{\ld\mu}.
\endeq
The {\em Schur algebra} $\SSj$ is defined as the following $\bbA$-algebra
\eq
\label{def:Sj}
\SSj = \textup{End}_{\HH}
\Bp{
\mathop{\oplus}_{\ld\in\Ld_{n,d}} x_\ld \HH
}
= \bigoplus_{\ld,\mu \in \Ld_{n,d}} \Hom_{\HH} (x_\mu \HH, x_\ld \HH)
.
\endeq
Thanks to Lemma \ref{lem:kappa}, for $A = \kappa(\ld,g,\mu)$ we define
\eq\label{def:eA}
e_A = \phi_{\ld\mu}^g.
\endeq
A formal argument as in \cite{Du92,G97} is applicable to our setting and gives us the following:
\begin{lem}
The set $\{ e_A ~|~ A\in\Xi_{n,d} \}$
forms an $\bbA$-basis of $\SSj$.
\end{lem}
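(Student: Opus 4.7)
By the direct sum decomposition $\SSj = \bigoplus_{\ld,\mu} \Hom_\HH(x_\mu\HH, x_\ld\HH)$, it suffices to fix $\ld,\mu \in \Ld_{n,d}$ and show that $\{\phi^g_{\ld\mu} \mid g \in \D_{\ld\mu}\}$ is an $\bbA$-basis of $\Hom_\HH(x_\mu\HH, x_\ld\HH)$. The strategy, following the equal-parameter blueprint of \cite{Du92,G97}, is to identify this Hom space with a ``twisted invariant'' subspace of $x_\ld\HH$ and exhibit the $T^g_{\ld\mu}$ as its canonical basis.

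First, I would set up the identification. Since $x_\mu\HH$ is cyclically generated by $x_\mu$, any right $\HH$-linear $\phi$ is determined by $y := \phi(x_\mu) \in x_\ld\HH$. Lemma~\ref{lem:wx} (applied on the right) forces
$$y T_w = u^{2\ell_\fc(w)}v^{2\ell_\fa(w)}\, y \quad \textup{for all } w \in W_\mu. \tag{$\ast$}$$
Conversely, any $y \in x_\ld\HH$ satisfying $(\ast)$ defines a well-defined map, because the annihilator of $x_\mu$ under left-multiplication on $\HH$ is the left ideal generated by $\{T_w - u^{2\ell_\fc(w)}v^{2\ell_\fa(w)} \mid w \in W_\mu\}$ — a fact that follows from Lemma~\ref{lem:doublecoset}(c) applied to the decomposition of $W$ by right $W_\mu$-cosets. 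Thus $\Hom_\HH(x_\mu\HH, x_\ld\HH) \cong M$, where $M \subset x_\ld\HH$ is the submodule cut out by $(\ast)$.

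Next, I would verify that each $T^g_{\ld\mu}$ lies in $M$ and that these elements are linearly independent. Membership in $x_\ld\HH$ is immediate from the factorization $T^g_{\ld\mu} = x_\ld T_g T_{\D_\delta\cap W_\mu}$ supplied by Lemma~\ref{lem:doublecoset}(b). To check $(\ast)$, it suffices by induction on $\ell(w)$ to treat a single generator $s \in S \cap W_\mu$: pair each $w \in W_\ld g W_\mu$ with $ws$ (which also lies in $W_\ld g W_\mu$), and use the quadratic Hecke relations \eqref{def:T0}, \eqref{def:Ts} — giving a scalar $u^2$ when $s=s_0$ and $v^2$ otherwise, consistent with the truncated length functions. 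Linear independence is immediate: the double cosets $W_\ld g W_\mu$ for distinct $g \in \D_{\ld\mu}$ are pairwise disjoint, so the $T^g_{\ld\mu}$ have pairwise disjoint supports in the $T_w$-basis of $\HH$.

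Finally, I would prove spanning. Let $y = \sum_w c_w T_w \in M$. The fact that $y \in x_\ld\HH$ combined with Lemma~\ref{lem:wx} forces the support of $y$ to be $W_\ld$-stable on the left and, by $(\ast)$ together with the same Hecke-relation pairing argument, $W_\mu$-stable on the right in a weighted sense: within each double coset $W_\ld g W_\mu$, comparing coefficients of $T_w$ and $T_{ws}$ (for $s$ ranging over $S \cap W_\mu$ and $w$ over coset representatives of length increasing from $g$) shows inductively that all $c_w$ inside the coset are determined by $c_g$, with precisely the ratios that arise in $T^g_{\ld\mu}$. Hence $y = \sum_{g \in \D_{\ld\mu}} c_g T^g_{\ld\mu}$.

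The main obstacle is bookkeeping the scalars from the two distinct Hecke relations \eqref{def:T0}, \eqref{def:Ts} throughout the pairing arguments, since the proof now produces factors $u^{2\ell_\fc(\cdot)}v^{2\ell_\fa(\cdot)}$ rather than the single-parameter weights of the classical case; this is precisely where the weight-function setup of $\ell_\fc, \ell_\fa$ pays off and keeps the equal-parameter proof structurally intact.
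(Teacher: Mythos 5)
Your proposal is correct and is precisely the standard formal argument that the paper invokes by citing \cite{Du92,G97} without writing it out: one identifies $\Hom_\HH(x_\mu\HH,x_\ld\HH)$ with the twisted-invariant submodule of $x_\ld\HH$ cut out by $(\ast)$ (this is exactly the characterization \eqref{eq:HIJ2} the paper later records from \cite{CIK72}), and checks that the elements $T^g_{\ld\mu}$, which have pairwise disjoint supports and constant coefficients along each double coset, form an $\bbA$-basis of that submodule. The only nit is terminological: the kernel of $h\mapsto x_\mu h$ is the \emph{right} ideal generated by $\{T_s-u^{2\ell_\fc(s)}v^{2\ell_\fa(s)}\mid s\in S\cap W_\mu\}$ rather than a left ideal, but your well-definedness argument uses it in the correct (right-ideal) form, so nothing breaks.
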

For $T = (t_{ij})\in \Tt_N$, let $\diag(T) = (\delta_{ij} t_{ij}) \in \Tt_N$ and denote its centro-symmetrizer by
\eq\label{def:tt}
T^\tt= (t^\tt_{ij}),\quad\mbox{where}\quad t^\tt_{ij} = t_{ij} + t_{-i,-j}.
\endeq
We remark that $T^\tt \not\in \Xi_{n}$ since $t^\tt_{00}$ is even.
A matrix $B \in \Xi_{n,d}$ is called a {\em Chevalley matrix} if
\eq\label{def:Chev}
B-\diag(B) =  bE^\tt_{h,h+1},
\quad
(b\in\NN, -n\le h < n).
\endeq
An easy consequence of Lemma \ref{lem:l(A)} is that $g_B =\id$ if $B$ is Chevalley.
We assume from now on that $B$ is a Chevalley matrix, and we fix
$B = \kappa(\ld, \id, \mu)$, $A = \kappa(\mu, g, \nu)$.
Recall $[A]^!_\fc$ from \eqref{def:c-factorial}. We have the following identity.
\lem  \label{lem:xTx}
$x_\mu T_{g} x_\nu = [A]^!_\fc \, e_A(x_\nu).$
\endlem
\proof
Let $\delta = \delta(A)$. By Lemma~\ref{lem:doublecoset}(c), we have $x_\nu = x_\delta T_{\D_\delta \cap W_\nu}$, and hence
\eq
x_\mu T_g x_\nu = x_\mu T_g x_\delta T_{\D_\delta \cap W_\nu} = \sum_{w\in W_\delta} x_\mu T_g T_w T_{\D_\delta \cap W_\nu}.
\endeq
By Lemma~\ref{lem:doublecoset}(a), $w \in g\inv W_\mu g \cap W_\nu \subset W_\nu$ and hence $T_gT_w = T_{gw}$ since $g\in\D_{\mu\nu} \subset \D_\nu\inv$.
Moreover, we have $gw = w' g$ for some $w' \in W_\mu$.
Since $g\in\D_{\mu\nu} \subset \D_\mu$, we have
\begin{equation}\label{ell}
\ell(g)+\ell(w)=\ell(gw)=\ell(w'g) = \ell(w') + \ell(g)
\end{equation}
and therefore $\ell(w') = \ell(w)$. Moreover, note that $\ell_\fc$ is a well-defined weight function (cf. \cite{Lu03}) determined by $\ell(s_0)=1$ and $\ell(s_i)=0 \ (i\geq1)$. Counting the number of $s_0$ appeared in a reduced form of $gw=w'g$, we have $\ell_\fc(gw)=\ell_\fc(g)+\ell_\fc(w)$ and $\ell_\fc(w'g) = \ell_\fc(w') + \ell_\fc(g)$ by \eqref{ell}. Thus $\ell_\fc(w)=\ell_\fc(w')$ (and hence $\ell_\fa(w)=\ell_\fa(w')$).
Finally, we have
\eq
\sum_{w\in W_\delta} x_\mu T_{gw}
=
\sum_{w\in W_\delta} x_\mu T_{w'}T_g
=
\sum_{w\in W_\delta}  u^{2\ell_\fc(w)}v^{2\ell_\fa(w)} x_\mu T_g
=
[A]^!_\fc x_\mu T_g,
\endeq
where the second equality follows from Lemma~\ref{lem:wx}, while the third equality follows from Lemma~\ref{lem:[A]!}.
The rest follows by the definition  $e_A(x_\nu)=x_\mu T_g T_{\D_\delta \cap W_\nu}$.
\endproof
\subsection{Multiplication formulas $\D_\delta \cap W_\mu$}  \label{sec:DW}
\begin{lem}\label{lem:multAw}
Fix
$B = \kappa(\ld, \id, \mu)$, $A = \kappa(\mu, g, \nu)$ and let $\delta = \delta(B)$.
Let $y^w$ be the shortest double coset representative for $W_\ld wg W_\nu$, and set $A^w = \kappa(\ld,y^w,\nu)$. Then
\eq\label{eq:multAw}
e_B e_A
=   \sum_{w\in \D_\delta \cap W_\mu} \frac{[A^w]_\fc^!}{[A]_\fc^!}
(u^2)^{\ell_\fc(w) + \ell_\fc(g) - \ell_\fc(y^w)}
(v^2)^{\ell_\fa(w) + \ell_\fa(g) - \ell_\fa(y^w)}
e_{A^w}.
\endeq
\end{lem}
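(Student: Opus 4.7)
The plan is to evaluate both sides of \eqref{eq:multAw} on the generator $x_\nu$ and reduce everything to an identity inside $\HH$ by two applications of Lemma~\ref{lem:xTx}. First, writing $(e_B e_A)(x_\nu) = e_B(e_A(x_\nu))$, I would apply Lemma~\ref{lem:xTx} to rewrite $e_A(x_\nu) = \frac{1}{[A]^!_\fc} x_\mu T_g x_\nu$, and then use right $\HH$-linearity of $e_B$ together with $e_B(x_\mu) = \frac{1}{[B]^!_\fc} x_\ld x_\mu$ (Lemma~\ref{lem:xTx} applied to the Chevalley matrix $B$, where $g_B = \id$). This reduces the task to expanding
$$x_\ld x_\mu T_g x_\nu \in \HH$$
as an $\bbA$-linear combination of terms of the shape $[A^w]^!_\fc\, e_{A^w}(x_\nu)$, with a cancellation of the factor $[B]^!_\fc$ built in.

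Next, letting $\delta = \delta(B)$ (so $W_\delta = W_\ld \cap W_\mu$ by Lemma~\ref{lem:doublecoset}(a)), the length-additive factorization in Lemma~\ref{lem:doublecoset}(b) together with Lemma~\ref{lem:wx} and Lemma~\ref{lem:[A]!} produces
$$x_\ld x_\mu = [B]^!_\fc\, x_\ld\, T_{\D_\delta \cap W_\mu},$$
which absorbs the $[B]^!_\fc$ factor and leaves the sum $\frac{1}{[A]^!_\fc}\sum_{w\in\D_\delta\cap W_\mu} x_\ld T_w T_g x_\nu$. For each such $w$, because $w \in W_\mu$ and $g \in \D_\mu$ we have length-additivity $\ell(wg) = \ell(w) + \ell(g)$, so $T_wT_g = T_{wg}$. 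Now invoke Lemma~\ref{lem:doublecoset}(b) for the double coset $W_\ld (wg) W_\nu$: this produces the promised shortest representative $y^w$ and a length-additive factorization $wg = x^w y^w z^w$ with $x^w \in W_\ld$ and $z^w \in \D_{\delta^w} \cap W_\nu$, where $\delta^w = \delta(A^w)$. Hence $T_{wg} = T_{x^w}T_{y^w}T_{z^w}$, and sandwiching by $x_\ld,x_\nu$ while applying Lemma~\ref{lem:wx} on both outer factors yields
$$x_\ld T_w T_g x_\nu \;=\; u^{2(\ell_\fc(x^w)+\ell_\fc(z^w))}\, v^{2(\ell_\fa(x^w)+\ell_\fa(z^w))}\, x_\ld T_{y^w} x_\nu.$$
A final invocation of Lemma~\ref{lem:xTx} on $A^w = \kappa(\ld,y^w,\nu)$ converts $x_\ld T_{y^w} x_\nu$ into $[A^w]^!_\fc\, e_{A^w}(x_\nu)$, and collecting everything gives \eqref{eq:multAw}.

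The main technical point, and the only place where one has to be careful, is the additivity of the truncated exponents $\ell_\fc(x^w) + \ell_\fc(y^w) + \ell_\fc(z^w) = \ell_\fc(wg) = \ell_\fc(w) + \ell_\fc(g)$, which is what allows me to replace $\ell_\fc(x^w) + \ell_\fc(z^w)$ by $\ell_\fc(w) + \ell_\fc(g) - \ell_\fc(y^w)$ (and similarly for $\ell_\fa = \ell - \ell_\fc$). This follows because $\ell_\fc$ is the weight function determined by $\ell_\fc(s_0)=1,\ \ell_\fc(s_i)=0\ (i\geq1)$: any concatenation of reduced expressions for $x^w,y^w,z^w$ is already reduced for $wg$ by the $\ell$-additivity supplied by Lemma~\ref{lem:doublecoset}(b), so the count of $s_0$'s distributes across the three factors. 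This is precisely the observation that justified $T_wT_g = T_{wg}$ and $\ell_\fc(wg) = \ell_\fc(w) + \ell_\fc(g)$ in the proof of Lemma~\ref{lem:xTx}, and it is the only subtlety distinguishing the unequal-parameter computation from its equal-parameter analogue in \cite{BKLW18,FL3Wb}.
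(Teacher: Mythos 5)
Your proof is correct and follows essentially the same route as the paper's: evaluate on $x_\nu$, use Lemma~\ref{lem:xTx} to get $\frac{1}{[A]^!_\fc}x_\ld T_{\D_\delta\cap W_\mu}T_g x_\nu$, factor each $wg$ length-additively through its shortest double coset representative $y^w$, absorb the outer factors via Lemma~\ref{lem:wx}, and convert $x_\ld T_{y^w}x_\nu$ back with Lemma~\ref{lem:xTx}, with the weight-function property of $\ell_\fc$ handling the exponent bookkeeping. The only (harmless) cosmetic difference is that you recover $e_B(x_\mu)=x_\ld T_{\D_\delta\cap W_\mu}$ by applying Lemma~\ref{lem:xTx} to $B$ and unwinding $x_\ld x_\mu$, whereas the paper reads it off directly from the definition \eqref{phi}.
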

\proof
By Lemma \ref{lem:xTx} and \eqref{phi} (which implies $e_B(x_\mu)=x_\ld T_{\D_\delta \cap W_\mu}$) we see that
\eq\label{eq:mult1}
e_B e_A(x_\nu)
= e_B\Big(\frac{1}{[A]_\fc^!} x_\mu T_g x_\nu\Big)
= \frac{1}{[A]_\fc^!} e_B(x_\mu) T_g x_\nu
=  \frac{1}{[A]_\fc^!} x_\ld T_{\D_\delta \cap W_\mu} T_g x_\nu.
\endeq
Since $g \in \D_{\mu\nu} \subset \D_\mu$, so $T_w T_g = T_{wg}$ for all $w \in \D_\delta \cap W_\mu \subset W_\mu$.
For $w \in \D_\delta \cap W_\mu$,
there exists $w_\ld \in W_\ld, w_\nu \in W_\nu$ such that $wg = w_\ld y^w w_\nu$. Moreover, we have
\eq\label{eq:y^w}
\ell(wg) = \ell(w) + \ell(g) = \ell(w_\ld) +\ell(y^w) + \ell(w_\nu).
\endeq
Thus, we have
\eq\label{eq:xTx}
x_\ld T_{wg} x_\nu
= x_\ld T_{w_\ld} T_{y^w} T_{w_\nu} x_\nu
= (u^2)^{\ell_\fc(w_\ld) + \ell_\fc(w_\nu)}(v^2)^{\ell_\fa(w_\ld) + \ell_\fa(w_\nu)} x_\ld T_{y^w} x_\nu.
\endeq
Combining the \eqref{eq:mult1}, \eqref{eq:xTx} and applying Lemma \ref{lem:xTx} on $x_\ld T_{y^w} x_\nu$, we have
\eq
e_B e_A(x_\nu)
=  \frac{1}{[A]_\fc^!} \sum_{w\in \D_\delta \cap W_\mu} x_\ld T_{wg} x_\nu
= \sum_{w\in \D_\delta \cap W_\mu} \frac{[A^w]_\fc^!}{[A]_\fc^!}
(u^2)^{\ell_\fc(wg) - \ell_\fc(y^w)}
(v^2)^{\ell_\fa(wg) - \ell_\fa(y^w)} e_{A^w} (x_\nu).
\endeq

The lemma follows from \eqref{eq:y^w}.
\endproof

\begin{prop}\label{prop:multformula1}
Suppose that $A, B, C\in\Xi_{n,d}$ and $h\in[1,n]$.

\enu
\item[(1)] If $B-bE_{h,h-1}^\theta$ is diagonal, $\co(B)=\ro(A)$, then
\begin{equation}
e_B e_A=\sum_{t}v^{2\sum_{k<l}t_la_{h,k}}\prod_{l=-n}^{n}\left[\begin{array}{cc}a_{h,l}+t_l\\t_l\end{array}\right] e_{\widecheck{A}_{t,h}},
\end{equation} where $t=(t_i)_{-n\leq i\leq n}\in\NN^N$ with $\sum_{i=-n}^n t_i=b$ such that
$\bc{
t_i\leq a_{h-1,i} & \tif h>1;
\\
t_i+t_{-i}\leq a_{h-1,i} &\tif h=1,
}$
and $$
\widecheck{A}_{t,h}=A+\sum_{l=-n}^{n}t_lE_{h,l}^\theta-\sum_{l=-n}^{n}t_lE_{h-1,l}^\theta.$$

\item[(2)] Suppose $C-cE_{h-1,h}^\theta$ is diagonal and $\co(C)=\ro(A)$.
If $h\neq1$, then
\begin{equation}
e_C e_A=\sum_{t}v^{2\sum_{k>l}t_la_{h-1,k}}\prod_{l=-n}^{n}\left[\begin{array}{cc}a_{h-1,l}+t_l\\t_l\end{array}\right] e_{\widehat{A}_{t,h}},
\end{equation} where $t=(t_i)_{-n\leq i\leq n}\in\NN^N$ with $\sum_{i=-n}^n t_i=c$ such that $t_i\leq a_{h,i}$, and
\[
\widehat{A}_{t,h}=A-\sum_{l=-n}^{n}t_lE_{h,l}^\theta+\sum_{l=-n}^{n}t_lE_{h-1,l}^\theta.
\]
If $h=1$, then
\eq\label{ecea}
e_C e_A=
\sum_{t}
u^{2\sum_{l<0}t_l}
v^{2\sum_{k>l}
a_{0,k}t_l+2\sum_{l<k<-l}t_lt_k+\sum_{l<0}t_l(t_l-3)}
\frac{[a_{0,0}^\natural+t_0]_\fc^!}{[a_{0,0}^\natural]_\fc^![t_0]!}\prod_{l=1}^n\frac{[a_{0,l}+t_l+t_{-l}]!}{[a_{0,l}]![t_l]![t_{-l}]!}e_{\widehat{A}_{t,1}},
\endeq
where $t=(t_i)_{-n\leq i\leq n}\in\NN^N$ with $\sum_{i=-n}^n t_i=c$ such that $t_i\leq a_{1,i}$.
\endenu
\end{prop}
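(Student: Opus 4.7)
The approach starts from Lemma~\ref{lem:multAw}, which reduces the computation of $e_B e_A$ (respectively $e_C e_A$) to a sum over $w \in \D_\delta \cap W_\mu$, where $\delta = \delta(B)$ (resp.\ $\delta(C)$). For a Chevalley matrix with off-diagonal block $bE^\theta_{h,h-1}$, the parabolic $W_\delta$ is obtained from $W_\mu$ by splitting off a piece of size $b$ (together with its centro-symmetric copy) from the $(h-1)$-th row block, so elements of $\D_\delta \cap W_\mu$ are in bijection with choices of how to redistribute this block among the columns of row $h$ of $A$. I would parametrize these $w$'s by tuples $t=(t_l)_{-n\le l \le n}$ with $\sum_l t_l = b$, recording how many elements land in column $l$; centro-symmetry forces $t_i+t_{-i}\le a_{h-1,i}$ precisely when $h=1$ (and only $t_i\le a_{h-1,i}$ otherwise), matching the summation constraints in the statement. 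Via the column-reading characterization of $g_A$ from Lemma~\ref{lem:kappa}, the resulting matrix $A^w$ is identified with $\widecheck{A}_{t,h}$ (resp.\ $\widehat{A}_{t,h}$).

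Next I would collapse the sum in Lemma~\ref{lem:multAw} by grouping the $w$'s with a common tuple $t$. The length contributions $\ell(w)+\ell(g)-\ell(y^w)$ are read off from the symmetrized formulas of Lemma~\ref{lem:l(A)}, and the weight function $\ell_\fc$ records precisely the inversions that involve sign changes. For $h>1$ the moved elements never cross the zero column, so the $u$-power is trivial and the remaining $v^2$-exponent is the standard inversion count $2\sum_{k<l}t_l a_{h,k}$ for part~(1) (and $2\sum_{k>l}t_l a_{h-1,k}$ for the $h>1$ branch of part~(2)). For $h=1$ in \eqref{ecea}, each $t_l$ with $l<0$ contributes one $s_0$-occurrence to $\ell_\fc$, yielding the factor $u^{2\sum_{l<0}t_l}$, while the pair-crossings forced by centro-symmetry between row $1$ and row $-1$ contribute the extra $v$-exponent $2\sum_{l<k<-l}t_l t_k + \sum_{l<0}t_l(t_l-3)$.

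The factorial ratio $[A^w]^!_\fc/[A]^!_\fc$, after summing the (equal-length) $w$'s with the same $t$, simplifies to the stated quantum binomial product via the identity $[a+t]!/([a]![t]!) = \bin{a+t}{t}$ on rows of index $\ne 0$ and the analogous $\fc$-factorial identity
\[
\frac{[a^\natural_{00}+t_0]^!_\fc}{[a^\natural_{00}]^!_\fc\,[t_0]!}
\]
on the $(0,0)$-entry in the $h=1$ sub-case. Lemma~\ref{lem:[A]!} guarantees that $[A]^!_\fc$ is the correct multiparameter substitute for the Poincar\'e polynomial of $W_\delta$ at the weight-specialization $(u^2,v^2)$, so the overall shape of the coefficient is forced once the weight-function count is carried out correctly.

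The main obstacle is the $h=1$ case of part~(2). Three effects must be tracked simultaneously there: (i) which inversions in a reduced expression for $y^w$ correspond to $s_0$-occurrences (hence feed $\ell_\fc$) versus those that do not; (ii) the interaction between row-$1$ and row-$(-1)$ source elements as they are placed into the self-symmetric row $0$, which is responsible for the quadratic terms $t_l(t_l-3)$ and $t_l t_k$; and (iii) the appearance of the $\fc$-factorial at the odd $(0,0)$-entry. The equal-parameter analogue of this proposition (the main multiplication formula of \cite{BKLW18}) provides a roadmap, so what is really new is the clean separation of the $u$- and $v$-weights; this amounts to counting $s_0$-occurrences inside each reduced expression separately from ordinary simple reflections, which the weight-function framework of \cite{Lu03} handles automatically.
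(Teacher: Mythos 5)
Your proposal follows the paper's proof essentially step for step: both start from Lemma~\ref{lem:multAw}, fiber the sum over the tuples $t$ indexing the matrices $A^w$, evaluate the differences $\ell_\fc(g)-\ell_\fc(y^w)$ and $\ell_\fa(g)-\ell_\fa(y^w)$ via Lemma~\ref{lem:l(A)}, and convert the sum over each fiber into quantum binomials, with the quantum binomial theorem producing the $\fc$-factorial at the $(0,0)$ entry in the $h=1$ sub-cases. One correction: the elements $w$ in a fixed fiber are \emph{not} of equal length --- their generating function $\sum_w u^{2\ell_\fc(w)}v^{2\ell_\fa(w)}$ equals $u^{2\ell_\fc(w_t)}v^{2\ell_\fa(w_t)}$ times a product of quantum binomials such as $\prod_{l}\lrb{a_{0,l}}{t_l}\lrb{a_{0,l}-t_l}{t_{-l}}$, and it is precisely this spread of lengths that cancels the denominator of the factorial ratio $[A^w]^!_\fc/[A]^!_\fc$ and leaves the stated coefficient; if the fibers were equal-length one would obtain ordinary rather than quantum binomials and the formula would fail.
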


\proof
For Part (1), we only present the proof for the most complicated case $h=1$.
Let $\delta=\delta(B)$ and take any $t=(t_i)_{-n\leq i\leq n}\in\NN^N$ as in the assumptions.
Among those $w\in\D_\delta \cap W_\mu$ such that $A^{w}=\widecheck{A}_{t,1}$, there is a unique shortest element $w_t$ with
\begin{equation}\label{eq1}
\begin{split}
\ell(w_t)
&=\sum\limits_{k>l}(a_{0,k}-t_k)t_l-\sum\limits_{l<k<-l}t_lt_k-\frac{1}{2}\sum\limits_{l<0}t_l(t_l-1).
\end{split}
\end{equation}
In particular, we have
\begin{equation}\label{eq:fcwt}
\ell_{\fc}(w_t)=\sum_{l<0}t_l,
\quad
\ell_{\fa}(w_t)=\sum_{k>l}(a_{0,k}-t_k)t_l-\sum_{l<k<-l}t_lt_k-\frac{1}{2}\sum_{l<0}t_l(t_l+1).
\end{equation}
By a combinatorial argument, we calculate that
\begin{eqnarray*}
\sum_{\substack{w\in\D_{\delta} \cap W_{\mu},\\A^{w}=\widecheck{A}_{t,1}}}
u^{2\ell_\fc(w)}v^{2\ell_\fa(w)}
=
u^{2\ell_\fc(w_t)}v^{2\ell_\fa(w_t)}
\left(\sum_{x+y=t_{0}}
\lrb{a^\natural_{00}}{x}
\lrb{a^\natural_{00}-x}{y}u^{2x}
(v^2)^{\frac{x(x-1)}{2}+x(a^\natural_{00}-t_{0})}\right)\prod_{l=1}^n\lrb{a_{0l}}{t_l}\lrb{a_{0l}-t_l}{t_{-l}}.
\end{eqnarray*}
Note that
\[
\begin{split}
&\sum_{x+y=t_{0}}
\lrb{a^\natural_{0,0}}{x}
\lrb{a^\natural_{0,0}-x}{y}
u^{2x}(v^2)^{\frac{x(x-1)}{2}+x(a^\natural_{0,0}-t_{0})}
\\
&=\lrb{ a^\natural_{0,0}}{t_{0}}
\sum_{x=0}^{t_{0}}
\lrb{t_{0}}{x}
v^{x(x-1)} (uv^{a^\natural_{0,0}-t_{0}})^{2x}
\stackrel{(\diamondsuit)}{=}\lrb{ a^\natural_{0,0}}{t_{0}}
\prod_{i=1}^{t_{0}}(1+v^{2(i-1)}u^2v^{2(a^\natural_{0,0}-t_{0})})
= \frac{[a_{0,0}^\natural]^!_\fc}{[a_{0,0}^\natural - t_{0}]^!_\fc [t_{0}]!},
\end{split}
\]
where ($\diamondsuit$) is due to the quantum binomial theorem
$
\sum_{x=0}^m
\lrb{m}{x}
v^{x(x-1)}z^x
=\prod_{i=0}^{m-1}(1+v^{2i}z).
$
Therefore
\begin{equation}\label{eq:v2lw}
\sum_{w\in\D_{\delta} \cap W_{\mu},A^{w}=\widecheck{A}_{t,1}}u^{2\ell_\fc(w)}v^{2\ell_\fa(w)}=u^{2\ell_\fc(w_t)}v^{2\ell_\fa(w_t)}\frac{[a_{0,0}^\natural]^!_\fc}{[a_{0,0}^\natural - t_{0}]^!_\fc [t_{0}]!}\prod_{l=1}^n\lrb{a_{0,l}}{t_l}\lrb{a_{0,l}-t_l}{t_{-l}}.
\end{equation}
Furthermore, it follows from Lemma \ref{lem:l(A)} that
\begin{align}
\label{eq:dfc}
\ell_\fc(A)-\ell_\fc(\widecheck{A}_{t,1})
&=-\sum_{l<0}t_l
\\
\label{eq:dfa}
\ell_\fa(A)-\ell_\fa(\widecheck{A}_{t,1})
&=\sum_{k<l}t_la_{1,k}-\sum_{k>l}(a_{0,k}-t_k)t_l+\sum_{l<k<-l}t_lt_k+\frac{1}{2}\sum_{l<0}t_l(t_l+1).
\end{align}

Part (1) then follows from combining \eqref{eq:multAw}, \eqref{eq:fcwt}--\eqref{eq:dfa}.
For Part (2), we only present a proof for the most complicated case that $h=1$.
Let $\delta=\delta(C)$ and take any $t=(t_i)_{-n\leq i\leq n}\in\NN^N$ as in the assumptions. Among those $w\in\D_{\delta} \cap W_{\mu}$ such that $A^{w}=\widehat{A}_{t,1}$, there is a shortest element $w_t$ with
\begin{equation}\label{eq:lfalfc}
\ell_\fc(w_t)=0 \quad \mbox{and} \quad \ell_{\fa}(w_t)=\sum_{k<l}t_l(a_{1,k}-t_k).
\end{equation}
Direct computation yields to the following identities:

\begin{align}
\label{eq4}
\sum_{\substack{w\in\D_{\delta} \cap W_{\mu},\\A^{w}=\widehat{A}_{t,1}}}
u^{2\ell_\fc(w)}v^{2\ell_\fa(w)}
&=
u^{2\ell_{\fc}(w_t)}v^{2\ell_\fa(w_t)}\prod_{l=-n}^n\lrb{a_{1,l}}{t_l}=v^{2\sum_{k<l}t_l(a_{1,k}-t_k)}\prod_{l=-n}^n\lrb{a_{1,l}}{t_l},
\\
\ell_\fc(A)-\ell_\fc(\widehat{A}_{t,1})
&=\sum_{l<0}t_l,
\\
\label{eq:eq5}
\ell_\fa(A)-\ell_\fa(\widehat{A}_{t,1})
&=\sum_{k>l}a_{0,k}t_l-\sum_{k<l}t_l(a_{1,k}-t_k)+\sum_{l<k<-l}t_lt_k+\frac{1}{2}\sum_{l<0}t_l(t_l-3).
\end{align}
Part (2) then follows from combining \eqref{eq:multAw}, \eqref{eq:lfalfc}--\eqref{eq:eq5}.
\endproof

\begin{rem}\label{specialize}
These explicit formulas match the ones in \cite{BKLW18} (resp. the unsigned ones in \cite{FL15}) if we specialize $u=v$ (resp. $u=1$).
\end{rem}

\section{Canonical bases}\label{sec:bases}
\subsection{The bar involution}
There is an $\bbA$-algebra involution $\bar{\empty}:\HH \rw \HH$,
which sends $u\mapsto u\inv, v \mapsto v^{-1}, T_w \mapsto T_{w^{-1}}^{-1}$, for all $w\in W$.
In particular, we have, for $s \in S - \{s_0\}$,
\eq\label{eq:Tbar}
\={T_{s}} = v^{-2} T_{s} + v^{-2} - 1,
\quad
\={T_{s_0}} = u^{-2} T_{s_0} + u^{-2} - 1.
\endeq

For $\ld,\mu \in \Ld_{n,d}$ (see \eqref{def:Ld}),
let $g^+_{\ld\mu}$ be the longest element in the double coset $W_\ld g W_\mu$ for $g \in \D_{\ld\mu}$,
and let $w_\circ^\mu = \id_{\mu\mu}^+$ be the longest element in the parabolic subgroup $W_\mu = W_\mu \id W_\mu$.
The lemma below is standard (cf. \cite[Corollary 4.19]{DDPW08}).
\begin{lem}\label{lem:WgW}
Let $A = \kappa(\ld,g,\mu)$, $\delta = \delta(A)$.
Then:
\enua
\item $g_{\ld\mu}^+ = w_\circ^\ld g w_\circ^{\delta} w_\circ^\mu$, and
$\ell(g_{\ld\mu}^+) =  \ell(w_\circ^\ld) +  \ell(g)  - \ell(w_\circ^{\delta}) +  \ell(w_\circ^\mu).$
\item $W_\ld g W_\mu = \{w \in W ~|~ g \le w \le g^+_{\ld\mu}\}$.
\endenua
\end{lem}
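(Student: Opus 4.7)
The plan is to parameterize $W_\ld g W_\mu$ length-additively using Lemma~\ref{lem:doublecoset} and read off both claims from there.

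For part (a), combining Lemma~\ref{lem:doublecoset}(a)--(b) gives a length-additive bijection $W_\ld \times (\D_\delta \cap W_\mu) \to W_\ld g W_\mu$ sending $(x,y) \mapsto xgy$, so the longest element of the double coset is necessarily $w_\circ^\ld \cdot g \cdot y_0$, where $y_0$ denotes the longest element of $\D_\delta \cap W_\mu$. To identify $y_0$, I would use that $W_\delta \subseteq W_\mu$ together with $\ell(w_\circ^\delta y) = \ell(w_\circ^\delta) + \ell(y)$ for all $y \in \D_\delta$, which forces $\ell(y) \le \ell(w_\circ^\mu) - \ell(w_\circ^\delta)$; equality is realized by $y_0 = w_\circ^\delta w_\circ^\mu$, whose membership in $\D_\delta \cap W_\mu$ is confirmed by applying Lemma~\ref{lem:doublecoset}(c) to the length-additive decomposition $w_\circ^\mu = w_\circ^\delta \cdot y_0$. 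The formula $g^+_{\ld\mu} = w_\circ^\ld g w_\circ^\delta w_\circ^\mu$ and the stated length identity then follow by direct substitution.

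For part (b), the inclusion $W_\ld g W_\mu \subseteq [g, g^+_{\ld\mu}]$ is the easy direction: each $w = xgy$ in the length-additive parameterization dominates $g$ (by erasing $x$ and $y$ from a chosen reduced expression for $xgy$) and is dominated by $g^+_{\ld\mu}$, since $x \le w_\circ^\ld$ in $W_\ld$ and $y \le y_0$ in $\D_\delta \cap W_\mu$, combined with the fact that Bruhat order is preserved under length-additive concatenation. For the reverse, I would invoke the subword property: any $w \le g^+_{\ld\mu}$ admits a reduced expression obtained as a subword of a concatenation of reduced expressions for $w_\circ^\ld$, $g$, and $y_0$; the hypothesis $g \le w$ forces the middle block to remain a full reduced expression for $g$, producing a factorization $w = x' g y'$ with $x' \in W_\ld$ and $y' \in \D_\delta \cap W_\mu$, placing $w$ in $W_\ld g W_\mu$.

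The main obstacle is the subword argument in the reverse inclusion of part (b): turning "$g \le w$" into "the middle factor of a chosen reduced expression for $g^+_{\ld\mu}$ survives intact in a subword for $w$" requires the uniqueness of $g$ as the minimum-length element of its double coset, together with a careful lift of subword reductions through the bijection of Lemma~\ref{lem:doublecoset}(b). Since the entire statement is recorded in \cite[Corollary~4.19]{DDPW08}, in practice I would defer this technical verification to that reference rather than reproduce it in full.
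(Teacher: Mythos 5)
The paper offers no proof of this lemma at all --- it labels it standard and cites \cite[Corollary~4.19]{DDPW08} --- so your proposal, which ends by deferring the one hard step to that same reference, is consistent with what the paper does. Your part (a) and the inclusion $W_\ld g W_\mu \subseteq \{w \mid g\le w\le g^+_{\ld\mu}\}$ are correct: identifying $y_0 = w_\circ^\delta w_\circ^\mu$ via Lemma~\ref{lem:doublecoset}(c) and the concatenation-of-reduced-subwords argument both work (the latter also uses the standard fact that every $y \in \D_\delta\cap W_\mu$ satisfies $y \le y_0$, which follows from order-preservation of the projection onto minimal coset representatives). The obstacle you flag in the reverse inclusion is genuine: the subword property applied to $w \le g^+_{\ld\mu}$ only produces \emph{some} reduced subword, and $g \le w$ does not force the middle block to survive intact, so that route does not close on its own. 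The clean repair --- and essentially the content of the cited result --- is to use that the projection $w \mapsto \min(W_\ld w W_\mu)$ onto minimal double coset representatives is order-preserving: $w \le g^+_{\ld\mu}$ gives $\min(W_\ld w W_\mu) \le \min(W_\ld g^+_{\ld\mu} W_\mu) = g$, while $g \le w$ gives $g = \min(W_\ld g W_\mu) \le \min(W_\ld w W_\mu)$, so $\min(W_\ld w W_\mu)=g$ and $w \in W_\ld g W_\mu$. Since you explicitly defer this verification to \cite{DDPW08} rather than claim the subword heuristic as a proof, I would not count this as a gap relative to the paper.
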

Following \cite{KL79}, denote by $\{C'_w\}$ the Kazhdan-Lusztig $\ZZ[v, v\inv]$-basis of the Hecke algebra $\HH|_{u=v}$
characterized by Conditions (C1)--(C2) below:
\itm
\item[(C1)]
$C'_w$ is bar-invariant;

\item[(C2)]
$C'_w =  v^{-\ell(w)}\sum_{y \le w} P_{yw}(v) T_y$.
\enditm
Here $\le$ is the (strong) Bruhat order, and $P_{yw}$ is the Kazhdan-Lusztig polynomial satisfying that $P_{ww}=1$ and $P_{yw}\in \ZZ[v^2]$ with $ \deg_v P_{yw} \le \ell(w)-\ell(y)-1$ for $y<w$.
Recall $T^g_{\ld\mu}$ from \eqref{eq:x} and denote
\eq
C^g_{\ld\mu} = C'_{g^+_{\ld\mu}}
\quad (g \in \D_{\ld\mu}, \ld,\mu \in \Ld_{n,d}).
\endeq
Following  \cite{Cur85}, let $\bH_{\ld\mu}$ be the $\ZZ[v, v\inv]$-submodule of $\left.\HH\right|_{u=v}$ with basis $\{ T_{\ld\mu}^g\}_{g \in \D_{\ld\mu}}$.
It is shown in {\it loc. cit.} that $\{C^g_{\ld\mu}\}_{g\in \D_{\ld\mu}}$ also forms a bar-invariant basis of $\bH_{\ld\mu}$.


It is shown in \cite[\S 5]{Lu03} that, for any weight function $\bL:W \to \NN$, there exists a bar-invariant basis $\{\CL_w\}$ (referred as $c_w$ therein) at the specialization $u = \bv^{\bL(s_0)}, v = \bv^{\bL(s_1)}$, given by
\eq\label{def:CL}
\CL_w =
u^{-\ell_\fc(w)}v^{-\ell_\fa(w)} \sum_{y \le w} p_{y,w}(\bv) \left. T_y
\right|_{u = \bv^{\bL(s_0)}, v = \bv^{\bL(s_1)}},
\endeq
where $p_{y,w}(\bv)$ is an analogue of Kazhdan-Lusztig polynomial.
For $\ld,\mu \in \Ld_{n,d}$, let $\HH_{\ld\mu}$ be the $ \ZZ[u^{\pm2},v^{\pm2}] $-submodule of $\HH$ with basis $\{ T_{\ld\mu}^g\}_{g \in \D_{\ld\mu}}$.
It follows from \cite[Lemma~2.10]{CIK72} and Lemma~\ref{lem:wx} that $\HH_{\ld\mu}$ can be characterized  as below:
\eq\label{eq:HIJ2}
\HH_{\ld\mu} =
\left\{ h \in \HH \middle|
\begin{array}{l}
T_{w} h = u^{2\ell_\fc(w)}v^{2\ell_\fa(w)} h, (\forall w\in W_\ld), \\
h T_{w'}=u^{2\ell_\fc(w')}v^{2\ell_\fa(w')}h, (\forall w'\in W_\mu)
\end{array}
\right\}.
\endeq
Below we show that the bar involution is closed on $\HH_{\ld\mu}$ although lacking of bar-invariant basis.
\lem\label{lem:bar}
Let $A = \kappa(\ld,g,\mu)$. Then $\={T^g_{\ld\mu}} \in \HH_{\ld\mu}$.
In particular,
\eq
\={T^g_{\ld\mu}}  \in  u^{-2\ell_\fc(g^+_{\ld\mu})}v^{-2\ell_\fa(g^+_{\ld\mu})} T^g_{\ld\mu}
+ \sum\limits_{\substack{y\in \D_{\ld\mu}\\ y < g }}  \ZZ[u^{\pm2},v^{\pm2}]  T^y_{\ld\mu}.
\endeq
Moreover, $u^{-\ell_\fc(w_\circ^\mu)} v^{-\ell_\fa(w_\circ^\mu)} x_\mu$ is bar-invariant.
\endlem
\proof
First, we show that $\={x_\nu} \in \bbA x_\nu$ for all $\nu \in \Ld_{n,d}$ via bar-invariant basis $\CL_w$.
Let $\HH^{\bL}_{\ld\mu}$ be the specialization of $\HH_{\ld\mu}$ at $u = \bv^{\bL(s_0)}, v = \bv^{\bL(s_1)}$.
From \eqref{eq:HIJ2}, a direct calculation shows that
$\CL_{w_\circ^\nu} \in \HH^{\bL}_{\nu\nu}$ and hence
\eq\bsp{
\CL_{w_\circ^\nu}
&{}= u^{-\ell_\fc(w_\circ^\nu)}v^{-\ell_\fa(w_\circ^\nu)} \sum_{y \le w_\circ^\nu} p_{y,w_\circ^\nu}
\left.T_y
\right|_{u = \bv^{\bL(s_0)}, v = \bv^{\bL(s_1)}}
\in \sum_{g\in \D_{\nu\nu}} \ZZ(\bv^{\pm\bL(s_0)},\bv^{\pm\bL(s_1)})
\left. T^g_{\nu\nu}
\right|_{u = \bv^{\bL(s_0)}, v = \bv^{\bL(s_1)}}.
}\endeq
Upon comparing coefficients, we obtain
\eq
\CL_{w_\circ^\nu} =u^{-\ell_\fc(w_\circ^\nu)}v^{-\ell_\fa(w_\circ^\nu)}
\left. T^\id_{\nu\nu}
\right|_{u = \bv^{\bL(s_0)}, v = \bv^{\bL(s_1)}}.
\endeq
Note that $x_\nu = T_{\nu\nu}^\id$.
Hence, for any weight function $\bL$, we have
\eq
\left.
(\={x_\nu} -u^{-2\ell_\fc(w_\circ^\nu)}v^{-2\ell_\fa(w_\circ^\nu)} x_\nu)
\right|_{u = \bv^{\bL(s_0)}, v = \bv^{\bL(s_1)}}
= 0.
\endeq
Therefore $\={x_\nu}  = u^{-2\ell_\fc(w_\circ^\nu)} v^{-2\ell_\fa(w_\circ^\nu)} x_\nu$.
We now show that $\={T_{\ld\mu}^g} \in \HH_{\ld\mu}$.
By Lemma~\ref{lem:xTx}, we have
$
T_{\ld\mu}^g \in \ZZ[u^{\pm2},v^{\pm2}]  x_\ld T_{g} x_\mu,
$
and hence
\eq
\={T_{\ld\mu}^g} \in  \ZZ[u^{\pm2},v^{\pm2}]   \={x_\ld} \={T_{g}} \={x_\mu}
= \sum_{z \le g}  \ZZ[u^{\pm2},v^{\pm2}]  x_\ld T_z x_\mu
.
\endeq
Similar to \eqref{eq:y^w}, we have $x_\ld T_z x_\mu \in  \ZZ[u^{\pm2},v^{\pm2}]  x_\ld T_y x_\mu$ for some $y \in \D_{\ld\mu}$ such that $y\le z$.
Finally, we have $\={T_{\ld\mu}^g} \in \sum_{y \in \D_{\ld\mu}}  \ZZ[u^{\pm2},v^{\pm2}]  x_\ld T_y x_\mu  \subseteq \HH_{\ld\mu}$.
The leading coefficient is obtained by a lengthy calculation which we omit.
\endproof
The bar involution $\bar{\empty}$ on $\SSj$ is defined as follows:
for each $f \in \textup{Hom}_{\HH}(x_\mu \HH, x_\ld \HH)$, let $\={f}\in\textup{Hom}_{\HH}(x_\mu \HH, x_\ld \HH)$ be the
$\HH$-linear map which sends $x_\mu$ to $\={f(\={x_\mu})}$.

%
%


\subsection{A standard basis in $\SSj$}
We define, for $ A \in \Xi_{n,d}$,
the (truncated) generalized length functions of $A$ by
\bA{\label{eq:l^(A)}
&\^{\ell}(A) = \dfrac{1}{2}
\bigg(
\sum\limits_{(i,j) \in I_\fc }
\Bp{
\sum\limits_{\substack{x \le i\\ y > j}}
+
\sum\limits_{\substack{x \ge i\\ y < j}}
}
a^\natural_{ij} a_{xy}
\bigg),
\quad
\^\ell_\fc(A) =
\dfrac{1}{2}
\Bp{
\sum\limits_{\substack{0 \le x\\ 0 > y}}
+
\sum\limits_{\substack{0 \ge x\\ 0 < y}}
}
a_{xy},
\\
&
\^\ell_\fa(A) =
\^\ell(A) - \^\ell_\fc(A)
=
\dfrac{1}{2}
\bigg(
\sum\limits_{(i,j) \in I_\fc }
\Bp{
\sum\limits_{\substack{x \le i\\ y > j}}
+
\sum\limits_{\substack{x \ge i\\ y < j}}
}
a^{\natural\natural}_{ij} a_{xy}
\bigg),
}
where $a_{00}^{\natural\natural} = \frac{1}{2}(a_{00}-3)$ and $a_{ij}^{\natural\natural} =a_{ij}$ if $(i,j) \in I_\fa$.
We shall see in Proposition~\ref{prop:Abar}  that $\^\ell_\fa(A), \^\ell_\fc(A) \in\NN$.
\rmk
The function $\^\ell$ counts the dimension of the generalized Schubert variety associated to the matrix $A$ (cf. \cite[Appendix A]{FL3Wb}), and is equal to the length of $A$ when $A$ is a permutation matrix (that is when the associated variety is a genuine Schubert variety).
\endrmk
Set
\eq \label{def:[A]}
[A] = u^{-\^\ell_\fc(A)} v^{-\^\ell_\fa(A)} e_A.
\endeq
The set
$\{[A] ~|~ A \in \Xi_{n,d}\}$ forms an $\bbA$-basis of $\SSj$, which we call the \textit{standard basis}.
For $A\in \Xi_n$, we let
\eq\label{def:sig}
\sigma_{ij}(A) =
\ds\sum_{x\le i, y\ge j} a_{xy}.
\endeq
Now we define a partial order $\le_\alg$ on $\Xi_n$ by letting, for $A, B \in \Xi_n$,
\eq \label{eq:order}
A \le_\alg B \Leftrightarrow
\ro(A) = \ro(B),\; \co(A)=\co(B), \; \text{and } \sigma_{ij}(A) \le \sigma_{ij}(B), \forall i < j.
\endeq
We denote $A <_\alg B$ if $A \le_\alg B$ and $A\neq B$.

\prop\label{prop:Abar}
Let $A =\kappa(\ld,g,\mu)\in \Xi_{n,d}$. Then we have
$\={[A]}  \in [A] +\sum_{B <_\alg A} \bbA [B].$
\endprop
\proof
By the finite type analogue of \cite[Proposition~5.3]{FL3Wb}, we have
\eq
\^\ell_\fc(A) =\ell_\fc(g_{\ld\mu}^+) - \ell_\fc(w_\circ^\mu),
\quad
\^\ell_\fa(A) =\ell_\fa(g_{\ld\mu}^+) - \ell_\fa(w_\circ^\mu).
\endeq
Hence,
\eq
[A](u^{-\ell_\fc(w_\circ^\mu)}v^{-\ell_\fa(w_\circ^\mu)} x_\mu) = u^{-\ell_\fc(g^+_{\ld\mu})}v^{-\ell_\fa(g^+_{\ld\mu})} T^g_{\ld\mu}.
\endeq
Thus, by Lemma~\ref{lem:bar}, the map $\={[A]}$ is determined by
\eq
\={[A]}(u^{-\ell_\fc(w_\circ^\mu)}v^{-\ell_\fa(w_\circ^\mu)} x_\mu)
= u^{\ell_\fc(g^+_{\ld\mu})}v^{\ell_\fa(g^+_{\ld\mu})} \={T^g_{\ld\mu}}
\in u^{-\ell_\fc(g^+_{\ld\mu})}v^{-\ell_\fa(g^+_{\ld\mu})} T^g_{\ld\mu} + \sum_{y<g}  \bbA  T^y_{\ld\mu}.
\endeq
We note that $[\kappa(\ld,y,\mu)](x_\mu) \in \bbA T^y_{\ld\mu}$. An induction on $\ell(g)$ shows that
\eq\label{eq:Abar}
\={[A]}  \in [A] + \sum_{y\in \D_{\ld\mu}, y< g} \bbA\, [\kappa(\ld,y,\mu)].
\endeq
A finite type analogue of \cite[Corollary~5.5]{FL3Wb} shows that $\kappa(\ld,y,\mu) <_\alg A$ if $y < g$.  We conclude the statement.
\endproof

Let us reformulate the multiplication formula for $\SSj$ (Proposition~\ref{prop:multformula1}) in
terms of the standard basis. 
\begin{thm}\label{thm:multformula2}
Suppose that $A, B, C\in\Xi_{n,d}$ and $h\in[1,n]$.

\enu
\item[(1)] If $B-bE_{h,h-1}^\theta$ is diagonal, $\co(B)=\ro(A)$, then
\begin{equation}\label{mult1}
[B] [A]=\sum_{t}u^{-\delta_{h,1}\sum_{l>0}{t_l}}v^{\beta(t)}\prod_{l=-n}^{n}\overline{\left[\begin{array}{cc}a_{h,l}+t_l\\t_l\end{array}\right]} [\widecheck{A}_{t,h}],
\end{equation} where
$t$ is summed over as in Propsition \ref{prop:multformula1}~(1), and
\begin{equation}\label{beta1}
\beta(t)=\sum_{k\leq l}t_la_{h,k}-\sum_{k<l}t_l(a_{h-1,k}-t_k)+\delta_{h,1}(\sum_{-l<k<l}t_lt_k+\sum_{l>0}\frac{t_l(t_l+3)}{2}).
\end{equation}

\item[(2)] Suppose $C-cE_{h-1,h}^\theta$ is diagonal and $\co(C)=\ro(A)$.
If $h\neq1$ then
\begin{equation}\label{mult2}
[C] [A]=\sum_{t}v^{\beta'(t)}\prod_{l=-n}^{n}\overline{\left[\begin{array}{cc}a_{h-1,l}+t_l\\t_l\end{array}\right]} [\widehat{A}_{t,h}],
\end{equation} where $t$ is summed over as in Propsition \ref{prop:multformula1}~(2), and
\begin{equation}\label{beta2}
\beta'(t)=\sum_{k\geq l}t_la_{h-1,k}-\sum_{k>l}t_l(a_{h,k}-t_k).
\end{equation}
\\
If $h=1$ then
\begin{equation}\label{mult3}
[C][A]=\sum_{t}u^{\sum_{l\leq0}t_l}v^{\beta''(t)}
\overline{\left(\frac{[a_{0,0}^\natural+t_0]_\fc^!}{[a_{0,0}^\natural]_\fc^![t_0]!}\prod_{l=1}^n\frac{[a_{0,l}+t_l+t_{-l}]!}{[a_{0,l}]![t_l]![t_{-l}]!}\right)}[\widehat{A}_{t,1}],
\end{equation}
where
\begin{equation}\label{beta3}
\beta''(t)=\sum_{k\geq l}t_la_{0,k}-\sum_{k>l}t_l(a_{1,k}-t_k)+\sum_{l<k\leq -l}t_lt_k+\sum_{l\leq 0}\frac{t_l(t_l-3)}{2}.
\end{equation}
\endenu
\end{thm}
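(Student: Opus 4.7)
The plan is to deduce this theorem directly from Proposition~\ref{prop:multformula1} by renormalizing each $e_A$ into $[A] = u^{-\^\ell_\fc(A)} v^{-\^\ell_\fa(A)} e_A$. Rewriting both sides of Proposition~\ref{prop:multformula1}(1) in the standard basis introduces, on each summand of $[B][A]$, an extra monomial $u^{\^\ell_\fc(\widecheck A_{t,h})-\^\ell_\fc(B)-\^\ell_\fc(A)} v^{\^\ell_\fa(\widecheck A_{t,h})-\^\ell_\fa(B)-\^\ell_\fa(A)}$. It therefore suffices to evaluate these length differences and combine them with the explicit $v$-power and quantum binomials already appearing in Proposition~\ref{prop:multformula1}.

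A useful preliminary move is to replace each ``plain'' quantum binomial by its bar-companion via the elementary identity $\lrb{a+t}{t} = v^{2ta}\,\overline{\lrb{a+t}{t}}$, which follows at once from $\overline{[k]} = v^{-2(k-1)}[k]$. This transfers a factor $v^{2\sum_l t_l a_{h,l}}$ out of the binomials, which combines with the factor $v^{2\sum_{k<l} t_l a_{h,k}}$ from Proposition~\ref{prop:multformula1}(1) into $v^{2\sum_{k\le l} t_l a_{h,k}}$, accounting for the leading term of $\beta(t)$ in \eqref{beta1}. An analogous identity $\overline{[2t]_\fc} = u^{-2}v^{-2(t-1)}[2t]_\fc$, immediate from \eqref{def:c-factorial}, handles the composite factorial coefficient in \eqref{ecea} and simultaneously produces part of the $u^{\sum_{l\le 0} t_l}$ prefactor required in \eqref{mult3}.

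It then remains to evaluate the length differences using the explicit formula \eqref{eq:l^(A)}. Only rows $\pm h, \pm(h-1)$ of $A$ are altered in the passage $A \mapsto \widecheck A_{t,h}$, and $B$ is a Chevalley matrix with a single centro-symmetric off-diagonal pair; accordingly, the differences $\^\ell_\fc(\widecheck A_{t,h})-\^\ell_\fc(A)-\^\ell_\fc(B)$ and $\^\ell_\fa(\widecheck A_{t,h})-\^\ell_\fa(A)-\^\ell_\fa(B)$ expand as explicit polynomials in the $t_l$'s and in the entries of $A, B$. Direct comparison with $-\delta_{h,1}\sum_{l>0}t_l$ and $\beta(t) - 2\sum_{k\le l} t_l a_{h,k}$, respectively, recovers the claimed exponents in \eqref{mult1}. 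Part~(2) is handled by the mirror computation with $C - cE^\theta_{h-1,h}$ in place of $B$, reducing to verifying \eqref{mult2} and \eqref{mult3}.

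The main obstacle lies in the $h=1$ subcase of part~(2), where the matrix move straddles the central row and column and activates both parameters simultaneously. There the quadratic cross terms $\sum_{l<k\le -l} t_l t_k$ and $\sum_{l\le 0} \tfrac{t_l(t_l-3)}{2}$ in $\beta''(t)$ arise from the signed reflection combinatorics underlying the type B/C factorial $[a_{0,0}^\natural+t_0]_\fc^!$; disentangling the $u$-contributions (from entries with $l\le 0$ crossing column $0$) from the $v$-contributions requires splitting each symmetric sum in \eqref{eq:l^(A)} according to the sign of the column index, and carefully handling the negative weight $a_{00}^{\natural\natural} = \tfrac12(a_{00}-3)$ attached to the central diagonal entry. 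Once these identities are verified, the remaining cases fall out as essentially formal consequences.
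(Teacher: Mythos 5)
Your plan reproduces the paper's proof essentially verbatim: rewrite Proposition~\ref{prop:multformula1} in the standard basis so that the generalized length differences $\^\ell_\fc(\widecheck{A}_{t,h})-\^\ell_\fc(A)-\^\ell_\fc(B)$, etc.\ appear as exponents, convert the quantum binomials and the type B/C factorial coefficient into their bar-companions, and evaluate the length differences by direct computation from the formulas for $\^\ell_\fc,\^\ell_\fa$. The only blemish is the stated identity $\overline{[2t]_\fc}=u^{-2}v^{-2(t-1)}[2t]_\fc$, which should read $\overline{[2t]_\fc}=u^{-2}v^{-4(t-1)}[2t]_\fc$; this is a harmless slip that affects neither the strategy nor its correctness.
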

\proof
For Part (1), by Proposition \ref{prop:multformula1}, we have
\[
[B] [A]=\sum_{t}u^{\^\ell_\fc(\widecheck{A}_{t,h})-\^\ell_\fc(A)-\^\ell_\fc(B)}v^{\^\ell_\fa(\widecheck{A}_{t,h})-\^\ell_\fa(A)-\^\ell_\fa(B)+2\sum_{k<l}t_la_{h,k}+2\sum_{l}t_la_{h,l}}
\prod_{l=-n}^{n}\overline{\left[\begin{array}{cc}a_{h,l}+t_l\\t_l\end{array}\right]} [\widecheck{A}_{t,h}].
\]
Part (1) concludes by combining the following identities via direct computation:
\begin{align*}
&\^\ell_\fc(B)=0,
\quad
\^\ell_\fa(B)=b b_{h,h}=\sum_{l,k}t_la_{h,k},
\quad
\^\ell_\fc(\widecheck{A}_{t,h})-\^\ell_\fc(A)=-\delta_{h,1}\sum_{l>0}{t_l},
\\
&\^\ell_\fa(\widecheck{A}_{t,h})-\^\ell_\fa(A)=\sum_{k>l}t_la_{h,k}-\sum_{k<l}t_l(a_{h-1,k}-t_k)+\delta_{h,1}(\sum_{-l<k<l}t_lt_k+\sum_{l>0}\frac{t_l(t_l+3)}{2}).
\end{align*}
For Part (2), we only present the most complicated case that $h=1$.
A direct computation shows that
\eq\label{eq:mult3bar}
\frac{[a_{0,0}^\natural+t_0]_\fc^!}{[a_{0,0}^\natural]_\fc^![t_0]!}\prod_{l=1}^n\frac{[a_{0,l}+t_l+t_{-l}]!}{[a_{0,l}]![t_l]![t_{-l}]!}\\
=u^{2t_0}v^{\sum_{l}(2a_{0,l}t_l+t_lt_{-l})-3t_0}\overline{\left(\frac{[a_{0,0}^\natural+t_0]_\fc^!}{[a_{0,0}^\natural]_\fc^![t_0]!}\prod_{l=1}^n\frac{[a_{0,l}+t_l+t_{-l}]!}{[a_{0,l}]![t_l]![t_{-l}]!}\right)}.
\endeq
Part (2) follows from combining \eqref{eq:mult3bar} and the calculation below:
\begin{align*}
&\^\ell_\fc(C)=c=\sum_{l}t_l,
\quad
\^\ell_\fa(C)=\sum_{l,k}t_la_{0,k}+\frac{c(c-3)}{2},
\quad
\^\ell_\fc(\widehat{A}_{t,1})-\^\ell_\fc(A)
=\sum_{l>0}{t_l},
\\
&\^\ell_\fa(\widehat{A}_{t,1})-\^\ell_\fa(A)
=\sum_{k<l}t_la_{0,k}-\sum_{l<k}t_l(a_{1,k}-t_k)+(\sum_{-l<k<l}t_lt_k+\sum_{l>0}\frac{t_l(t_l-3)}{2}).
\end{align*}
\endproof

\subsection{A monomial basis in $\SSj$}
Thanks to Remark \ref{specialize}, we can use results in \cite{BKLW18} freely when we specialize $u=v$.
For $A \in \Xi_{n,d}$, we can use the algorithm
in \cite[Theorem~3.10]{BKLW18} with the fixed order therein
to produce a unique family of Chevalley matrices $\{A^{(1)}, \ldots, A^{(x)}\}$ in $\Xi_{n,d}$ for some $x = x(A) \in \NN$.
At the specialization $u=v$, a unitriangular relation is satisfied:
\eq
 \left. [A^{(1)}] \cdots [A^{(x)}]\right|_{u=v} = \textstyle[A] + \sum_{B<_\alg A}\left. \bbA [B]\right|_{u=v}.
\endeq
Denote the product of the corresponding elements in $\SSj$ by
\eq\label{def:mA}
m_A =[A^{(1)}] \cdots [A^{(x)}] \in \SSj.
\endeq
Let $I$ be the identity matrix. Since the algorithm in \cite[Theorem~3.10]{BKLW18} produces matrices $A^{(1)}, \ldots, A^{(x)}$ according to mainly the off-diagonal matrices of $A$ and then determine the diagonal entries of these $A^{(i)}$ by the row and column sums, we have that $x(A) = x(A+pI)$ and $(A+pI)^{(i)} = A^{(i)}+ pI$ for all $p\in2\NN$, i.e.,
\eq\label{eq:monop}
m_{A+pI} = [A^{(1)}+pI] \cdots [A^{(x)}+pI].
\endeq
\prop\label{prop:mono}
For $A \in \Xi_{n,d}$ the element $m_A \in \SSj$ has the following property:
\eq\label{eq:mA}
m_A =[A] + \sum_{B<_\alg A} \bbA [B].
\endeq
Moreover,  $\{m_A\}_{A\in \Xi_{n,d}}$ form a basis of $\SSj$, which we call the monomial basis.
\endprop
\proof
A direct proof can be pursued using the multiplication formulas (Proposition~\ref{thm:multformula2}), similar to the proofs of \cite[Theorem 3.10]{BKLW18} and \cite[Theorem 4.6.3]{FL15}.
Here we offer a simpler proof by combining \cite[Theorem 3.10]{BKLW18} and \cite[Theorem 4.6.3]{FL15} as below: now
\[
m_A =u^{\alpha(A)}v^{\beta(A)}[A] + \sum_{B<_\alg A} \bbA [B],
\quad
\textup{for some}
\quad \alpha(A),\beta(A)\in \NN
\]
It follows from \cite[Theorem 3.10]{BKLW18} (resp. \cite[Theorem 4.6.3]{FL15}) that $v^{\alpha(A)}v^{\beta(A)}=1$ (resp. $1^{\alpha(A)}v^{\beta(A)}=1$), which forces that $u^{\alpha(A)}v^{\beta(A)}=1$ and hence \eqref{eq:mA} holds.
Hence the transition matrix from $\{m_A~|~A\in \Xi_{n,d}\}$ to the standard basis $\{[A] ~|~ A \in \Xi_{n,d}\}$ is unital triangular. Therefore $\{m_A~|~A\in \Xi_{n,d}\}$ form a basis of $\SSj$.
\endproof
\begin{rmk}
The monomial basis acts as an intermediate step toward constructing canonical basis in the one-parameter case.
Moreover, the two-parameter stabilization procedure is made possible thanks to the property \eqref{eq:monop} of monomial basis.
\end{rmk}

\subsection{The canonical basis at the specialization}\label{sec:SCB}

For any weight function $\bL$, let $\mbf{c} = \gcd(\bL(s_0), \bL(s_1))$. We show that the specialization of $\SSj$ at $u=\bv^{\bL(s_0)}, v=\bv^{\bL(s_1)}$ admits canonical basis with respect to $\bv^{\mbf{c}}$.
For $A \in \Xi_{n,d}$, let $[A]^\bL$ (and $m_A^\bL$, resp.) be the standard basis (and monomial basis, resp.) of the specialization of $\SSj$ at $u=\bv^{\bL(s_0)}, v=\bv^{\bL(s_1)}$.
It follows from \eqref{eq:Abar} and \eqref{eq:mA} that the following unitriangular relations hold:
\bA{
\={[A]^\bL} &{}\in [A]^\bL + \sum_{B <_\alg A} \ZZ[\bv^{\mbf{c}}, \bv^{-\mbf{c}}] [B]^\bL,
\\
\={m_A^\bL} = m_A^\bL &\in [A]^\bL + \sum_{B <_\alg A} \ZZ[\bv^{\mbf{c}}, \bv^{-\mbf{c}}] [B]^\bL.
}
If $A$ is diagonal, set $\{A\}^\bL =[A]^\bL$.
Arguing inductively on the partial order $\le_\alg$ and using a standard argument (cf. \cite[24.2.1]{Lu93})
there exists a unique element $\{A\}^\bL \in \SSj$ such that
\eq\label{eq:SCB}
\overline{ \{A\}^\bL } =\{A\}^\bL \in [A]^\bL + \sum_{B <_\alg A} \bv^{-\mbf{c}}\ZZ[\bv^{-\mbf{c}}] [B]^\bL.
\endeq

Let $\mathbb{S}^{\jmath, \bL}_{n,d}$ be the specialization of $\SSj$ at $u=\bv^{\bL(s_0)}, v=\bv^{\bL(s_1)}$.
\begin{thm} \label{thm:SCB}
There exists a canonical basis $\{\{A\}^\bL\ |\ A \in \Xi_{n,d}\}$ for $\mathbb{S}^{\jmath, \bL}_{n,d}$,
which is characterized by the property \eqref{eq:SCB}.
\end{thm}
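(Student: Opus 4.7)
The plan is to invoke Lusztig's standard argument (cf.\ \cite[24.2.1]{Lu93}) for promoting a standard basis with unitriangular bar behavior to a bar-invariant canonical basis. All technical ingredients have been assembled in the preceding subsections, so the proof is essentially a formal consequence of these, combined with a check that nothing leaves the smaller coefficient ring after specialization.

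First, I would confirm that the bar involution on $\SSj$ descends to a well-defined involution on $\mathbb{S}^{\jmath, \bL}_{n,d}$: the $\bbA$-involution $u \mapsto u^{-1}, v \mapsto v^{-1}$ corresponds, under the specialization $u = \bv^{\bL(s_0)}, v = \bv^{\bL(s_1)}$, to $\bv \mapsto \bv^{-1}$, which preserves the subring $\ZZ[\bv^{\pm \mbf{c}}]$ since $\mbf{c} \mid \bL(s_0)$ and $\mbf{c} \mid \bL(s_1)$. By specializing Proposition~\ref{prop:Abar}, we obtain
$$\overline{[A]^\bL} \in [A]^\bL + \sum_{B <_\alg A} \ZZ[\bv^{\pm \mbf{c}}]\, [B]^\bL,$$
because every monomial $u^a v^b \in \bbA$ appearing in the expression of $\overline{[A]}$ specializes to $\bv^{a\bL(s_0) + b\bL(s_1)}$, which lies in $\ZZ[\bv^{\pm \mbf{c}}]$ by the definition of $\mbf{c}$ as a gcd.

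Next, I would apply the standard lemma using the partial order $\le_\alg$ on $\Xi_{n,d}$. Since $\Xi_{n,d}$ is finite, the poset is well-founded, and the unitriangular bar relation above allows an inductive construction: if $A$ is diagonal set $\{A\}^\bL = [A]^\bL$; otherwise, having determined $\{B\}^\bL$ satisfying \eqref{eq:SCB} for all $B <_\alg A$, one seeks $\{A\}^\bL = [A]^\bL + \sum_{B <_\alg A} \pi_{B,A}\, \{B\}^\bL$ with $\pi_{B,A} \in \bv^{-\mbf{c}}\ZZ[\bv^{-\mbf{c}}]$. Writing out the bar-invariance condition $\overline{\{A\}^\bL} = \{A\}^\bL$ and comparing coefficients of each $[B]^\bL$ (for $B <_\alg A$) produces a triangular system whose unique solution in $\bv^{-\mbf{c}}\ZZ[\bv^{-\mbf{c}}]$ exists by the standard decomposition $\ZZ[\bv^{\pm \mbf{c}}] = \ZZ[\bv^{\mbf{c}}] \oplus \bv^{-\mbf{c}}\ZZ[\bv^{-\mbf{c}}]$ together with bar-stability of each factor. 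Uniqueness of $\{A\}^\bL$ is built into the characterizing property.

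The core difficulty has already been overcome in Proposition~\ref{prop:Abar}, whose proof relied on the delicate identification of the leading bar coefficient via Lusztig's bar-invariant basis $\CL_w$ for unequal parameters (Lemma~\ref{lem:bar}); the present theorem is formally a corollary. The only substantive verification specific to this statement is that the specialized bar transition coefficients genuinely lie in the subring $\ZZ[\bv^{\pm \mbf{c}}]$, which reduces to the gcd observation above. Thus the construction goes through and produces the desired canonical basis $\{\{A\}^\bL \mid A \in \Xi_{n,d}\}$ characterized by \eqref{eq:SCB}.
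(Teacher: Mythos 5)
Your proposal is correct and follows essentially the same route as the paper: specialize the unitriangular bar relation of Proposition~\ref{prop:Abar} (noting that all exponents $a\bL(s_0)+b\bL(s_1)$ are multiples of $\mbf{c}$, so coefficients land in $\ZZ[\bv^{\pm\mbf{c}}]$), then run the standard inductive argument of \cite[24.2.1]{Lu93} on the finite poset $(\Xi_{n,d},\le_\alg)$ with $\{A\}^\bL=[A]^\bL$ for diagonal $A$. The paper additionally records the bar-invariance of the specialized monomial basis $m_A^\bL$ alongside \eqref{eq:Abar}, but for the finite-dimensional Schur algebra the standard-basis unitriangularity you use is all that the existence-and-uniqueness argument requires.
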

\section{Stabilization algebra $\dKKj$}  \label{sec:stabj}
In this section,
we shall establish a stabilization property for the family of Schur algebras $\SSj$ as $d$ varies,
which leads to a quantum algebra $\dKKj$.
\subsection{A BLM-type stabilization}
   \label{sec:stab}
Let
\eq\label{eq:Xitn}
\Xit_n = \left\{
(a_{ij})_{-n\leq i,j\leq n} \in \text{Mat}_{N\times N}(\ZZ)
\middle|
\substack{\ds a_{-i,-j} =a_{ij} (\forall i, j),
\\
\ds a_{xy}\in\NN (\forall x\neq y),
a_{00}\in2\ZZ+1}
\right\}.
\endeq
Extending the partial ordering $\le_\alg$  for $\Xi_{n}$,
we define a partial ordering $\le_\alg$ on $\Xit_n$ using the same recipe
\eqref{eq:order}.
For each $A \in \Xit_n$ and $p \in2\NN$, we write
\eq
\p{A} = A+pI \in \Xit_n.
\endeq
Then $\p{A} \in \Xi_n$ for even $p \gg 0$.
Let $\pi$ be an indeterminate (independent of $u,v$),
and $\mathcal{R}_1$ be the subring of $\QQ(u,v)[\pi,\pi\inv]$ generated by, for $a\in\ZZ, k\in \ZZ_{>0}$,
\eq\label{def:R1}
r^{(1)}_{a,k},
\quad
r^{(2)}_{a,k},
\quad
v^a,
\quad
\textup{and}
\quad
u^a,
\endeq
where
\bA{\label{def:r_i}
r^{(1)}_{a,k}(u,v,\pi) &{}= \prod_{i=1}^k \frac{v^{-2(a-i)}\pi^2-1}{v^{-2i}-1},\\
r^{(2)}_{a,k}(u,v,\pi) &{}= \prod_{i=1}^k \frac{(u^{-2}v^{-2(a-1-i)}\pi+1)(v^{-2(a-i)}\pi-1)}{v^{-2i}-1}.
}
Let $\mathcal{R}_2$ be the subring of $\QQ(u,v)[\pi, \pi\inv]$ generated by, for $a\in\ZZ, k\in \ZZ_{>0}$,
\eq\label{def:R2}
r^{(1)}_{a,k},
\quad
\={r}^{(1)}_{a,k},
\quad
r^{(2)}_{a,k},
\quad
\={r}^{(2)}_{a,k},
\quad
v^a,
\quad
\textup{and}
\quad
u^a.
\endeq
We extend the bar-involution to $\mathcal{R}_2$ by requiring $\overline{\pi}=\pi^{-1}$.
\prop\label{prop:stab1}
Let $A_1, \ldots, A_f \in \Xit_n$ be such that $\co(A_i) = \ro(A_{i+1})$ for all $i$.
Then there exists matrices $Z_1, \ldots, Z_m \in \Xit_n$ and $\zeta_i(u,v,\pi) \in \mathcal{R}_1$
such that for  even integer $p\gg 0 ,$
\eq\label{eq:stab1}
[\p{A_1}] [\p{A_2}]  \cdots  [\p{A_f}] = \sum_{i=1}^m \zeta_i(u,v, v^{-p}) [\p{Z_i}].
\endeq
\endprop
\proof
We assume first that $f=2$ and $A_1$ is such that $A_1-bE_{h,h-1}^\theta$ is diagonal for some $h\in[1,n]$ and some $b\geq0$. Let $A_2=A=(a_{ij})$. For each $t=(t_i)_{-n\leq i\leq n}\in\NN^N$, we define
$$\zeta_t(u,v,\pi)=
u^{-\delta_{h,1}\sum_{l>0}{t_l}}v^{\beta(t)}
\prod_{h\neq l\in[-n,n]}\overline{\left[\begin{array}{cc}a_{h,l}+t_l\\t_l\end{array}\right]}
\prod_{i=1}^{t_h}\frac{v^{-2(a_{h,h}+t_h-i+1)}\pi^2-1}{v^{-2i}-1}\in\mathcal{R}_1$$
where $\beta(t)$ is defined in \eqref{beta1}. Though $\beta(t)$ depends on $A$, it is invariant if $A$ is replaced by $\p{A}$. Therefore we have the following formula for large enough even $p$ by \eqref{mult1}:
$$[\p{A}_1][\p{A}]=\sum_{t}\zeta_t(u,v,v^{-p})[\p{\widecheck{A}}_{t,h}].$$
The statement holds in this case.

We next assume that $f=2$ and $A_1$ is such that $A_1-cE_{h-1,h}^\theta$ is diagonal for some $h\in[1,n]$ and some $c\geq0$. Let $A_2=A=(a_{ij})$. Recall $\beta'(t)$ and $\beta''(t)$ in \eqref{beta2} and \eqref{beta3}, respectively. If $h\neq1$, for each $t=(t_i)_{-n\leq i\leq n}\in\NN^N$, we define
$$\zeta_t(u,v,\pi)=
v^{\beta'(t)}\prod_{h-1\neq l\in[-n,n]}\overline{\left[\begin{array}{cc}a_{h-1,l}+t_l\\t_l\end{array}\right]}\prod_{i=1}^{t_h}\frac{v^{-2(a_{h-1,h-1}+t_{h-1}-i+1)}\pi^2-1}{v^{-2i}-1}\in\mathcal{R}_1;$$
If $h=1$, we define
\begin{align*}
\zeta_t(u,v,\pi)=&
u^{\sum_{l\leq0}t_l}v^{\beta''(t)}
\overline{\left(\prod_{l=1}^n\frac{[a_{0,l}+t_l+t_{-l}]!}{[a_{0,l}]![t_l]![t_{-l}]!}\right)}\\
&\cdot\prod_{i=1}^{t_0}\frac{(u^{-2}v^{-2(a_{00}^\natural+t_0-1-i)}\pi+1)(v^{-2(a_{00}^\natural+t_0-i)}\pi-1)}{v^{-2i}-1}\in\mathcal{R}_1.
\end{align*}
It is clear that both $\beta'(t)$ and $\beta''(t)$ are invariant if $A$ is replaced by $\p{A}$. Therefore the following formula holds for large enough even $p$ by \eqref{mult2}:
$$[\p{A}_1][\p{A}]=\sum_{t}\zeta_t(u,v,v^{-p})[\p{\widehat{A}}_{t,h}].$$
Hence the proposition is verified in the present case.

Using induction on $f$, we know that the proposition holds for general $f$ in the case where $A_1,\ldots,A_f$ are Chevalley matrices (i.e. of one of the two types considered above).
It follows from \eqref{def:mA} and \eqref{eq:mA} that
for any $A\in\Xi_{n,d}$, there exists Chevalley matrices $B_1, B_2,\ldots,B_M$ such that
$$[B_1][B_2]\cdots[B_M]=[A]+\mbox{lower terms}.$$
Then we can prove the proposition by using induction on $\Psi(A)=\sum_{i<j}\sigma_{ij}(A)$. We omit the subsequent argument here since it is totally as the same as those for \cite[Proposition 4.2]{BLM90}.
\endproof


By an argument identical with \cite[Proposition 4.3]{BLM90}, we obtain below the stabilization of bar involution by allowing extra coefficients as seen in \eqref{def:R2}.
\begin{prop}\label{prop:stab2}
For any $A \in \Xit_n$, there exist matrices $T_1, \ldots, T_s\in \Xit_n$ and $\tau_i(u,v,\pi) \in \mathcal{R}_2$  such that,
for  even integer $p\gg 0,$
\eq\label{eq:stab2}
\={[\p{A}]} = \sum_{i=1}^s \tau_i(u,v, v^{-p}) [\p{T_i}].
\endeq
\end{prop}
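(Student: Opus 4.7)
The proof follows the strategy of \cite[Proposition~4.3]{BLM90}, proceeding by induction on the partial order $\le_\alg$ on $\Xit_n$. The role of the enlarged ring $\mathcal{R}_2$ is to accommodate coefficients obtained by applying the bar involution to scalars in $\mathcal{R}_1$; adjoining the conjugate generators $\={r}^{(1)}_{a,k}$ and $\={r}^{(2)}_{a,k}$ in \eqref{def:R2} is engineered precisely to make this closure tautological.

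For the base case, if $A$ is minimal under $\le_\alg$, then Proposition~\ref{prop:Abar} applied to $\p A$ forces $\={[\p A]} = [\p A]$, and the conclusion holds trivially with $s=1$, $T_1 = A$, $\tau_1 = 1$. For the inductive step, fix $A$ and assume the result for every $B <_\alg A$. By \eqref{eq:monop} there is a fixed sequence of Chevalley matrices $A^{(1)}, \ldots, A^{(x)}$ depending only on $A$ (not on $p$) such that $m_{\p A} = [\p{A^{(1)}}] \cdots [\p{A^{(x)}}]$ for all even $p \gg 0$. Iterating Proposition~\ref{prop:stab1} on this product expands $m_{\p A}$ in the standard basis as $\sum_k \zeta_k(u,v,v^{-p})\,[\p{Z_k}]$ with $\zeta_k \in \mathcal{R}_1$; by Proposition~\ref{prop:mono} this expansion is unitriangular with respect to $\le_\alg$ and has leading term $[\p A]$, so inverting it produces
\[
[\p A] = m_{\p A} + \sum_{B <_\alg A} \eta_B(u,v,v^{-p})\,[\p B], \qquad \eta_B \in \mathcal{R}_1
\]
(the inversion uses only polynomial operations in $\mathcal{R}_1$, so it stays inside $\mathcal{R}_1$). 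Applying the bar involution, which is an algebra endomorphism of $\SSj$, then gives
\[
\={[\p A]} = \={m_{\p A}} + \sum_{B <_\alg A} \={\eta_B(u,v,v^{-p})}\,\={[\p B]}.
\]
Each $\={\eta_B}$ lies in $\mathcal{R}_2$ by the very definition of $\mathcal{R}_2$, and each $\={[\p B]}$ admits an $\mathcal{R}_2$-expansion by the inductive hypothesis. To handle $\={m_{\p A}} = \={[\p{A^{(1)}}]} \cdots \={[\p{A^{(x)}}]}$, note that for each Chevalley factor Lemma~\ref{lem:bar} (whose sum over $y < g_{A^{(i)}} = \id$ is empty) computes $\={[\p{A^{(i)}}]}$ as an explicit $u,v$-monomial multiple of $[\p{A^{(i)}}]$ whose exponent is a manifestly $\mathcal{R}_2$-valued function of $v^{-p}$; substituting these into the product and again invoking Proposition~\ref{prop:stab1} expresses $\={m_{\p A}}$ in the standard basis with coefficients in $\mathcal{R}_2$, completing the induction.

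The main obstacle is bookkeeping: one must verify that every coefficient produced during the argument -- those coming from Proposition~\ref{prop:stab1}, those from inverting the unitriangular change-of-basis matrix over $\mathcal{R}_1$, and those arising after passing through bar -- lies in $\mathcal{R}_2$. A secondary subtlety, emphasized in the introduction, is that the multiparameter monomial basis elements need not be bar-invariant, so one cannot short-circuit by equating $\={m_{\p A}}$ with $m_{\p A}$; the expansion of $\={m_{\p A}}$ via Proposition~\ref{prop:stab1} applied to the bars of the Chevalley factors is the appropriate substitute.
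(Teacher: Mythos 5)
Your argument is correct and is essentially the proof the paper intends: the paper itself only says "by an argument identical with \cite[Proposition 4.3]{BLM90}," and your write-up is exactly that induction on $\le_\alg$, using \eqref{eq:monop} and Proposition~\ref{prop:stab1} to solve for $[\p{A}]$ in terms of $m_{\p{A}}$ and lower terms, then applying the bar and the inductive hypothesis, with $\mathcal{R}_2$ absorbing the barred $\mathcal{R}_1$-coefficients. One minor remark: since Lemma~\ref{lem:bar} with $g=\id$ has an empty lower-order sum, the Chevalley standard basis elements are genuinely bar-invariant in $\SSj$ (the multiplier is exactly $1$, not just a monomial), so $\={m_{\p{A}}}=m_{\p{A}}$ and your extra caution at that step, while harmless, is not needed.
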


Let $\dKKj$ be the free $\bbA$-module with an $\bbA$-basis given by the symbols $[A]$ for $A\in \Xit_n$ (which will be
called a standard basis of $\dKKj$).
By Propositions~ \ref{prop:stab1}--\ref{prop:stab2} and applying a specialization at $\pi=1$ (note that $\zeta_i(u,v, 1)\in\bbA$), we have the following corollary.
\begin{cor}\label{cor:stab}
There is a unique associative $\bbA$-algebra structure on $\dKKj$ with multiplication given by
\[
[A_1]  [A_2]  \cdots  [A_f]
= \bc{
\sum_{i=1}^m \zeta_i(u,v, 1) [Z_i]
&\tif \co(A_i) = \ro(A_{i+1})\textup{ for all }i,
\\
0
&\textup{otherwise}.
}
\]
Moreover, the map $\bar{\empty}: \dKKj \rw \dKKj$ given by
$\={[A]} = \sum_{i=1}^s \tau_i(u,v, 1) [T_i]$
is an $\bbA$-linear involution.
\end{cor}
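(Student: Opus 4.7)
The plan is to read off the algebra structure and the bar involution directly from Propositions~\ref{prop:stab1} and \ref{prop:stab2} by specializing the parameter $\pi$ to $1$, then to promote the associativity (and the involution property) from the Schur algebras $\SSj$ to $\dKKj$ via a limiting argument in $p$. First, I would observe that every generator of $\mathcal{R}_1$ and $\mathcal{R}_2$ is regular at $\pi=1$: the denominators of $r^{(1)}_{a,k}$ and $r^{(2)}_{a,k}$ depend only on $v$, and $\overline{\pi}\mid_{\pi=1}=1$, so specialization at $\pi=1$ yields a well-defined ring homomorphism into $\bbA$. Consequently, the formula
\[
[A_1][A_2]\cdots[A_f] := \sum_{i=1}^m \zeta_i(u,v,1)[Z_i]
\]
(when $\co(A_i)=\ro(A_{i+1})$ for all $i$, and zero otherwise) does produce an element of $\dKKj$.

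Second, I would establish well-definedness and uniqueness of this algebra structure. Since $\{[\p{Z}] \mid Z \in \Xit_n, p \gg 0\}$ is part of the $\bbA$-basis of $\SSj$, the stabilized data $(Z_i,\zeta_i)$ in Proposition~\ref{prop:stab1} are determined by the product in $\SSj$ for $p \gg 0$, which forces the $\bbA$-expansion coefficients in $\dKKj$ after specialization. Any associative algebra structure on $\dKKj$ whose products of standard basis elements stabilize to those of $\SSj$ must therefore coincide with the one defined above.

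Third, for associativity: given $A_1, A_2, A_3 \in \Xit_n$ with matching row/column sums, apply Proposition~\ref{prop:stab1} iteratively to both $([\p{A_1}][\p{A_2}])[\p{A_3}]$ and $[\p{A_1}]([\p{A_2}][\p{A_3}])$, obtaining expansions
\[
\sum_i f_i(v^{-p})[\p{Z_i}] \quad\text{and}\quad \sum_i g_i(v^{-p})[\p{Z_i}]
\]
with $f_i, g_i \in \mathcal{R}_1$. Associativity of $\SSj$ gives $f_i(v^{-p}) = g_i(v^{-p})$ for all sufficiently large even $p$; since a nonzero element of $\mathcal{R}_1 \subset \QQ(u,v)[\pi,\pi\inv]$ cannot vanish at $v^{-p}$ for infinitely many $p$, we conclude $f_i = g_i$ as elements of $\mathcal{R}_1$. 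Specializing $\pi=1$ yields associativity in $\dKKj$. The bar involution on $\dKKj$ is defined analogously through Proposition~\ref{prop:stab2} by $\overline{[A]} := \sum_i \tau_i(u,v,1)[T_i]$; that $\overline{\overline{\phantom{x}}} = \mathrm{id}$ is transferred from $\SSj$ by the same limiting argument, and $\bbA$-linearity (with respect to $\bar{\empty}$ on $\bbA$) is automatic from the identity $\overline{\tau_i(u,v,\pi)}\mid_{\pi=1}=\overline{\tau_i(u,v,1)}$.

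The main obstacle is precisely the limiting argument: one must verify that identities in $\SSj$ which hold for all $p \gg 0$ lift to identities in $\mathcal{R}_1$ (or $\mathcal{R}_2$) and survive the specialization at $\pi=1$. The two ingredients that make this work are (a) the uniqueness of the expansions in Propositions~\ref{prop:stab1}--\ref{prop:stab2}, which rests on the linear independence of the standard basis of $\SSj$, and (b) the explicit regularity at $\pi=1$ of the generators of $\mathcal{R}_1$ and $\mathcal{R}_2$. Everything else is formal.
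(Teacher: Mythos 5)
Your proposal is correct and follows essentially the same route as the paper, which simply invokes Propositions~\ref{prop:stab1}--\ref{prop:stab2} together with the specialization $\pi=1$ (the standard Beilinson--Lusztig--MacPherson argument); your write-up merely makes explicit the regularity at $\pi=1$ and the Vandermonde-type step showing that identities holding for infinitely many $p$ lift to identities in $\mathcal{R}_1$ and $\mathcal{R}_2$.
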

The following multiplication formula in $\dKKj$ follows directly from
Theorem \ref{thm:multformula2} by the stabilization construction.
\begin{prop}
 \label{prop:multK}
Let $A, B, C \in \Xit_{n}$ and $h\in[1,n]$.
\enu
\item[(1)] If $B-bE_{h,h-1}^\theta$ is diagonal and $\co(B)=\ro(A)$, then
\begin{equation}
[B] [A]=\sum_{t}u^{-\delta_{h,1}\sum_{l>0}{t_l}}v^{\beta(t)}\prod_{l=-n}^{n}\overline{\left[\begin{array}{cc}a_{h,l}+t_l\\t_l\end{array}\right]} [\widecheck{A}_{t,h}],
\end{equation} where
$t=(t_i)_{-n\leq i\leq n}\in\NN^N$
with $\sum_{i=-n}^n t_i=b$ such that
$$\left\{\begin{array}{ll}
t_i\leq a_{h-1,i} & \mbox{if~} i+1\neq h>1;\\
t_i+t_{-i}\leq a_{0,i} & \mbox{if~} h=1, i\neq0,
\end{array}
\right.$$
\item[(2)] Suppose $C-cE_{h-1,h}^\theta$ is diagonal and $\co(C)=\ro(A)$.
\\
If $h\neq1$ then
\begin{equation}
[C] [A]=\sum_{t}v^{\beta'(t)}\prod_{l=-n}^{n}\overline{\left[\begin{array}{cc}a_{h-1,l}+t_l\\t_l\end{array}\right]} [\widehat{A}_{t,h}],
\end{equation} where $t=(t_i)_{-n\leq i\leq n}\in\NN^N$ with $\sum_{i=-n}^n t_i=c$ such that $t_i\leq a_{h,i}$ if $i\neq h$.
\\
If $h=1$ then
\begin{equation}\label{mfinK3}
[C][A]=\sum_{t}u^{\sum_{l\leq0}t_l}v^{\beta''(t)}
\overline{\left(\frac{\prod_{k=a^\natural_{00}+1}^{a^\natural_{00}+t_{0}}[k](u^2v^{2(k-1)}+1)}
{\prod_{k=1}^{t_{0}}[k]}\prod_{l=1}^n\frac{[a_{0,l}+t_l+t_{-l}]!}{[a_{0,l}]![t_l]![t_{-l}]!}\right)}[\widehat{A}_{t,1}],
\end{equation}
where $t=(t_i)_{-n\leq i\leq n}\in\NN^N$ with $\sum_{i=-n}^n t_i=c$ such that $t_i\leq a_{1,i}$ if $i\neq 1$.
\endenu
\end{prop}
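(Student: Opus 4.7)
The approach is to read Proposition~\ref{prop:multK} off of Theorem~\ref{thm:multformula2} using the stabilization framework of Proposition~\ref{prop:stab1} and Corollary~\ref{cor:stab}. For definiteness let $B \in \Xit_n$ be Chevalley with $B - bE^\theta_{h,h-1}$ diagonal and let $A \in \Xit_n$ satisfy $\co(B) = \ro(A)$. For any even integer $p \gg 0$, the shifted matrices $\p{A}, \p{B}$ lie in $\Xi_{n, d_p}$ for some $d_p \in \NN$, and Theorem~\ref{thm:multformula2}(1) computes $[\p{B}][\p{A}]$ in $\mathbb{S}^{\jmath}_{n, d_p}$. Since the diagonal shift $A \mapsto \p{A}$ preserves all off-diagonal entries, the indexing set $\{t\}$ agrees with the one appearing in Proposition~\ref{prop:multK}, and the resulting matrices satisfy $\widecheck{\p{A}}_{t,h} = \p{(\widecheck{A}_{t,h})}$.

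The central observation is that the $p$-dependence of the coefficients is confined to a single factor. The prefactor $u^{-\delta_{h,1}\sum_{l>0}t_l}$ and the binomials $\overline{\lrb{a_{h,l}+t_l}{t_l}}$ for $l \neq h$ are manifestly $p$-independent. A direct inspection of \eqref{beta1} shows that $\beta(t)$ is also $p$-invariant: the only diagonal-entry contributions are $a_{h,h}\sum_{l \geq h} t_l$ (from the $k = h$ summand of $\sum_{k \leq l} t_l a_{h,k}$) and $-a_{h-1,h-1}\sum_{l \geq h} t_l$ (from the $k = h-1$ summand of $-\sum_{k<l}t_l a_{h-1,k}$), whose respective $p$-shifts $\pm p \sum_{l \geq h} t_l$ cancel. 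Analogous cancellations verify the $p$-invariance of $\beta'(t)$ and $\beta''(t)$. The entire $p$-dependence is thus carried by $\overline{\lrb{a_{h,h}+p+t_h}{t_h}}$, which under the substitution $\pi = v^{-p}$ matches $r^{(1)}_{a_{h,h}+t_h+1,\, t_h}(u,v,\pi) \in \mathcal{R}_1$ from \eqref{def:r_i}.

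Applying the specialization $\pi = 1$ of Corollary~\ref{cor:stab} now yields the Part~(1) formula in $\dKKj$, valid for arbitrary (possibly negative) integer values of $a_{h,h}$. The constraint $t_{h-1} \leq a_{h-1,h-1}+p$ (for $h > 1$) or $2t_0 \leq a_{0,0}+p$ (for $h = 1$) coming from Theorem~\ref{thm:multformula2} becomes vacuous as $p \to \infty$ and drops out, leaving precisely the off-diagonal constraints $t_i \leq a_{h-1,i}$ ($i+1 \neq h > 1$) and $t_i + t_{-i} \leq a_{0,i}$ ($h = 1$, $i \neq 0$) stated in Proposition~\ref{prop:multK}. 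Part~(2) with $h \neq 1$ is handled identically; for $h = 1$ the type-C quantum factor $\frac{[a_{00}^\natural+p/2+t_0]_\fc^!}{[a_{00}^\natural+p/2]_\fc^!\,[t_0]!}$ matches $r^{(2)}_{a_{00}^\natural+t_0+1,\, t_0}(u,v,v^{-p})$ via the defining identity $[2k]_\fc = [k](u^2 v^{2(k-1)}+1)$ from \eqref{def:c-factorial} (using that $p$ is even so $a_{00}^\natural$ shifts by the integer $p/2$). The main bookkeeping pitfall is thus the cancellation of $p$-shifts in $\beta, \beta', \beta''$, which the above paragraph resolves; beyond this, no algebraic input beyond Theorem~\ref{thm:multformula2} and Corollary~\ref{cor:stab} is required.
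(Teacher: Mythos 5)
Your proposal is correct and follows exactly the route the paper takes: the paper's proof of Proposition~\ref{prop:multK} is the one-line remark that it follows from Theorem~\ref{thm:multformula2} by the stabilization construction, and the details you supply (the $p$-invariance of $\beta,\beta',\beta''$, the identification of the sole $p$-dependent factor with $r^{(1)}$ or $r^{(2)}$ at $\pi=v^{-p}$, and the dropping of the diagonal-entry constraints) are precisely those carried out in the paper's proof of Proposition~\ref{prop:stab1} before specializing $\pi=1$ via Corollary~\ref{cor:stab}.
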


\subsection{Monomial and canonical bases for $\dKKj$}
The proposition below follows from Proposition \ref{prop:mono} by the stabilization construction.
\begin{prop}  \label{prop:mAK'}
For any $A\in\Xit_{n}$, there exist Chevalley matrices $A^{(1)}, \ldots, A^{(x)}$ in $\Xit_{n}$
satisfying $\ro(A^{(1)})=\ro(A)$, $\co(A^{(x)})=\co(A)$, $\co(A^{(i)})=\ro(A^{(i+1)})$ for $1\le i\le x-1$
\eq \label{eq:mAK}
[A^{(1)}] [A^{(2)}] \cdots  [A^{(x)}]\in [A]+ \sum_{B <_\alg A} \bbA [B] \; \in \dKKj.
\endeq
\end{prop}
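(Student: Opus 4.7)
The plan is to derive Proposition~\ref{prop:mAK'} from the finite-level monomial basis construction (Proposition~\ref{prop:mono}) by passing through the stabilization procedure of Proposition~\ref{prop:stab1} and Corollary~\ref{cor:stab}. Given $A\in\Xit_n$, I would first apply the algorithm of \cite[Theorem~3.10]{BKLW18}, interpreted on $\Xit_n$ rather than $\Xi_n$, to produce Chevalley matrices $A^{(1)},\ldots,A^{(x)}\in\Xit_n$ with the required row/column compatibilities. The lift is justified by the shift equivariance \eqref{eq:monop}, namely $(A+pI)^{(i)}=A^{(i)}+pI$: the algorithm depends only on off-diagonal data and row/column sums, both of which transport unchanged under $A\mapsto\p{A}$.

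Next, for even $p\gg0$ so that $\p{A}$ and each $\p{A^{(i)}}$ lie in $\Xi_n$, Proposition~\ref{prop:mono} in $\SSj$ gives
\[
[\p{A^{(1)}}]\cdots[\p{A^{(x)}}]=[\p{A}]+\sum_{B'<_\alg\p{A}}c^{(p)}_{B'}[B'],\qquad c^{(p)}_{B'}\in\bbA,
\]
while Proposition~\ref{prop:stab1} provides matrices $Z_1,\ldots,Z_m\in\Xit_n$ and coefficients $\zeta_j\in\mathcal{R}_1$ with
\[
[\p{A^{(1)}}]\cdots[\p{A^{(x)}}]=\sum_{j=1}^m\zeta_j(u,v,v^{-p})\,[\p{Z_j}]
\]
for the same range of $p$. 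I would then match the two expansions term by term. Since $\sigma_{ij}$ for $i<j$ involves only off-diagonal entries, the order $<_\alg$ is preserved under $C\mapsto\p{C}$, and consequently each $Z_j$ equals either $A$ or some matrix strictly below $A$ in $<_\alg$.

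Finally, comparing the coefficients of $[\p{A}]$ on both sides forces the unique $\zeta(u,v,\pi)\in\mathcal{R}_1\subset\QQ(u,v)[\pi,\pi^{\pm1}]$ labelled by $Z_j=A$ to agree with $1$ at the infinitely many specialisations $\pi=v^{-p}$, and hence to equal $1$ identically; in particular $\zeta(u,v,1)=1$. Specialising the stabilised product at $\pi=1$ via Corollary~\ref{cor:stab} then yields
\[
[A^{(1)}][A^{(2)}]\cdots[A^{(x)}]=[A]+\sum_{B<_\alg A}\zeta_B(u,v,1)\,[B]\in\dKKj,
\]
as required. The main obstacle, as I see it, is the bookkeeping that every ingredient — the BKLW algorithm, the order $<_\alg$, the standard basis element $[A]$, and the coefficients on both sides — commutes with the shift $A\mapsto\p{A}$; once that is secured, the remaining step is simply the density argument isolating the leading coefficient.
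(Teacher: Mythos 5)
Your argument is correct and is exactly the route the paper intends: the paper states that the proposition "follows from Proposition~\ref{prop:mono} by the stabilization construction," and your write-up simply fills in the details of that derivation (lifting the BKLW algorithm via the shift-equivariance \eqref{eq:monop}, matching the finite-level expansion against Proposition~\ref{prop:stab1} for infinitely many $p$, and specializing at $\pi=1$). No discrepancies.
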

By abuse of notation, we denote the product in $\dKKj$ by
\eq
m_A = [A^{(1)}] [A^{(2)}] \cdots  [A^{(x)}] \in \dKKj.
\endeq
Hence $\{m_A~|~A\in\Xit_{n}\}$ forms a basis for $\dKKj$ (called a \emph{monomial basis}).
Similar to Section~\ref{sec:SCB}, we define, by abuse of notation, elements $[A]^\bL, m_A^\bL, \{A\}^\bL$ to be the according basis elements of $\dKKj$ at the specialization $u = \bv^{\bL(s_0)}, v= \bv^{\bL(s_1)}$.
\begin{thm}\label{thm:KCB}
There exists a canonical basis $\dBj = \{\{A\}^\bL\ |\ A \in \Xi_{n,d}\}$ for $\dKKj$ at the specialization  $u=\bv^{\bL(s_0)}, v=\bv^{\bL(s_1)}$,
which is characterized by the property \eqref{eq:SCB}.
\end{thm}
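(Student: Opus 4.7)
The plan is to mimic the proof of Theorem~\ref{thm:SCB} in the stabilization algebra $\dKKj$, invoking the standard Lusztig construction \cite[24.2.1]{Lu93} once two unitriangular relations are in place at the specialization $u = \bv^{\bL(s_0)}, v = \bv^{\bL(s_1)}$:
\begin{align*}
\overline{[A]^\bL} &\in [A]^\bL + \sum_{B <_\alg A} \ZZ[\bv^{\pm \mbf{c}}]\, [B]^\bL,\\
\overline{m_A^\bL} = m_A^\bL &\in [A]^\bL + \sum_{B <_\alg A} \ZZ[\bv^{\pm \mbf{c}}]\, [B]^\bL.
\end{align*}
Once both are established, the standard inductive argument on the partial order $\le_\alg$ produces a unique bar-invariant element $\{A\}^\bL$ with the required triangular expansion \eqref{eq:SCB}, and varying $A \in \Xit_n$ yields the desired basis $\dBj$.

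First, I would establish the bar-triangularity of the standard basis in $\dKKj$. By Corollary~\ref{cor:stab}, the bar involution on $\dKKj$ is obtained via the stabilization formula $\overline{[A]} = \sum_i \tau_i(u,v,1)[T_i]$ of Proposition~\ref{prop:stab2}, where the $T_i$ and $\tau_i$ are extracted from the $p \gg 0$ behavior of $\overline{[\p{A}]}$ in $\SSj$. Since Proposition~\ref{prop:Abar} gives $\overline{[\p{A}]} \in [\p{A}] + \sum_{B <_\alg A} \bbA\,[\p{B}]$ in $\SSj$ (note that adding $pI$ to both $A$ and $B$ preserves the order $\le_\alg$, which is defined via row/column sums and the $\sigma_{ij}$ with $i<j$), the analogous triangularity propagates to $\dKKj$ and then to its specialization $u = \bv^{\bL(s_0)}, v = \bv^{\bL(s_1)}$. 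The leading coefficient $1$ is also preserved in the limit.

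Next, I would upgrade the unitriangular identity $m_A \in [A] + \sum_{B <_\alg A} \bbA\,[B]$ of Proposition~\ref{prop:mAK'} to a \emph{bar-invariant} statement at the specialization. Specialize to get $m_A^\bL \in [A]^\bL + \sum_{B <_\alg A} \ZZ[\bv^{\pm\mbf{c}}]\,[B]^\bL$. For bar-invariance of $m_A^\bL$, I would use the stabilization construction: each $m_A$ in $\dKKj$ equals $[A^{(1)}]\cdots[A^{(x)}]$ for Chevalley matrices $A^{(i)}$, and the same expression defines $m_{\p{A}} = [\p{A^{(1)}}]\cdots[\p{A^{(x)}}]$ in $\SSj$ for $p \gg 0$ via \eqref{eq:monop}. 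By the corresponding step in the proof of Theorem~\ref{thm:SCB}, $m_{\p{A}}^\bL$ is bar-invariant in $\mathbb{S}^{\jmath,\bL}_{n,d}$; transporting this through the stabilization (which is compatible with the bar involution by Proposition~\ref{prop:stab2}) yields $\overline{m_A^\bL} = m_A^\bL$ in $\dKKj$.

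The main obstacle is the last step: verifying cleanly that bar-invariance in $\SSj$ for all sufficiently large $p$ actually transfers to bar-invariance in $\dKKj$, because the bar involution in $\dKKj$ is defined only after a careful specialization $\pi = 1$ of rational functions in $\mathcal{R}_2$ that a priori involve $\bv^{-p}$. The point to check is that if $\overline{m_{\p{A}}^\bL} - m_{\p{A}}^\bL = 0$ in $\mathbb{S}^{\jmath,\bL}_{n,d}$ for all $p \gg 0$, then the coefficients of the expansion (which lie in $\mathcal{R}_2$ evaluated at $\pi = \bv^{-p}$) vanish as polynomials in $\pi$, forcing vanishing at $\pi = 1$ and hence in $\dKKj$. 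Once this is settled, Lusztig's lemma finishes the proof, and the uniqueness characterization \eqref{eq:SCB} is automatic.
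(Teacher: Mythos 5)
Your proposal is correct and follows essentially the same route as the paper, which proves Theorem~\ref{thm:KCB} by combining the stabilized bar involution (Proposition~\ref{prop:stab2}, Corollary~\ref{cor:stab}) and the stabilized monomial basis (Proposition~\ref{prop:mAK'}) with the standard argument of \cite[24.2.1]{Lu93}, exactly as in Theorem~\ref{thm:SCB}. The only refinement I would suggest is to run the Laurent-polynomial-in-$\pi$ vanishing argument over $\bbA$ first (so that the points $\pi = v^{-p}$ are genuinely distinct) and specialize $u=\bv^{\bL(s_0)}, v=\bv^{\bL(s_1)}$ afterwards, rather than specializing before letting $p$ vary.
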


\section{A different stabilization algebra $\dKKi$}\label{sec:i}

In this section we formulate a variant of Schur algebras and their corresponding stabilization algebras.
We construct the distinguished bases of these algebras.
Recall $N=2n+1$. 
\subsection{$\imath$-Schur algebras }  \label{sec:Si}

Recall $\Xi_{n,d}$ from \eqref{def:Xi}. Let
\eq \label{eq:Xii}
\iXi = \{ A \in \Xi_{n,d}~|~\ro(A)_0 = 1 = \co(A)_0 \}.
\endeq
Recall $\Ld_{n,d}$ \eqref{def:Ld}.
Let
\begin{align*}
\Ld_{n,d}^\imath =\{ \ld = (\ld_n,\ldots,\ld_1,1,\ld_1,\ldots,\ld_n)\in\Ld_{n,d} \}.
\end{align*}
The lemma below is the $\imath$-analog of Lemma~\ref{lem:kappa}, which follows by a similar argument.
\begin{lem}\label{lem:kappai'}
The map
$
\kappa^{\imath}:  \bigsqcup_{\ld,\mu\in\Ld_{n,d}^\imath} \{\ld\} \times \D_{\ld\mu} \times \{\mu\} \longrightarrow \iXi
$
sending $(\ld, g, \mu)$ to $(|R_i^\ld \cap g R_j^\mu|)$ is a bijection.
\end{lem}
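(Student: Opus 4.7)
The plan is to deduce this lemma directly from Lemma~\ref{lem:kappa} by recognizing $\kappa^\imath$ as an appropriate restriction of $\kappa$. First I would verify that the map lands in $\iXi$: by the definition \eqref{def:Ld}, an element $\ld \in \Ld_{n,d}^\imath$ has its middle entry equal to $2\ld_0+1 = 1$, i.e. $\ld_0 = 0$. Hence if $\ld,\mu \in \Ld_{n,d}^\imath$ and $A = (|R_i^\ld \cap g R_j^\mu|)$, then by Lemma~\ref{lem:kappa} we have $\ro(A) = \ld$ and $\co(A) = \mu$, so $\ro(A)_0 = \co(A)_0 = 1$, placing $A$ in $\iXi$.

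Next, since $\Ld_{n,d}^\imath \subseteq \Ld_{n,d}$, the map $\kappa^\imath$ is the restriction of the bijection $\kappa$ from Lemma~\ref{lem:kappa} to a subset of its domain, and is therefore automatically injective.

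For surjectivity, I would take an arbitrary $A \in \iXi$ and apply Lemma~\ref{lem:kappa}: there exist unique $\ld,\mu \in \Ld_{n,d}$ and $g \in \D_{\ld\mu}$ with $\kappa(\ld,g,\mu) = A$, where $\ld = \ro(A)$ and $\mu = \co(A)$. The defining condition $\ro(A)_0 = \co(A)_0 = 1$ forces the middle entry of both $\ld$ and $\mu$ to equal $1$, which by \eqref{def:Ld} means $\ld_0 = \mu_0 = 0$; hence $\ld,\mu \in \Ld_{n,d}^\imath$, and the triple $(\ld, g, \mu)$ lies in the domain of $\kappa^\imath$.

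There is no real obstacle; the only point to be careful about is the indexing convention whereby the $0$-th part $\ld_0$ of $\ld \in \Ld_{n,d}$ contributes $2\ld_0 + 1$ to the central entry of the composition, so the $\imath$-condition $\ld_0=0$ corresponds exactly to the normalization $\ro(A)_0=\co(A)_0=1$ singled out in \eqref{eq:Xii}.
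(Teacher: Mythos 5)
Your proof is correct and complete. The paper itself offers no details here---it merely asserts that the $\imath$-analog ``follows by a similar argument'' to Lemma~\ref{lem:kappa}---and your observation that $\kappa^\imath$ is literally the restriction of the bijection $\kappa$, with the condition $\ld_0=\mu_0=0$ (equivalently, middle entry $2\ld_0+1=1$) matching exactly the condition $\ro(A)_0=\co(A)_0=1$ defining $\iXi$, is a clean and fully adequate justification of that claim.
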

Now we define the {\em $\imath$-Schur algebra } as
\eq
\SSi = \textup{End}_{\HH}
\big(
\mathop{\oplus}_{\ld\in \Ld_{n,d}^\imath} x_\ld \HH \big).
\endeq
By definition the algebra $\SSi$ is naturally a subalgebra of $\SSj$.
Moreover, both  $\{  e_A ~|~ A \in \iXi \}$ and $\{  [A] ~|~ A \in \iXi \}$ are bases of $\SSi$ as a free $\bbA$-module.
\subsection{Monomial and canonical bases for $\SSi$}
\prop\label{thm:min gen'}
For each $A \in \iXi$, we have $m_A\in\SSi$. Hence the set $\{m_A~|~A \in \iXi \}$ forms an $\bbA$-basis of $\SSi$. Furthermore, we have
$m_A \in [A] +\sum_{B\in \iXi, B<_\alg A} \bbA [B]$.
\endprop
\proof
It follows from \cite[Proposition 5.6]{BKLW18} thanks to Remark~\ref{specialize}.
\endproof

\begin{thm}\label{thm:SiCB}
At the specialization $u = \bv^\bL(s_0), v= \bv^\bL(s_1)$, there is a canonical basis
$\mathfrak B_{n,d}^{\imath} = \{ \{A\}^\bL~|~A \in \iXi \}$ of $\SSi$  such that $\overline{\{A\}^\bL } =\{A\}^\bL$ and
$\{A\}^\bL \in [A]^\bL + \sum_{B\in \iXi, B<_\alg A} \bv^{-\mathbf{c}} \ZZ[\bv^{-\mathbf{c}}] [B]^\bL$.
Moreover, we have
$\mathfrak B_{n,d}^{\imath} = \mathfrak B_{n,d}^{\jmath}\cap \SSi$.
\end{thm}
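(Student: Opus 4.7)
The plan is to imitate the construction of Section~\ref{sec:SCB} directly inside $\SSi$ at the specialization $u=\bv^{\bL(s_0)},\,v=\bv^{\bL(s_1)}$, using the $\imath$-monomial basis of Proposition~\ref{thm:min gen'} in place of the $\jmath$-monomial basis of Proposition~\ref{prop:mono}, and then to extract the coincidence $\mathfrak B_{n,d}^\imath = \mathfrak B_{n,d}^\jmath \cap \SSi$ from the uniqueness clause in Lusztig's construction.

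The only non-formal step is to check that the bar involution on $\SSj$ restricts to $\SSi$. By Proposition~\ref{thm:min gen'} each $m_A$ with $A\in\iXi$ lies in $\SSi$, and at the chosen specialization these elements are bar-invariant (cf.~Section~\ref{sec:SCB}); hence $\mathbb{S}^{\imath,\bL}_{n,d}$ is spanned by bar-invariant elements and is a bar-stable subalgebra of $\mathbb{S}^{\jmath,\bL}_{n,d}$. Consequently the bar expansion $\overline{[A]^\bL} \in [A]^\bL + \sum_{B <_\alg A} \bbA [B]^\bL$ supplied by Proposition~\ref{prop:Abar}, when restricted to $A\in\iXi$, automatically involves only $B\in\iXi$, since both $[A]^\bL$ and $\overline{[A]^\bL}$ lie in $\mathbb{S}^{\imath,\bL}_{n,d}$ while $\{[B]^\bL\mid B\in\iXi\}$ is a basis of the latter. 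Together with the unitriangularity $m_A^\bL \in [A]^\bL + \sum_{B\in\iXi,\,B<_\alg A}\bbA [B]^\bL$ from Proposition~\ref{thm:min gen'}, Lusztig's lemma (\cite[24.2.1]{Lu93}) produces a unique bar-invariant family $\{\{A\}^\bL\}_{A\in\iXi}$ in $\mathbb{S}^{\imath,\bL}_{n,d}$ with the required expansion.

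For the identity $\mathfrak B_{n,d}^\imath = \mathfrak B_{n,d}^\jmath\cap \SSi$ I argue by uniqueness in both directions. If $A\in\iXi$, then the element $\{A\}^\bL$ constructed above already satisfies the defining conditions of the $\jmath$-canonical basis element indexed by $A$: its expansion in the $\imath$-standard basis is in particular an expansion in the larger $\jmath$-standard basis, and bar-invariance is intrinsic. By Theorem~\ref{thm:SCB} it must coincide with $\{A\}^\bL_\jmath$, yielding $\mathfrak B_{n,d}^\imath\subseteq \mathfrak B_{n,d}^\jmath\cap \SSi$. Conversely, if $A\in\Xi_{n,d}\setminus \iXi$, then the leading term $[A]^\bL$ in the expansion of $\{A\}^\bL_\jmath$ is not in $\mathbb{S}^{\imath,\bL}_{n,d}$; since $\{[B]^\bL\mid B\in\iXi\}$ is a basis of this subalgebra, $\{A\}^\bL_\jmath$ itself fails to lie in $\mathbb{S}^{\imath,\bL}_{n,d}$. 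The only point of substance is the bar-stability of $\SSi$, handled above through the $\imath$-monomial basis; after that the argument is a formal repetition of Section~\ref{sec:SCB} together with the usual uniqueness dichotomy for canonical bases.
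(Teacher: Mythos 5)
Your proposal is correct and follows essentially the same route as the paper: Proposition~\ref{thm:min gen'} supplies the unitriangular $\imath$-monomial basis, Lusztig's standard argument \cite[24.2.1]{Lu93} produces the canonical basis, and both inclusions of $\mathfrak B_{n,d}^{\imath} = \mathfrak B_{n,d}^{\jmath}\cap \SSi$ follow from the uniqueness characterizations. Your one substantive addition, the bar-stability of $\mathbb{S}^{\imath,\bL}_{n,d}$ via bar-invariance of the $m_A^\bL$, is valid but can be obtained more directly: the bar involution preserves each summand $\Hom_\HH(x_\mu\HH,x_\ld\HH)$ by its very definition, and since $B<_\alg A$ forces $\ro(B)=\ro(A)$ and $\co(B)=\co(A)$, the triangular expansion of $\overline{[A]^\bL}$ for $A\in\iXi$ automatically involves only $B\in\iXi$.
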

\proof
The first half statement on the canonical basis follows by Proposition~\ref{thm:min gen'} and a standard argument (cf. \cite[24.2.1]{Lu93}).
The second half statement follows from the uniqueness characterization of the canonical basis $\mathfrak B_{n,d}^{\imath}$.
\endproof

\subsection{Stabilization algebra of type $\imath$}
 \label{sec:Ki}
We define two subsets of $\Xit_n$  \eqref{eq:Xitn} as follows:
\eq
\Xit_n^< = \{A =(a_{ij}) \in \Xit_n ~|~ a_{00} < 0\},
\quad
\Xit_n^> = \{A  =(a_{ij})  \in \Xit_n ~|~ a_{00} > 0\}.
\endeq
For any matrix $A \in \Xit_n$ and $p\in2\NN$, we define
\eq  \label{eq:ppA}
\pp{A} = A + p(I - E^{00}).
\endeq
\lem \label{lem:stab-ij}
For $A_1, A_2,\ldots, A_f \in \Xit_n^>$, there exists $\mathcal{Z}_i \in \Xit_n^>$ and
$\zeta^\imath_i(u,v,\pi) \in \mathcal{R}_1$ such that for all even integers
$p\gg 0$, we have an identity in $\SSj$ of the form:
\[
[\pp{A_1}]  [\pp{A_2}]  \ldots   [\pp{A_f}] = \sum\limits_{i=1}^m \zeta^\imath_i(u, v, v^{-p}) [\pp{\mathcal Z}_i].
\]
\endlem
\proof
The proof is similar to the proof of Proposition~\ref{prop:stab1} where $\p{A} = A+pI$ is used instead of
$\pp{A}$.
\endproof
Consequently, the vector space $\dKKjp$ over $\bbA$ spanned by the symbols
$[A]$, for $A \in \Xit_n^>$, is a stabilization algebra whose multiplicative structure is given by
(with $f=2$; associativity follows from $f=3$):
\eq
[A_1]  [A_2]  \cdots  [A_f]
= \bc{
\sum\limits_{i=1}^m \zeta^\imath_i(u,v, 1) [\mathcal{Z}_i]
&\tif \co(A_i) = \ro(A_{i+1}) \; \forall i,
\\
0
&\textup{otherwise}.
}
\endeq

Precisely, we have the following multiplication formulas for Chevalley generators in $\dKKjp$.
\begin{prop}\label{mfinKl}
Let $A, B, C \in \Xit_n^>$ and $h\in[1,n]$.
\enu
\item[(1)] If $B-bE_{h,h-1}^\theta$ is diagonal and $\co(B)=\ro(A)$, then
\begin{equation}
[B] [A]=\sum_{t}u^{-\delta_{h,1}\sum_{l>0}{t_l}}v^{\beta(t)}\prod_{l=-n}^{n}\overline{\left[\begin{array}{cc}a_{h,l}+t_l\\t_l\end{array}\right]} [\widecheck{A}_{t,h}],
\end{equation} where
$t=(t_i)_{-n\leq i\leq n}\in\NN^N$
with $\sum_{i=-n}^n t_i=b$ such that
$$\left\{\begin{array}{ll}
t_i\leq a_{h-1,i} & \mbox{if~} i+1\neq h>1;\\
t_i+t_{-i}\leq a_{0,i} & \mbox{if~} h=1, \forall i,
\end{array}
\right.$$
\item[(2)] Suppose $C-cE_{h-1,h}^\theta$ is diagonal and $\co(C)=\ro(A)$.
\\
If $h\neq1$ then
\begin{equation}
[C] [A]=\sum_{t}v^{\beta'(t)}\prod_{l=-n}^{n}\overline{\left[\begin{array}{cc}a_{h-1,l}+t_l\\t_l\end{array}\right]} [\widehat{A}_{t,h}],
\end{equation} where $t=(t_i)_{-n\leq i\leq n}\in\NN^N$ with $\sum_{i=-n}^n t_i=c$ such that $t_i\leq a_{h,i}$ if $i\neq h$.
\\
If $h=1$ then
\begin{equation}\label{mfinK3}
[C][A]=\sum_{t}u^{\sum_{l\leq0}t_l}v^{\beta''(t)}
\overline{\left(\frac{\prod_{k=a^\natural_{00}+1}^{a^\natural_{00}+t_{0}}[k](u^2v^{2(k-1)}+1)}
{\prod_{k=1}^{t_{0}}[k]}\prod_{l=1}^n\frac{[a_{0,l}+t_l+t_{-l}]!}{[a_{0,l}]![t_l]![t_{-l}]!}\right)}[\widehat{A}_{t,1}],
\end{equation}
where $t=(t_i)_{-n\leq i\leq n}\in\NN^N$ with $\sum_{i=-n}^n t_i=c$ such that $t_i\leq a_{1,i}$ if $i\neq 1$.
\endenu
\end{prop}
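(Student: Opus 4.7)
The plan is to adapt the stabilization argument behind Proposition~\ref{prop:multK} to the $\imath$-setting by systematically replacing $\p{A} = A + pI$ with $\pp{A} = A + p(I - E^{00})$; equivalently, every diagonal entry except $a_{00}$ grows with $p$, while $a_{00}$ itself is held fixed. The underlying stabilization mechanism is already available through Lemma~\ref{lem:stab-ij}.

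First, I would apply Theorem~\ref{thm:multformula2} directly to evaluate $[\pp{B}][\pp{A}]$ and $[\pp{C}][\pp{A}]$ inside $\SSj$ for $d \gg 0$. After expressing the result in the standard basis, each coefficient factors naturally into a part depending on the unshifted entries (in particular $a_{00}^\natural$ when $h=1$) and a part depending on the shifted diagonal entries. The latter, under the substitution $\pi \mapsto v^{-p}$, assemble into factors of the form $r^{(1)}_{\star,\star}(u,v,\pi) \in \mathcal{R}_1$; specializing $\pi = 1$ as prescribed by Corollary~\ref{cor:stab} then produces the stabilized structure constants.

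For $h > 1$ the coefficients do not involve $a_{00}$, and the diagonal shifts $a_{h,h} \mapsto a_{h,h}+p$ and $a_{h-1,h-1} \mapsto a_{h-1,h-1}+p$ dissolve into the stabilizer in exactly the manner of Proposition~\ref{prop:multK}; the only visible effect is that bounds on $t_i$ become vacuous precisely at the shifted diagonal indices ($i = h-1$ in Part~(1), $i = h$ in Part~(2)), explaining the conditions ``$i+1 \neq h > 1$'' and ``$i \neq h$'' in the statement. The decisive case is $h = 1$, where $a_{00}$ enters the multiplication formula. Since $a_{00}$ is \emph{unshifted} under $\pp{\cdot}$, the factor depending on $a_{00}^\natural$ in Theorem~\ref{thm:multformula2} is preserved verbatim in the limit, yielding the coefficient stated in Part~(2). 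At the same time the constraint $2t_0 \leq a_{00}$ in Part~(1) -- which became vacuous in Proposition~\ref{prop:multK}(1) once $a_{00}+p \to \infty$ -- now survives as a genuine bound, accounting for the enlarged range ``$\forall i$'' in the statement.

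The main technical bookkeeping, uniform across cases, is to track which diagonal entries are shifted by $p$ (hence absorbed into $\mathcal{R}_1$-factors via $\pi \mapsto v^{-p}$) and which remain finite as $\bbA$-coefficients -- here only the solitary entry $a_{00}$. This separation is already the essential content of Lemma~\ref{lem:stab-ij}, so no new combinatorial input is required beyond the explicit formulas in Theorem~\ref{thm:multformula2}.
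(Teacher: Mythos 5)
Your proposal is correct and is essentially the paper's (implicit) argument: the proposition is obtained by running the stabilization of Lemma~\ref{lem:stab-ij} on the Schur-algebra formulas of Theorem~\ref{thm:multformula2}, with $\pp{A}=A+p(I-E^{00})$ in place of $\p{A}=A+pI$, and you correctly locate the two visible differences from Proposition~\ref{prop:multK} (the surviving bound $2t_0\le a_{00}$ in Part~(1) for $h=1$, and the verbatim survival of the $a_{00}^\natural$-factor in Part~(2)). One point deserves explicit care that your ``uniform bookkeeping'' glosses over: unlike in the proof of Proposition~\ref{prop:stab1}, the exponents $\beta(t)$ and $\beta''(t)$ are \emph{not} invariant under $A\mapsto\pp{A}$ when $h=1$ (they shift by $\pm p\sum_{l}t_l$ because $a_{00}$ and $a_{11}$ are no longer shifted in tandem), so the stabilizing coefficient $\zeta^\imath_t(u,v,\pi)$ picks up an extra integral power of $\pi$ beyond the $r^{(1)}$-type factors; this power specializes to $1$ at $\pi=1$, so the stated formulas are unaffected, but it must be recorded for the coefficient to lie in (a suitable extension of) $\mathcal{R}_1$.
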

By arguments entirely analogous to those for Corollary~\ref{cor:stab} and
Theorem~\ref{thm:KCB},
$\dKKjp$ admits a (stabilizing) bar involution,
$\dKKjp$ admits  a monomial basis $\{m_A~|~ A \in \Xit_n^>\},$
and a  canonical basis $\dot{\mathfrak B}^{\jmath,>}$.
Let $\dKKi$ be the $\bbA$-submodule of $\dKKjp$ generated by $\{ [A]~|~A \in \iXit\}$, where
\eq  \label{dKij2}
\iXit = \{A \in \Xit_n^> ~|~ \co(A)_0 = \ro(A)_0 = 1\}.
\endeq
The goal of this subsection is to realize $\dKKi$ as a subquotient of $\dKKj$ with compatible bases
by following \cite[Appendix~A]{BKLW18}.
It follows from \eqref{dKij2} that $\dKKi$ is a subalgebra of $\dKKjp$.
Since  the bar-involution on $\dKKjp$ restricts to an involution on $\dKKi$, we reach the following conclusion.
\begin{lem}  \label{lem:Kp}
The set $\dKKi \cap \dot{\mathfrak B}^{\jmath,>}$ forms a canonical basis of $\dKKi$.
\end{lem}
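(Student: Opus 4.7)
The plan is to identify the intersection $\dKKi \cap \dot{\mathfrak B}^{\jmath,>}$ explicitly as the set $\{\{A\}^\bL \mid A \in \iXit\}$ and then verify directly that this set is an $\bbA$-basis of $\dKKi$ satisfying the defining properties of a canonical basis. The key structural input is the unitriangular expansion of $\dot{\mathfrak B}^{\jmath,>}$ in the standard basis together with the preservation of row and column sum vectors under the partial order $\le_\alg$.

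First, I would observe that the canonical basis of $\dKKjp$ (constructed by the same procedure as for $\dKKj$ in Theorem~\ref{thm:KCB}) satisfies
\[
\{A\}^\bL \in [A]^\bL + \sum_{B <_\alg A} \bv^{-\mbf{c}}\ZZ[\bv^{-\mbf{c}}]\,[B]^\bL
\quad (A \in \Xit_n^>).
\]
By the definition of $\le_\alg$ in \eqref{eq:order}, every $B$ appearing in this sum has $\ro(B) = \ro(A)$ and $\co(B) = \co(A)$; in particular $\ro(B)_0 = \ro(A)_0$ and $\co(B)_0 = \co(A)_0$. Hence if $A \in \iXit$, so that $\ro(A)_0 = \co(A)_0 = 1$, then every $B$ in the expansion also lies in $\iXit$, which shows $\{A\}^\bL \in \dKKi$. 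This already gives one containment $\{\{A\}^\bL \mid A \in \iXit\} \subseteq \dKKi \cap \dot{\mathfrak B}^{\jmath,>}$.

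For the reverse containment, suppose $\{A\}^\bL \in \dot{\mathfrak B}^{\jmath,>}$ with $A \notin \iXit$. Using the linear independence of $\{[B]^\bL \mid B \in \Xit_n^>\}$, the leading term $[A]^\bL$ cannot be cancelled; but $[A]^\bL \notin \dKKi$ by definition \eqref{dKij2}, while all other terms in the expansion come from indices $B <_\alg A$ satisfying $\ro(B)_0 = \ro(A)_0$ and $\co(B)_0 = \co(A)_0$, so none of them lie in $\iXit$ either. Therefore $\{A\}^\bL \notin \dKKi$, and equality $\dKKi \cap \dot{\mathfrak B}^{\jmath,>} = \{\{A\}^\bL \mid A \in \iXit\}$ follows. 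To finish, I would invert the unitriangular transition to write each $[A]^\bL$ ($A \in \iXit$) as an $\bbA$-combination of $\{\{B\}^\bL \mid B \in \iXit, B \le_\alg A\}$; combined with the fact that $\{[A]^\bL \mid A \in \iXit\}$ is a basis of $\dKKi$, this proves that the intersection is an $\bbA$-basis. Bar-invariance and the characteristic $\bv^{-\mbf{c}}$-triangularity are inherited directly from $\dot{\mathfrak B}^{\jmath,>}$.

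I expect the main obstacle to be essentially bookkeeping rather than conceptual: one must be careful that the partial order $\le_\alg$ on $\Xit_n^>$ restricts properly to $\iXit$ and that the canonical basis of $\dKKjp$ has already been established with the expected triangular characterization. Once these are in place, the argument is parallel to \cite[Appendix~A]{BKLW18} and reduces to a formal consequence of unitriangularity.
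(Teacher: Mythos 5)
Your proposal is correct and follows essentially the same route as the paper: the paper's (very terse) justification is that the bar involution of $\dKKjp$ restricts to $\dKKi$, which is exactly the observation you make explicit, namely that $B\le_\alg A$ forces $\ro(B)=\ro(A)$ and $\co(B)=\co(A)$, so the unitriangular expansions of $\={[A]}$ and of $\{A\}^\bL$ stay inside the span of $\{[B]\mid B\in\iXit\}$ precisely when $A\in\iXit$. Your write-up just spells out the details the paper leaves to the reader.
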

The submodule of $\dKKj$ spanned by $[A]$ for $A \in \iXit$ is not a subalgebra.
This is why we need a somewhat different stabilization above to construct the canonical basis for $\dKKi$.
We shall see below the stabilization above is related to the stabilization used earlier.
Define $\JJ$ to be the $\bbA$-submodule of $\dKKj$ spanned by $[A]$ for all $A \in \Xit_n^<$.
\lem\label{lem:2ideal}
The submodule $\JJ$ is a two-sided ideal of $\dKKj$.
\endlem

\proof
We note that $\JJ$  is clearly  invariant under the anti-involution for $\dKKj$ below:
\eq
[A] \mapsto u^{- \^\ell_\fc(A) + \^\ell_\fc({}^tA)} v^{- \^\ell_\fa(A) + \^\ell_\fa({}^tA)} [{}^t A].
\endeq
Hence the claim that $\JJ$ is a left ideal of $\dKKj$ is equivalent to that $\JJ$ is a right ideal of $\dKKj$.
We shall show that $\JJ$ is a left ideal of $\dKKj$. To that end,
it suffices to show that $[B]  [A] \in \JJ$ for arbitrary $A \in \Xit_n^<$ and $B\in \Xit_n$ such that $B-bE_{h,h-1}$ or $B-bE_{h-1,h}$ is diagonal for some $h\in[1,n]$ and $b\geq 0$.
Thanks to the multiplication formulas in Proposition \ref{prop:multK}, unless the case of $B-bE_{0,1}^\theta$ being diagonal, the $(0,0)$-entry of the terms arising in $[B][A]$ never exceeds $a_{0,0}$. Thus $[B][A]\in\JJ$ in these cases.

Consider the case that $B-bE_{0,1}^\theta$ is diagonal. Recall the formula \eqref{mfinK3}. If the $(0,0)$-entry $a_{0,0}+2t_0$ of the term $[\widehat{A}_{t,1}]$ is positive, then the coefficient of this term must be zero since
$$\frac{\prod_{k=a^\natural_{00}+1}^{a^\natural_{00}+t_{0}}[k](u^2v^{2(k-1)}+1)}
{\prod_{k=1}^{t_{0}}[k]}=0,$$
because of $a^\natural_{00}+1\leq 0< a^\natural_{00}+t_{0}$.
Therefore, we always have $[B][A] \in \JJ$.
\endproof

\lem
If $A \in \Xit_n^<$ then $m_A \in \JJ$.
\endlem
\proof
The proof is as the same as the one of \cite[Lemma A.6 (1)]{BKLW18}.
\endproof

Recall $\dKKj$ admits a canonical basis of $\dBj$ at the specialization  $u=\bv^{\bL(s_0)}, v=\bv^{\bL(s_1)}$ from Theorem~\ref{thm:KCB}.
\begin{thm}\label{thm:monoJ}
The ideal $\JJ$ admits a monomial basis $\{m_A ~|~ A \in \Xit_n^<\}$. Moreover, its specialization at $u=\bv^{\bL(s_0)}, v=\bv^{\bL(s_1)}$ (denoted by $\JJ^\bL$) has
 a canonical basis $\dBj \cap \JJ^\bL = \{\{A\}^\bL ~|~ A \in \Xit_n^<\}$.
\end{thm}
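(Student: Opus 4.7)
The theorem has two claims that I would prove in order, each leveraging the preceding lemma ($m_A \in \JJ$ for every $A \in \Xit_n^<$) together with the unitriangular expansions of $m_A$ and $\{A\}^\bL$ in the standard basis from Proposition~\ref{prop:mAK'} and Theorem~\ref{thm:KCB}.

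For the monomial basis, since $\JJ$ is by definition the free $\bbA$-module on $\{[C] : C \in \Xit_n^<\}$, the membership $m_A \in \JJ$ for $A \in \Xit_n^<$ forces the unique standard-basis expansion $m_A = [A] + \sum_{B <_\alg A} c_B [B]$ to be supported entirely on $\Xit_n^<$. Thus every $B <_\alg A$ with $c_B \ne 0$ lies in $\Xit_n^<$, and the transition between $\{m_A\}_{A \in \Xit_n^<}$ and $\{[A]\}_{A \in \Xit_n^<}$ is unitriangular with respect to $\le_\alg$; $\{m_A : A \in \Xit_n^<\}$ is then an $\bbA$-basis of $\JJ$.

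For the canonical basis, the claim $\dBj \cap \JJ^\bL = \{\{A\}^\bL : A \in \Xit_n^<\}$ splits into two directions. The reverse containment is immediate: if $A \in \Xit_n^>$ then the standard-basis expansion of $\{A\}^\bL$ has leading term $[A]^\bL \notin \JJ^\bL$. For the forward containment I would induct on $\le_\alg$ restricted to $\Xit_n^<$, assuming $\{B\}^\bL \in \JJ^\bL$ for every $B <_\alg A$ with $B \in \Xit_n^<$. The standard Lusztig argument~\cite[24.2.1]{Lu93}, executed inside $\JJ^\bL$, constructs a bar-invariant element of $[A]^\bL + \sum_{B <_\alg A,\, B \in \Xit_n^<} \bv^{-\mbf{c}} \ZZ[\bv^{-\mbf{c}}] [B]^\bL$; this element must then coincide with $\{A\}^\bL$ by the uniqueness part of Theorem~\ref{thm:KCB}. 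The induced unitriangular relation between $\{\{A\}^\bL\}_{A \in \Xit_n^<}$ and $\{[A]^\bL\}_{A \in \Xit_n^<}$ realizes the former as a basis of $\JJ^\bL$.

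The chief obstacle is bar-stability of $\JJ^\bL$, without which the Lusztig algorithm cannot be confined to $\JJ^\bL$. I would address this by identifying the algebra quotient $\dKKj \twoheadrightarrow \dKKj/\JJ$ (well defined by Lemma~\ref{lem:2ideal}) with $\dKKjp$, via the matching standard bases indexed by $\Xit_n^>$ and the matching multiplication formulas of Propositions~\ref{prop:multK} and~\ref{mfinKl}, and then verifying that the two bar involutions intertwine through this projection --- a computation that reduces to the action on Chevalley generators. Alternatively, one may track the stabilized formula $\overline{[\p{A}]} = \sum_i \tau_i(u,v,v^{-p})[\p{T_i}]$ from Proposition~\ref{prop:stab2} via Lemma~\ref{lem:bar}, confirming that the $(0,0)$-entries of the resulting $T_i$ remain negative whenever $A_{00} < 0$.
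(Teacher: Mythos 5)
Your first claim (the monomial basis of $\JJ$) is argued exactly as in the paper: $m_A\in\JJ$ by the preceding lemma, the standard-basis expansion of $m_A$ is therefore supported on $\Xit_n^<$, and unitriangularity with respect to $\le_\alg$ does the rest. That part is fine.

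The gap is in how you handle bar-stability of $\JJ^\bL$, which you correctly identify as the crux but do not actually resolve. Your route (a) is circular as stated: to speak of the two bar involutions ``intertwining through the projection'' $\dKKj\rw\dKKj/\JJ\simeq\dKKjp$ you must first know that the bar involution on $\dKKj$ descends to the quotient, i.e.\ that $\JJ$ (or $\JJ^\bL$) is bar-stable --- precisely what is to be proved. Your route (b) is not a verification but a restatement of the problem: the terms $[T_i]$ in $\overline{[\p{A}]}=\sum_i\tau_i(u,v,v^{-p})[\p{T_i}]$ satisfy only $T_i\le_\alg A$, and the order $\le_\alg$ fixes row/column sums while leaving the $(0,0)$-entry uncontrolled (indeed Lemma~\ref{lem:2ideal} exists exactly because multiplication can push $a_{00}$ upward), so ``confirming that the $(0,0)$-entries of the $T_i$ remain negative'' is not something one can read off from Proposition~\ref{prop:stab2}. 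The paper's resolution is much shorter and you already have every ingredient for it: by your first part $\{m_A\mid A\in\Xit_n^<\}$ is an $\bbA$-basis of $\JJ$, hence $\{m_A^\bL\}$ is a basis of $\JJ^\bL$, and by the unitriangular relations of Section~\ref{sec:SCB} each $m_A^\bL$ is bar-invariant at the specialization $u=\bv^{\bL(s_0)},v=\bv^{\bL(s_1)}$; therefore $\JJ^\bL$ is spanned by bar-invariant elements and is bar-stable. With that in hand, your induction on $\le_\alg$ inside $\JJ^\bL$ together with the uniqueness characterization of $\{A\}^\bL$ from Theorem~\ref{thm:KCB} correctly yields $\dBj\cap\JJ^\bL=\{\{A\}^\bL\mid A\in\Xit_n^<\}$.
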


\proof
The first statement follows from the above lemma directly.
Since $m_A=[A]+\mbox{~lower terms}$, we know that $\JJ^\bL$ is bar invariant. Thus $\JJ^\bL$ does admit a canonical bases parameterized by $A \in \Xit_n^<$, which should be $\dBj \cap \JJ^\bL = \{\{A\}^\bL ~|~ A \in \Xit_n^<\}$ by the uniqueness of canonical basis.
\endproof

\prop  \label{prop:JK}
The following statements hold:
\enua
\item
The quotient algebra $\dKKj/\JJ$ admits a monomial basis $\{m_A + \JJ ~|~ A\in \Xit_n^>\}$.
\item
The specialization at $u = \bv^{\bL(s_0)}, v= \bv^{\bL(s_1)}$ of the quotient algebra $\dKKj/\JJ$ admits a canonical basis $\{\{A\}^\bL + \JJ^\bL ~|~ A \in \Xit_n^>\}$.
\item
The map $\sharp: \dKKj/\JJ \rw \dKKjp$ sending $[A] + \JJ \mapsto [A]$ is an isomorphism of $\bbA$-algebras,
which matches the corresponding monomial bases. It also matches the corresponding canonical bases at the specialization $u = \bv^{\bL(s_0)}, v= \bv^{\bL(s_1)}$.
\endenua
\endprop
\proof
Parts (a) and (b) follow directly from Theorem~\ref{thm:monoJ}. Below we prove the Part (c).
Knowing that the map $\sharp$ is a linear isomorphism, we need to verify it is an algebraic homomorphism. Comparing the multiplication formulas for $\dKKj$ in Proposition \ref{prop:multK} with the ones for $\dKKjp$ in Proposition \ref{mfinKl}, we can see that the structure constants with respect to the Chevalley generators for $\dKKj/\JJ$ are as the same as those for $\dKKjp$. Therefore $\sharp$ is an algebraic homomorphism.

Since $\sharp$ matches the Chevalley generators, it matches the corresponding monomial bases. We also obtain that $\sharp$ commutes with the bar involution. Notice that the partial orders $<_\alg$ are compatible, hence $\sharp$ also matches the corresponding canonical bases at the specialization $u = \bv^{\bL(s_0)}, v= \bv^{\bL(s_1)}$.
\endproof

We summarize Lemma~\ref{lem:Kp} and Proposition~\ref{prop:JK} above as follows.

\begin{thm} \label{thm:KiCB}
As an $\bbA$-algebra, $\dKKi$ is isomorphic to a subquotient of $\dKKj$, with compatible standard, monomial basis.
They have compatible canonical bases at the specialization $u = \bv^{\bL(s_0)}, v= \bv^{\bL(s_1)}$.
\end{thm}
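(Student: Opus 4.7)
The plan is to assemble Theorem~\ref{thm:KiCB} directly as a bookkeeping combination of Lemma~\ref{lem:Kp} and Proposition~\ref{prop:JK}, so no new computation should be needed. The central observation is that $\dKKi$ is defined as a subalgebra of $\dKKjp$ spanned by $\{[A] \mid A \in \iXit\}$, and Proposition~\ref{prop:JK}(c) provides an $\bbA$-algebra isomorphism $\sharp : \dKKj/\JJ \to \dKKjp$. Pulling $\dKKi$ back along $\sharp^{-1}$ realizes it as a subalgebra of the quotient $\dKKj/\JJ$, i.e., as a subquotient of $\dKKj$.

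Concretely, I would first verify that $\sharp^{-1}(\dKKi)$ is the $\bbA$-submodule of $\dKKj/\JJ$ spanned by $\{[A] + \JJ \mid A \in \iXit\}$; this is immediate from the fact that $\sharp$ matches $[A] + \JJ \leftrightarrow [A]$. Since $\sharp$ is an algebra isomorphism, this submodule is a subalgebra, and the isomorphism $\dKKi \simeq \sharp^{-1}(\dKKi)$ transports the standard basis $\{[A] \mid A \in \iXit\}$ of $\dKKi$ to the standard basis $\{[A] + \JJ \mid A \in \iXit\}$ of the subquotient. Next, because the algorithm of Proposition~\ref{prop:mAK'} producing $m_A$ uses only Chevalley matrices and since $\sharp$ matches Chevalley generators (again by Proposition~\ref{prop:JK}(c)), the monomial basis $\{m_A \mid A \in \iXit\}$ of $\dKKi$ (cf.\ the $\imath$-analog of Proposition~\ref{prop:mAK'}, paralleling Proposition~\ref{thm:min gen'}) corresponds to $\{m_A + \JJ \mid A \in \iXit\}$, giving compatibility of monomial bases.

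For the canonical basis at the specialization $u = \bv^{\bL(s_0)}, v = \bv^{\bL(s_1)}$, I would combine Lemma~\ref{lem:Kp}, which identifies $\dKKi \cap \dot{\mathfrak B}^{\jmath,>}$ as the canonical basis of $\dKKi$, with Proposition~\ref{prop:JK}(b), which gives $\{\{A\}^\bL + \JJ^\bL \mid A \in \Xit_n^>\}$ as the canonical basis of $(\dKKj/\JJ)^\bL$. Intersecting with the subquotient $\sharp^{-1}(\dKKi)^\bL$ and invoking the uniqueness characterization of canonical bases (Lusztig's standard lemma, cf.\ \cite[24.2.1]{Lu93}) under the bar involution intertwined by $\sharp$ yields the compatibility asserted in the theorem.

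The only mild obstacle is ensuring that the three layers of data (standard, monomial, canonical) are simultaneously compatible under $\sharp$; this reduces to checking that $\sharp$ intertwines bar involutions and preserves the partial order $\le_\alg$, both of which are already built into Proposition~\ref{prop:JK}(c) via the matching of Chevalley generators and monomial bases. Once this is noted, the theorem follows by straightforward assembly, and no further analysis is required.
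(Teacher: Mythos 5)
Your proposal is correct and follows essentially the same route as the paper, which explicitly presents Theorem~\ref{thm:KiCB} as a summary of Lemma~\ref{lem:Kp} and Proposition~\ref{prop:JK}: one realizes $\dKKi\subseteq\dKKjp$ as a subalgebra of $\dKKj/\JJ$ via $\sharp^{-1}$ and transports the standard, monomial, and canonical bases through the matching of Chevalley generators and bar involutions established in Proposition~\ref{prop:JK}(c). No gap.
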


Let $\dot{\KK}_n^{\jmath,1}$ be the $\bbA$-submodule of $\dKKj$ spanned by $[A]$ where $A\in\Xit_{n}$ with $\ro(A)_{0}=\co(A)_0=1$. It is clear that $\dot{\KK}_n^{\jmath,1}$ is a subalgebra of $\dKKj$.
Let $\JJ^1=\JJ\cap \dot{\KK}_n^{\jmath,1}$, i.e.
$$\JJ^1=\mbox{span}_{\bbA}\{[A]~|~A\in\Xit_{n},\ro(A)_{0}=\co(A)_0=1,a_{00}<0\}.$$
Imitating the argument in \cite[\S A.3]{BKLW18}, we have the following.
\begin{prop}
\

\enua
\item
The monomial basis of ${\dKKj}$ restricts to the monomial basis of $\dot{\KK}_n^{\jmath,1}$; the monomial basis of $\dot{\KK}_n^{\jmath,1}$ restricts to the monomial basis of $\JJ^1$. So does the canonical basis at the specialization $u = \bv^{\bL(s_0)}, v= \bv^{\bL(s_1)}$.
\item
The quotient $\bbA$-subalgebra $\dot{\KK}_n^{\jmath,1}/\JJ^1$ admits a monomial basis $\{m_A + \JJ^1 ~|~ A\in \iXit\}$. It also admits a canonical basis $\{\{A\}^\bL + \JJ^{1,\bL} ~|~ A \in \iXit\}$ at the specialization $u = \bv^{\bL(s_0)}, v= \bv^{\bL(s_1)}$, where $\JJ^{1,\bL}=\JJ^1|_{u = \bv^{\bL(s_0)}, v= \bv^{\bL(s_1)}}$.
\item
There is an $\bbA$-algebra isomorphism $\dot{\KK}_n^{\jmath,1}/\JJ^1\cong \dKKi$, which matches the corresponding monomial bases. It also matches the corresponding canonical basis at the specialization $u = \bv^{\bL(s_0)}, v= \bv^{\bL(s_1)}$.
\endenua
\end{prop}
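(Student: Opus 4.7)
The plan is to imitate \cite[\S A.3]{BKLW18} and reduce each statement to the analogous facts already proved for the pair $(\dKKj,\JJ)$ in Lemma~\ref{lem:2ideal}, Theorem~\ref{thm:monoJ}, Proposition~\ref{prop:JK} and Theorem~\ref{thm:KiCB}. The only new ingredient is that the defining condition $\ro(A)_0=\co(A)_0=1$ cuts out a subalgebra of $\dKKj$ whose monomial and canonical bases inherit from those of $\dKKj$. All three parts will rest on one combinatorial observation: the partial order $\le_\alg$ preserves row and column sums, and the algorithm of \cite[Theorem~3.10]{BKLW18} producing the Chevalley decomposition $m_A=[A^{(1)}]\cdots[A^{(x)}]$ operates on the off-diagonal entries of $A$ only, so it respects the subset $\{A\in\Xit_n \mid \ro(A)_0=\co(A)_0=1\}$.

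For part (a), I would first show that if $A\in\Xit_n$ satisfies $\ro(A)_0=\co(A)_0=1$, then every Chevalley matrix $A^{(i)}$ in the decomposition also satisfies $\ro(A^{(i)})_0=\co(A^{(i)})_0=1$. This is the stabilization counterpart of Proposition~\ref{thm:min gen'}, and it forces $m_A\in\dot\KK_n^{\jmath,1}$. Combining with the unitriangular relation \eqref{eq:mAK}, and using that $\le_\alg$ preserves row/column sums, one obtains that $\{m_A\mid A\in\Xit_n,\ \ro(A)_0=\co(A)_0=1\}$ is a basis of $\dot\KK_n^{\jmath,1}$, and restricting to $a_{00}<0$ yields a basis of $\JJ^1=\JJ\cap\dot\KK_n^{\jmath,1}$. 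The canonical basis statement follows by the same unique-characterization argument used in Theorem~\ref{thm:monoJ}: the elements $\{A\}^\bL\in\dKKj$ for $A$ in the prescribed index set are bar-invariant, lie in $\dot\KK_n^{\jmath,1}$ (resp.\ in $\JJ^1$) by triangularity, and are uniquely characterized by \eqref{eq:SCB}, hence must coincide with the canonical basis elements of the subalgebra/ideal.

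Part (b) is then immediate: from (a) the cosets $\{m_A+\JJ^1\mid A\in\iXit\}$ are linearly independent (they are triangular against $\{[A]+\JJ^1\mid A\in\iXit\}$) and span the quotient, and the canonical basis at the specialization $u=\bv^{\bL(s_0)},\,v=\bv^{\bL(s_1)}$ is obtained by projecting the $\{A\}^\bL$ for $A\in\iXit$ and invoking uniqueness. For part (c), I would define
\[
\Phi:\dot\KK_n^{\jmath,1}/\JJ^1\longrightarrow\dKKi,\qquad [A]+\JJ^1\longmapsto[A]\quad(A\in\iXit),
\]
which is an $\bbA$-linear bijection on standard bases by construction. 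To verify that $\Phi$ is an algebra homomorphism, compare the Chevalley multiplication formulas in Proposition~\ref{prop:multK} (for $\dKKj$) with those in Proposition~\ref{mfinKl} (for $\dKKjp\supseteq\dKKi$): the formulas differ only in the admissible range of the summation index $t$, and the discrepancy is exactly the set of terms whose $(0,0)$-entry becomes non-negative, whose coefficients lie in $\JJ^1$ by the vanishing argument of Lemma~\ref{lem:2ideal} (the factor $\prod_{k=a^\natural_{00}+1}^{a^\natural_{00}+t_0}[k](u^2v^{2(k-1)}+1)$ vanishes). Hence after passing to the quotient the formulas coincide term-by-term, which forces $\Phi$ to respect multiplication on Chevalley generators and therefore on all of $\dot\KK_n^{\jmath,1}/\JJ^1$. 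Matching of monomial bases is automatic from this Chevalley-level agreement, and matching of canonical bases follows because $\Phi$ intertwines the bar involutions and the partial orders $\le_\alg$ used in both characterizations.

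The main technical obstacle I anticipate is the first assertion of part (a), namely that the Chevalley decomposition $m_A=[A^{(1)}]\cdots[A^{(x)}]$ in $\dKKj$ never leaves $\dot\KK_n^{\jmath,1}$ when $A$ lies there. At the Schur-algebra level this is Proposition~\ref{thm:min gen'}, which is imported from \cite[Proposition 5.6]{BKLW18}; the subtle point is to transport it through the stabilization of Proposition~\ref{prop:stab1} and Corollary~\ref{cor:stab} without ambiguity. Concretely, I would show that the algorithm of \cite[Theorem 3.10]{BKLW18} commutes with the shift $A\mapsto\p{A}=A+pI$ in the sense of \eqref{eq:monop} (namely $(A+pI)^{(i)}=A^{(i)}+pI$ for all $i$), so that the property $\ro(A^{(i)})_0=\co(A^{(i)})_0=1$ at finite $d$ is preserved in the stabilized product. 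Once this bookkeeping is in place, the rest of the argument runs exactly parallel to \cite[\S A.3]{BKLW18} and to the pair of results (Theorem~\ref{thm:monoJ}, Proposition~\ref{prop:JK}) already established in this paper.
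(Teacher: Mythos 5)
Your overall route is the paper's: the paper gives no proof beyond ``imitating the argument in \cite[\S A.3]{BKLW18}'', and your reduction to Lemma~\ref{lem:2ideal}, Theorem~\ref{thm:monoJ} and Proposition~\ref{prop:JK}, together with the isomorphism $[A]+\JJ^1\mapsto [A]$ verified on Chevalley generators modulo terms killed by the vanishing factor in \eqref{mfinK3}, is exactly the intended argument. One correction, though: the ``main technical obstacle'' you isolate in part (a) --- that every Chevalley factor $A^{(i)}$ in $m_A=[A^{(1)}]\cdots[A^{(x)}]$ satisfies $\ro(A^{(i)})_0=\co(A^{(i)})_0=1$ --- is in general \emph{false}. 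A factor of the form $D+bE^\theta_{1,0}$ with $b>0$ has $\co(\cdot)_0=\ro(\cdot)_0+2b$, so whenever the algorithm of \cite[Theorem~3.10]{BKLW18} moves mass through row or column $0$ the intermediate weights leave the set $\{\ro_0=\co_0=1\}$; this is the same phenomenon as the paper's remark that the span of $[A]$, $A\in\iXit$, is \emph{not} a subalgebra of $\dKKj$. Correspondingly, Proposition~\ref{thm:min gen'} asserts only that the \emph{product} $m_A$ lands in $\SSi$, not that the factors do. Fortunately this claim is not needed: since $\dot{\KK}_n^{\jmath,1}$ is defined by conditions on $\ro(A)_0$ and $\co(A)_0$ alone, and $B\le_\alg A$ forces $\ro(B)=\ro(A)$, $\co(B)=\co(A)$, the membership $m_A\in\dot{\KK}_n^{\jmath,1}$ (and $m_A\in\JJ^1$ when moreover $a_{00}<0$, by the analogue for $\JJ$) already follows from the unitriangularity \eqref{eq:mAK} --- which is the second, correct, justification you give. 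With that false intermediate step deleted, the rest of your argument for (a)--(c), including the bar-invariance and uniqueness argument for the canonical bases and the comparison of Propositions~\ref{prop:multK} and \ref{mfinKl} for part (c), goes through as written.
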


\section{Quantum symmetric pairs}\label{sec:QSP}

\subsection{The quantum symmetric pair $(\UU,\UUj)$}\label{sec:(U,Uj)}
We start with the quantum symmetric pairs of type AIII/AIV without fixed points nor black nodes, associated with the following Satake diagram:
\[
\begin{tikzpicture}[thick]
\matrix [column sep={0.6cm}, row sep={0.5 cm,between origins}, nodes={draw = none,  inner sep = 3pt},ampersand replacement=\&]
{
	\node(U1) [draw, circle, fill=white, scale=0.6, label={above:$n-1/2$}] {};
	\&\node(U2) [draw, circle, fill=white, scale=0.6, label={above:$n-3/2$}] {};
	\&\node(U3) {$\cdots$};
	\&\node(U4) [draw, circle, fill=white, scale=0.6, label={above:$1/2$}] {};
\\
	\&\&\&
\\
	\node(L1) [draw, circle, fill=white, scale=0.6, label={below:$-n+1/2$}] {};
	\&\node(L2) [draw, circle, fill=white, scale=0.6, label={below:$-n+3/2$}] {};
	\&\node(L3) {$\cdots$};
	\&\node(L4) [draw, circle, fill=white, scale=0.6, label={below:$-1/2$}] {};
\\
};
\begin{scope}
\draw (U1) -- node  {} (U2);
\draw (U2) -- node  {} (U3);
\draw (U3) -- node  {} (U4);
\draw (U4) -- node  {} (L4);
\draw (L1) -- node  {} (L2);
\draw (L2) -- node  {} (L3);
\draw (L3) -- node  {} (L4);
\draw (L1) edge [color = blue,<->, bend right, shorten >=4pt, shorten <=4pt] node  {} (U1);
\draw (L2) edge [color = blue,<->, bend right, shorten >=4pt, shorten <=4pt] node  {} (U2);
\draw (L4) edge [color = blue,<->, bend left, shorten >=4pt, shorten <=4pt] node  {} (U4);
\end{scope}
\end{tikzpicture}
\]
Note that we use half integers for the index set following the convention in \cite{BW13}.
Set
\eq
\textstyle
\II_{2n}=\left\{
-n+\frac{1}{2},-n+\frac{3}{2},\ldots,n-\frac{1}{2}\right\}
\quad\mbox{and}\quad
\IIj_n=\left\{
\frac{1}{2},\frac{3}{2},\ldots,n-\frac{1}{2}
\right\}.
\endeq
Let $\mathbb{U}=\mathbb{U}(\fgl_{2n+1})$ be the algebra over $\QQ(u,v)$ generated by
$E_i, F_i$, $(i\in\II_{2n})$ and $D_a$, $(a\in[-n,n])$ subject to the following relations,
for $i,j\in\II_{2n}, a,b\in [-n,n]$:
\begin{align}
&D_aD_a^{-1}=D_a^{-1}D_a=1, \quad
D_aD_b=D_bD_a,
\\
&D_aE_jD_a^{-1}=v^{\delta_{a,j-\frac{1}{2}}-\delta_{a,j+\frac{1}{2}}}E_j,\quad D_aF_jD_a^{-1}=v^{-\delta_{a,j-\frac{1}{2}}+\delta_{a,j+\frac{1}{2}}}F_j,
\\
&E_iF_j-F_jE_i=\delta_{i,j}\frac{K_{i}-K_{i}^{-1}}{v-v^{-1}},
\\
&E_i^2E_j+E_jE_i^2=(v+v^{-1})E_iE_jE_i,\quad F_i^2F_j+F_jF_i^2=(v+v^{-1})F_iF_jF_i, \quad
&(|i-j|=1),
\\
&E_iE_j=E_jE_i, \quad F_iF_j=F_jF_i,\quad
&(|i-j|>1).
\end{align}
(Here and below $K_i:=D_{i-\frac{1}{2}}D_{i+\frac{1}{2}}^{-1}$.)

Let $\UUj = \UUj(\fgl_{2n+1})$ be the $\QQ(u,v)$-algebra with generators
\[
e_i, f_i,\quad (i \in \IIj_n),\quad d_a^{\pm1} \quad(0 \leq a \leq n),
\]
subject to the following relations,  for $i\in\IIj_n,a,b\in [0,n]$:
\begin{align}
&d_ad_a^{-1}=1 =d_a^{-1}d_a, \quad d_ad_b=d_bd_a,\\
&d_0e_\frac{1}{2}d_0^{-1}=v^{2}e_\frac{1}{2},
\quad
d_0f_\frac{1}{2}d_0^{-1}=v^{-2}f_\frac{1}{2},
\\
& d_ae_jd_a^{-1}=v^{\delta_{a,j-\frac{1}{2}}-\delta_{a,j+\frac{1}{2}}}e_j,\quad d_af_jd_a^{-1}=v^{-\delta_{a,j-\frac{1}{2}}+\delta_{a,j+\frac{1}{2}}}f_j,
&\textstyle ((a,j)\neq (0,\frac{1}{2})),
\\
&e_if_j-f_je_i=\delta_{i,j}\frac{k_{i}-k_{i}^{-1}}{v-v^{-1}},
&\textstyle ((i,j)\neq (\frac{1}{2},\frac{1}{2})),
\\
&e_ie_j=e_je_i, \quad f_if_j=f_jf_i,
&\textstyle (|i-j|>1),
\\
&e_i^2e_j+e_je_i^2
=
(v+v^{-1})e_ie_je_i, \quad f_i^2f_j+f_jf_i^2=(v+v^{-1})f_if_jf_i,
&\textstyle (|i-j|=1),
\\
&e_{\frac{1}{2}}^2f_{\frac{1}{2}}+f_{\frac{1}{2}}e_{\frac{1}{2}}^2
=
(v+v^{-1})
\left(
  e_{\frac{1}{2}}f_{\frac{1}{2}}e_{\frac{1}{2}}
  - e_{\frac{1}{2}}(uvk_{\frac{1}{2}}+u^{-1}v^{-1}k_{\frac{1}{2}}^{-1})
\right),
\\
&f_{\frac{1}{2}}^2e_{\frac{1}{2}}+e_{\frac{1}{2}}f_{\frac{1}{2}}^2
=
(v+v^{-1})
\left(
  f_{\frac{1}{2}}e_{\frac{1}{2}}f_{\frac{1}{2}}
 -(uvk_{\frac{1}{2}}
 +u^{-1}v^{-1}k_{\frac{1}{2}}^{-1})f_{\frac{1}{2}}
 \right).
\end{align}
(Here $k_i=d_{i-\frac{1}{2}}d_{i+\frac{1}{2}}^{-1}$, $(i\not=\frac{1}{2})$, and $k_\frac{1}{2}=v^{-1}d_0d_1^{-1}$.)

It is known in \cite[\S 4.1]{BWW18} that there is a $\QQ(u,v)$-algebra homomorphism $\UUj \to \UU$ given by, for $i \in \IIj_n-\{\frac{1}{2}\}$, and for $1\leq  a \leq n$,
\eq\label{eq:imbeddingj}
\begin{array}{lll}
d_0 \mapsto v^{-1}D_0^{2},&e_i\mapsto E_{i}+F_{-i}K_{i}^{-1}& e_{\frac{1}{2}}\mapsto E_{\frac{1}{2}}+u^{-1}F_{-\frac{1}{2}}K_{\frac{1}{2}}^{-1},
\\
d_a\mapsto D_aD_{-a},&f_{i}\mapsto E_{-i}+K_{-i}^{-1}F_{i},  &  f_{\frac{1}{2}}\mapsto  E_{-\frac{1}{2}}+u K_{-\frac{1}{2}}^{-1}F_{\frac{1}{2}}.
\end{array}
\endeq
\rmk
The (multiparameter) quantum symmetric pairs $(\UU,\UUj)$ in this paper are the $\fgl$-variant of the quantum symmetric pairs in \cite{BWW18}.
\endrmk
\subsection{Isomorphism $\dot{\mathbb{U}}^\jmath\simeq \dKKj$} \label{sec:KjUj}
Following \cite[\S 23.1]{Lu93}, it is routine to define the modified quantum algebra $\dot{\mathbb{U}}^\jmath$ from $\mathbb{U}^\jmath$.
Let $\Xit_n^\diag$ be the set of all diagonal matrices in $\Xit_n$. Denote by $\ld=\diag(\ld_{-n},\ld_{-n+1},\ldots, \ld_n)$ a diagonal matrix in $\Xit_n^\diag$. For $\ld,\ld'\in\Xit_n^\diag$, we set
\eq
{}_\ld\mathbb{U}^\jmath_{\ld'}=
\mathbb{U}^\jmath/\left(\sum_{a=0}^n(d_a-v^{\ld_a})\mathbb{U}^\jmath
+\sum_{a=0}^n\mathbb{U}^\jmath(d_a-v^{\ld'_a})\right).
\endeq
The modified quantum algebra $\dot{\mathbb{U}}^\jmath$ is defined by
\eq
\dot{\mathbb{U}}^\jmath=\bigoplus_{\ld,\ld'\in\Xit_n^\diag}{}_\ld\dot{\mathbb{U}}^\jmath_{\ld'}.
\endeq
Let $1_\ld=p_{\ld,\ld}(1)$, where $p_{\ld,\ld}: \mathbb{U}^\jmath\rightarrow{}_\ld\dot{\mathbb{U}}^\jmath_{\ld}$ is the
canonical projection. Thus the unit of $\mathbb{U}^\jmath$ is replaced by a collection of orthogonal idempotents $1_\ld$ in $\dot{\mathbb{U}}^\jmath$. It is clear that
\[
\dot{\mathbb{U}}^\jmath=\sum_{\ld\in\Xit_n^\diag}\mathbb{U}^\jmath 1_{\ld}
=\sum_{\ld\in\Xit_n^\diag}1_{\ld}\mathbb{U}^\jmath.
\]
For $\ld \in \Xit_n^\diag$ and $i\in\mathbb{I}_n^\jmath$, we use the following short-hand notations:
\eq
\ld+\alpha_i = \ld+E^\theta_{i-\frac{1}{2},i-\frac{1}{2}}-E^\theta_{i+\frac{1}{2},i+\frac{1}{2}},
\quad
\ld-\alpha_i = \ld - E^\theta_{i-\frac{1}{2},i-\frac{1}{2}} + E^\theta_{i+\frac{1}{2},i+\frac{1}{2}}.
\endeq
We also define, for $r \in \NN$,
\eq
\LR{r} = \frac{v^r - v^{-r}}{v-v^{-1}}.
\endeq
A multiparameter version of \cite[Proposition~4.6]{BKLW18} gives a presentation of $\dUUj$ as a $\QQ(u,v)$-algebra generated by the symbols, for $i \in \IIj_n, \ld \in \Xit_n^\diag,$
\[
1_\ld,
\quad
e_i 1_\ld,
\quad
1_\ld e_i,
\quad
f_i 1_\ld,
\quad
1_\ld f_i,
\]
subject to the following relations, for $i,j \in \IIj_n, \ld,\mu \in \Xit_n^\diag$, $x,y \in \{1, e_i, e_j, f_i, f_j\}$:
\begin{align}
&x 1_\ld 1_\mu y = \delta_{\ld,\mu} x 1_\ld y,\label{ex:xly}
\\
&e_i 1_\ld = 1_{\ld+\alpha_i} e_i,
\quad
f_i 1_\ld = 1_{\ld-\alpha_i} f_i,
\\
&e_i 1_\ld f_j = f_j 1_{\ld+\alpha_i+\alpha_j} e_i,
& (i \neq j),
\\
&(e_if_i-f_ie_i)1_\ld=\LR{\ld_{i-\frac{1}{2}} - \ld_{i+\frac{1}{2}} }1_{\ld},
&\textstyle (i \neq \frac{1}{2}),
\\
&e_ie_j1_\ld=e_je_i1_\ld, \quad f_if_j1_\ld=f_jf_i1_\ld,
&\textstyle (|i-j|>1),
\\
&(e_i^2e_j+e_je_i^2)1_\ld
=
\LR{2}e_ie_je_i1_\ld, \quad (f_i^2f_j+f_jf_i^2)1_\ld=\LR{2}f_if_jf_i1_\ld,
&\textstyle (|i-j|=1),
\\
&(\LR{2}e_\frac{1}{2}f_\frac{1}{2}e_\frac{1}{2}-e_{\frac{1}{2}}^2f_{\frac{1}{2}}-f_{\frac{1}{2}}e_{\frac{1}{2}}^{2})1_\ld
=\LR{2}(uv^{\ld_0-\ld_1}+u^{-1}v^{-\ld_0+\ld_1})e_\frac{1}{2} 1_\ld, \label{eq:eef}
\\
&
(\LR{2}f_{\frac{1}{2}}e_{\frac{1}{2}}f_{\frac{1}{2}}-f_{\frac{1}{2}}^2e_{\frac{1}{2}}-e_{\frac{1}{2}}f_{\frac{1}{2}}^2) 1_\ld
=\LR{2}
(uv^{\ld_0-\ld_1-3}+u^{-1}v^{-\ld_0+\ld_1+3}) f_{\frac{1}{2}}
 1_\ld.\label{ex:ffe}
\end{align}
Here and below we always write $x_1 1_{\ld^1} x_2 1_{\ld^2}\cdots x_k 1_{\ld^k}=x_1x_2\cdots x_k 1_{\ld^k}$, if the product is not zero; in this case such $\ld^1,\ld^2,\ldots,\ld^{k-1}$ are all uniquely determined by $\ld^k$.

For $\forall i\in\mathbb{I}^\jmath_n, \ld\in\Xit_n^\diag$,
write $$\be_i 1_\ld=[\ld -E^\theta_{i+\frac{1}{2},i+\frac{1}{2}}+E^\theta_{i-\frac{1}{2},i+\frac{1}{2}}]\in \dKKj \quad \mbox{and}\quad \bbf_i 1_\ld = [\ld -E^\theta_{i-\frac{1}{2},i-\frac{1}{2}}+E^\theta_{i+\frac{1}{2},i-\frac{1}{2}}]\in \dKKj.$$
Set ${}_\QQ\dKKj=\QQ(u,v)\otimes_\bbA \dKKj$.

\begin{thm}\label{thm:Kj=Uj}
There is an isomorphism of $\QQ(u,v)$-algebras $\aleph:\dot{\mathbb{U}}^\jmath\rightarrow {}_{\QQ}\dKKj$ such that, for $\forall i\in\mathbb{I}^\jmath_n, \ld\in\Xit_n^\diag$,
\[
e_i 1_\ld \mapsto \be_i 1_\ld,
\quad
f_i 1_\ld  \mapsto \bbf_i 1_\ld,
\quad
1_\ld  \mapsto [\ld].
\]
\end{thm}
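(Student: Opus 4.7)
The strategy is to verify that $\aleph$ is a well-defined $\QQ(u,v)$-algebra homomorphism, then obtain surjectivity via the monomial basis and injectivity via specialization to the equal-parameter case.

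First I would check that the images $\be_i 1_\ld$, $\bbf_i 1_\ld$, $[\ld]$ satisfy the defining relations \eqref{ex:xly}--\eqref{ex:ffe} of $\dUUj$. The orthogonal-idempotent relation \eqref{ex:xly} and the weight-shift relations $\be_i 1_\ld = 1_{\ld+\alpha_i}\be_i$, $\bbf_i 1_\ld = 1_{\ld-\alpha_i}\bbf_i$ are immediate from the row/column matching condition in Corollary~\ref{cor:stab}. The commutation $\be_i \bbf_j 1_\ld = \bbf_j \be_i 1_\ld$ for $i\neq j$, the quantum Serre relations at $|i-j|=1$, the commutation at $|i-j|>1$, and the relation $(\be_i\bbf_i-\bbf_i\be_i)1_\ld = \LR{\ld_{i-\frac{1}{2}}-\ld_{i+\frac{1}{2}}}1_\ld$ for $i\neq \frac{1}{2}$ are direct applications of Proposition~\ref{prop:multK} with $h\neq 1$. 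None of these involve $u$, and each is a multiparameter lift of a verification already in \cite{BKLW18}.

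The main obstacle will be the two $\imath$-Serre relations \eqref{eq:eef} and \eqref{ex:ffe} at the distinguished node $\frac{1}{2}$, where the parameter $u$ and the diagonal weights $\ld_0,\ld_1$ enter nontrivially. I would compute $\be_{\frac{1}{2}}^2\bbf_{\frac{1}{2}}1_\ld$, $\bbf_{\frac{1}{2}}\be_{\frac{1}{2}}^2 1_\ld$, and $\be_{\frac{1}{2}}\bbf_{\frac{1}{2}}\be_{\frac{1}{2}}1_\ld$ by iterating the $h=1$ multiplication formula in Proposition~\ref{prop:multK}(2), which carries the factor $[2k]_\fc = [k](u^2 v^{2(k-1)}+1)$. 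The $u$-dependent scalar $uv^{\ld_0-\ld_1} + u^{-1}v^{-\ld_0+\ld_1}$ on the right-hand side of \eqref{eq:eef} should arise precisely from this $u^2v^{2(k-1)}+1$ factor at $k = a_{00}^\natural + 1$, after identifying the diagonal data with $\ld_0 = a_{00}$, $\ld_1 = a_{11}$ and carrying out the cancellations. Relation \eqref{ex:ffe} then follows either by a parallel computation or by invoking the matrix-transpose anti-involution on $\dKKj$ that swaps $\be_i$ and $\bbf_i$ (twisted by the appropriate $u^\bullet v^\bullet$ scalar coming from the length function).

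Once $\aleph$ is a well-defined algebra map, surjectivity follows from Proposition~\ref{prop:mAK'}: each standard basis element $[A]$ of $\dKKj$ differs by $<_\alg$-lower terms from a product of Chevalley-type basis elements, so induction on $\le_\alg$ shows every $[A]$ lies in the image. For injectivity I would reduce to the equal-parameter situation by specializing $u=v$, where the isomorphism $\dot{\mathbb{U}}^\jmath|_{u=v} \simeq \dKKj|_{u=v}$ is \cite[Proposition~4.6]{BKLW18}. Combining a standard PBW-type spanning argument for $\dUUj$ (bounding $\dim_{\QQ(u,v)} 1_{\ld'}\dUUj 1_\ld$ in terms of pairs of triangular monomials in the $\be_i,\bbf_i$ joining the two idempotents) with the already-established surjectivity of $\aleph$ and the known equal-parameter isomorphism forces $\aleph$ to be injective and hence an isomorphism.
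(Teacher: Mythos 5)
Your proposal is correct and follows essentially the same route as the paper: the homomorphism property is checked by direct computation with the multiplication formulas, with the only nontrivial work being the $\imath$-Serre relations at the node $\tfrac{1}{2}$ where the $u$-dependent scalar indeed emerges from the $[2k]_\fc = [k](u^2v^{2(k-1)}+1)$ factors, and bijectivity is reduced to the monomial-basis/spanning argument of \cite[Theorem 4.7]{BKLW18}. The paper simply cites that argument for linear bijectivity rather than spelling out the surjectivity-plus-dimension-count as you do, but the underlying reasoning is the same.
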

\begin{proof}
A direct computation using Theorem~\ref{thm:multformula2} shows that relations~ \eqref{ex:xly}-\eqref{ex:ffe} also hold if we replace $e_i, f_i$'s by $\be_i, \bbf_i$'s.
Here we only present details for \eqref{eq:eef} regarding $\be_\frac{1}{2} 1_\ld$ and $\bbf_\frac{1}{2} 1_\ld$ as follows:
\begin{align*}
\be_{\frac{1}{2}}^2\bbf_{\frac{1}{2}}1_\ld
&=u^{-2}v^{-2\ld_0-\ld_1+4}[2](e_{\ld-2E_{1,1}^\theta+2E_{0,1}^\theta-E_{0,0}^{\theta}+E_{1,0}^{\theta}}
+[\ld_0-1]_\fc e_{\ld-E_{1,1}^{\theta}+E_{0,1}^{\theta}}),
\\
\bbf_{\frac{1}{2}}\be_{\frac{1}{2}}^{2}1_\ld
&=u^{-2}v^{-2\ld_0-\ld_1+2}[2](e_{\ld-2E_{1,1}^\theta+E_{0,1}^\theta+E_{1,-1}^{\theta}}
+e_{\ld-2E_{1,1}^\theta+2E_{0,1}^\theta-E_{0,0}^{\theta}+E_{1,0}^{\theta}}+[\ld_1-1]e_{\ld-E_{1,1}^{\theta}+E_{0,1}^{\theta}}),
\\
\be_\frac{1}{2}\bbf_\frac{1}{2}\be_\frac{1}{2}1_\ld
&=u^{-2}v^{-2\ld_0-\ld_1+3}(e_{\ld-2E_{1,1}^\theta+E_{0,1}^\theta+E_{1,-1}^{\theta}}+
[2]e_{\ld-2E_{1,1}^\theta+2E_{0,1}^\theta-E_{0,0}^{\theta}+E_{1,0}^{\theta}}
\\
\nonumber&+(u^2v^{2\ld_0-2}+[\ld_0-1]_\fc v^2+[\ld_1])e_{\ld-E_{1,1}^{\theta}+E_{0,1}^{\theta}}),
\end{align*}
Combining the identities above, we get
$(\LR{2}\be_\frac{1}{2}\bbf_\frac{1}{2}\be_\frac{1}{2}-\be_{\frac{1}{2}}^2\bbf_{\frac{1}{2}}-\bbf_{\frac{1}{2}}\be_{\frac{1}{2}}^{2})1_\ld
=\LR{2}(uv^{\ld_0-\ld_1}+u^{-1}v^{-\ld_0+\ld_1})\be_\frac{1}{2} 1_\ld$.
That is, $\aleph$ is indeed an algebra homomorphism.

We also know that $\aleph$ is a linear isomorphism. The argument is almost as the same as that for the case of specialization at $u=v$, which can be found in the proof of \cite[Theorem 4.7]{BKLW18}. Therefore $\aleph$ is an isomorphism of $\QQ(u,v)$-algebras.
\end{proof}

It has been shown in \cite[Lemma 4.1]{BWW18} that there exists a unique $\QQ$-linear bar involution on $\mathbb{U}^\jmath$ such that $\overline{u}=u^{-1}, \overline{v}=v^{-1}, \overline{d_a}=d_a^{-1}\ (0 \leq a \leq n), \overline{e_i}=e_i, \overline{f_i}=f_i\ (i\in\IIj_n)$. This bar involution on $\mathbb{U}^\jmath$ induces a compatible bar involution on $\dot{\mathbb{U}}^\jmath$, denoted also by ${}^-$, fixing all the generators $1_\ld$, $e_i 1_\ld$, $f_i 1_\ld$.

Note that $\be_i 1_\ld$, $\bbf_i 1_\ld$, $[\ld]$ are bar invariant elements in $\dKKj$, which implies that the isomorphism $\aleph$ intertwines the bar involution on $\dot{\mathbb{U}}^\jmath$ and on ${}_\QQ\dKKj$.

Set ${}_{\bbA}\dot{\mathbb{U}}^\jmath=\aleph^{-1}(\dKKj)$. It is an $\bbA$-subalgebra of $\dot{\mathbb{U}}^\jmath$. We have the following result.
\begin{prop}
The integral form ${}_{\bbA}\dot{\mathbb{U}}^\jmath$ is a free $\bbA$-submodule of $\dot{\mathbb{U}}^\jmath$. It is stable under the bar involution.
\end{prop}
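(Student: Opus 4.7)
The plan is to derive both assertions as direct formal consequences of Theorem~\ref{thm:Kj=Uj} together with the structural properties of $\dKKj$ proved in Section~\ref{sec:stabj}.

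First I would unpack the definition: ${}_\bbA\dot{\mathbb{U}}^\jmath = \aleph^{-1}(\dKKj)$ is by construction an $\bbA$-submodule of $\dot{\mathbb{U}}^\jmath$, and since $\aleph$ is a $\QQ(u,v)$-algebra isomorphism it restricts to an $\bbA$-module isomorphism ${}_\bbA\dot{\mathbb{U}}^\jmath \xrightarrow{\sim} \dKKj$. By Corollary~\ref{cor:stab}, $\dKKj$ is free as an $\bbA$-module on the standard basis $\{[A] : A \in \Xit_n\}$. Hence the pullback family $\{\aleph^{-1}([A]) : A \in \Xit_n\}$ is an $\bbA$-basis of ${}_\bbA\dot{\mathbb{U}}^\jmath$, which establishes freeness.

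For bar-stability, I would first observe (as remarked just before the proposition) that $\aleph$ intertwines the bar involutions on $\dot{\mathbb{U}}^\jmath$ and on ${}_\QQ\dKKj$. Indeed the bar involution on $\dot{\mathbb{U}}^\jmath$ is the unique $\QQ$-algebra involution fixing $e_i 1_\ld$, $f_i 1_\ld$, $1_\ld$ and sending $u \mapsto u^{-1}$, $v \mapsto v^{-1}$; since $\aleph$ sends these to $\be_i 1_\ld, \bbf_i 1_\ld, [\ld]$, which are bar-invariant in $\dKKj$, and since the bar involution on ${}_\QQ\dKKj$ is likewise characterized by its action on a $\QQ(u,v)$-generating set together with the flip $u \leftrightarrow u^{-1}, v \leftrightarrow v^{-1}$, the two bar involutions agree under $\aleph$. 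Now Corollary~\ref{cor:stab} asserts that the bar involution on $\dKKj$ preserves $\dKKj$ (it is an $\bbA$-linear involution of $\dKKj$, not merely of ${}_\QQ\dKKj$). Applying $\aleph^{-1}$ to the inclusion $\overline{\dKKj} \subseteq \dKKj$ yields $\overline{{}_\bbA\dot{\mathbb{U}}^\jmath} \subseteq {}_\bbA\dot{\mathbb{U}}^\jmath$.

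There is no substantive obstacle here; the argument is a formal consequence of the machinery already assembled. The only point worth double-checking is the agreement of the two bar involutions under $\aleph$, but this is immediate once one verifies that $\aleph$ carries a bar-invariant generating set of $\dot{\mathbb{U}}^\jmath$ to a bar-invariant generating set of ${}_\QQ\dKKj$ and that both involutions invert $u$ and $v$.
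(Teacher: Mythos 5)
Your argument is correct and is exactly the one the paper intends: the paper states this proposition without proof immediately after noting that $\aleph$ intertwines the bar involutions, so freeness is pulled back from the standard basis of $\dKKj$ and bar-stability from the fact that the stabilized bar involution preserves $\dKKj$ (Corollary~\ref{cor:stab}). No gaps.
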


\subsection{The quantum symmetric pair $(\UU,\UUi)$}
Below we formulate the counterparts of Sections~\ref{sec:(U,Uj)}--\ref{sec:KjUj}.
The proofs are very similar and will often be omitted.
We now work on quantum symmetric pairs of type AIII with fixed points associated with the Satake diagram below:
\[
\begin{tikzpicture}[thick]
\matrix [column sep={0.6cm}, row sep={0.5 cm,between origins}, nodes={draw = none,  inner sep = 3pt},ampersand replacement=\&]
{
	\node(U1) [draw, circle, fill=white, scale=0.6, label={above:$n-1$}] {};
	\&\node(U2)[draw, circle, fill=white, scale=0.6, label={above:$n-2$}] {};
	\&\node(U3) {$\cdots$};
	\&\node(U5)[draw, circle, fill=white, scale=0.6, label={above:$1$}] {};
\\
	\&\&\&\&
	\node(R)[draw, circle, fill=white, scale=0.6, label={below:$0$}] {};
\\
	\node(L1) [draw, circle, fill=white, scale=0.6, label={below:$-n+1$}] {};
	\&\node(L2)[draw, circle, fill=white, scale=0.6, label={below:$-n+2$}] {};
	\&\node(L3) {$\cdots$};
	\&\node(L5)[draw, circle, fill=white, scale=0.6, , label={below:$-1$}] {};
\\
};
\begin{scope}
\draw (U1) -- node  {} (U2);
\draw (U2) -- node  {} (U3);
\draw (U3) -- node  {} (U5);
\draw (U5) -- node  {} (R);
\draw (L1) -- node  {} (L2);
\draw (L2) -- node  {} (L3);
\draw (L3) -- node  {} (L5);
\draw (L5) -- node  {} (R);
\draw (R) edge [color = blue,loop right, looseness=40, <->, shorten >=4pt, shorten <=4pt] node {} (R);
\draw (L1) edge [color = blue,<->, bend right, shorten >=4pt, shorten <=4pt] node  {} (U1);
\draw (L2) edge [color = blue,<->, bend right, shorten >=4pt, shorten <=4pt] node  {} (U2);
\draw (L5) edge [color = blue,<->, bend left, shorten >=4pt, shorten <=4pt] node  {} (U5);
\end{scope}
\end{tikzpicture}
\]
Let $\mathbb{U}=\mathbb{U}(\fgl_{2n})$ be the algebra over $\QQ(u,v)$ generated by
$E_i, F_i$, $(i\in[-n+1,n-1])$ and $D_a$, $(a\in[-n+1,n])$ subject to the following relations,
for $i,j\in[-n+1,n-1], a,b\in [-n+1,n]$:
\begin{align}
&D_aD_a^{-1}=D_a^{-1}D_a=1, \quad
D_aD_b=D_bD_a,
\\
&D_aE_jD_a^{-1}=v^{\delta_{a,j}-\delta_{a,j+1}}E_j,\quad D_aF_jD_a^{-1}=v^{-\delta_{a,j}+\delta_{a,j+1}}F_j,
\\
&E_iF_j-F_jE_i=\delta_{i,j}\frac{K_{i}-K_{i}^{-1}}{v-v^{-1}},
\\
&E_i^2E_j+E_jE_i^2=(v+v^{-1})E_iE_jE_i,\quad F_i^2F_j+F_jF_i^2=(v+v^{-1})F_iF_jF_i, \quad
&(|i-j|=1),
\\
&E_iE_j=E_jE_i, \quad F_iF_j=F_jF_i,\quad
&(|i-j|>1).
\end{align}
(Here and below $K_i:=D_{i}D_{i+1}^{-1}$.)

Let
$\UUi = \UUi(\fgl_{2n})$ be the $\QQ(u,v)$-algebra with generators
\[
t,\quad e_i, \quad f_i\quad (i \in [1, n-1]),\quad d_a^{\pm1} \quad(a\in[1,n]),
\]
subject to the following relations,  for $i,j\in [1, n-1], a,b\in [1,n]$:
\begin{align}
&d_ad_a^{-1}=1 =d_a^{-1}d_a,
\quad d_ad_b=d_bd_a,\\
&d_a t d_a^{-1}=t,\quad d_ae_jd_a^{-1}=v^{\delta_{a,j}-\delta_{a,j+1}}e_j,
\quad d_af_jd_a^{-1}=v^{-\delta_{a,j}+\delta_{a,j+1}}f_j,
\\
&e_if_j-f_je_i=\delta_{i,j}\frac{k_{i}-k_{i}^{-1}}{v-v^{-1}},
\\
&e_ie_j=e_je_i, \quad f_if_j=f_jf_i,
&\textstyle (|i-j|>1),
\\
&e_i^2e_j+e_je_i^2
=
(v+v^{-1})e_ie_je_i, \quad f_i^2f_j+f_jf_i^2=(v+v^{-1})f_if_jf_i,
&\textstyle (|i-j|=1),
\\
&
e_i t = te_i,\quad f_i t = t f_{i},&(i \neq 1),
\\
& t^2 e_1 + e_1 t^2 = (v+v^{-1})  t e_1 t + e_1,
\quad
 e_1^2 t + t e_1^2 = (v+v^{-1})  e_1 t e_1,
 \\
&t^2 f_1 + f_1 t^2 =(v+v^{-1})  t f_1 t + f_1,
\quad
f_1^2 t + tf_1^2
= (v+v^{-1})  f_1 t f_1.
\end{align}
(Here $k_i=d_{i}d_{i+1}^{-1}$.)

It has been known in \cite[\S 2.1]{BWW18} that there is a $\QQ(u,v)$-algebra homomorphism $\UUi \to \UU$ given by, for $i \in [1,n-1]$, and for $a\in[1,n]$,
\eq\label{eq:imbeddingj}
\begin{array}{ll}
d_a=D_aD_{-a},& t=E_0+vF_0K_0^{-1}+\frac{u-u^{-1}}{v-v^{-1}}K_0^{-1},
\\
e_i=E_{i}+F_{-i}K_{i}^{-1},& f_{i}=E_{-i}+K_{-i}^{-1}F_{i}.
\end{array}
\endeq

\rmk\label{rmk:embedding}
It was observed in \cite{Le99,BWW18} that the parameter $\omega\in\QQ(u,v)$ in the embedding $t=E_0+vF_0K_0^{-1}+\omega K_0^{-1}$, is irrelevant to the presentation of the algebra $\UUi$.
\endrmk

Let ${}^\imath\Xit_n^\diag$ be the set of all diagonal matrices in $\Xit^\imath$. Denote by $\ld=\diag(\ld_{-n},\ldots,\ld_{-1},1,\ld_{1}, \ldots, \ld_n)$ a diagonal matrix in ${}^\imath\Xit_n^\diag$.
We define the modified algebra $\dUUi$ similarly to the construction of $\dUUj$ as follows:
$$\dot{\mathbb{U}}^\imath=\bigoplus_{\ld,\ld'\in{}^\imath\Xit_n^\diag}{}_\ld\dot{\mathbb{U}}^\imath_{\ld'}=\sum_{\ld\in{}^\imath\Xit_n^\diag}\mathbb{U}^\imath 1_{\ld}
=\sum_{\ld\in{}^\imath\Xit_n^\diag}1_{\ld}\mathbb{U}^\imath,$$
where
$
{}_\ld\mathbb{U}^\imath_{\ld'}=
\mathbb{U}^\imath/\left(\sum_{a=1}^n(d_a-v^{\ld_a})\mathbb{U}^\imath
+\sum_{a=1}^n\mathbb{U}^\imath(d_a-v^{\ld'_a})\right)
$
and $1_\ld\in{}_\ld\mathbb{U}^\imath_{\ld}$ is the canonical projection image of the unit of $\mathbb{U}^\imath$.

For $\ld \in {}^\imath\Xit_n^\diag$ and $i\in [1,n-1]$, we use the following short-hand notations:
\eq
\ld+\alpha_i = \ld+E^\theta_{ii}-E^\theta_{i+1,i+1},
\quad
\ld-\alpha_i = \ld-E^\theta_{ii}+E^\theta_{i+1,i+1}.
\endeq
We thus obtain a presentation of $\dUUi$ as a $\QQ(u,v)$-algebra generated by the symbols, for $i \in [1,n-1], \ld \in {}^\imath\Xit_n^\diag,$
\[
1_\ld,
\quad
t1_\ld,
\quad
1_\ld t,
\quad
e_i 1_\ld,
\quad
1_\ld e_i,
\quad
f_i 1_\ld,
\quad
1_\ld f_i,
\]
subject to the following relations, for $i,j \in [1,n-1], \ld,\mu \in {}^\imath\Xit_n^\diag$, $x,y \in \{1, e_i, e_j, f_i, f_j, t\}$:
\begin{align}
&x 1_\ld 1_\mu y = \delta_{\ld,\mu} x 1_\ld y,\label{ex:kappi1}
\\
&e_i 1_\ld = 1_{\ld+\alpha_i} e_i,
\quad
f_i 1_\ld = 1_{\ld-\alpha_i} f_i,
\quad
t 1_\ld = 1_\ld t,
\\
&e_i 1_\ld f_j = f_j 1_{\ld+\alpha_i+\alpha_j} e_i,
& (i \neq j),
\\
&(e_if_i-f_ie_i)1_\ld=\LR{\ld_{i} - \ld_{i+1} }1_{\ld},
\quad
\\
&e_ie_j1_\ld=e_je_i1_\ld, \quad f_if_j1_\ld=f_jf_i1_\ld,
&\textstyle (|i-j|>1),
\\
&(e_i^2e_j+e_je_i^2)1_\ld
=
\LR{2}e_ie_je_i1_\ld, \quad (f_i^2f_j+f_jf_i^2)1_\ld=\LR{2}f_if_jf_i1_\ld,
&\textstyle (|i-j|=1),
\\
&f_i t1_\ld = t f_{i}1_\ld,
\quad
e_i t 1_\ld= te_i 1_\ld &(i \neq 1),
\\
&(t^2 f_1 + f_1 t^2)1_\ld = (\LR{2}t f_1 t + f_1)1_\ld,
\quad
(f_1^2 t + tf_1^2)1_\ld
= \LR{2} f_1 t f_1 1_\ld,
\\\label{ex:kappi2}
& (t^2 e_1 + e_1 t^2)1_\ld = (\LR{2}t e_1 t + e_1)1_\ld,
\quad
(e_1^2 t + t e_1^2)1_\ld = \LR{2}  e_1 t e_11_\ld.
\end{align}
For $i\in[1,n-1], \ld\in{}^\imath\Xit_n^\diag$,
write
\begin{align}
\be_i 1_\ld&=[\ld -E^\theta_{i+1,i+1}+E^\theta_{i,i+1}],\quad\quad\quad \bbf_i 1_\ld = [\ld -E^\theta_{i,i}+E^\theta_{i+1,i}],\nonumber\\
{\bf t}1_\ld&=[\ld-E_{1,1}^\theta+E_{-1,1}^\theta]+v^{-\ld_1}\frac{u - u\inv}{v - v\inv}[\ld].\label{eq:t1l}
\end{align}
Set ${}_\QQ\dKKj=\QQ(u,v)\otimes_\bbA \dKKi$.

\begin{thm}\label{thm:Ki=Ui}
There is an isomorphism of $\QQ(u,v)$-algebras $\aleph:\dot{\mathbb{U}}^\imath\rightarrow {}_\QQ\dKKi$ such that, for all $i\in[1,n-1], \ld\in\Xit_n^\diag$,
\[
t 1_\ld \mapsto {\bf t}1_\ld,
\quad
e_i 1_\ld \mapsto \be_i 1_\ld,
\quad
f_i 1_\ld  \mapsto \bbf_i 1_\ld ,
\quad
1_\ld  \mapsto [\ld].
\]
\end{thm}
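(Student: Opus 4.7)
The plan is to mimic closely the strategy used for Theorem~\ref{thm:Kj=Uj}. Using the multiplication formulas of Proposition~\ref{prop:multK}, I would first verify that each of the defining relations \eqref{ex:kappi1}--\eqref{ex:kappi2} of $\dUUi$ is satisfied when the generators $t, e_i, f_i, 1_\ld$ are replaced by their proposed images $\mathbf{t}1_\ld, \be_i 1_\ld, \bbf_i 1_\ld, [\ld]$. The relations that do not involve $t$ are direct analogues of the $\jmath$-type computations already carried out in the proof of Theorem~\ref{thm:Kj=Uj}, transplanted to the $\imath$-setting, so the substantive work lies in verifying the $t$-relations. Once $\aleph$ is known to be an algebra homomorphism, bijectivity would be established exactly as in the last step of the proof of Theorem~\ref{thm:Kj=Uj}: the elements $\mathbf{t}1_\ld, \be_i 1_\ld, \bbf_i 1_\ld, [\ld]$ generate ${}_\QQ\dKKi$ (by Proposition~\ref{prop:mAK'} applied in the $\imath$-setting via Theorem~\ref{thm:KiCB}), and a specialization at $u = v$ reduces injectivity to the equal-parameter statement of \cite[Theorem~4.7]{BKLW18}.

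The $t$-relations split into two types. The commutations $\mathbf{t}\be_i 1_\ld = \be_i \mathbf{t}1_\ld$ and $\mathbf{t}\bbf_i 1_\ld = \bbf_i \mathbf{t}1_\ld$ for $i \neq 1$ are immediate, since the two products are computed via Proposition~\ref{prop:multK} with $h \geq 2$ and are symmetric in either order: the $h=1$ block acted on by $\mathbf{t}$ does not interfere with the disjoint block acted on by $\be_i, \bbf_i$. For the $\imath$-Serre relations $(\mathbf{t}^2 \be_1 + \be_1 \mathbf{t}^2)1_\ld = \LR{2}\mathbf{t}\be_1\mathbf{t}1_\ld + \be_1 1_\ld$ and $(\be_1^2 \mathbf{t} + \mathbf{t}\be_1^2)1_\ld = \LR{2}\be_1 \mathbf{t}\be_1 1_\ld$ (and the symmetric $\bbf_1$-variants), I would compute each of the elementary products $\mathbf{t}^2 1_\ld$, $\be_1^2 1_\ld$, $\mathbf{t}\be_1 1_\ld$, $\be_1\mathbf{t}1_\ld$, $\mathbf{t}\be_1\mathbf{t}1_\ld$, $\mathbf{t}^2\be_1 1_\ld$, $\be_1\mathbf{t}^2 1_\ld$, $\be_1^2\mathbf{t}1_\ld$, $\mathbf{t}\be_1^2 1_\ld$, $\be_1\mathbf{t}\be_1 1_\ld$ by iterating the $h=1$ formulas of Proposition~\ref{prop:multK}, being careful to track the $u^{\sum t_l}$-weight produced when $t$-like terms cross the $0$-strand, and then regroup by matrix pattern before assembling the required linear combinations.

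The main obstacle will be the delicate cancellation that produces the lone $+\be_1$ on the right-hand side of the first $\imath$-Serre relation. Most contributions will already cancel at the level of the Chevalley-like piece $[\ld - E^\theta_{1,1} + E^\theta_{-1,1}]$, and the surviving $+\be_1$ must arise from cross-terms between that piece and the scalar correction $v^{-\ld_1}\tfrac{u-u^{-1}}{v-v^{-1}}[\ld]$. This cross-term is precisely where the unequal parameters become essential: the prefactor $\tfrac{u-u^{-1}}{v-v^{-1}}$ vanishes at $u = v$, so the correction term disappears in the equal-parameter specialization used in \cite{BKLW18}, but here it carries the Serre defect. A cleaner bookkeeping, which I would attempt first, is to write $\mathbf{t}1_\ld = \mathbf{t}_0 1_\ld + \omega_\ld [\ld]$ with $\mathbf{t}_0 1_\ld = [\ld - E^\theta_{1,1} + E^\theta_{-1,1}]$ and $\omega_\ld = v^{-\ld_1}\tfrac{u-u^{-1}}{v-v^{-1}}$, and isolate the $\omega_\ld$-dependence of each side of the relation; in the spirit of Remark~\ref{rmk:embedding}, this should decouple the verification into an $\omega$-independent identity for $\mathbf{t}_0$ (handled by the equal-parameter case) plus an elementary identity among the weight scalars $\omega_\ld$.
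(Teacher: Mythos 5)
Your overall architecture (verify the relations \eqref{ex:kappi1}--\eqref{ex:kappi2} for the proposed images, then prove linear bijectivity as in \cite[Theorem A.15]{BKLW18}) is the same as the paper's, but the execution plan has two genuine problems. First, the ``cleaner bookkeeping'' rests on a false identity: $\tfrac{u-u^{-1}}{v-v^{-1}}$ equals $1$ at $u=v$, not $0$ (it vanishes at $u=1$). Consequently the correction term $\omega_\ld[\ld]$ does \emph{not} disappear in the equal-parameter setting of \cite{BKLW18}, so the ``$\omega$-independent identity for ${\bf t}_0$'' is not something the equal-parameter case hands you --- in \cite{BKLW18} the relation is verified for ${\bf t}_0+v^{-\ld_1}[\ld]$, not for ${\bf t}_0$ alone. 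Moreover the decoupling itself fails: writing ${\bf t}1_\ld={\bf t}_01_\ld+\omega_\ld[\ld]$ and noting $\omega_{\ld+\alpha_1}=v^{-1}\omega_\ld$, the quadratic-in-$\omega$ coefficient of $\be_11_\ld$ in $({\bf t}^2\be_1+\be_1{\bf t}^2-\LR{2}{\bf t}\be_1{\bf t})1_\ld$ does cancel, but the linear-in-$\omega$ cross-terms contribute $\omega_\ld(v-v^{-1})\bigl(v^{-1}\be_1{\bf t}_0-{\bf t}_0\be_1\bigr)1_\ld$, which is not ``an elementary identity among the weight scalars'': it involves a nontrivial $q$-commutation between $\be_1$ and ${\bf t}_0$ that is not among the available relations and must instead cancel against the defect of the Serre relation for ${\bf t}_0$ alone. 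So nothing decouples, and one is forced into the full computation.

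Second, the full computation cannot be carried out ``by iterating the $h=1$ formulas of Proposition~\ref{prop:multK}'': the matrix $\ld-E^\theta_{1,1}+E^\theta_{-1,1}$ underlying ${\bf t}_01_\ld$ has off-diagonal part $E_{-1,1}+E_{1,-1}$, which is not of the form $bE^\theta_{h,h\pm1}$ for any $h\in[1,n]$, so none of the stated multiplication formulas apply to left multiplication by ${\bf t}$ or ${\bf t}_0$. The missing ingredient --- and the key device in the paper's proof --- is the identity ${\bf t}1_\ld=\bbf_0\be_01_\ld-\tfrac{u^{-1}v^{\ld_1}-uv^{-\ld_1}}{v-v^{-1}}1_\ld$ inside the larger algebra $\dKKjp$, where $\be_01_\ld=[\ld-E^\theta_{1,1}+E^\theta_{0,1}]$ and $\bbf_0$ are genuine Chevalley generators (they change the $0$-weight, hence live in $\dKKjp$ rather than in $\dKKi$); all products ${\bf t}^21_\ld$, $\be_1{\bf t}^21_\ld$, ${\bf t}^2\be_11_\ld$, ${\bf t}\be_1{\bf t}1_\ld$ are then computed there by repeated use of the Chevalley multiplication formulas. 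Without this factorization (or some other extension of the multiplication formulas to the non-Chevalley element $[\ld-E^\theta_{1,1}+E^\theta_{-1,1}]$), your verification cannot get started.
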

\begin{proof}
By a direct computation using Theorem~\ref{thm:multformula2} one can show that the relations \eqref{ex:kappi1}-\eqref{ex:kappi2} for $t, e_i, f_i$'s also hold for ${\bf t}, \be_i, \bbf_i$'s. Hence $\aleph$ is a homomorphism of $\QQ(u,v)$-algebras.
Here we only present the details for the first relation in \eqref{ex:kappi2} as follows.
Note that as an element in $\dKKjp$,
\begin{equation}\label{eq:t}
{\bf t}1_\ld=\bbf_0\be_01_\ld-\frac{u^{-1}v^{\ld_1}-uv^{-\ld_1}}{v-v^{-1}}1_\ld,
\end{equation}
where  $\be_0 1_\ld=[\ld-E_{1,1}^\theta+E_{0,1}^\theta]\quad\mbox{and}\quad\bbf_0 1_{\ld+E_{0,0}^\theta-E_{1,1}^\theta}=[\ld-E_{1,1}^\theta+E_{1,0}^\theta]\in\dKKjp.$
Moreover, we have
\begin{align*}
{\bf t}^2 1_\ld=&\LR{2}u^{-1}v^{-2\ld_1+2}\frac{u-u^{-1}}{v-v^{-1}}e_{\ld-E_{1,1}^\theta+E_{-1,1}^{\theta}}
+\left(v^{-2\ld_1+2}[\ld_1]+v^{-2\ld_1}\frac{(u-u^{-1})^2}{(v-v^{-1})^2}\right)e_\ld\\
&+u^{-2}v^{-2\ld_1+2}[2]e_{\ld-2E_{1,1}^\theta+2E_{-1,1}^{\theta}}.
\end{align*}
Hence
\begin{align*}
\be_1{\bf t}^2 1_\ld=&\LR{2}u^{-1}v^{-3\ld_1+2}\frac{u-u^{-1}}{v-v^{-1}}
e_{\ld-E_{1,1}^\theta+E_{-1,1}^{\theta}-E_{2,2}^\theta+E_{1,2}^\theta}\\
&+\left(v^{-3\ld_1+2}[\ld_1]+v^{-3\ld_1}\frac{(u-u^{-1})^2}{(v-v^{-1})^2}\right)
e_{\ld-E_{2,2}^\theta+E_{1,2}^\theta}
+u^{-2}v^{-3\ld_1+2}[2]e_{\ld-2E_{1,1}^\theta+2E_{-1,1}^{\theta}-E_{2,2}^\theta+E_{1,2}^\theta},
\end{align*}
and
\begin{align*}
{\bf t}^2 \be_1 1_\ld=&\LR{2}u^{-1}v^{-3\ld_1}\frac{u-u^{-1}}{v-v^{-1}}
(e_{\ld-E_{1,1}^\theta+E_{-1,1}^{\theta}-E_{2,2}^\theta+E_{1,2}^\theta}+
e_{\ld-E_{2,2}^\theta+E_{-1,2}^\theta})\\
&+\left(v^{-3\ld_1}[\ld_1+1]+v^{-3\ld_1-2}\frac{(u-u^{-1})^2}{(v-v^{-1})^2}\right)
e_{\ld-E_{2,2}^\theta+E_{1,2}^\theta}\\
&+u^{-2}v^{-3\ld_1}[2]
(e_{\ld-2E_{1,1}^\theta+2E_{-1,1}^{\theta}-E_{2,2}^\theta+E_{1,2}^\theta}
+e_{\ld-E_{1,1}^\theta+E_{-1,1}^{\theta}-E_{2,2}^\theta+E_{-1,2}^\theta}).
\end{align*}
Finally, using \eqref{eq:t} again, we compute that
\begin{align*}
{\bf t}\be_1{\bf t} 1_\ld
=&\LR{2}u^{-1}v^{-3\ld_1+1}\frac{u-u^{-1}}{v-v^{-1}}
e_{\ld-E_{1,1}^\theta+E_{-1,1}^{\theta}-E_{2,2}^\theta+E_{1,2}^\theta}+
u^{-1}v^{-3\ld_1}\frac{u-u^{-1}}{v-v^{-1}}e_{\ld-E_{2,2}^\theta+E_{-1,2}^\theta}\\
&+\left(v^{-\ld_1}\frac{(1-v^{-2\ld_1})(v+v^{-1})}{v-v^{-1}}+v^{-3\ld_1-1}\frac{(u-u^{-1})^2}{(v-v^{-1})^2}\right)
e_{\ld-E_{2,2}^\theta+E_{1,2}^\theta}\\
&+u^{-2}v^{-3\ld_1+1}e_{\ld-E_{1,1}^\theta+E_{-1,1}^{\theta}-E_{2,2}^\theta+E_{-1,2}^\theta}
+u^{-2}v^{-3\ld_1+1}[2]e_{\ld-2E_{1,1}^\theta+2E_{-1,1}^{\theta}-E_{2,2}^\theta+E_{1,2}^\theta}.
\end{align*}
Combining the identities above, we see that indeed $({\bf t}^2 \be_1 + \be_1 {\bf t}^2)1_\ld = (\LR{2}{\bf t} \be_1 t + \be_1)1_\ld.$

An argument similar to the proof of \cite[Theorem A.15]{BKLW18} also shows $\aleph$ is a linear isomorphism. Therefore $\aleph$ is an isomorphism of $\QQ(u,v)$-algebras.
\end{proof}

Thanks to \cite[Lemma 2.1]{BWW18}, we know there exists a unique $\QQ$-algebra bar involution on $\dot{\mathbb{U}}^\imath$ such that
$\overline{u}=u^{-1}, \overline{v}=v^{-1}, \overline{d_a}=d_a^{-1}\ (a\in[1,n]), \overline{e_i}=e_i, \overline{f_i}=f_i\ (i\in[1,n-1]), \overline{t}=t$. This bar involution on $\mathbb{U}^\imath$ induces a compatible bar involution on $\dot{\mathbb{U}}^\imath$, denoted also by ${}^-$, fixing all the generators $1_\ld$, $e_i 1_\ld$, $f_i 1_\ld, t$.

Set ${}_{\bbA}\dot{\mathbb{U}}^\imath=\aleph^{-1}(\dKKi)$. It is an $\bbA$-subalgebra of $\dot{\mathbb{U}}^\imath$.
\begin{prop}
The integral form ${}_{\bbA}\dot{\mathbb{U}}^\imath$ is a free $\bbA$-submodule of $\dot{\mathbb{U}}^\imath$. It is stable under the bar involution.
\end{prop}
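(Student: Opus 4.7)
The statement mirrors the $\jmath$-analogue proved just before Section~7.3, and I will follow the same pattern. Since $\aleph : \dot{\mathbb{U}}^\imath \to {}_{\QQ}\dKKi$ is a $\QQ(u,v)$-algebra isomorphism by Theorem~\ref{thm:Ki=Ui}, and since $\{[A] \mid A \in \iXit\}$ is an $\bbA$-basis of $\dKKi$ which is simultaneously a $\QQ(u,v)$-basis of ${}_{\QQ}\dKKi$, the preimage ${}_{\bbA}\dot{\mathbb{U}}^\imath = \aleph^{-1}(\dKKi)$ is a free $\bbA$-submodule of $\dot{\mathbb{U}}^\imath$ with $\bbA$-basis $\{\aleph^{-1}([A]) \mid A \in \iXit\}$. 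This settles freeness.

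For bar-stability, my plan is to show that $\aleph$ intertwines the bar on $\dot{\mathbb{U}}^\imath$ with the $\bbA$-linear bar on $\dKKi$ (from Theorem~\ref{thm:KiCB}), extended $\QQ$-linearly to ${}_{\QQ}\dKKi$. Both are $\QQ$-algebra involutions sending $u \mapsto u^{-1}$ and $v \mapsto v^{-1}$, so it is enough to check agreement on a $\QQ(u,v)$-algebra generating set. The generators $1_\ld$, $e_i 1_\ld$, $f_i 1_\ld$ (with $i \in [1, n-1]$) of $\dot{\mathbb{U}}^\imath$ are bar-invariant, and their $\aleph$-images $[\ld]$, $\be_i 1_\ld$, $\bbf_i 1_\ld$ are standard basis elements associated to Chevalley matrices with $g_A = \id$, hence bar-invariant under the bar on $\dKKi$ as well; so the two bars agree on these generators.

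The nontrivial generator is $t 1_\ld$, whose image is ${\bf t} 1_\ld = [\ld - E_{1,1}^\theta + E_{-1,1}^\theta] + v^{-\ld_1}\frac{u-u^{-1}}{v-v^{-1}}[\ld]$. It is fixed by the transferred bar because $t$ is bar-invariant in $\dot{\mathbb{U}}^\imath$. To verify it is fixed by the bar on $\dKKi$, I plan to use Lemma~\ref{lem:bar} to expand $\overline{[\ld - E_{1,1}^\theta + E_{-1,1}^\theta]}$ in the standard basis and identify the coefficient of $[\ld]$. Combined with the scalar computation
\[
v^{-\ld_1}\frac{u-u^{-1}}{v-v^{-1}} \;-\; \overline{v^{-\ld_1}\frac{u-u^{-1}}{v-v^{-1}}} \;=\; -[\ld_1]_v\,(u-u^{-1}) \;\in\; \bbA,
\]
where $[\ld_1]_v := (v^{\ld_1}-v^{-\ld_1})/(v-v^{-1})$, bar-invariance of ${\bf t}1_\ld$ under the $\dKKi$-bar reduces to the identity $\overline{[\ld - E_{1,1}^\theta + E_{-1,1}^\theta]} = [\ld - E_{1,1}^\theta + E_{-1,1}^\theta] - [\ld_1]_v(u-u^{-1})[\ld]$. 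Morally, this is enforced by the choice of $\omega = (u-u^{-1})/(v-v^{-1})$ in the embedding of $t$ (see Remark~\ref{rmk:embedding}), which was made precisely so that $t$ is bar-invariant.

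The main obstacle is this explicit bar computation on the standard basis element $[\ld - E_{1,1}^\theta + E_{-1,1}^\theta]$ via Lemma~\ref{lem:bar}: one must track the Hecke algebra action $\overline{T^g_{\ld\ld}}$ for the corresponding minimal double coset representative $g$, rewrite the result in the $\{x_\ld T_y x_\ld\}$-basis, and extract the coefficient of the diagonal term to match the predicted value $-[\ld_1]_v(u-u^{-1})$. Once the intertwining is established, bar-stability of ${}_{\bbA}\dot{\mathbb{U}}^\imath$ follows by pulling back via $\aleph^{-1}$ from the bar-stability of $\dKKi$ (Theorem~\ref{thm:KiCB}).
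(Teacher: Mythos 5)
Your overall strategy---freeness from the fact that $\aleph$ is a linear isomorphism carrying the free $\bbA$-module $\dKKi$ with basis $\{[A]\mid A\in\iXit\}$ onto ${}_{\bbA}\dot{\mathbb{U}}^\imath$, and bar-stability by showing that $\aleph$ intertwines the two bar involutions---is exactly the argument the paper intends (it is spelled out for the $\jmath$-case just before the corresponding proposition and left implicit here). The freeness half is complete. For the intertwining you correctly isolate the only nontrivial point: the generators $1_\ld$, $e_i1_\ld$, $f_i1_\ld$ map to bar-invariant standard basis elements of diagonal or Chevalley matrices, whereas $t1_\ld$ maps to ${\bf t}1_\ld$ of \eqref{eq:t1l}, whose bar-invariance in ${}_\QQ\dKKi$ must be verified. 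You leave this verification as an acknowledged ``obstacle'' and propose a direct Hecke-algebra computation of $\overline{[\ld-E^\theta_{1,1}+E^\theta_{-1,1}]}$ via Lemma~\ref{lem:bar}; your predicted coefficient of $[\ld]$ is the correct one, but the computation is unnecessary. Equation \eqref{eq:t} already expresses ${\bf t}1_\ld=\bbf_0\be_0 1_\ld-\frac{u^{-1}v^{\ld_1}-uv^{-\ld_1}}{v-v^{-1}}1_\ld$ inside $\dKKjp$: the factors $\be_0 1_\ld=[\ld-E^\theta_{1,1}+E^\theta_{0,1}]$ and $\bbf_0 1_{\ld+E^\theta_{0,0}-E^\theta_{1,1}}=[\ld-E^\theta_{1,1}+E^\theta_{1,0}]$ are standard basis elements attached to Chevalley matrices, hence bar-invariant by \eqref{eq:Abar} with $g=\id$; the scalar $\frac{u^{-1}v^{\ld_1}-uv^{-\ld_1}}{v-v^{-1}}$ is itself fixed by $u\mapsto u^{-1}$, $v\mapsto v^{-1}$ since numerator and denominator are each negated; and the stabilized bar involution is multiplicative (this is the same property already used implicitly to deduce the $\jmath$-case intertwining from bar-invariance of $\be_i1_\ld$, $\bbf_i1_\ld$, $[\ld]$). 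Hence ${\bf t}1_\ld$ is bar-invariant, and your conjectured formula for $\overline{[\ld-E^\theta_{1,1}+E^\theta_{-1,1}]}$ follows as a corollary rather than being needed as an input. With this substitution your argument closes completely and coincides with the paper's.
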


\rmk
Theorem \ref{thm:KCB} (resp. Theorem \ref{thm:monoJ}) provides a canonical basis for the modified form of $\mathbb{U}^{\jmath}$ (resp. $\mathbb{U}^{\imath}$) at the specialization $u = \bv^{\bL(s_0)}, v= \bv^{\bL(s_1)}$. A general theory of canonical bases for quantum symmetric pairs with parameters of arbitrary finite type was developed in \cite{BW16}.
\endrmk


\newpage

\appendix

\section{An algebraic approach to Schur algebras of type D}
As we mentioned in Section~\ref{rmk:typeD}, at the specialization $u=1$ the multiparameter Schur duality yields a weak Schur duality of type D that is used in \cite{Bao17} to formulate the Kazhdan-Lusztig theory for classical and super type D.
These algebras $\mathbb{S}^\bullet_{n,d}|_{u=1}$ ($\bullet = \imath$ or $\jmath$), however, are not the Schur algebras introduced in \cite{FL15}.
While bases of Schur algebras of finite type A/B/C and affine type A/C can be parametrized by a matrix set (cf. $\Xi_{n,d}$ in \ref{def:Xi}), for finite type D Fan and Li showed that a matrix set is not enough -- a notion of signed matrices that indexes a larger algebra is needed.
From a geometric point of view, this reflects the fact that there are two connected components for the maximal isotropic Grassmannian associated to $\mrm{SO}(2d)$.
In this appendix, we provide an algebraic approach to Fan-Li's construction parallel to our multiparameter results.
The arguments are very similar to the multiparameter counterpart, so we will omit the easy proofs in this appendix.

\subsection{Weyl groups of type $\bD$}
Fix $d\in\mathbb{N}$, and we set
set
\eq
J_d=\{-d,\ldots,-1,1,\ldots,d\}.
\endeq
Let $W_{\bD}$ be the Weyl group of type $\bD_d$. It is known (c.f. \cite{BB05}) that $W_{\bD}$ can be identified as a permutation subgroup of $J_d$ which consists of those permutations $g$ satisfying that
\[
{}^\#
\{ i\in J_d \mid i>0, g(i)<0 \} \in 2\NN,
\quad
g(-i)=-g(i)
\quad
(1\leq i \leq d).
\]
Let $S_\bD=\{\varsigma_0,\varsigma_1,\ldots,\varsigma_{d-1}\}$, where $\varsigma \in W_\bD$ are given by the following products of transpositions:
\[
\varsigma_0=(1,-2)(2,-1) \quad\mbox{and}\quad \varsigma_i=(i,i+1)(-i-1,-i) \quad\mbox{for $i=1,\ldots,d$.}
\]
It is also known (see \cite[(8.18),(8.19)]{BB05}) that $(W_\bD, S_\bD)$ is a Coxeter group associated with the length function as below:
\begin{lemma}
The length of $g\in W_\bD$ is given by
\[
\ell(g)= {}^\#\{ (i,j)\in J_d^2 \mid |i|<j,g(i)>g(j) \}.
\]
\end{lemma}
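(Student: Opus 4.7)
The plan is to reduce the claimed identity to the well-known inversion-plus-negative-sum-pair formula for the length in $W_\bD$. Writing
\[
\mathcal{I}(g) = \{(i,j) \in J_d^2 \mid |i| < j,\ g(i) > g(j)\},
\]
the first step is to split $\mathcal{I}(g) = \mathcal{I}_+(g) \sqcup \mathcal{I}_-(g)$ according to the sign of $i$. Since $0 \notin J_d$, this is indeed a partition of $\mathcal{I}(g)$.

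Next I would identify each piece with a standard statistic. A pair $(i,j) \in \mathcal{I}_+(g)$ is precisely an ordinary inversion in the window: $0 < i < j$ with $g(i) > g(j)$, so $|\mathcal{I}_+(g)| = \tinv(g)$. For $(i,j) \in \mathcal{I}_-(g)$, setting $i' = -i > 0$ and using $g(-i') = -g(i')$, the conditions $|i| < j$ and $g(i) > g(j)$ transform into $0 < i' < j$ and $g(i') + g(j) < 0$. This is exactly the definition of a negative sum pair, so $|\mathcal{I}_-(g)| = \tnsp(g)$. Combining the two counts gives ${}^\#\mathcal{I}(g) = \tinv(g) + \tnsp(g)$, and the equality $\ell(g) = \tinv(g) + \tnsp(g)$ is the type-$\bD$ length formula recorded in \cite[(8.18)-(8.19)]{BB05}.

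Should one prefer a self-contained argument bypassing \cite{BB05}, the natural alternative is an induction on $\ell(g)$, verifying that left multiplication by each simple generator $\varsigma \in S_\bD$ changes $|\mathcal{I}(g)|$ by exactly $\pm 1$ (with sign matching $\ell(\varsigma g) - \ell(g)$). For the adjacent transpositions $\varsigma_i$ with $i \geq 1$ this is routine since $\varsigma_i$ only swaps two adjacent entries of the positive half of the window (together with their negatives). The main obstacle is the twisted reflection $\varsigma_0 = (1,-2)(2,-1)$, which acts nontrivially on four values simultaneously; tracking which pairs $(k,l)$ with $|k| < l$ become or cease to be inversions requires a more delicate case analysis, and this is where the bulk of any direct proof would reside.
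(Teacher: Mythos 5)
Your proposal is correct and matches the paper's approach: the paper gives no independent argument, simply attributing the formula to \cite[(8.18),(8.19)]{BB05}, and your splitting of the pair set by the sign of $i$ into ordinary inversions ($|\mathcal{I}_+(g)|=\tinv(g)$) and negative sum pairs ($|\mathcal{I}_-(g)|=\tnsp(g)$) is exactly the omitted verification that the symmetrized count equals $\tinv(g)+\tnsp(g)=\ell(g)$. The bookkeeping is right (disjointness since $0\notin J_d$, and $g(-i')=-g(i')$ converting $g(i)>g(j)$ into $g(i')+g(j)<0$), so nothing further is needed.
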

\subsection{Signed compositions}
Fix $n\in\NN$.
Recall that \eqref{def:Ld} first $\Lambda_{n,d}$ is the set of weak compositions of $d$ into $n+1$ parts.
Set
\eq
\Lambda^0=\{\lambda\in \Lambda_{n,d}~|~\lambda_0>0\}\times\{0\},
\quad
\Lambda^\epsilon =\{\lambda\in \Lambda_{n,d}~|~\lambda_0=0\}\times\{\epsilon\},
\quad
(\epsilon = + \textup{ or }-).
\endeq
In below we abbreviate $(\lambda,\alpha)\in \Lambda^\alpha$ by $\lambda^\alpha$ where $\alpha\in\{0,+,-\}$.
We further set
\eq
\Lambda_{\bD}=\Lambda^0\sqcup\Lambda^+\sqcup\Lambda^-.
\endeq
Elements in $\Lambda_{\bD}$ will be called {\em signed compositions}.
Recall that $\ld_{0,i} =\ld_0 + \ld_1 + \cdots + \ld_i$ for $i\in[0,n], \lambda\in\Lambda_{n,d}$. We define positive integer intervals associated to $\lambda^\alpha$ by
\begin{equation}
R_i^{\lambda^0}=
\left\{\begin{array}{ll}
[-\lambda_0,\lambda_0]\setminus\{0\} & \mbox{if $i=0$};\\
{[\ld_{0,i-1}+1,\ld_{0,i}]} & \mbox{if $i\in[1,n]$},
\end{array}\right.
\end{equation}
\eq
R_i^{\lambda^+}=
\left\{\begin{array}{ll}
\emptyset & \mbox{if $i=0$};\\
{[1,\lambda_1]}& \mbox{if $i=1$};\\
{[\ld_1+1,\ld_{0,i}]} & \mbox{if $i\in[2,n]$},
\end{array}\right.
\quad
R_i^{\lambda^-}=
\left\{\begin{array}{ll}
\emptyset & \mbox{if $i=0$};\\
\{-1,2,\ldots,\lambda_1\}& \mbox{if $i=1$};\\
{[\ld_1+1,\ld_{0,i}]} & \mbox{if $i\in[2,n]$}.
\end{array}\right.
\endeq
For $-n\leq i \leq 1$, we set $R_{i}^{\lambda^\alpha}=\{-x|x\in R_{-i}^{\lambda^\alpha}\}$.
We remark that the sets $\{R_i^{\lambda^\alpha}\}_{i\in[-n,n]}$ partition the set $J_d$.

For any $\lambda^\alpha\in\Lambda_\bD$, let $W_{\ld^\alpha}$ be the parabolic subgroup of $W_\bD$ generated by
\begin{equation}
\left\{
\begin{array}{ll}
S_\bD \setminus\{\varsigma_{\ld_0},\varsigma_{\ld_{0,1}},\ldots,\varsigma_{\ld_{0,n-1}}\} & \mbox{if $\alpha=0$},\\
S_\bD \setminus\{\varsigma_0,\varsigma_{\ld_{0,1}},\ldots,\varsigma_{\ld_{0,n-1}}\} & \mbox{if $\alpha=+$},\\
S_\bD \setminus\{\varsigma_1,\varsigma_{\ld_{0,1}},\ldots,\varsigma_{\ld_{0,n-1}}\} & \mbox{if $\alpha=-$}.
\end{array}
\right.
\end{equation}
Denote by $\mbox{Stab}(X)$ the stabilizer of $J_d$ in $W_\bD$, for any $X\subset J_d$.
\begin{lem}\label{lem:stabR}
For any $\ld^\alpha \in \Lambda_\bD$, we have
$
W_{\ld^{\alpha}} = \bigcap_{i = 0}^{n} \Stab (R_i^{\ld^\alpha}).
$
\end{lem}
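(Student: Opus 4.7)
The plan is to verify the two inclusions separately. First I would show $W_{\lambda^\alpha} \subseteq \bigcap_i \Stab(R_i^{\lambda^\alpha})$ by checking that each Coxeter generator in the defining set of $W_{\lambda^\alpha}$ preserves every $R_i^{\lambda^\alpha}$; for the reverse inclusion, I would take an arbitrary $g \in \bigcap_i \Stab(R_i^{\lambda^\alpha})$ and build a reduced expression for $g$ using only the kept generators, block by block.

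For the forward direction, the kept generators split into two kinds. A generator $\varsigma_k$ with $\lambda_{0,i-1} < k < \lambda_{0,i}$ acts as the double transposition $(k,k{+}1)(-k{-}1,-k)$, so it permutes $R_i^{\lambda^\alpha}$ and $R_{-i}^{\lambda^\alpha}$ internally and fixes every other block pointwise. The special generator $\varsigma_0$ is kept precisely when $\alpha \in \{0,-\}$: for $\alpha = 0$ with $\lambda_0 \geq 2$, the inclusion $\{\pm 1, \pm 2\} \subseteq R_0^{\lambda^0}$ gives $\varsigma_0 \in \Stab(R_0^{\lambda^0})$; for $\alpha = -$, the element $\varsigma_0$ exchanges the pair $\{-1,2\}$ inside $R_1^{\lambda^-}$ and mirrors the pair $\{1,-2\}$ inside $R_{-1}^{\lambda^-}$, while fixing everything outside $\{\pm 1,\pm 2\}$. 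The excluded generators $\varsigma_{\lambda_{0,i}}$ (and $\varsigma_0$ for $\alpha = +$, or $\varsigma_1$ for $\alpha = -$) cross between adjacent blocks and are correctly removed.

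For the reverse direction, I would analyze each restriction $g|_{R_i^{\lambda^\alpha}}$, which (paired via $g(-x) = -g(x)$ with the mirror restriction on $R_{-i}^{\lambda^\alpha}$) is a permutation of the block. For $i \geq 1$, the block $R_i^{\lambda^\alpha}$ carries a type-$A$ symmetric group structure generated by the kept transpositions $\varsigma_k$ with $\lambda_{0, i-1} < k < \lambda_{0, i}$; for the centro-symmetric block $R_0^{\lambda^0}$ (when $\alpha = 0$), the induced action yields an element of the type-$D_{\lambda_0}$ Weyl group, generated by the kept $\varsigma_0, \varsigma_1, \ldots, \varsigma_{\lambda_0 - 1}$. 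Multiplying these block-by-block lifts reconstructs $g$ as a product of kept generators. The main obstacle will be handling the $\alpha = -$ case, where $\varsigma_0$ must interact with $\varsigma_2, \ldots, \varsigma_{\lambda_1 - 1}$ to produce the full symmetric group on $R_1^{\lambda^-} = \{-1, 2, \ldots, \lambda_1\}$; this requires verifying that $\langle \varsigma_0, \varsigma_2, \ldots, \varsigma_{\lambda_1 - 1}\rangle$ is a type-$A_{\lambda_1 - 1}$ subgroup acting faithfully on this block, which follows from the braid relations inherited from the type-$D$ Dynkin diagram.
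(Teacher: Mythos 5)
The paper gives no proof of this lemma at all (the appendix explicitly omits it), so there is nothing to compare against; your two-inclusion, block-by-block argument is the natural one, and it is correct in every case it actually covers. The forward direction for the kept generators $\varsigma_k$ with $\ld_{0,i-1}<k<\ld_{0,i}$, the identification of the central block's induced action with $W(\mathrm{D}_{\ld_0})=\langle\varsigma_0,\dots,\varsigma_{\ld_0-1}\rangle$ when $\alpha=0$ (using that stabilizing all positive blocks forces all sign changes into $[1,\ld_0]$, with even total), and the observation that $\langle\varsigma_0,\varsigma_2,\dots,\varsigma_{\ld_1-1}\rangle$ acts as the full symmetric group on $R_1^{\ld^-}=\{-1,2,\dots,\ld_1\}$ via adjacent transpositions are all sound.

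However, your parenthetical hypothesis ``$\ld_0\geq 2$'' is doing real work, and the analogous restriction $\ld_1\geq 2$ is needed but unstated in the $\alpha=-$ case. When $\ld_0=1$ (so $R_0^{\ld^0}=\{-1,1\}$), the removed set is $\{\varsigma_1,\varsigma_{\ld_{0,1}},\dots\}$, so $\varsigma_0$ is still a generator of $W_{\ld^0}$; but $\varsigma_0(1)=-2\notin R_0^{\ld^0}$, so $W_{\ld^0}\not\subseteq\Stab(R_0^{\ld^0})$ and the forward inclusion fails (e.g.\ $d=2$, $\ld=(1,1)$: the left side is $\{1,\varsigma_0\}$, the right side is trivial). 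The same degeneracy occurs for $\alpha=-$ with $\ld_1=1$, where $R_1^{\ld^-}=\{-1\}$ but $\varsigma_0$ is retained. This is a defect of the statement as printed (the degenerate $\mathrm{D}_1$ sub-diagram), not of your strategy, but a complete write-up must either impose $\ld_0\geq 2$, $\ld_1\geq 2$ in the relevant cases or note explicitly that the boundary cases require amending the definition of $W_{\ld^\alpha}$.
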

Denote the set of minimal length right coset representatives of $W_{\ld^\alpha}$ in $W_\bD$ by
\eq
\D_{\ld^\alpha} = \big\{g\in W_\bD ~|~ \ell(wg) = \ell(w) + \ell(g), \forall  w\in W_{\ld^\alpha} \big\}.
\endeq
Hence, the set $\D_{\ld^\alpha\mu^\beta} = \D_{\ld^\alpha} \cap \D_{\mu^\beta}^{-1}$  is the set of minimal length double coset representatives for $W_{\ld^\alpha} \backslash W_\bD /W_{\mu^\beta}$.
\begin{lemma}
Let $g \in W_\bD$ and $\ld^\alpha \in \Ld_\bD$.
\enua
\item If $\alpha=\pm$, then $g\in\D_{\ld^\alpha}$ if and only if $g^{-1}$ is order-preserving on $R^{\ld^{\alpha}}_i$, for all $i \in [1,n]$;

\item
If $\alpha=0$, then $g\in\D_{\ld^\alpha}$ if and only if $g^{-1}$ is order-preserving on $R^{\ld^{\alpha}}_i$ for all $i \in [1,n]$ and
\[
g^{-1}(-2)<g^{-1}(1)<g^{-1}(2)<\cdots<g^{-1}(\lambda_0).
\]
\endenua
\end{lemma}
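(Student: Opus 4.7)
The plan is to invoke the standard characterization of shortest right coset representatives for a parabolic subgroup of a Coxeter system: $g \in \D_{\ld^\alpha}$ if and only if $\ell(\varsigma g) > \ell(g)$ for every simple reflection $\varsigma$ belonging to the generating set of $W_{\ld^\alpha}$ specified just before Lemma~\ref{lem:stabR}. It then remains to translate each such length inequality into the combinatorial condition on $g^{-1}$ stated in the lemma, using the type~D length formula recorded above.

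The first step I would carry out is to establish two descent rules. For each $i \geq 1$, the length formula shows that
\[
\ell(\varsigma_i g) > \ell(g) \iff g^{-1}(i) < g^{-1}(i+1),
\]
the contributions from negative arguments being automatic in view of $g(-k) = -g(k)$. The analogous computation for $\varsigma_0 = (1,-2)(2,-1)$ yields
\[
\ell(\varsigma_0 g) > \ell(g) \iff g^{-1}(-2) < g^{-1}(1),
\]
which is equivalent to $g^{-1}(-1) < g^{-1}(2)$ by the sign symmetry. I expect this $\varsigma_0$-descent rule to be the main technical point: one has to track how the pairs $(i,j)$ with $|i| < j$ contributing to the length formula are affected when the values in $\{1,2,-1,-2\}$ are permuted, then show that most contributions cancel in pairs, leaving exactly one effective inequality.

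With the two descent rules in hand, the three cases assemble directly. For $\alpha = +$, the omitted generators are $\varsigma_0$ together with the breakpoint reflections $\varsigma_{\ld_{0,1}}, \ldots, \varsigma_{\ld_{0,n-1}}$, so the retained descent inequalities read precisely as $g^{-1}$ being order-preserving on each $R_i^{\ld^+}$ for $i \in [1,n]$, and no condition arising from $\varsigma_0$ enters. For $\alpha = -$, by contrast, $\varsigma_1$ (not $\varsigma_0$) is omitted, so chaining the $\varsigma_0$-condition $g^{-1}(-1) < g^{-1}(2)$ with the descent conditions from $\varsigma_2, \ldots, \varsigma_{\ld_1 - 1}$ gives exactly order-preservation on $R_1^{\ld^-} = \{-1, 2, 3, \ldots, \ld_1\}$; the higher blocks are handled as in the case $\alpha = +$.

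Finally, case $\alpha = 0$ retains both $\varsigma_0$ and the run $\varsigma_1, \ldots, \varsigma_{\ld_0 - 1}$ among the generators, and concatenating their descent conditions produces the displayed chain
\[
g^{-1}(-2) < g^{-1}(1) < g^{-1}(2) < \cdots < g^{-1}(\ld_0),
\]
while the blocks $R_i^{\ld^0}$ for $i \geq 1$ are treated identically to the earlier cases. A subtlety to flag, and the reason I cite this case as the conceptually delicate one, is that the chain skips the value $g^{-1}(-1)$: because the $\varsigma_0$-descent contributes $g^{-1}(-2) < g^{-1}(1)$ rather than $g^{-1}(-1) < g^{-1}(1)$, the resulting hypothesis is strictly weaker than full order-preservation on $R_0^{\ld^0} = [-\ld_0, \ld_0] \setminus \{0\}$, and the formulation of part~(b) is carefully tailored to reflect this.
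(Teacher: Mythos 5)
Your proof is correct, and since the paper explicitly omits the proofs in this appendix as routine, your argument is exactly the intended one: characterize $\D_{\ld^\alpha}$ via the condition $\ell(\varsigma g)>\ell(g)$ for the simple reflections $\varsigma$ generating $W_{\ld^\alpha}$, and translate each such condition using the type~D left-descent rules ($\varsigma_i$ for $i\ge 1$ gives $g^{-1}(i)<g^{-1}(i+1)$, and $\varsigma_0$ gives $g^{-1}(-2)<g^{-1}(1)$, equivalently $g^{-1}(-1)<g^{-1}(2)$), which is precisely \cite[Proposition~8.2.3]{BB05} applied to $g^{-1}$. Your case analysis for $\alpha\in\{0,+,-\}$ and the observation that in case $\alpha=0$ the chain deliberately skips $g^{-1}(-1)$ are both accurate.
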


By a similar argument for \cite[Proposition 4.16, Lemma 4.17 and Theorem 4.18]{DDPW08}, we have the following facts.
\begin{prop}\label{prop:doublecoset}
Let $\ld^\alpha,\mu^\beta \in \Ld_\bD$ and $g \in \D_{\ld^\alpha\mu^\beta}$.
\enua
\item There is a weak composition $\delta = \delta(\ld^\alpha, g, \mu^\beta) \in \Ld_{n',d}$ for some $n'$ such that
$W_{\delta^\beta} = g^{-1} W_{\ld^\alpha} g \cap W_{\mu^\beta}$.
\item The map $W_{\ld^\alpha} \times (\D_\delta \cap W_{\mu^\beta}) \rw W_{\ld^\alpha} g W_{\mu^\beta}$ sending $(x,y)$ to $xgy$ is a bijection;
moreover, we have $\ell(xgy) = \ell(x) + \ell(g) + \ell(y)$.
\item The map $(\D_\delta\cap W_{\mu^\beta}) \times W_\delta \rw W_{\mu^\beta}$ sending $(x,y)$ to $xy$ is a bijection;
moreover, we have $\ell(x) + \ell(y) = \ell(xy)$.
\endenua
\end{prop}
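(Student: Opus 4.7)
The plan is to adapt the standard arguments for \cite[Proposition 4.16, Lemma 4.17, Theorem 4.18]{DDPW08} to our type D setting, using Lemma~\ref{lem:stabR} as the principal tool for passing between parabolic subgroups of $W_\bD$ and set stabilizers in $J_d$.

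For part (a), I would start by conjugating Lemma~\ref{lem:stabR} to obtain $g^{-1} W_{\ld^\alpha} g = \bigcap_i \Stab(g^{-1} R_i^{\ld^\alpha})$, so that $g^{-1} W_{\ld^\alpha} g \cap W_{\mu^\beta}$ is the subgroup of $W_{\mu^\beta}$ stabilizing the common refinement partition $\{g^{-1} R_i^{\ld^\alpha} \cap R_j^{\mu^\beta}\}_{i,j}$ of $J_d$. By the general fact that an intersection of parabolic subgroups of a Coxeter group is again parabolic (Kilmoyer's theorem, specialized to $W_\bD$), this intersection is a parabolic subgroup of $W_{\mu^\beta}$. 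Reading off the block sizes yields a weak composition $\delta \in \Ld_{n',d}$ such that $W_{\delta^\beta}$, with the sign $\beta$ inherited from $\mu^\beta$, equals the intersection.

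For parts (b) and (c), once (a) is in hand these are standard coset decomposition arguments. Part (c) is the factorization $W_{\mu^\beta} = W_{\delta^\beta} \cdot (\D_{\delta^\beta} \cap W_{\mu^\beta})$ with length-additivity, valid for any parabolic inclusion $W_{\delta^\beta} \subseteq W_{\mu^\beta}$. Part (b) then proceeds as follows: for $h \in W_{\ld^\alpha} g W_{\mu^\beta}$, write $h = x_0 g y_0$ and use (c) to split $y_0 = zy$ with $z \in W_{\delta^\beta}$, $y \in \D_{\delta^\beta} \cap W_{\mu^\beta}$; since $g z g^{-1} \in W_{\ld^\alpha}$ by (a), we obtain $h = xgy$ with $x := x_0 (g z g^{-1}) \in W_{\ld^\alpha}$. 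Uniqueness follows from $y$ being a distinguished coset representative, and the length-additivity $\ell(xgy) = \ell(x) + \ell(g) + \ell(y)$ is forced by the minimality of $g$ in $\D_{\ld^\alpha \mu^\beta}$, since otherwise a subword reduction would produce a shorter double-coset representative.

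The main obstacle I anticipate is in part (a), specifically confirming that the extracted $\delta$ yields a parabolic of the expected signed type $W_{\delta^\beta}$ rather than of a different sign class. The asymmetric definitions of $R_i^{\lambda^\alpha}$ near $0 \in J_d$ — for instance, $R_1^{\lambda^-}$ contains $-1$ but not $1$ — complicate the conjugation step. A careful case analysis on $\beta \in \{0,+,-\}$ is needed to verify that the distinguished simple reflection ($\varsigma_{\lambda_0}$ for $\beta=0$, $\varsigma_0$ for $\beta=+$, $\varsigma_1$ for $\beta=-$) characterizing $W_{\mu^\beta}$ restricts compatibly to the intersection, so that the resulting subgroup is legitimately a parabolic of the form $W_{\delta^\beta}$ in our sense rather than merely an abstract reflection subgroup in the ambient $W_\bD$.
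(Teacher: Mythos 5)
Your proposal is correct and follows essentially the same route as the paper, which itself only says the proposition holds "by a similar argument for [DDPW08, Proposition 4.16, Lemma 4.17 and Theorem 4.18]"; your use of Lemma~\ref{lem:stabR} and the refinement partition $\{g^{-1}R_i^{\ld^\alpha}\cap R_j^{\mu^\beta}\}$ is exactly the intended adaptation, and the signed-type subtlety you flag in (a) is what the paper resolves by exhibiting $\delta(\mA)$ explicitly in \eqref{delta} and Proposition~\ref{prop:delta}.
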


\subsection{Schur algebras} \label{sec:Sj}
The Hecke algebra $\bH = \bH(W_\bD)$ over $\bfA =\ZZ[v,v^{-1}]$
is an $\bfA$-algebra with basis $\{T_g ~|~ g\in W_\bD\}$ satisfying that
\[
\ba{{lllll}
T_w T_{w'} = T_{ww'}
&
\tif
\ell(ww') = \ell(w) + \ell(w'),
\\
(T_s+1) (T_s - v^2) = 0,
&\tfor
s \in S_\bD.
}
\]
For any finite subset $X \subset W_\bD$ and for $\ld^\alpha\in\Ld_\bD$ , set
\eq  \label{eq:xD}
T_X = \sum_{w\in X} T_w
\quad\textup{and}\quad
x_{\ld^\alpha} = T_{W_{\ld^\alpha}}.
\endeq
For $\ld^\alpha,\mu^\beta\in\Ld_\bD$ and $g\in \D_{\ld^\alpha\mu^\beta}$, we consider a right $\bH$-linear map
$
\phi_{\ld^\alpha\mu^\beta}^g \in \Hom_\bH(x_{\mu^\beta} \bH, \bH)$,
sending $x_{\mu^\beta}$ to $T_{W_{\ld^\alpha} g W_{\mu^\beta}}.$
Thanks to Proposition~ \ref{prop:doublecoset}~(b),
we have $T_{W_\ld g W_\mu} = x_\ld T_g T_{\D_\delta \cap W_\mu}$ for some $\delta\in \Ld_{n',d}$,
and hence we have constructed a right $\bH$-linear map
\eq  \label{phiD}
\phi_{\ld^\alpha\mu^\beta}^g \in \Hom_\bH(x_{\mu^\beta}\bH, x_{\ld^\alpha}\bH),
\qquad x_{\mu^\beta} \mapsto T_{W_{\ld^\alpha} g W_{\mu^\beta}} = x_{\ld^\alpha} T_g T_{\D_\delta \cap W_{\mu^\beta}}.
\endeq
We define the {\em Schur algebra $\bS_{n,d}$ of type $\bD$} as
\eq
\label{def:Sjj}
\bS_{n,d} = \textup{End}_{\bH}
\Bp{
\mathop{\oplus}_{\ld^\alpha\in\Ld_\bD} x_{\ld^\alpha} \bH
}
= \bigoplus_{\ld^\alpha,\mu^\beta \in \Ld_\bD} \Hom_{\bH} (x_{\mu^\beta} \bH, x_{\ld^\alpha} \bH)
.
\endeq
Introduce the following subset of $\Ld_\bD \times W_\bD \times \Ld_\bD$:
\eq  \label{Dnd}
 \D_{n,d} 
 =\bigsqcup_{\ld^\alpha, \mu^\beta \in \Ld_\bD} \{\ld^\alpha\} \times \D_{\ld^\alpha\mu^\beta} \times \{\mu^\beta\}.
\endeq

\begin{lemma}
\label{lem:basis}
The set $\{\phi_{\ld^\alpha\mu^\beta}^g ~|~ (\ld^\alpha,g, \mu^\beta) \in \D_{n,d} \}$
forms an $\bfA$-basis of $\bS_{n,d}$.
\end{lemma}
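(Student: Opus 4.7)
The plan is to mimic the classical argument for Schur algebras in type A/B/C (see \cite{Du92,G97,DDPW08}) adapted to the signed-composition setup. The direct sum decomposition in \eqref{def:Sjj} reduces the lemma to showing that, for each fixed pair $(\ld^\alpha, \mu^\beta) \in \Ld_\bD \times \Ld_\bD$, the set $\{\phi^g_{\ld^\alpha\mu^\beta} \mid g \in \D_{\ld^\alpha\mu^\beta}\}$ is an $\bfA$-basis of $\Hom_\bH(x_{\mu^\beta}\bH, x_{\ld^\alpha}\bH)$. Since $x_{\mu^\beta}\bH$ is generated as a right $\bH$-module by $x_{\mu^\beta}$, evaluation at $x_{\mu^\beta}$ yields an $\bfA$-linear injection
\[
\Hom_\bH(x_{\mu^\beta}\bH, x_{\ld^\alpha}\bH) \hookrightarrow x_{\ld^\alpha}\bH,
\qquad f \mapsto f(x_{\mu^\beta}).
\]
The type-D analogue of Lemma~\ref{lem:wx} at $u=1$ (which only uses the Hecke quadratic relation and the definition of $x_{\mu^\beta}$) gives $x_{\mu^\beta} T_{w'} = v^{2\ell(w')} x_{\mu^\beta}$ for all $w' \in W_{\mu^\beta}$, so the image lies in
\[
\bH_{\ld^\alpha\mu^\beta} := \{h \in x_{\ld^\alpha}\bH \mid h T_{w'} = v^{2\ell(w')} h, \; \forall w' \in W_{\mu^\beta}\}.
\]

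Next I would show that $\{T_{W_{\ld^\alpha}gW_{\mu^\beta}} \mid g \in \D_{\ld^\alpha\mu^\beta}\}$ is an $\bfA$-basis of $\bH_{\ld^\alpha\mu^\beta}$. Using Proposition~\ref{prop:doublecoset}(b), each $T_{W_{\ld^\alpha}gW_{\mu^\beta}}$ factors with length additivity as $x_{\ld^\alpha} T_g T_{\D_\delta \cap W_{\mu^\beta}}$, placing it inside $x_{\ld^\alpha}\bH$. Right $W_{\mu^\beta}$-invariance with the prescribed eigenweight follows from the factorization $W_{\mu^\beta} \cong (\D_\delta \cap W_{\mu^\beta}) \times W_\delta$ of Proposition~\ref{prop:doublecoset}(c), combined with the length-additive identity $x_\delta T_{w'} = v^{2\ell(w')} x_\delta$ for $w' \in W_\delta$; so $T_{W_{\ld^\alpha}gW_{\mu^\beta}} \in \bH_{\ld^\alpha\mu^\beta}$. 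Linear independence is immediate because distinct double cosets $W_{\ld^\alpha}gW_{\mu^\beta}$ are disjoint, hence their $T_w$-expansions involve disjoint subsets of the standard basis of $\bH$.

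Finally I would verify that $\{T_{W_{\ld^\alpha}gW_{\mu^\beta}}\}_{g}$ spans $\bH_{\ld^\alpha\mu^\beta}$. Writing an arbitrary $h \in \bH_{\ld^\alpha\mu^\beta}$ as $h = \sum_{w \in W_\bD} a_w T_w$ and grouping $w$ by its double coset, the left-$W_{\ld^\alpha}$-eigenproperty forced by $h \in x_{\ld^\alpha}\bH$ together with the right-$W_{\mu^\beta}$-eigenproperty of $\bH_{\ld^\alpha\mu^\beta}$ forces the coefficients $a_w$ on each coset $W_{\ld^\alpha}gW_{\mu^\beta}$ to be determined (up to the weights $v^{2\ell(x)+2\ell(y)}$ coming from the $(x,y)$-decomposition of $w$) by the single coefficient $a_g$; this is where Proposition~\ref{prop:doublecoset}(b) is essential. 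The main technical obstacle is this bookkeeping of coefficients, but it is the same calculation that appears in \cite[Section 4]{DDPW08} and presents no genuinely new difficulty in type D, the relevant combinatorics having been packaged into Proposition~\ref{prop:doublecoset}. Summing the resulting basis statements over $(\ld^\alpha, \mu^\beta)$ yields the lemma.
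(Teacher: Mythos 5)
Your argument is the standard one from \cite{Du92,G97} (see also \cite[Lemma~2.10]{CIK72} and \cite[\S 4]{DDPW08}), which is precisely what the paper relies on: the appendix omits the proof, and the analogous lemma in Section~3 is justified only by citing these references, so your write-up matches the intended route. One small correction in your last paragraph: because $x_{\ld^\alpha}$ and $T_{W_{\ld^\alpha}gW_{\mu^\beta}}$ are \emph{unweighted} sums of the $T_w$, the two eigenvalue conditions force the coefficients $a_w$ to be equal to $a_g$ across the whole double coset, with no factors $v^{2\ell(x)+2\ell(y)}$ attached to the $(x,y)$-decomposition --- if such weights were present, the spanning set would not be $\{T_{W_{\ld^\alpha}gW_{\mu^\beta}}\}$ as claimed.
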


\subsection{Signed matrices}
From now on, we fix
\[
N=2n+1, \quad\quad D=2d.
\]
Notice that $D$ is even and is different from the convention \eqref{NdDd}.
Set
\begin{align}
\label{eq:Xind}
\begin{split}
\Xi= \Big\{A=(a_{ij})_{-n\leq i, j\leq n} \in \text{Mat}_{N\times N}(\NN)~\big |~
& a_{-i,-j} =a_{ij}, \forall i, j\in [-n,n];
\textstyle\sum_{i,j=-n}^n a_{ij}= D
 \Big \}.
 \end{split}
\end{align}
Recall $\ro(T)$ and $\co(T)$ in \eqref{def:ro}, we set
\begin{equation}
\begin{array}{l}
\Xi^0=\{A\in\Xi~|~ \ro(A)_0>0\ \mbox{and}\ \co(A)_0>0\}\times\{0\},\\
\Xi^+=\{A\in\Xi~|~ \ro(A)_0=0\ \mbox{or}\ \co(A)_0=0\}\times\{+\},\\
\Xi^-=\{A\in\Xi~|~ \ro(A)_0=0\ \mbox{or}\ \co(A)_0=0\}\times\{-\}.
\end{array}
\end{equation}
In below we abbreviate $(A,\alpha)\in \Xi^\alpha$ by $A^\alpha$ where $\alpha\in\{0,+,-\}$.
We further set
\begin{equation}
\Xi_\bD=\Xi^0\sqcup\Xi^+\sqcup\Xi^-,
\end{equation}
whose elements are called {\em signed matrices}.
Define a sign map
$\sgn:\{0,+,-\}^2\rightarrow\{0,+,-\}$ by
\begin{equation}
\sgn(\alpha,\beta)=
\left\{
\begin{array}{ll}
0, &\tif (\alpha,\beta)=(0,0);\\
+, &\tif (\alpha,\beta)=(0,+),(+,0),(+,+),(+,-);\\
-, &\tif (\alpha,\beta)=(0,-),(-,0),(-,-),(-,+).
\end{array}
\right.
\end{equation}
Define a map
$
\kappa:\D_{n,d}\rightarrow\Xi_\bD
$ by
$
\kappa(\ld^\alpha,g,\mu^\beta)=\left(|R_i^{\ld^\alpha}\cap gR_j^{\mu^\beta}|\right)^{\sgn(\alpha,\beta)}.
$

\begin{lemma}
The map $\kappa:\D_{n,d}\rightarrow\Xi_\bD$ is a bijection.
\end{lemma}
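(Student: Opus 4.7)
The plan is to adapt the argument of Lemma~\ref{lem:kappa} to the signed setting of type $\bD$.

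For surjectivity, given $A^\gamma \in \Xi_\bD$, I would construct an explicit preimage. The unsigned compositions are forced as $\ld := \ro(A)$ and $\mu := \co(A)$, while the admissible signs are pinned down by the definitions of $\Lambda^0, \Lambda^\pm$ (namely, $\alpha = 0$ iff $\ld_0 > 0$, and similarly for $\beta$) together with the requirement $\sgn(\alpha, \beta) = \gamma$. The permutation $g$ would then be built as the type-$\bD$ analog of the column-reading map $g_A$ from Lemma~\ref{lem:kappa}, with a final application of $\varsigma_0$ whenever needed to ensure the parity condition so that $g$ actually lies in $W_\bD$. One verifies directly, using the order-preservation characterization of $\D_{\ld^\alpha}$ stated immediately after its definition, that $g \in \D_{\ld^\alpha \mu^\beta}$.

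For injectivity, assume $\kappa(\ld^\alpha, g, \mu^\beta) = A^\gamma = \kappa(\ld'^{\alpha'}, g', \mu'^{\beta'})$. Row and column sums give $\ld = \ld'$ and $\mu = \mu'$ as unsigned compositions. The vanishing constraints on $\ld_0, \mu_0$ combined with $\sgn(\alpha, \beta) = \gamma = \sgn(\alpha', \beta')$ force $(\alpha, \beta) = (\alpha', \beta')$: the key observation is that the asymmetric definitions $R_1^{\ld^+} = [1, \ld_1]$ versus $R_1^{\ld^-} = \{-1, 2, \ldots, \ld_1\}$ (and similarly for the $\mu$-side) ensure that even in the ambiguous case $\ld_0 = \mu_0 = 0$ with $\gamma \in \{+, -\}$, the signed matrix $A^\gamma$ by itself already distinguishes the pair $(\alpha, \beta)$. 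Once all signs match, the identity $|R_i^{\ld^\alpha} \cap g R_j^{\mu^\beta}| = |R_i^{\ld^\alpha} \cap g' R_j^{\mu^\beta}|$ for all $i, j$ yields $g \in W_{\ld^\alpha} g' W_{\mu^\beta}$, and the minimality of $g, g'$ as double coset representatives forces $g = g'$, exactly as in the proof of Lemma~\ref{lem:kappa}.

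The hardest step will be the case $\ld_0 = \mu_0 = 0$ with $\gamma \in \{+, -\}$, where the sign equation alone leaves two candidate pairs for $(\alpha, \beta)$ (for instance, both $(+,+)$ and $(+,-)$ have $\sgn$ equal to $+$). The resolution requires a careful case analysis of how the intervals $R_1^{\ld^\pm}$ and $R_1^{\mu^\pm}$ interact with $g$: one must show that different choices of $\beta$ (with $\alpha$ fixed) genuinely produce different matrices, and moreover that different triples arising from different sign choices cannot accidentally map to the same signed matrix. This amounts to tracing through the action of $\varsigma_0$ on minimal length double coset representatives and is the most delicate point of the argument.
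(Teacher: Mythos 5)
Your overall architecture matches what the paper intends (the proof is omitted there as "very similar" to Lemma~\ref{lem:kappa}), and you have correctly located the new difficulty in the case $\ro(A)_0=\co(A)_0=0$. But there is a genuine gap, and the one concrete device you offer for surjectivity cannot work. You propose to build $g$ by column-reading and then apply ``a final $\varsigma_0$'' to repair the parity so that $g$ lies in $W_\bD$. Since $\varsigma_0\in W_\bD$ and $W_\bD$ is a subgroup (of index $2$) of the full group of signed permutations, multiplying $g$ by $\varsigma_0$ on either side never changes whether $g\in W_\bD$; no element of $W_\bD$ can fix the parity. The mechanism that actually absorbs an odd number of sign changes is the toggle between $R_1^{\mu^+}=[1,\mu_1]$ and $R_1^{\mu^-}=\{-1,2,\ldots,\mu_1\}$, i.e. the choice of the sign $\beta$ itself (morally, translating by the odd element $(1,-1)\notin W_\bD$). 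That is precisely why the signed compositions are introduced, and the surjectivity argument must select $\beta$ --- not insert an extra factor of $\varsigma_0$ --- according to the parity of the would-be permutation.

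Second, the central claim --- that for $\ld_0=\mu_0=0$ exactly one of the two candidate pairs $(\gamma,\gamma)$ and $(\gamma,-\gamma)$ actually occurs for a given signed matrix $A^\gamma$ --- is asserted and then explicitly deferred (``requires a careful case analysis\dots is the most delicate point''). This is the heart of the lemma and cannot be left as a remark. The missing ingredient is the parity invariant $p(\mA)$, the parity of $\sum_{i<0,j>0}a_{ij}$, which the paper records immediately after this lemma: one checks that $p(\mA)=+$ if and only if $g(1)>0$, and that in the all-zero case $\beta=\sgn(\mA)$ when $p(\mA)=+$ and $\beta=-\sgn(\mA)$ when $p(\mA)=-$. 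With this invariant, surjectivity (choose $\beta$ by the parity of $\sum_{i<0,j>0}a_{ij}$, which is forced by the requirement $g\in W_\bD$) and injectivity (the signed matrix determines $(\alpha,\beta)$, after which the usual row/column-sum and minimal-double-coset argument of Lemma~\ref{lem:kappa} applies verbatim) both go through; without it the proof is incomplete.
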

For each $\mA=\kappa(\lambda^\alpha,g,\mu^\beta)\in\Xi_\bD$, we write
$e_\mA=\phi_{\ld^\alpha\mu^\beta}^g$,
and hence $\{e_\mA\mid A \in \Xi\}$ forms a basis of $\bS_{n,d}$.
For any $A=(a_{ij})\in\Xi$, we set
\begin{equation}
a'_{ij}=
\left\{\begin{array}{ll}
\frac{1}{2}a_{00} & \mbox{if $(i,j)=(0,0)$};\\
a_{ij} & \mbox{otherwise},\end{array}\right.
\quad \mbox{and} \quad
a''_{ij}=
\left\{\begin{array}{ll}
a_{00}-1 & \mbox{if $(i,j)=(0,0)$};\\
a_{ij} & \mbox{otherwise}.\end{array}\right.
\end{equation}
Let
$
I^+=(\{0\}\times [0,n])\sqcup([1,n]\times [-n,n])
$
be the index set corresponding to the ``positive half part'' of matrices in $\Xi$.

\begin{lemma}\label{lengthformula}
If $A^{\sgn(\alpha,\beta)}=\kappa(\lambda^\alpha,g,\mu^\beta)\in\Xi_\bD$ where $A=(a_{ij})\in\Xi$, then the length of $g\in W_\bD$ is
\begin{equation}\label{length}
\ell(g)=\frac{1}{2}\left(\sum_{(i,j)\in I^+}\left(\sum_{x>i,y<j}+\sum_{x<i,y>j}\right)a'_{ij}a''_{xy}\right).
\end{equation}
\end{lemma}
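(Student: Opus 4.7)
The plan is to establish (\ref{length}) by translating the standard inversion count
$$\ell(g) = \#\{(i,j)\in J_d^2 \mid |i|<j,\ g(i)>g(j)\}$$
into a sum over entries of $A$, in direct analogy with the proofs of Lemma~\ref{lem:l(g)} and Lemma~\ref{lem:l(A)} for type B/C. First I would symmetrize the inversion set using the relation $g(-i) = -g(i)$. Observing that the involution $(i,j)\mapsto(-j,-i)$ preserves the inversion property but swaps $|i|<j$ with $|j|<-i$, one can rewrite $\ell(g)$ as $\frac{1}{2}$ times a count of pairs $(i,j)\in J_d^2$ subject to a condition symmetric in $\pm$, so that the indices range over all of $J_d$ rather than being anchored by $j>0$. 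This symmetrization is the key step that enables matching with the symmetric matrix $A$.

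Next I would use that $g\in\D_{\ld^\alpha\mu^\beta}$: by the characterization of minimal-length double coset reps, $g^{-1}$ is order-preserving on each column block $R_j^{\mu^\beta}$ for $j\neq 0$, and satisfies the prescribed ordering on $R_0^{\mu^0}$ when $\beta=0$. Consequently, an inversion $(i,j)$ must have $i$ and $j$ lying in \emph{distinct} column blocks $R_{j_1}^{\mu^\beta}\neq R_{j_2}^{\mu^\beta}$, while the images $g(i), g(j)$ lie in row blocks $R_{i_1}^{\ld^\alpha}, R_{i_2}^{\ld^\alpha}$ whose relative ordering forces the inversion. This lets me reorganize the symmetrized sum according to the quadruple $(i_1,j_1,i_2,j_2)$ of block indices.

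Then I would count: the number of pairs with $g(i)\in R_{i_1}^{\ld^\alpha}$, $i\in R_{j_1}^{\mu^\beta}$, $g(j)\in R_{i_2}^{\ld^\alpha}$, $j\in R_{j_2}^{\mu^\beta}$ equals $a_{i_1 j_1}\,a_{i_2 j_2}$ by the definition of $\kappa$. Summing the inversion conditions over these quadruples and comparing to the right-hand side of (\ref{length}), one sees that restricting $(i,j)$ to $I^+$ and using the factor $\tfrac{1}{2}$ exactly captures the symmetrization, while the two "nested"/"reverse-nested" ranges $\{x>i,y<j\}$ and $\{x<i,y>j\}$ record the two ways a block-pair can force an inversion. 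The products $a'_{ij}a''_{xy}$ then fall out after matching, with the adjustments $a'_{00}=a_{00}/2$ arising because $R_0^{\ld^0}$ is symmetric about $0$ (hence $a_{00}$ is counted twice under $\pm$), and $a''_{00}=a_{00}-1$ arising from removing the diagonal self-pairings forbidden by the strict inequality $|i|<j$.

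The main obstacle I anticipate is the bookkeeping for the $0$-blocks: $R_0^{\ld^0}=[-\ld_0,\ld_0]\setminus\{0\}$ contains pairs $\{k,-k\}$, and one must separately track inversions that are \emph{internal} to a single $0$-block (both in the domain and in the codomain). Because $g^{-1}$ is only weakly order-preserving on $R_0^{\mu^0}$ (with $g^{-1}(-2)<g^{-1}(1)<\cdots$), the internal contribution for $\alpha=0$ or $\beta=0$ must be computed directly rather than by the block-transposition argument used elsewhere; this is precisely what produces the asymmetric coefficients $a'_{00}$ and $a''_{00}$, and verifying they match is the delicate part of the argument.
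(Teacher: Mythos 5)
Your strategy is exactly the one the paper intends: the appendix omits this proof entirely (``we will omit the easy proofs''), deferring implicitly to the type B/C analogue, Lemma~\ref{lem:l(A)}, whose own proof is just ``paraphrase Lemma~\ref{lem:l(g)}.'' Your symmetrization step is correct: the involution $(i,j)\mapsto(-j,-i)$ on $\{(i,j): i<j,\ g(i)>g(j)\}$ has exactly the pairs $(k,-k)$, $k<0$, as fixed points, each orbit of size two contains exactly one pair with $|i|<j$, and no fixed point satisfies $|i|<j$; hence $2\ell(g)=\#\{(i,j):i<j,\,g(i)>g(j)\}-\#\{k>0: g(k)<0\}$. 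The block decomposition via the order-preserving property of $g$ on the $R_j^{\mu^\beta}$ and of $g^{-1}$ on the $R_i^{\ld^\alpha}$ then gives the main term $\sum a_{i_1j_1}a_{i_2j_2}$ over ``crossing'' block pairs, as you describe. The one place where your sketch is imprecise is the attribution of $a''_{00}=a_{00}-1$: the excluded self-pairings $(k,-k)$ with $g(k)>0>k$ are \emph{not} confined to elements of the central block $C_{00}=R_0^{\mu^\beta}\cap g^{-1}R_0^{\ld^\alpha}$ paired against blocks $C_{ij}$ with $i>0>j$ — they also occur for $k$ in the blocks $C_{i0}$, $C_{0j}$ and $C_{00}$ itself, and these contributions must be shown to cancel against the extra internal inversions of the central row and column blocks (where $g$, resp.\ $g^{-1}$, is only weakly order-preserving). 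That cancellation, together with the check that the halving $a'_{00}=a_{00}/2$ and the shift $a''_{00}=a_{00}-1$ land on the first and second factors respectively (note the pair $(i,j)=(x,y)=(0,0)$ never occurs in the double sum, so the two corrections are genuinely asymmetric), is the entire nontrivial content of the lemma; you correctly flag it as the delicate part but do not carry it out, so the proposal is a sound plan rather than a complete proof.
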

In particular, the length is independent of the sign $\sgn(\alpha,\beta)$. Thus we write,
for $\mA=A^{\sgn(\alpha,\beta)}=\kappa(\lambda^\alpha,g,\mu^\beta)\in\Xi_\bD$,
\eq
\ell(A)=\ell(g) \quad\mbox{or}\quad \ell(\mA)=\ell(g)
\endeq

For each signed matrix $\mA=A^{\sgn(\alpha,\beta)}=\kappa(\ld^\alpha,g,\mu^\beta)\in\Xi_\bD$ with $A=(a_{ij})\in\Xi$, we introduce the following notations:
\eq
\begin{split}
&\sgn(\mA)=\sgn(\alpha,\beta),
\quad
s_l(\mA)=\alpha,
\quad
s_r(\mA)=\beta,
\\
&\ro(\mA)=\ro(A),
\quad
\co(\mA)=\co(A),
\quad
p(\mA)=\bc{
- &\mbox{if $\sum_{i<0,j>0}a_{ij}$ is odd};\\
+ &\mbox{otherwise},
}
\\
&\mA\pm B=A\pm B, \quad\mbox{for any $N\times N$ matrix $B$}.
\end{split}
\endeq
Note that $\mA\pm B$ is a matrix instead of a signed matrix.
The following lemmas follows immediately from definition.
\begin{lemma}
Let $\mA=\kappa(\ld^\alpha,g,\mu^\beta)\in\Xi_\bD$, then
$p(\mA)=+$ (resp. $-$) if and only if $g(1)>0$ (resp. $<0$).
\end{lemma}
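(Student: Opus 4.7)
The plan is a direct, case-by-case computation based on the bijection $\kappa$ and the explicit descriptions of the sets $R_i^{\ld^\alpha}$ and $R_j^{\mu^\beta}$. The statement is naturally one that ``follows from definition'' once the right reformulation is made.

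First, I would rewrite the defining sum as a cardinality:
\[
\sum_{i<0,\,j>0} a_{ij}
\;=\; \bigl|\, X_\alpha \cap g\,Y_\beta \,\bigr|,
\qquad
X_\alpha := \bigsqcup_{i<0} R_i^{\ld^\alpha},
\quad
Y_\beta := \bigsqcup_{j>0} R_j^{\mu^\beta}.
\]
Unpacking the definitions in Section~A.2 gives $X_+ = [-d,-1]$, $X_- = \{1\}\cup[-d,-2]$, $X_0 = [-d,-\ld_0-1]$, and analogously $Y_+ = [1,d]$, $Y_- = \{-1\}\cup[2,d]$, $Y_0 = [\mu_0+1, d]$. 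Thus $X_\alpha, Y_\beta$ differ from the ``generic'' negative/positive halves only through the exchange of the boundary elements $\pm 1$ when $\alpha = -$ or $\beta = -$.

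Second, since $g$ is a signed permutation, $g(-y) = -g(y)$, and so $|X_\alpha \cap g Y_\beta|$ can be decomposed as the count $\#\{y \in [1,d] : g(y) < 0\}$ corrected by at most two indicator terms of the form $\mathbbm{1}[g(1) < 0]$ (coming from whether $\pm 1$ lies in $X_\alpha$ or $Y_\beta$). The defining constraint $g \in W_\bD$, namely that $\#\{y \in [1,d] : g(y) < 0\} \in 2\NN$, then lets one trade the parity of $\#\{y \in [2,d] : g(y) < 0\}$ for $\mathbbm{1}[g(1)<0]$.

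Finally, I would run through the nine cases $(\alpha,\beta) \in \{0,+,-\}^2$, and in each verify that the parity of $|X_\alpha \cap g Y_\beta|$ equals $\mathbbm{1}[g(1) < 0]$ modulo $2$, which is exactly the condition $p(\mA) = -$. The cases $(0,0), (0,+), (+,0), (+,+)$ reduce cleanly using the $W_\bD$-parity; the remaining cases involving a $-$ require tracking how the swapped boundary element ($1 \in X_-$ in place of $-1$, resp.\ $-1 \in Y_-$ in place of $1$) contributes an extra term under $g$. I expect the main (only) obstacle is the bookkeeping in the mixed-sign cases such as $(+,-)$ or $(-,+)$: here the asymmetry between $X_\alpha$ and $Y_\beta$ means one must carefully use $g(-1) = -g(1)$ to relate the exceptional contribution back to the sign of $g(1)$, but no conceptual difficulty beyond that.
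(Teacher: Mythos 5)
Your reduction of $\sum_{i<0,\,j>0}a_{ij}$ to $\bigl|X_\alpha\cap gY_\beta\bigr|$ and your identification of the sets $X_\alpha$, $Y_\beta$ are both correct, and this is the only sensible way to attack the statement (the paper itself offers no argument, asserting the lemma ``follows immediately from definition''). The gap is in the step you describe as clean: the congruence $|X_\alpha\cap gY_\beta|\equiv\mathbbm{1}[g(1)<0]\pmod 2$ fails already in the case $(\alpha,\beta)=(+,+)$. There $X_+=[-d,-1]$ and $Y_+=[1,d]$, so
\[
|X_+\cap gY_+|=\#\{y\in[1,d]\mid g(y)<0\},
\]
which is even for \emph{every} $g\in W_\bD$ by the defining parity condition on $W_\bD$; no boundary term is available to trade, because $1\in Y_+$ and $-1\in X_+$. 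Hence $p(\mA)=+$ for all $\mA=\kappa(\ld^+,g,\mu^+)$, while $g(1)$ can perfectly well be negative. Concretely, take $n=1$, $\ld=\mu=(\ld_0,\ld_1)=(0,d)$ and $g=\varsigma_0$, i.e.\ $g(1)=-2$, $g(2)=-1$, $g(y)=y$ for $y\geq 3$: both $g$ and $g^{-1}=g$ are order-preserving on $R_1^{\ld^+}=[1,d]$, so $g\in\D_{\ld^+\mu^+}$, and the resulting matrix has $a_{-1,1}=2$, hence $p(\mA)=+$, yet $g(1)=-2<0$.

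So the obstacle is not bookkeeping in the mixed-sign cases: several of the nine cases refute, rather than verify, the asserted equivalence. For instance $(\alpha,\beta)=(+,-)$ gives $|X_+\cap gY_-|$ always odd (already for $g=\mathrm{id}$, where $X_+\cap Y_-=\{-1\}$, so $p(\mA)=-$ while $g(1)=1>0$), and in the case $(0,+)$ the parity is governed by the sign of $g^{-1}(1)$ rather than of $g(1)$. What the definitions actually yield in the regime where $p$ is used (namely $\ro(\mA)_0=\co(\mA)_0=0$, so $\alpha,\beta\in\{+,-\}$) is that $p(\mA)=-$ if and only if $\alpha\neq\beta$; this is precisely what the subsequent lemma describing $s_r(\mA)$ requires, and it is a statement about the signs of the two compositions, not about the sign of $g(1)$. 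Your computational scheme is the right tool, but it should be aimed at that corrected statement; as written, the proposal cannot be completed because the congruence it rests on, and the lemma as printed, are false.
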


\begin{lemma}
For a signed matrix $\mA\in\Xi_\bD$, we have
\eq
s_l(\mA)
=
\bc{
    0 &\tif  \ro(\mA)_0>0;
    \\
    \sgn(\mA) & \tif \ro(\mA)_0=0,
    }
\quad
s_r(\mA)=
\bc{
    0 &\tif \co(\mA)_0>0;
    \\
    -\sgn(\mA) &\tif \co(\mA)_0=\ro(\mA)_0=0, p(\mA)=-;
    \\
    \sgn(\mA) &\otw.
}
\endeq
\end{lemma}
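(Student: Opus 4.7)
The plan is to peel off $s_l(\mA)$ and $s_r(\mA)$ directly from the bijection $\kappa$ and the definitions of $\sgn$ and $p$, doing a case analysis on the possible sign combinations $(\alpha,\beta)$.

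First I would dispose of $s_l(\mA)=\alpha$. Writing $\mA=\kappa(\ld^\alpha,g,\mu^\beta)$, we have $\ro(\mA)_0=|R_0^{\ld^\alpha}|$, which equals $2\ld_0$ when $\alpha=0$ and $0$ when $\alpha\in\{+,-\}$ (by the definitions of $\Ld^0,\Ld^\pm$ and $R_0^{\ld^\alpha}$). Thus $\ro(\mA)_0>0\Leftrightarrow\alpha=0\Leftrightarrow s_l(\mA)=0$. In the remaining case $\alpha\ne 0$, a glance at the table defining $\sgn$ shows $\sgn(\alpha,\beta)=\alpha$ for every $\beta\in\{0,+,-\}$, giving $s_l(\mA)=\alpha=\sgn(\mA)$. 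This establishes the first formula.

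Next I would handle $s_r(\mA)=\beta$. The case $\co(\mA)_0>0$ is symmetric to the argument above and yields $\beta=0$. When $\co(\mA)_0=0$ but $\ro(\mA)_0>0$ (so $\alpha=0$, $\beta\in\{+,-\}$), the sign table gives $\sgn(0,\beta)=\beta$, so $s_r(\mA)=\sgn(\mA)$, landing in the "otherwise" clause. The interesting subcase is $\ro(\mA)_0=\co(\mA)_0=0$, where $\alpha,\beta\in\{+,-\}$. Here $\sgn(\mA)=\alpha$ irrespective of $\beta$, so the sign alone cannot determine $\beta$; the additional invariant $p(\mA)$ must be used. The plan is to show
\[
\textup{when } \ld_0=\mu_0=0,\qquad g(1)>0\iff \alpha=\beta,
\]
since then the preceding lemma ($p(\mA)=+\Leftrightarrow g(1)>0$) gives $p(\mA)=+$ precisely when $\beta=\alpha=\sgn(\mA)$ and $p(\mA)=-$ precisely when $\beta=-\alpha=-\sgn(\mA)$, exactly matching the two remaining clauses.

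To establish the key equivalence, I would use the characterization of $\D_{\ld^\alpha\mu^\beta}$: $g$ is order-preserving on each $R_j^{\mu^\beta}$ and $g^{-1}$ is order-preserving on each $R_i^{\ld^\alpha}$ for $|i|,|j|\ge 1$. Since $\mu_0=0$, the element $1$ sits either in $R_1^{\mu^+}=[1,\mu_1]$ (if $\beta=+$) or in $R_{-1}^{\mu^-}=\{1,-2,\ldots,-\mu_1\}$ (if $\beta=-$); a parallel dichotomy holds for $g(1)$ relative to $\ld^\alpha$. Running through the four combinations in $\{+,-\}^2$ and using the parity condition $\#\{i>0\mid g(i)<0\}\in 2\NN$ that defines $W_\bD$ should pin down $\sgn(g(1))$ in each case and yield the equivalence.

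I expect the main obstacle to be exactly this last case analysis: the asymmetric way the sign labels $\pm$ reshuffle the element $\pm 1$ between $R_{\pm 1}$ makes the bookkeeping delicate, and one must combine the order-preservation of both $g$ and $g^{-1}$ with the type-$\bD$ parity constraint to correctly recover the sign of $g(1)$ from the pair $(\alpha,\beta)$. Once this is in hand the remainder of the proof is a mechanical unpacking of the two piecewise formulas.
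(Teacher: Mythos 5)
Your handling of $s_l(\mA)$ and of the cases $\co(\mA)_0>0$ and $\co(\mA)_0=0<\ro(\mA)_0$ is correct. The gap is in the only case that requires an argument, $\ro(\mA)_0=\co(\mA)_0=0$: the equivalence you propose to establish, namely that $g(1)>0$ if and only if $\alpha=\beta$, is false, so the reduction to the preceding lemma cannot go through. Take $d=2$, $n=1$, $\ld=\mu=(0,2)$, $\alpha=\beta=+$. Then $W_{\ld^+}=\{e,\varsigma_1\}$, and both $e$ and $\varsigma_0=|-2,-1|$ lie in $\D_{\ld^+\mu^+}$ (each is the shortest element of its double coset, and $g^{-1}$ is order-preserving on $R_1^{\ld^+}=\{1,2\}$ in both cases), yet $e(1)=1>0$ while $\varsigma_0(1)=-2<0$. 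So the sign of $g(1)$ is simply not a function of $(\alpha,\beta)$, and no amount of bookkeeping with order-preservation will recover it. The same example shows that the preceding lemma you invoke is itself misstated: for $\mA=\kappa(\ld^+,\varsigma_0,\mu^+)$ one has $\sum_{i<0,j>0}a_{ij}=a_{-1,1}=|\{-1,-2\}\cap\varsigma_0\{1,2\}|=2$, so $p(\mA)=+$ although $g(1)<0$. Hence the route through $g(1)$ cannot be repaired; you must prove the composite statement $p(\mA)=+\iff\alpha=\beta$ directly.

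That composite statement does hold, by a parity count that never mentions $g(1)$. With $\ld_0=\mu_0=0$ put $P_\beta=\bigcup_{j>0}R_j^{\mu^\beta}$ and $N_\alpha=\bigcup_{i<0}R_i^{\ld^\alpha}$, so that $\sum_{i<0,j>0}a_{ij}=|g(P_\beta)\cap N_\alpha|$. One checks $P_+=[1,d]$, while $P_-$ is obtained from $[1,d]$ by exchanging $1$ for $-1$; likewise $N_+=[-d,-1]$ and $N_-$ exchanges $-1$ for $1$. Both $g(P_\beta)$ and $N_\alpha$ contain exactly one element of each pair $\{x,-x\}$ (using $g(-x)=-g(x)$), so each such exchange alters $|g(P_\beta)\cap N_\alpha|$ by exactly $\pm1$. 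Starting from $|g([1,d])\cap[-d,-1]|=\#\{x\in[1,d]\mid g(x)<0\}$, which is even by the defining parity condition of $W_\bD$, you obtain $\sum_{i<0,j>0}a_{ij}\equiv[\alpha=-]+[\beta=-]\pmod 2$; this is even exactly when $\alpha=\beta$, i.e.\ $p(\mA)=+\iff\beta=\alpha=\sgn(\mA)$ and $p(\mA)=-\iff\beta=-\sgn(\mA)$, which is the assertion of the lemma. (The paper offers no proof, asserting the statement follows from the definitions, so there is no argument to compare against beyond this.)
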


Let $\mA=\kappa(\ld^\alpha,g,\mu^\beta)\in\Xi_\bD$. We define a signed weak composition as below:
\begin{equation}\label{delta}
\delta(\mA)=(\frac{a_{00}}{2},a_{10},\ldots,a_{n0},a_{-n,1},a_{-n+1,1},\ldots,a_{n1},\ldots,\ldots,a_{-n,n},a_{-n+1,n},\ldots,a_{nn})^\beta.
\end{equation}

A direct computation shows that $\delta(\mA)$ is indeed a weak composition $\delta$ in Proposition \ref{prop:doublecoset}(a).
\begin{prop}\label{prop:delta}
Let $\mA=\kappa(\ld^\alpha,g,\mu^\beta)\in\Xi_\bD$. Then $W_{\delta(\mA)} = g^{-1} W_{\ld^\alpha} g \cap W_{\mu^\beta}.$
\end{prop}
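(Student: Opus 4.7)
The plan is to prove this by recognizing both sides as the stabilizer of the common refinement of two partitions of $J_d$, and then checking the parabolic type matches.

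First I would invoke Lemma~\ref{lem:stabR} to rewrite $W_{\ld^\alpha} = \bigcap_{i} \Stab(R_i^{\ld^\alpha})$, so conjugation gives $g^{-1} W_{\ld^\alpha} g = \bigcap_i \Stab(g^{-1} R_i^{\ld^\alpha})$. Intersecting with $W_{\mu^\beta} = \bigcap_j \Stab(R_j^{\mu^\beta})$ and noting that the stabilizer of the intersection of two subsets contains the intersection of the stabilizers, elementary set theory combined with the centro-symmetric structure (both $R_i^{\ld^\alpha}$ and $R_j^{\mu^\beta}$ are stable under $x\mapsto -x$) yields
\begin{equation}
g^{-1} W_{\ld^\alpha} g \cap W_{\mu^\beta} = \bigcap_{i,j=-n}^{n} \Stab\!\left(P_{ij}\right), \qquad P_{ij} := g^{-1} R_i^{\ld^\alpha} \cap R_j^{\mu^\beta}.
\end{equation}
By the definition of $\kappa$, we have $|P_{ij}| = a_{ij}$, and the family $\{P_{ij}\}_{i,j}$ partitions $J_d$ with $P_{-i,-j} = -P_{ij}$.

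Second, I would analyze the parabolic subgroup cut out by this partition. For each pair $\{(i,j),(-i,-j)\}$ with $(i,j)\neq(0,0)$, the joint stabilizer of $\{P_{ij}, P_{-i,-j}\}$ inside $W_\bD$ is isomorphic to $\fS_{a_{ij}}$ (a type A factor), acting by simultaneously permuting $P_{ij}$ and $-P_{ij}=P_{-i,-j}$; these factors account exactly for the non-central entries of $\delta(\mA)$ listed in \eqref{delta}. For the central block $P_{00}$ (of cardinality $a_{00}$, which is even because $J_d$ has even total size and each off-diagonal pair contributes an even number of elements), the stabilizer in $W_\bD$ of $P_{00}$ together with its decomposition into $P_{00} \cap \NN$ and $-P_{00}\cap \NN$ is a parabolic subgroup of type $\bD$ or $\bA$ of rank $\frac{a_{00}}{2}$, which matches the first entry $\frac{a_{00}}{2}$ of $\delta(\mA)$.

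Third, I would verify that the sign decoration $\beta$ on $\delta(\mA)$ is the correct one. Since $W_{\delta(\mA)} \subseteq W_{\mu^\beta}$ (indeed $\delta(\mA)$ is a refinement of $\mu^\beta$ by construction), the sign of the central factor is forced to be compatible with $\beta$: when $\mu^\beta$ has sign $\beta = \pm$, its central Weyl group factor is itself already of sign $\beta$, and the further parabolic subgroup obtained by cutting with $P_{00}$ inherits this sign; when $\beta = 0$, the central block $R_0^{\mu^0}$ is the full symmetric interval $[-\mu_0,\mu_0]\setminus\{0\}$ and the sign of $\delta(\mA)$ is again $0$ unless the refinement collapses to the $\pm$ case, which is encoded combinatorially by how $g^{-1}R_i^{\ld^\alpha}$ slices $R_0^{\mu^\beta}$.

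The main obstacle I anticipate is this last bookkeeping: verifying that the sign $\beta$ in \eqref{delta} is forced, as opposed to being a free choice. The cleanest way is to argue directly from the presentation of $W_{\mu^\beta}$ via its simple generators: removing the generator $\varsigma_{\mu_{0,1}}$ cuts off the central block from the rest, and inside the central block the distinction between $\beta = +,-,0$ is encoded by which of $\varsigma_0,\varsigma_1$, or neither, is kept as a generator. Since the refinement $P_{00}\subseteq R_0^{\mu^\beta}$ is obtained by slicing off elements from the ends (not the center), the central simple generator surviving in $\delta(\mA)$ is exactly the one surviving in $\mu^\beta$, giving sign $\beta$. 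The remaining claims then follow by counting generators and a dimension (rank) comparison to conclude that the containment $W_{\delta(\mA)} \subseteq g^{-1} W_{\ld^\alpha} g \cap W_{\mu^\beta}$ is in fact equality, using Proposition~\ref{prop:doublecoset}(a) for existence of such a composition.
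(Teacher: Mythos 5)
The paper omits the proof of this proposition entirely (the appendix explicitly discards "the easy proofs"), so the comparison is against what a correct argument must contain. Your opening reduction is sound and is the right starting point: since $\Stab(X)=\Stab(-X)$ for every $X\subseteq J_d$ and the blocks $R_j^{\mu^\beta}$ partition $J_d$, one does get $g^{-1}W_{\ld^\alpha}g\cap W_{\mu^\beta}=\bigcap_{i,j}\Stab(P_{ij})$ with $P_{ij}=g^{-1}R_i^{\ld^\alpha}\cap R_j^{\mu^\beta}$, $|P_{ij}|=a_{ij}$, $P_{-i,-j}=-P_{ij}$. The genuine gap is that nowhere do you use the hypothesis $g\in\D_{\ld^\alpha\mu^\beta}$, and no proof can avoid it: replacing $g$ by a non-minimal representative of the same double coset leaves $W_{\delta(\mA)}$ unchanged but replaces the right-hand side by a $W_{\mu^\beta}$-conjugate of it, which is generally not a standard parabolic. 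Your steps compute only the abstract isomorphism type of $\bigcap_{i,j}\Stab(P_{ij})$ (a product of $\fS_{a_{ij}}$'s and a central factor), whereas the proposition asserts equality with the \emph{specific} standard parabolic $W_{\delta(\mA)}$ generated by the prescribed simple reflections. For that you must show that the sets $P_{ij}$, read in the order in which the entries appear in \eqref{delta}, are exactly the consecutive blocks $R_k^{\delta(\mA)^\beta}$. This is precisely where minimality enters: $g\in\D_{\mu^\beta}^{-1}$ means $g$ is order-preserving on each $R_j^{\mu^\beta}$, which forces each $P_{ij}$ to consist of consecutive elements of $R_j^{\mu^\beta}$, arranged in increasing order of $i$ (with extra care for the central block and the $\pm$ twist). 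Both your claimed containment $W_{\delta(\mA)}\subseteq g^{-1}W_{\ld^\alpha}g\cap W_{\mu^\beta}$ and your "slicing off elements from the ends" description of $P_{00}\subseteq R_0^{\mu^\beta}$ silently presuppose this interval structure without proving it.

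The rank-comparison finish does not repair this: two standard parabolic subgroups of equal rank or cardinality need not coincide, so invoking Proposition~\ref{prop:doublecoset}(a) only helps \emph{after} the containment has been established, and establishing it is the whole content. A secondary slip: for the central factor you invoke "the stabilizer of $P_{00}$ together with its decomposition into $P_{00}\cap\NN$ and $-P_{00}\cap\NN$", but stabilizing the two halves separately yields a sign-change-free type A group; when $\beta=0$ the correct central factor is the full even-sign-change group of type $\bD_{a_{00}/2}$ acting on $P_{00}$, matching the generators $\varsigma_0,\varsigma_1,\ldots$ retained in $W_{\delta(\mA)}$. The global evenness constraint on sign changes localizes to this factor only because, as you can check, any permutation of a single $P_{ij}$ with $(i,j)\neq(0,0)$ extends to an element of $W_\bD$ contributing an even number of sign changes; this too deserves a line in a complete proof.
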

We define type D quantum factorials by
\[
[0]^!_\fd=[2]^!_\fd=1,
\quad
[2k]^!_\fd=[k][2][4]\cdots[2(k-1)],
\quad
(k \geq 2).
\]
We further define,
for $A=(a_{ij})\in\Xi$,
\eq
[A]_\fd^!=[a_{0,0}]^!_\fd\prod_{(i,j)\in I^+\setminus\{(0,0)\}}[a_{ij}]!.
\endeq
We write $[\mA]^!_\fd=[A]^!_\fd$ if $\mA=A^{\sgn{\mA}}$.
The type D quantum factorials are defined in the sense that the following identity on the Poincare polynomial for $W_{\delta(\mA)}$ holds:
\begin{lemma}
For any $\mA=A^\alpha\in\Xi_\bD$ with $A=(a_{ij})$, we have
$
\sum_{w\in W_{\delta(\mA)}}v^{2\ell(w)}=[A]_\fd^!.
$
\end{lemma}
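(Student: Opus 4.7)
The plan is to follow the template of Lemma~\ref{lem:[A]!}: first invoke Proposition~\ref{prop:delta} to rewrite the left-hand side as the Poincare polynomial (in $v^2$) of the parabolic subgroup $W_{\delta(\mA)}$ of $W_\bD$, reducing the problem to a computation of a Poincare polynomial on an explicit parabolic subgroup.

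Next, I would decompose $W_{\delta(\mA)}$ as a direct product of Coxeter subgroups by reading off the connected components of the type $\bD_d$ Dynkin subdiagram obtained after removing the block-boundary nodes dictated by $\delta(\mA)$. When the sign $\beta = 0$, the first block is the Weyl group of type $D_{a_{00}/2}$ (trivial when the rank is $0$ or $1$); when $\beta = \pm$, the first entry $a_{00}/2$ of $\delta(\mA)$ vanishes, and the first nontrivial block is a symmetric group, because removing $\varsigma_0$ (respectively $\varsigma_1$) from the neighborhood of the branch node of the type $\bD$ diagram leaves a type A chain. All remaining blocks are symmetric groups $\mathfrak{S}_{a_{ij}}$. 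Applying the multiplicativity of Poincare polynomials together with the classical identities $\sum_{w \in \mathfrak{S}_m} v^{2\ell(w)} = [m]!$ and (for the Weyl group of type $D_k$) $\sum_{w} v^{2\ell(w)} = [k]\prod_{i=1}^{k-1}[2i]$, which under the conventions $[0]^!_\fd = [2]^!_\fd = 1$ absorb the degenerate cases $k \in \{0,1\}$, yields a product of $[a_{00}]^!_\fd$ with various factors $[a_{ij}]!$.

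The last step is an indexing check: the off-diagonal entries indexing the resulting product range over $\{(i,0) : i \in [1,n]\} \cup ([-n,n] \times [1,n])$, and the centro-symmetry $a_{ij} = a_{-i,-j}$ identifies this set entrywise with $I^+ \setminus \{(0,0)\}$ appearing in the definition of $[A]^!_\fd$, producing the claimed equality. The hardest part will be the careful verification of the first block's Coxeter structure across the three sign cases $\beta \in \{0,+,-\}$ and in the low-rank degenerate cases; after this bookkeeping is dispatched, the conclusion follows by a direct product computation parallel to the proof of Lemma~\ref{lem:[A]!}.
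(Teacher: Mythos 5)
Your overall strategy is the intended one: the paper omits this proof precisely because it is meant to be the type D replay of Lemma~\ref{lem:[A]!} --- split $W_{\delta(\mA)}$ into a product of one ``type D'' factor and symmetric groups, multiply Poincar\'e polynomials, and match the block indices against $I^+$ --- and your index bookkeeping at the end is correct.

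There is, however, one genuine gap, sitting exactly where you defer the work: the case $a_{00}=2$. Your claim that the first block is trivial ``when the rank is $0$ or $1$'' does not follow from reading off connected components of the subdiagram obtained by deleting the block-boundary nodes. When $\delta_0=a_{00}/2=1$, the deleted boundary node is $\varsigma_{\delta_0}=\varsigma_1$, so $\varsigma_0$ survives in the generating set; since $\varsigma_0$ is adjacent to $\varsigma_2$ in the type $\bD$ diagram, the ``first block'' is then neither trivial nor a separate direct factor. Concretely, for $d=3$, $n=1$ and $\mA=\kappa(\ld^0,e,\ld^0)$ with $\ld=(1,2)$, one gets $\delta(\mA)=(1,0,0,0,2)$, and the parabolic generated by $S_\bD\setminus\{\varsigma_1\}=\{\varsigma_0,\varsigma_2\}$ is $\fS_3$ with Poincar\'e polynomial $[3]!=[2][3]$, whereas the lemma requires $[2]^!_\fd\cdot[2]!=[2]$. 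To close this you cannot use the naive subdiagram reading; you must instead work with the description of $W_{\delta(\mA)}$ as $g^{-1}W_{\ld^\alpha}g\cap W_{\mu^\beta}$ coming from Proposition~\ref{prop:delta} and the stabilizer characterization behind Lemma~\ref{lem:stabR}: when the middle block $R_0$ has exactly two elements $\{\pm x\}$ and all other blocks consist of integers of a fixed sign, the evenness constraint on sign changes in $W_\bD$ forces every element of the intersection to fix $\pm x$ pointwise, so $\varsigma_0$ must also be discarded and the first factor really is trivial, matching $[2]^!_\fd=1$. Once this case is handled separately, the remaining cases ($a_{00}=0$; $a_{00}\ge 4$, where the first block is an honest type $D_{a_{00}/2}$ component with Poincar\'e polynomial $[a_{00}/2]\prod_{i=1}^{a_{00}/2-1}[2i]=[a_{00}]^!_\fd$; and $\beta=\pm$, where $a_{00}=0$ automatically) go through exactly as you describe.
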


\subsection{Multiplication formulas}
The proofs of Lemma~\ref{lem:xTxD}--\ref{lem:multAwD} are very similar to their counterparts (Lemma~\ref{lem:xTx}, \eqref{eq:mult1} and  Lemma~\ref{lem:multAw}) so we omit.
\begin{lemma}\label{lem:xTxD}
Let $\mA=\kappa(\ld^\alpha,g,\mu^\beta)$ for $\ld^\alpha,\mu^\beta \in \Ld_{\bD}, g\in \D_{\ld^\alpha\mu^\beta}$.  Then
$x_{\ld^\alpha} T_{g} x_{\mu^\beta} = [A]^!_\fd \, e_\mA(x_{\mu^\beta}).$
\end{lemma}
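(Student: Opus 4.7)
The plan is to mimic the proof of Lemma~\ref{lem:xTx} in this one-parameter type-$\bD$ setting; the argument is parallel but substantially simpler, as no bookkeeping between $\ell_\fa$ and $\ell_\fc$ is required.

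First I would set $\delta := \delta(\mA)$ as in \eqref{delta}. By Proposition~\ref{prop:doublecoset}(c), the length-additive factorisation $W_{\mu^\beta} = W_\delta \cdot (\D_\delta \cap W_{\mu^\beta})$ gives $x_{\mu^\beta} = x_\delta\, T_{\D_\delta \cap W_{\mu^\beta}}$, whence
\[
x_{\ld^\alpha} T_g\, x_{\mu^\beta} = \sum_{w\in W_\delta} x_{\ld^\alpha} T_g T_w T_{\D_\delta \cap W_{\mu^\beta}}.
\]

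Next, for each $w \in W_\delta$ I would commute $T_w$ past $T_g$ using Proposition~\ref{prop:doublecoset}(a): since $W_\delta = g\inv W_{\ld^\alpha} g \cap W_{\mu^\beta}$, the element $w' := gwg\inv$ lies in $W_{\ld^\alpha}$ and satisfies $gw = w'g$. Combining $\ell(gw) = \ell(g)+\ell(w)$ (from $g \in \D_{\mu^\beta}\inv$) with $\ell(w'g) = \ell(w')+\ell(g)$ (from $g \in \D_{\ld^\alpha}$) forces $\ell(w) = \ell(w')$, so $T_g T_w = T_{w'} T_g$. An immediate induction from the single Hecke relation $(T_s+1)(T_s-v^2)=0$ for $s\in S_\bD$ gives the type-$\bD$ analogue of Lemma~\ref{lem:wx}, namely $T_{w'} x_{\ld^\alpha} = v^{2\ell(w')} x_{\ld^\alpha}$, and hence $x_{\ld^\alpha} T_g T_w = v^{2\ell(w)} x_{\ld^\alpha} T_g$.

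Summing over $w \in W_\delta$ and applying the Poincar\'e polynomial identity $\sum_{w\in W_\delta} v^{2\ell(w)} = [A]^!_\fd$ (the lemma immediately preceding) yields
\[
x_{\ld^\alpha} T_g\, x_{\mu^\beta} \;=\; [A]^!_\fd\cdot x_{\ld^\alpha} T_g T_{\D_\delta \cap W_{\mu^\beta}} \;=\; [A]^!_\fd\cdot e_\mA(x_{\mu^\beta}),
\]
the last equality being the very definition \eqref{phiD} of $e_\mA = \phi_{\ld^\alpha\mu^\beta}^g$. The only point requiring care is interpreting Proposition~\ref{prop:doublecoset}(c) with the factor $W_\delta$ on the left of $\D_\delta \cap W_{\mu^\beta}$, matching the main-text convention used in Lemma~\ref{lem:doublecoset}(c); with that convention fixed, the rest of the argument is formal.
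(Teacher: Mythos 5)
Your proof is correct and is essentially the proof the paper intends: the paper explicitly omits it as "very similar" to Lemma~\ref{lem:xTx}, and your argument is exactly that adaptation — decompose $x_{\mu^\beta}=x_\delta T_{\D_\delta\cap W_{\mu^\beta}}$ via Proposition~\ref{prop:doublecoset}(c), conjugate $W_\delta$ across $T_g$ into $W_{\ld^\alpha}$ with length preserved, absorb $T_{w'}$ into $x_{\ld^\alpha}$ as $v^{2\ell(w')}$, and sum the Poincar\'e polynomial to get $[A]^!_\fd$. Your remark about the ordering in Proposition~\ref{prop:doublecoset}(c) correctly identifies a slip in the appendix's statement relative to the main-text convention needed here.
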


\begin{lemma}\label{lem:mult1D}
Let $\mB = \kappa(\ld^\alpha,g_1,\mu^\beta)$ and $\mA = \kappa(\mu^\beta,g_2,\nu^\gamma)$, where
$\ld^\alpha,\mu^\beta, \nu^\gamma \in \Ld_\bD$, $g_1 \in \D_{\ld^\alpha\mu^\beta}$, and $g_2 \in \D_{\mu^\beta\nu^\gamma}$.
Write $\delta = \delta(\mA)$. Then we have
$
e_\mB   e_\mA(x_{\nu^\gamma}) = \frac{1}{[A]^!_\fd} x_{\ld^\alpha} T_{g_1} T_{(\D_{\delta} \cap W_{\mu^\beta})g_2} x_{\nu^\gamma}.
$
\end{lemma}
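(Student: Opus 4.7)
The plan is to mimic the derivation of equation~\eqref{eq:mult1} in the proof of Lemma~\ref{lem:multAw}, the equal-parameter type B/C analogue. The argument is purely formal and uses only three inputs already established in the appendix: the defining identity~\eqref{phiD} for the basis elements, the Poincar\'e-type collapse in Lemma~\ref{lem:xTxD}, and Proposition~\ref{prop:doublecoset}(a) (in particular the existence of a composition $\delta$ with $W_\delta = g_1^{-1} W_{\ld^\alpha} g_1 \cap W_{\mu^\beta}$).

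First I apply Lemma~\ref{lem:xTxD} to $\mA = \kappa(\mu^\beta, g_2, \nu^\gamma)$, which gives
$$e_\mA(x_{\nu^\gamma}) = \frac{1}{[A]^!_\fd}\, x_{\mu^\beta}\, T_{g_2}\, x_{\nu^\gamma}.$$
Next, since $e_\mB \in \Hom_\bH(x_{\mu^\beta}\bH,\, x_{\ld^\alpha}\bH)$ is right $\bH$-linear and $T_{g_2}\, x_{\nu^\gamma} \in \bH$, I pull this tail through $e_\mB$ and substitute the evaluation of $e_\mB$ on $x_{\mu^\beta}$ from \eqref{phiD} to obtain
$$e_\mB \circ e_\mA(x_{\nu^\gamma}) = \frac{1}{[A]^!_\fd}\, e_\mB(x_{\mu^\beta})\, T_{g_2}\, x_{\nu^\gamma} = \frac{1}{[A]^!_\fd}\, x_{\ld^\alpha}\, T_{g_1}\, T_{\D_{\delta} \cap W_{\mu^\beta}}\, T_{g_2}\, x_{\nu^\gamma},$$
where $\delta$ is the signed weak composition attached by Proposition~\ref{prop:doublecoset}(a) to the triple $(\ld^\alpha, g_1, \mu^\beta)$.

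The only remaining step is to merge $T_{\D_{\delta} \cap W_{\mu^\beta}}\, T_{g_2}$ into the single Hecke-algebra element $T_{(\D_{\delta} \cap W_{\mu^\beta})\, g_2}$. This reduces to verifying the length additivity $\ell(w g_2) = \ell(w) + \ell(g_2)$ for every $w \in \D_{\delta} \cap W_{\mu^\beta} \subset W_{\mu^\beta}$. This holds because $g_2 \in \D_{\mu^\beta \nu^\gamma} \subset \D_{\mu^\beta}^{-1}$: the latter inclusion says that $g_2^{-1}$ is the minimal-length representative of its left $W_{\mu^\beta}$-coset, and taking inverses yields exactly the required additivity for $w g_2$. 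I do not anticipate any genuine obstacle, as the argument is a line-by-line transcription of the type B/C proof~\eqref{eq:mult1} into the type D setting; the type D signed-matrix bookkeeping enters only through the statement, not the manipulation itself.
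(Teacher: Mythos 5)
Your argument is correct and is exactly the computation the paper intends: the authors omit the proof of Lemma~\ref{lem:mult1D} because it is the transcription of \eqref{eq:mult1} that you carry out (apply Lemma~\ref{lem:xTxD} to $\mA$, pull the tail $T_{g_2}x_{\nu^\gamma}$ through $e_\mB$ by right $\bH$-linearity, substitute \eqref{phiD}, and merge $T_{\D_\delta\cap W_{\mu^\beta}}T_{g_2}$ via length additivity). Two small corrections. First, the containment you cite is off: $\D_{\mu^\beta\nu^\gamma}=\D_{\mu^\beta}\cap\D_{\nu^\gamma}^{-1}\subset\D_{\mu^\beta}$, not $\D_{\mu^\beta}^{-1}$, and membership in $\D_{\mu^\beta}$ is, by the definition given in the appendix, precisely the statement $\ell(wg_2)=\ell(w)+\ell(g_2)$ for all $w\in W_{\mu^\beta}$ --- no passage to inverses is needed (compare the B/C proof, which invokes $g\in\D_{\mu\nu}\subset\D_\mu$ in the same way); as written, $g_2\in\D_{\mu^\beta}^{-1}$ would give additivity for $g_2w$ rather than $wg_2$. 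Second, your $\delta$ is the composition attached to $(\ld^\alpha,g_1,\mu^\beta)$, i.e.\ $\delta(\mB)$, whereas the statement reads $\delta=\delta(\mA)$; your choice is the right one (it matches $\delta=\delta(B)$ in the counterpart Lemma~\ref{lem:multAw} and is what \eqref{phiD} produces for $e_\mB(x_{\mu^\beta})$), so the ``$\delta(\mA)$'' in the statement should be regarded as a typo rather than something your proof must accommodate.
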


\begin{lem}\label{lem:multAwD}
Let $\mB = \kappa(\ld^\alpha,1,\mu^\beta), \mA = \kappa(\mu^\beta,g,\nu^\gamma)$.
Let $y^{(w)}$ be the shortest double coset representative for $W_\lambda wg W_\nu$, and let $\mA^{(w)}=\kappa(\lambda^\alpha,y^{(w)},\nu^\gamma)$. Then
\[
e_\mB e_\mA
=\sum_{w\in\D_{\delta} \cap W_{\mu^\beta}}
v^{2(\ell(w)+\ell(g)-\ell(y^{(w)}))}
\frac{[\mA^{(w)}]^!_\fd}{[A]^!_\fd}
e_{\mA^{(w)}}
\]
\end{lem}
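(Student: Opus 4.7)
The plan is to mimic the proof of Lemma~\ref{lem:multAw} (which is its equal-parameter type B/C analogue), using the type D ingredients already assembled in the appendix. The sign decorations $\alpha,\beta,\gamma$ on the compositions do not enter the calculation beyond bookkeeping, because Proposition~\ref{prop:doublecoset} and Proposition~\ref{prop:delta} already provide the length-additivity and parabolic intersection statements one needs.

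First I would apply Lemma~\ref{lem:mult1D} with $g_1=\id$ and $g_2=g$ to obtain
\[
e_\mB e_\mA(x_{\nu^\gamma})
= \frac{1}{[A]^!_\fd}\, x_{\ld^\alpha}\, T_{(\D_\delta \cap W_{\mu^\beta})g}\, x_{\nu^\gamma}
= \frac{1}{[A]^!_\fd} \sum_{w\in \D_\delta \cap W_{\mu^\beta}} x_{\ld^\alpha} T_{wg} x_{\nu^\gamma},
\]
where the product $T_wT_g=T_{wg}$ is valid because $g\in\D_{\mu^\beta\nu^\gamma}\subset \D_{\mu^\beta}^{-1}$ forces $\ell(wg)=\ell(w)+\ell(g)$ for $w\in W_{\mu^\beta}$.

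Next, for each $w\in \D_\delta\cap W_{\mu^\beta}$, Proposition~\ref{prop:doublecoset}(b) applied to the double coset $W_{\ld^\alpha}\, wg\, W_{\nu^\gamma}$ produces a unique factorization $wg = x_\ld\, y^{(w)}\, x_\nu$ with $x_\ld\in W_{\ld^\alpha}$, $x_\nu\in W_{\nu^\gamma}$, $y^{(w)}$ the shortest double coset representative, and length additivity
\[
\ell(w) + \ell(g) = \ell(wg) = \ell(x_\ld) + \ell(y^{(w)}) + \ell(x_\nu).
\]
Using $T_s x_{\ld^\alpha} = v^{2} x_{\ld^\alpha}$ for $s$ a simple reflection in $W_{\ld^\alpha}$ (and similarly on the right for $W_{\nu^\gamma}$), we may absorb $T_{x_\ld}$ and $T_{x_\nu}$ into the parabolic symmetrizers, which yields
\[
x_{\ld^\alpha} T_{wg} x_{\nu^\gamma}
= v^{2(\ell(x_\ld)+\ell(x_\nu))}\, x_{\ld^\alpha} T_{y^{(w)}} x_{\nu^\gamma}
= v^{2(\ell(w)+\ell(g)-\ell(y^{(w)}))}\, x_{\ld^\alpha} T_{y^{(w)}} x_{\nu^\gamma}.
\]
Finally, Lemma~\ref{lem:xTxD} rewrites $x_{\ld^\alpha} T_{y^{(w)}} x_{\nu^\gamma} = [\mA^{(w)}]^!_\fd\, e_{\mA^{(w)}}(x_{\nu^\gamma})$, and substituting back gives the claimed formula after canceling $[A]^!_\fd$.

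The only genuinely type-D aspect to check is that the Hecke relations $T_s x_{\ld^\alpha} = v^{2}x_{\ld^\alpha}$ and the length-additivity in Proposition~\ref{prop:doublecoset} hold uniformly regardless of the sign $\alpha$; this is immediate from the definition of $W_{\ld^\alpha}$ as a parabolic subgroup of $W_\bD$ generated by a subset of $S_\bD$, so no new sign-dependent manipulation appears. Hence there is no real obstacle beyond tracking the decorations, and the lemma reduces to the same chain of identities used in the B/C case.
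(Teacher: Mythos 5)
Your argument is correct and is essentially the proof the paper intends (the appendix omits it, pointing to the identical chain of identities in the proof of Lemma~\ref{lem:multAw}): reduce via Lemma~\ref{lem:mult1D}, factor $wg=w_\ld y^{(w)}w_\nu$ with length additivity from Proposition~\ref{prop:doublecoset}(b), absorb $T_{w_\ld}$ and $T_{w_\nu}$ into the parabolic symmetrizers, and finish with Lemma~\ref{lem:xTxD}. One small slip: the containment that forces $T_wT_g=T_{wg}$ for $w\in W_{\mu^\beta}$ is $g\in\D_{\mu^\beta\nu^\gamma}\subset\D_{\mu^\beta}$ (not $\D_{\mu^\beta}^{-1}$), since $\D_{\mu^\beta\nu^\gamma}=\D_{\mu^\beta}\cap\D_{\nu^\gamma}^{-1}$.
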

In the multiplication formulas below, we regard $e_\mA=0$ if $\mA\not\in\Xi_{\bD}$.
\begin{prop}\label{prop:multformula1D}
Suppose that $\mA=A^{\sgn(\mA)}, \mB, \mC\in\Xi_{\bD}$ and $h\in[1,n]$.
Let $\Gamma_r=\{t=(t_i)_{-n\leq i\leq n}\in\NN^N~|~\sum_{i=-n}^nt_i=r\}.$
\enu
\item[(1)] If $h\neq1$, $\mB-rE_{h,h-1}^\theta$ is diagonal, $\co(\mB)=\ro(\mA)$, and $s_r(\mB)=s_l(\mA)$, then
\begin{equation}
e_\mB e_\mA=\sum_{t\in\Gamma_r}v^{2\sum_{k<p}t_pa_{h,k}}\prod_{p=-n}^{n}\left[\begin{array}{cc}a_{h,p}+t_p\\t_p\end{array}\right]e_{\widecheck{\mA}_{t,h}},
\end{equation} where $\widecheck{\mA}_{t,h}=(A+t_pE_{h,p}^\theta-t_pE_{h-1,p}^\theta,\sgn(s_l(\mB),s_r(\mA)))$, $s_l(\widecheck{\mA}_{t,h})=s_l(\mB)$ and $s_r(\widecheck{\mA}_{t,h})=s_r(\mA)$.

\item[(2)] If $\mB-rE_{1,0}^\theta$ is diagonal, $\co(\mB)=\ro(\mA)$, and $s_r(\mB)=s_l(\mA)$,
then
\begin{equation}\label{eq7}
e_\mB e_\mA=
\sum_{t\in\Gamma_r}v^{2\sum_{k<p}t_pa_{1,k}}(1+(1-\delta_{r,\frac{1}{2}\ro(\mA)_0})(1-\delta_{a'_{0,0},0})\delta_{a'_{0,0},t_0})\prod_{p=-n}^{n}\left[\begin{array}{cc}a_{1,p}+t_p\\t_p\end{array}\right]e_{\widecheck{\mA}_{t,1}}.
\end{equation}

\item[(3)] If $h\neq1$, $\mC-rE_{h-1,h}^\theta$ is diagonal, $\co(\mC)=\ro(\mA)$, and $s_r(\mC)=s_l(\mA)$, then
\begin{equation}
e_\mC e_\mA=\sum_{t\in\Gamma_r}v^{2\sum_{k>p}t_pa_{h-1,k}}\prod_{p=-n}^{n}\left[\begin{array}{cc}a_{h-1,p}+t_p\\t_p\end{array}\right]e_{\widehat{\mA}_{t,h}},
\end{equation} where $\widehat{\mA}_{t,h}=(A-t_pE_{h,p}^\theta+t_pE_{h-1,p}^\theta,\sgn(s_l(\mC),s_r(\mA)))$, $s_l(\widehat{\mA}_{t,h})=s_l(\mC)$ and $s_r(\widehat{\mA}_{t,h})=s_r(\mA)$.

\item[(4)] If $\mC-rE_{0,1}^\theta$ is diagonal, $\co(\mC)=\ro(\mA)$, and $s_r(\mC)=s_l(\mA)$, then
\begin{equation}
e_\mC e_\mA=\sum_{t\in\Gamma_r}v^{2\sum_{k>p}a_{0,k}t_p+2\sum_{p<k<-p}t_pt_k+\sum_{p<0}t_p(t_p-1)}\frac{[a_{0,0}+2t_0]_\fd^!}{[a_{0,0}]_\fd^![t_0]!}\prod_{p=1}^n\frac{[a_{0,p}+t_p+t_{-p}]!}{[a_{0,p}]![t_p]![t_{-p}]!}e_{\widehat{\mA}_{t,1}}.
\end{equation}
\endenu
\end{prop}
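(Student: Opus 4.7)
The plan is to apply Lemma~\ref{lem:multAwD} as the master formula and, for each $t \in \Gamma_r$, to enumerate the fibre
\[
F_t = \{ w \in \D_{\delta(\mB)} \cap W_{\mu^\beta} \mid \mA^{(w)} = \widecheck{\mA}_{t,h}\}
\]
(and analogously for $\widehat{\mA}_{t,h}$). The coefficient of $e_{\widecheck{\mA}_{t,h}}$ in $e_\mB e_\mA$ is then
$\bigl([\widecheck{\mA}_{t,h}]^!_\fd / [\mA]^!_\fd\bigr)\, v^{2(\ell(g)-\ell(y^{(w_t)}))}\, \sum_{w \in F_t} v^{2\ell(w)}$, where the ratio of factorials and the $v^{-2\ell(y^{(w_t)})}$ factor are handled by the length formula \eqref{length} once a shortest representative $w_t$ of $F_t$ has been fixed; the combinatorial core of the proof is the weighted fibre sum on the right.

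For Parts~(1) and (3), where $h \neq 1$, neither the 0-th row nor the 0-th column is perturbed, so the sign labels $\alpha, \beta, \gamma$ merely propagate via $\sgn$. Here $F_t$ admits a unique shortest element $w_t$, and the remaining $w \in F_t$ are parameterised by row-block shuffles; the quantum binomial identity $\sum_{x=0}^m \lrb{m}{x} v^{x(x-1)} z^x = \prod_{i=0}^{m-1}(1+v^{2i}z)$ packs $\sum_{w \in F_t} v^{2\ell(w)}$ into $\prod_p \lrb{a_{h,p}+t_p}{t_p}$ (resp.\ $\prod_p \lrb{a_{h-1,p}+t_p}{t_p}$), exactly paralleling \eqref{eq:v2lw} and \eqref{eq4} from the type-B/C case. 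Part~(4) is the $h=1$ analogue on the ``hat'' side: the $(0,0)$-entry of $\widehat{\mA}_{t,1}$ rises to $a_{00}+2t_0$ by centrosymmetry, producing the ratio $[a_{00}+2t_0]^!_\fd / [a_{00}]^!_\fd$ of type-$\bD$ quantum factorials, while the new exponent $\sum_{p<0} t_p(t_p-1)$ tracks the internal ordering of the $t_p$ newly promoted entries under the even-parity constraint $\{i\in J_d:\,i>0,\,g(i)<0\}\in 2\NN$ that distinguishes $W_\bD$ from $W_{\mbf B_d}$.

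The main obstacle is Part~(2), where the multiplicity factor $(1+(1-\delta_{r,\frac{1}{2}\ro(\mA)_0})(1-\delta_{a'_{0,0},0})\delta_{a'_{0,0},t_0})$ doubles $|F_t|$ precisely when the moved mass absorbs the full central block ($t_0 = a'_{00} > 0$) while leaving some of row~0 behind ($r < \tfrac{1}{2}\ro(\mA)_0$). In that regime, two distinct $w, w' \in F_t$ of equal length yield the same target $\widecheck{\mA}_{t,1}$: they differ by the type-$\bD$ analogue of the sign-flip that in the type-B setting would be $\varsigma_0$, but here gives two genuinely inequivalent permutations because the even-parity sign-change constraint can be satisfied in two ways once the central column has been emptied. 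Outside this locus the constraint singles out exactly one of the candidates, giving the undoubled coefficient. Verifying this doubling requires a direct double-coset inspection via Proposition~\ref{prop:doublecoset}, checking that both preimages lie in $\D_{\delta(\mB)}\cap W_{\mu^\beta}$ and share the same length $\ell(w_t)$; once this is in hand, the packaging into $\prod_p \lrb{a_{1,p}+t_p}{t_p}$ and the cancellation of the length exponent proceed exactly as in Part~(1), and the sign labels $s_l(\widecheck{\mA}_{t,1}) = s_l(\mB)$, $s_r(\widecheck{\mA}_{t,1}) = s_r(\mA)$ are read off from the definition of $\sgn$.
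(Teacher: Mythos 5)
Your overall strategy --- reduce to Lemma~\ref{lem:multAwD}, enumerate the fibre $F_t$, and pack the weighted sum $\sum_{w\in F_t}v^{2\ell(w)}$ via the quantum binomial theorem --- is exactly the paper's, and Parts (1), (3), (4) would go through along the lines you sketch once the length computations (the explicit value of $\ell(w_t)$ and of $\ell(\mA)-\ell(\widehat{\mA}_{t,h})$ via \eqref{length}) are actually carried out; you defer all of these, so as written this is a plan rather than a proof.

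The genuine gap is in Part (2), which you rightly single out as the main obstacle but then explain by the wrong mechanism. The factor $2$ does not arise from the fibre acquiring a second shortest element ``because the even-parity constraint can be satisfied in two ways once the central column has been emptied.'' In the regime $r<\frac{1}{2}\ro(\mA)_0$ the parity constraint of $W_\bD$ does not restrict the fibre at all (the untouched part of row $0$ absorbs it), and the central-block contribution to the fibre sum is the unrestricted sum $\sum_{x+y=t_0}\lrb{a'_{0,0}}{x}\lrb{a'_{0,0}-x}{y}(v^2)^{x(x-1)/2+x(a'_{0,0}-t_0)}=\lrb{a'_{0,0}}{t_0}\prod_{i=1}^{t_0}\bigl(1+v^{2(i-1)}v^{2(a'_{0,0}-t_0)}\bigr)$; the explicit $2$ appears because when $t_0=a'_{0,0}>0$ the $i=1$ factor of this product degenerates to $1+v^0=2$, which the normalization $[0]^!_\fd=[2]^!_\fd=1$ of the type-$\bD$ factorial cannot absorb into $[a_{0,0}]^!_\fd/([a_{0,0}-2t_0]^!_\fd[t_0]!)$. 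The fibre has the same cardinality as in the type B/C computation; nothing doubles. Conversely, when $r=\frac{1}{2}\ro(\mA)_0$ the parity constraint does bite: the sum over $x$ is restricted to a fixed parity of $x+\sum_{p<0}t_p$, and one needs the identity $\sum_{x\,\mathrm{even}}\lrb{m}{x}v^{x(x-1)}=\sum_{x\,\mathrm{odd}}\lrb{m}{x}v^{x(x-1)}=\prod_{i=1}^{m-1}(1+v^{2i})$ to see that this exactly halves the fibre sum and cancels the $2$; moreover in this case $e_{\widecheck{\mA}_{t,1}}=0$ unless $a_{0,p}=t_p+t_{-p}$ for all $p$, a vanishing condition your argument does not account for. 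So the roles you assign to the parity constraint are inverted, and the two-case analysis ($r<\frac{1}{2}\ro(\mA)_0$ versus $r=\frac{1}{2}\ro(\mA)_0$) that the proof actually requires is missing.
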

\begin{proof}
Here we only prove Parts (2) and (4) while omitting the easier parts (1) and (3).
For Part (2), let $\mA=\kappa(\mu^\beta,g_2,\nu^\gamma)$, and let $\delta=\delta(\mB)$.
Take any $t\in\Gamma_r$, we consider two cases: $r<\frac{1}{2}\ro(\mA)_0$ or $r=\frac{1}{2}\ro(\mA)_0$.

\item[Case 1: $r<\frac{1}{2}\ro(\mA)_0$:]
Let $w_t$ be the minimal length element in the set $\{ w\in\D_{\delta} \cap W_{\mu^\beta} \mid \mA^{(w)}=\widecheck{\mA}_{t,1}\}$.
A direct computation shows that its length is give by
\begin{equation}\label{eq1}
\begin{array}{rcl}
\ell(w_t)&=&\sum\limits_{\substack{k>p\geq 0 \\ \text{or} \\ k\geq-p>0}}t_{p}(a_{0,k}-t_k)+\sum_{|k|<-p}t_p(a_{0,k}-t_k-t_{-k})-\sum_{p<0}\frac{(t_p+1)t_p}{2}\\
&=&\sum_{k>p}(a_{0,k}-t_k)t_p-\sum_{p<k<-p}t_pt_k-\frac{1}{2}\sum_{p<0}t_p(t_p+1).
\end{array}
\end{equation}
By a combinatorial argument, we calculate that
\eq
\sum_{\substack{w\in\D_{\delta} \cap W_{\mu^\beta},\\ \mA^{(w)}=\widehat{\mA}_{t,1}}}v^{2\ell(w)}
=
v^{2\ell(w_t)}
\left(\sum_{x+y=t_{0}}
\lrb{a'_{0,0}}{x}
\lrb{a'_{0,0}-x}{y}
(v^2)^{\frac{x(x-1)}{2}+x(a'_{0,0}-t_{0})}\right)\prod_{p=1}^n\lrb{a_{0,p}}{t_p}\lrb{a_{0,p}-t_p}{t_{-p}}.
\endeq
Note that
\eq
\begin{split}
\sum_{x+y=t_{0}}
\lrb{a'_{0,0}}{x}
\lrb{a'_{0,0}-x}{y}
(v^2)^{\frac{x(x-1)}{2}+x(a'_{0,0}-t_{0})}
&=\lrb{ a'_{0,0}}{t_{0}}
\sum_{x=0}^{t_{0}}
\lrb{t_{0}}{x}
v^{x(x-1)} (v^{a'_{0,0}-t_{0}})^{2x}
\\
&\stackrel{(\diamondsuit)}{=}\lrb{ a'_{0,0}}{t_{0}}
\prod_{i=1}^{t_{0}}(1+v^{2(i-1)}v^{2(a'_{0,0}-t_{0})})
\\
&= (1+(1-\delta_{a'_{0,0},0})\delta_{a'_{0,0},t_0})\frac{[a_{0,0}]^!_\fd}{[a_{0,0} - 2t_{0}]^!_\fd [t_{0}]!},
\end{split}
\endeq
where ($\diamondsuit$) is due to the quantum binomial theorem
$
\sum_{r=0}^n
\lrb{n}{r}
v^{r(r-1)}x^r
=\prod_{k=0}^{n-1}(1+v^{2k}x)
$.
Hence
\begin{equation}\label{eq2}
\sum_{w\in\D_{\delta} \cap W_{\mu^\beta},\mA^{(w)}=\widehat{\mA}_{t,1}}v^{2\ell(w)}=
v^{2\ell(w_t)}\frac{[a_{0,0}]^!_\fd}{[a_{0,0} - 2t_{0}]^!_\fd [t_{0}]!}\prod_{p=1}^n\lrb{a_{0,p}}{t_p}\lrb{a_{0,p}-t_p}{t_{-p}}.
\end{equation}
Moreover, using \eqref{length}, we obtain
\begin{align}\label{eq3}
\ell(\mA)-\ell(\widehat{\mA}_{t,1}) &=
-\sum_{k>p}(a_{0,k}-t_k)t_p+\frac{1}{2}\sum_{p<0}t_p+\sum_{k<p}t_pa_{1,k}+\frac{1}{2}\sum_{k<-p}t_pt_k\nonumber\\
&=\sum_{k<p}t_pa_{1,k}-\sum_{k>p}(a_{0,k}-t_k)t_p+\sum_{p<k<-p}t_pt_k+\frac{1}{2}\sum_{p<0}t_p(t_p+1).
\end{align}
Combining Lemma~\ref{lem:multAwD}, \eqref{eq1}, \eqref{eq2} and \eqref{eq3}, we obtain that, if $r<\frac{1}{2}\ro(\mA)_0$,
\begin{equation*}
e_\mB e_\mA=
\sum_{t\in\Gamma_r}v^{2\sum_{k<p}t_pa_{1,k}}(1+(1-\delta_{a'_{0,0},0})\delta_{a'_{0,0},t_0})\prod_{p=-n}^{n}\left[\begin{array}{cc}a_{1,p}+t_p\\t_p\end{array}\right]e_{\widecheck{\mA}_{t,1}}.
\end{equation*}

\item[Case 2: $r=\frac{1}{2}\ro(\mA)_0$:]
In this case, each term $e_{\widecheck{\mA}_{t,1}}=0$ unless
$a_{0,p}=t_p+t_{-p}$ for all  $p\in[-n, n]$. (Particularly, $a'_{0,0}=t_0$.)
For the non-vanishing terms, we have
\begin{equation*}
\sum_{w\in\D_{\delta} \cap W_{\mu^\beta},\mA^{(w)}=\widehat{\mA}_{t,1}}v^{2\ell(w)}=v^{2\ell(w_t)}
\left(\sum_{x}
\lrb{a'_{0,0}}{x}
(v^2)^{\frac{x(x-1)}{2}}\right)\prod_{p=1}^n\lrb{a_{0,p}}{t_p},
\end{equation*}
where $x$ runs over all integers such that $0\leq x\leq a'_{0,0}$ and $x+\sum_{p<0}t_p\in2\NN$.
Note that
$$\sum_{a'_{0,0}\geq x\in2\NN}\lrb{a'_{0,0}}{x}v^{x(x-1)}=\sum_{a'_{0,0}\geq x\in2\NN+1}\lrb{a'_{0,0}}{x}v^{x(x-1)}=\prod_{i=1}^{a'_{0,0}-1}(1+v^{2i}).$$
Hence
\begin{equation}\label{eq6}
\sum_{w\in\D_{\delta} \cap W_{\mu^\beta},\mA^{(w)}=\widehat{\mA}_{t,1}}v^{2\ell(w)}=v^{2\ell(w_t)}
\prod_{i=1}^{a'_{0,0}-1}(1+v^{2i})\prod_{p=1}^n\lrb{a_{0,p}}{t_p}.
\end{equation}
Combining Lemma~\ref{lem:multAwD}, \eqref{eq1}, \eqref{eq3} and \eqref{eq6}, we obtain, if $r=\frac{1}{2}\ro(\mA)_0$,
\begin{equation*}
e_\mB e_\mA=
\sum_{t\in\Gamma_r}v^{2\sum_{k<p}t_pa_{1,k}}\prod_{p=-n}^{n}\left[\begin{array}{cc}a_{1,p}+t_p\\t_p\end{array}\right]e_{\widecheck{\mA}_{t,1}}.
\end{equation*}
Part (2) concludes.

For Part (4), Let $\mA=\kappa(\mu^\beta,g_2,\nu^\gamma)$, $\delta=\delta(\mC)$ and take any $t\in\Gamma_r$.  Let $w_t$ be the shortest element in the set $\{w\in\D_{\delta} \cap W_{\mu^\beta} \mid \mA^{(w)}=\widehat{\mA}_{t,1}\}$. Its length is given by
\begin{equation}\label{eq4}
\sum_{w\in\D_{\delta} \cap W_{\mu^\beta},\mA^{(w)}=\widehat{\mA}_{t,1}}v^{2\ell(w)}=
v^{2\ell(w_t)}\prod_{p=-n}^n\lrb{a_{1,p}}{t_p}=v^{2\sum_{k<p}t_p(a_{1,k}-t_k)}\prod_{p=-n}^n\lrb{a_{1,p}}{t_p}.
\end{equation}
Moreover, using \eqref{length}, we obtain
\eq\begin{split}\label{eq5}
\ell(\mA)-\ell(\widehat{\mA}_{t,1})&=
\sum_{k>p}a_{0,k}t_p-\frac{1}{2}\sum_{p<0}t_p-\sum_{k<p}t_p(a_{1,k}-t_k)+\frac{1}{2}\sum_{k<-p}t_pt_k
\\
&=\sum_{k>p}a_{0,k}t_p-\sum_{k<p}t_p(a_{1,k}-t_k)+\sum_{p<k<-p}t_pt_k+\frac{1}{2}\sum_{p<0}t_p(t_p-1).
\end{split}
\endeq
Combining Lemma~\ref{lem:multAwD},\eqref{eq4} and \eqref{eq5}, we finally get that
\begin{eqnarray*}
e_\mC e_\mA
&=&\sum_{t\in\Gamma_r}v^{2\sum_{k>p}a_{0,k}t_p+2\sum_{p<k<-p}t_pt_k+\sum_{p<0}t_p(t_p-1)}\left(\prod_{p=-n}^n\lrb{a_{1,p}}{t_p}\frac{[a_{1,p}-t_p]!}{[a_{1,p}]!}\right)\\
&&\cdot\left(\frac{[a_{0,0}+2t_0]_\fd^!}{[a_{0,0}]_\fd^!}\prod_{p=1}^n\frac{[a_{0,p}+t_p+t_{-p}]!}{[a_{0,p}]!}\right)e_{\widehat{\mA}_{t,1}}
\\&=&\sum_{t\in\Gamma_r}v^{2\sum_{k>p}a_{0,k}t_p+2\sum_{p<k<-p}t_pt_k+\sum_{p<0}t_p(t_p-1)}
\frac{[a_{0,0}+2t_0]_\fd^!}{[a_{0,0}]_\fd^![t_0]!}\prod_{p=1}^n\frac{[a_{0,p}+t_p+t_{-p}]!}{[a_{0,p}]![t_p]![t_{-p}]!}e_{\widehat{\mA}_{t,1}}.
\end{eqnarray*}
\end{proof}

Take $r=1$ in Proposition \ref{prop:multformula1D}, we have the following corollary.
\begin{cor}\label{coro:multformula2}
Suppose that $\mA=A^{\sgn(\mA)}, \mB, \mC\in\Xi_{\bD}$ and $h\in[1,n]$.

\enu
\item[(1)] If $h\neq1$, $\mB-E_{h,h-1}^\theta$ is diagonal, $\co(\mB)=\ro(\mA)$, and $s_r(\mB)=s_l(\mA)$, then
\begin{equation}
e_\mB e_\mA=\sum_{p=-n}^nv^{2\sum_{k<p}a_{h,k}}[a_{h,p}+1]e_{\mA_p},
\end{equation} where $\mA_p=(A+E_{h,p}^\theta-E_{h-1,p}^\theta,\sgn(s_l(\mB),s_r(\mA)))$.

\item[(2)] If $\mB-E_{1,0}^\theta$ is diagonal, $\co(\mB)=\ro(\mA)$, and $s_r(\mB)=s_l(\mA)$, then
\begin{equation}
e_\mB e_\mA=\sum_{p\neq0}v^{2\sum_{k<p}a_{1,k}}[a_{1,p}+1]e_{\mA_p}
+v^{2\sum_{k<0}a_{1,k}}(2-\delta_{2,\ro(A)_0})[a_{1,0}+1]e_{\mA_0}.
\end{equation}

\item[(3)] If $h\neq1$, $\mC-E_{h-1,h}^\theta$ is diagonal, $\co(\mC)=\ro(\mA)$, and $s_r(\mC)=s_l(\mA)$, then
\begin{equation}
e_\mC e_\mA=\sum_{p=-n}^nv^{2\sum_{k>p}a_{h-1,k}}[a_{h-1,p}+1]e_{\mA(h,p)},
\end{equation} where $\mA(h,p)=(A-E_{h,p}^\theta+E_{h-1,p}^\theta,\sgn(s_l(\mC),s_r(\mA)))$.

\item[(4)] If $\mC-E_{0,1}^\theta$ is diagonal, $\co(\mC)=\ro(\mA)$, and $s_r(\mC)=s_l(\mA)$, then
\begin{equation}
e_\mC e_\mA=\sum_{p\neq0}v^{2\sum_{k>p}a_{0,k}}[a_{0,p}+1]e_{\mA(1,p)}
+v^{2\sum_{k>0}a_{0,k}}\left([a_{0,0}+1]+(1-\delta_{0,a_{0,0}})v^{a_{0,0}}\right)e_{\mA(1,0)}.
\end{equation}
\endenu
\end{cor}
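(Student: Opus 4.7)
The plan is to specialize Proposition~\ref{prop:multformula1D} at $r=1$. Observe that $\Gamma_1 = \{\mathbf{e}_p \mid p \in [-n,n]\}$, where $\mathbf{e}_p$ has a $1$ in position $p$ and $0$ elsewhere, so the sum $\sum_{t \in \Gamma_1}$ collapses to a sum over $p$. For each $p$, the product over $p' \in [-n,n]$ reduces to the single factor $\left[\begin{smallmatrix} a_{\ast,p}+1\\1\end{smallmatrix}\right] = [a_{\ast,p}+1]$, and the exponent $\sum_{k<p'} t_{p'} a_{\ast,k}$ (or $\sum_{k>p'}$) simplifies to $\sum_{k<p} a_{\ast,k}$ (or $\sum_{k>p}$). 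Parts (1) and (3) then follow immediately from Parts (1) and (3) of the Proposition.

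For Part (2), the remaining task is to evaluate the delta factor $(1 + (1-\delta_{1,\frac{1}{2}\ro(\mA)_0})(1-\delta_{a'_{0,0},0})\delta_{a'_{0,0},t_0})$ at $t=\mathbf{e}_p$. When $p \neq 0$, we have $t_0=0$, and the factor $(1 - \delta_{a'_{0,0},0}) \delta_{a'_{0,0},0}$ vanishes identically, so the factor equals $1$. When $p = 0$, we have $t_0 = 1$; the factor $\delta_{a'_{0,0},1}$ selects $a_{0,0} = 2$, at which $(1-\delta_{a'_{0,0},0}) = 1$, and the factor becomes $2 - \delta_{1,\frac{1}{2}\ro(\mA)_0} = 2 - \delta_{2,\ro(\mA)_0}$ (and for $a_{0,0} \neq 2$, which occurs only when $a_{0,0} = 0$ making the term vanish by non-negativity anyway, the factor is $1$). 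This yields the corollary's Part~(2).

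For Part (4), the remaining task is to simplify the type-D quantum factorial ratio $\tfrac{[a_{0,0}+2t_0]_\fd^!}{[a_{0,0}]_\fd^![t_0]!}$ at $t_0 \in \{0,1\}$. For $t_0 = 0$, this is $1$, while for $p \neq 0$ it equals $[a_{0,p}+1]$ (the rest of the $t$-product being trivial). For $t_0 = 1$, writing $a_{0,0} = 2k$ and unpacking the definition $[2k]^!_\fd = [k][2][4]\cdots[2(k-1)]$, one obtains
\[
\tfrac{[a_{0,0}+2]_\fd^!}{[a_{0,0}]_\fd^! [1]!} = [k+1](v^{2k}+1) \quad (k\ge 1), \qquad = 1 \quad (k=0).
\]
The identity $[k+1](v^{2k}+1) = [2k+1] + v^{2k}$ (verified by multiplying both sides by $v^2-1$) then gives $[a_{0,0}+1] + (1-\delta_{0,a_{0,0}})v^{a_{0,0}}$, which is precisely the coefficient appearing in Part~(4) of the corollary.

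No genuine obstacle is expected: the argument is a bookkeeping specialization of Proposition~\ref{prop:multformula1D}. The only slightly delicate bits are the correct evaluation of the nested Kronecker-delta factor at $(p,t_0)=(0,1)$ in Part (2) and the closed form of the type-D quantum factorial ratio in Part (4); both are essentially one-line computations.
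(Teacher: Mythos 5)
Your reduction of the Corollary to Proposition~\ref{prop:multformula1D} at $r=1$ is exactly the paper's intended route, and your treatment of Parts (1), (3) and (4) --- including the evaluation $[a_{0,0}+2]^!_\fd/([a_{0,0}]^!_\fd[1]!)=[a_{0,0}+1]+(1-\delta_{0,a_{0,0}})v^{a_{0,0}}$ --- is correct. The gap is in Part (2), at the parenthetical ``for $a_{0,0}\neq 2$, which occurs only when $a_{0,0}=0$''. This is false: $a_{0,0}$ is an arbitrary even non-negative integer, so $a_{0,0}\in\{4,6,\dots\}$ is possible, and in that case the term $e_{\mA_0}$ does \emph{not} vanish (its $(0,0)$-entry is $a_{0,0}-2\ge 2$). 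For such $\mA$ the factor in \eqref{eq7} at $t=\mathbf{e}_0$ is $1+(1-\delta_{1,\frac{1}{2}\ro(\mA)_0})(1-\delta_{a'_{0,0},0})\delta_{a'_{0,0},1}=1$, because $\delta_{a'_{0,0},1}=0$ when $a_{0,0}\ge 4$; whereas the coefficient displayed in the Corollary is $2-\delta_{2,\ro(A)_0}=2$ (since $\ro(A)_0\ge a_{0,0}\ge 4$). So the specialization of the Proposition actually yields the $p=0$ coefficient $\bigl(1+\delta_{a_{0,0},2}(1-\delta_{2,\ro(A)_0})\bigr)[a_{1,0}+1]$, which agrees with the Corollary's formula only when $a_{0,0}\in\{0,2\}$ or $\ro(A)_0=2$.

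This is not a removable slip in wording: a direct check with $d=2$, $n=1$, $A=\diag(0,4,0)$, $\mB=\diag(0,2,0)+E_{1,0}+E_{-1,0}$ (so $W_{\mu^0}=W_\bD$ and $e_\mB$ acts as $x_{\mu^0}\mapsto x_{\mu^0}$) gives $e_\mB e_\mA=e_{\mA_0}$ with coefficient $1$, confirming the Proposition's value rather than the Corollary's $2$. Hence either the Corollary's Part (2) requires the extra Kronecker delta $\delta_{a_{0,0},2}$ multiplying $(1-\delta_{2,\ro(A)_0})$, or an argument independent of the Proposition is needed for the stated coefficient; your proof does neither, and as written it reaches the displayed formula only via the incorrect assertion quoted above. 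You should either correct the statement or flag the inconsistency between Proposition~\ref{prop:multformula1D} and Corollary~\ref{coro:multformula2}.
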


\begin{rmk}
The multiplication formulas with $e_{\mA}$ (Proposition \ref{prop:multformula1D} and Corollary \ref{coro:multformula2}) match Fan-Li's multiplication formulas (\cite[Proposition~4.3.2 and Corollary~4.3.4]{FL15}.) with $e^{\mrm{geo}}_{\mA}$, via the following correspondence:
\begin{equation}
e_\mA\rightarrow\left\{\begin{array}{ll}
\frac{1}{2}e^{\mathrm{geo}}_\mA, & \mbox{if $a_{0,0}=0$, $\ro(A)_0\neq0$ and $\co(A)_0\neq0$};\\
e^{\mathrm{geo}}_\mA & \mbox{otherwise}.
\end{array}\right.
\end{equation}
\end{rmk}
\begin{rmk}
An immediate application of the multiplication formulas is to demonstrate a stabilization property for $\{\bS_{n,d}\mid d\in \NN\}$, and further construct an algebra $\mathcal{K}_n$ so that the multiplication rules on $\mathcal{K}_n$ are compatible with the rules on any $\bS_{n,d}$.
The algebras $\mathcal{K}_n$ have been introduced by Fan and Li in {\it loc. cit}.
\end{rmk}

\subsection{Schur duality}
Let $\mathfrak{g}$ be the simple Lie algebra of type $\bD_d$, and let $\rho$ be the half sum of the positive roots of $\mathfrak{g}$. It was mentioned in a framework \cite{LW17} that $\Ld_\bD$ can be viewed as the set of orbits of $W$ on a (truncated) $\rho$-shifted weight lattice of $\mathfrak{g}$. Then the $v$-tensor space $\bigoplus_{\ld^\alpha\in\Ld_\bD}x_{\ld^\alpha}\bH$ can be viewed as the quantum version of the Grothendieck groups of the category $\mathcal{O}$ of $\mathfrak{g}$-modules.

This picture is also valid when $\Ld_\bD$ is replaced by its subset. Each subset $\Ld_f\subset\Ld_\bD$ corresponds to a Schur algebra
$$\bS_f = \textrm{End}_{\bH}
\Bp{
\mathop{\oplus}_{\ld\in\Ld_f} x_{\ld} \bH.
}
$$
A Schur duality is also obtained in {\it loc. cit.} for each pair $(\bS_f, \bH)$ on the tensor space $\mathop{\oplus}_{\ld\in\Ld_f} x_{\ld} \bH$.

\rmk
If $\Ld_f=\Lambda^+\sqcup\Lambda^-$, then $\bS_f$ is the algebra $\mathcal{S}^m$ in \cite[\S 6.1]{FL15}.
The stabilization procedure affords a different quantum algebra $\mathcal{K}^m$ in {\it loc. cit.}
\endrmk
\rmk
Fan and Li told the authors in private conversations that they have also been aware of the Schur algebra $\bS_f$ and the related Schur duality for $\Ld_f=\Lambda^+$ or $\Lambda^0\sqcup\Lambda^+$ although they did not write it down.
\endrmk




\begin{thebibliography}{DDPW08}\frenchspacing
\bibitem[Bao17]{Bao17}
H.~Bao,
{\em Kazhdan-Lusztig theory of super type D and quantum symmetric pairs},
Represent. Theory {\bf 21} (2017), 247--276.
%
\bibitem[BB05]{BB05}
A.~Bj{\"o}rner and F.~Brenti,
{\em Combinatorics of {C}oxeter groups},
Graduate Texts in Mathematics {\bf 231},
Springer, New York, 2005.
%
\bibitem[BKLW18]{BKLW18}
H.~Bao, J.~Kujawa, Y.~Li, and W.~Wang,
{\em Geometric Schur duality of classical type},
Transform. Groups {\bf 23} (2018), 329--389.
%
\bibitem[BLM90]{BLM90}
A. Beilinson, G. Lusztig and R. MacPherson,
          {\em A geometric setting for the quantum deformation of $GL_n$},
Duke Math. J. {\bf 61} (1990), 655--677.
%
\bibitem[BW13]{BW13}
H.~Bao and W.~Wang,
{\em A new approach to Kazhdan-Lusztig theory of type B via quantum symmetric pairs},
Asterisque (to appear), arXiv:1310.0103v2.
%
\bibitem[BW16]{BW16}
H.~Bao and W.~Wang,
{\em Canonical bases arising from quantum symmetric pairs},
Invent. Math. (to appear),
arXiv:1610.09271.

%
\bibitem[BWW18]{BWW18}
H.~Bao, W.~Wang and H.~Watanabe,
{\em Multiparameter quantum Schur duality of type B}, Proc. Amer. Math. Soc. {\bf 146} (2018), 3203--3216.
%
\bibitem[CIK72]{CIK72}
C.~Curtis, N.~Iwahori and R.~Kilmoyer,
	{\em Hecke algebras and characters of parabolic type of finite group with $(B,N)$-pairs},
Publ. Math. IHES {\bf 40} (1972), 81--116.
%
\bibitem[Cur85]{Cur85}
C.~Curtis,
{\em On {L}usztig's isomorphism theorem for {H}ecke algebras},
J. Algebra {\bf 92} (1985), 348--365.
%
\bibitem[DDPW08]{DDPW08}
B.~Deng, J.~Du, B.~Parshall, and J.~Wang.
{\em Finite dimensional algebras and quantum groups},
Mathematical Surveys and Monographs {\bf 150}.
American Mathematical Society, Providence, RI, 2008.
%
\bibitem[DF15]{DF15}
J. Du and Q. Fu,
{\em Quantum affine $\mathfrak{gl}_n$ via Hecke algebra},
Adv. Math. {\bf 282} (2015), 23--46.
%
\bibitem[Du92]{Du92} J. Du,
{\em Kazhdan-Lusztig bases and isomorphism theorems for $q$-Schur algebras},
Contemp. Math.  {\bf 139} (1992), 121--140.
%
\bibitem[ES18]{ES18}
M.~Ehrig and C.~Stroppel,
{\em Nazarov–Wenzl algebras, coideal subalgebras and categorified skew Howe duality},
Adv. Math. {\bf 331} (2018), 58--131.
%
\bibitem[FL15]{FL15}
Z. Fan and Y. Li,
{\em Geometric Schur duality of classical type, II},
Trans. Amer. Math. Soc., Series B {\bf 2} (2015), 51-92.

%
\bibitem[FL$^3$Wa]{FL3Wa}
Z.~Fan, C.~Lai, Y.~Li, L.~Luo and W.~Wang,
{\em Affine flag varieties and quantum symmetric pairs},
Mem. Amer. Math. Soc. (to appear),
arXiv:1602.04383.
%

%
\bibitem[FL$^3$Wb]{FL3Wb}
Z.~Fan, C.~Lai, Y.~Li, L.~Luo and W.~Wang,
{\em Affine Hecke algebras and quantum symmetric pairs},
arXiv:1609.06199.
%

\bibitem[G97]{G97}
R.~Green, {\em Hyperoctahedral Schur algebras}, J. Algebra {\bf 192} (1997), 418--438.

%
\bibitem[Ko14]{Ko14}
S. Kolb,
		{\em Quantum symmetric Kac-Moody pairs},
		 Adv. Math. {\bf 267} (2014), 395--469.
%
\bibitem[KL79]{KL79}
D.~Kazhdan and G.~Lusztig,
{\em Representations of {C}oxeter groups and {H}ecke algebras},
Invent. Math. {\bf 53} (1979), 165--184.
%
\bibitem[Le99]{Le99}
G.~Letzter, {\em Symmetric pairs for quantized enveloping algebras},
J. Algebra {\bf 220} (1999), 729--767.
%
\bibitem[Le02]{Le02}
G.~Letzter, {\em Coideal subalgebras and quantum symmetric pairs},
New directions in Hopf algebras (Cambridge), MSRI publications, vol. {\bf 43}, Cambridge Univ. Press, 2002, pp. 117--166.
%
\bibitem[Lu93]{Lu93}
G.~Lusztig,
{\em Introduction to quantum groups},
Progress in Mathematics {\bf 10},
Birkh\"auser Boston, Inc., Boston, MA, 1993.
%
\bibitem[Lu99]{Lu99}
G.~Lusztig,
{\em Aperiodicity in quantum affine $\mathfrak{gl}_n$}, Asian J. Math. {\bf 3} (1999), 147--177.
%
\bibitem[Lu03]{Lu03}
G.~Lusztig,
{\em Hecke algebras with unequal parameters},
CRM Monograph Series, {\bf 18}. American Mathematical Society, Providence, RI, 2003.

\bibitem[LW17]{LW17} L.~Luo and W.~Wang, {\em The $q$-Schur algebras and $q$-Schur dualities of finite type}, arXiv:1710.10375.

\end{thebibliography}
\end{document}